\newtheorem{theorem}{Theorem}[section]
\newtheorem{lemma}[theorem]{Lemma}
\newtheorem{corollary}[theorem]{Corollary} 
\theoremstyle{definition}
\newtheorem{remark}[theorem]{Remark} 
\numberwithin{equation}{section}
\def\z*{\bar z}
\def\B{\mathsf B}
\def\uno{\mathsf 1}
\def\H{\mathscr H}
\def\fh{\mathfrak h}
\def\fk{\mathfrak k}
\def\C{\mathcal C}
\def\Di{\mathscr D}
\def\D{\text{\rm dom}}
\def\dom{\text{\rm dom}}
\def\ran{\text{\rm ran}}
\def\supp{\text{\rm supp}}
\def\RE{\mathbb R}
\def\CO{{\mathbb C}}
\def\o{$\bar{\text{\rm o}}$}
\def\SL{{\rm SL}}
\def\DL{{\rm DL}}
\def\U{U}
\def\NA{\mathbb N}
\def\ph*{\phi_\star}
\def\t {\tilde}
\def\be{\begin{equation}}
\def\ee{\end{equation}}
\def\min{{\rm min}}
\def\max{{\rm max}}
\def\comp{{\rm comp}}
\journal{Journal of Differential Equations}
\begin{document}
\begin{frontmatter}

\author[man]{Andrea Mantile}
\ead{andrea.mantile@univ-reims.fr}
\address[man]{Laboratoire de Math\'{e}matiques, Universit\'{e} de Reims -
FR3399 CNRS, Moulin de la Housse BP 1039, 51687 Reims, France}

\author[pos]{Andrea Posilicano\corref{cor}}
\ead{andrea.posilicano@uninsubria.it}
\cortext[cor]{Corresponding author}
\address[pos]{DiSAT - Sezione di Matematica, Universit\`a dell'Insubria, Via Valleggio 11, I-22100 Como, Italy}

\author[sin]{Mourad Sini}
\ead{mourad.sini@oeaw.ac.at}
\address[sin]{RICAM, Austrian Academy of
Sciences, Altenbergerstr. 69, A-4040 Linz, Austria}

\title{Self-adjoint elliptic operators with boundary conditions on not closed hypersurfaces}

\begin{abstract}
\noindent
The theory of self-adjoint extensions of symmetric operators is used
to construct self-adjoint realizations of a second-order elliptic differential operator on $\mathbb{R}^{n}$ with
linear boundary conditions on (a relatively open part of) a compact hypersurface. Our approach
allows to obtain Kre\u\i n-like resolvent formulae where the reference operator
coincides with the ''free'' operator with domain $H^{2}(\mathbb{R}^{n})$; this provides  an useful tool for the scattering problem
from a hypersurface. Concrete examples of this construction are developed in
connection with the standard boundary conditions, Dirichlet, Neumann, Robin, $\delta$ and  $\delta^{\prime}$-type, assigned either on a $(n-1)$ dimensional compact boundary $\Gamma=\partial\Omega$ or on a relatively open part $\Sigma\subset\Gamma$.
Schatten-von Neumann estimates for the difference of the powers of resolvents of the free and the perturbed operators are also proven; these give existence and completeness of the wave operators of the 
associated scattering systems.
\end{abstract}

\begin{keyword}
Elliptic Operators  \sep Boundary Conditions 
\sep Kre\u\i n's Resolvent Formulae, Self-Adjoint Extensions
\MSC 35J15\sep 35J25\sep 47B25\sep 47F05

\end{keyword}

\end{frontmatter}


\begin{section}{Introduction.}

This work is concerned with the self-adjoint realizations of symmetric, second-order
elliptic operators 
\begin{equation}
Au (x) =\sum_{1\le i,j\le n}\partial_{x_{i}}(
a_{ij}(  x)  \partial_{x_{j}}u(  x))  -V(
x)  u(x)\,,\quad x\equiv(x_{1},\dots,x_{n})\in\mathbb{R}^{n} \,,\label{P_diff}%
\end{equation}
with boundary conditions on (relatively open parts of) hypersurfaces which are boundaries $\Gamma$ of bounded open sets $\Omega\subset\mathbb{R}^{n}$. We assume $\Omega$ to be of class $\C^{1,1}$; that suffices in case the boundary conditions are globally imposed on $\Gamma$ whereas,  in the case of boundary conditions on $\Sigma\subset\Gamma$, we require more regularity on $\Gamma$, even if it suffices to assume $\Sigma$ to be of class $\C^{1,0}$, i.e $\Sigma$ has a Lipschitz boundary. By \cite{GM}, \cite{BM}, we expect that our results can be extended to the case in which also $\Gamma$ is merely Lipschitz. As regards the conditions on the coefficients $a_{ij}$ and $V$, for simplicity we assume that they are both in $\C^{\infty}_{b}(\RE^{n})$, the standard regularity hypotheses allowing to use the classical results on mapping properties of surface potentials (as given, for example, in \cite[Chapter 6]{McLe}). However our regularity assumptions could possibly be relaxed. For example, in the case $a_{ij}=\delta_{ij}$, it should suffice to work with any $(-\Delta)$-bounded potential $V$; moreover,  by following the results concerning the surface potentials provided in \cite{Agra} and references therein, we expect that our analysis could also be adapted to the case where the $a_{ij}$'s are bounded and Lipschitz and $V$ is bounded. 

\par
When 
defined on the domain $\dom(A) =H^{2}(\mathbb{R}^{n}) $, the operator $A$ is self-adjoint and bounded from
above. 
We then consider the same differential operator $A$ but now acting on a domain characterized by linear boundary conditions on $\Gamma$ or on a relatively open part 
$\Sigma\subset\Gamma$. Using the abstract theory of self-adjoint extensions of
symmetric operators developed in \cite{P01}-\cite{P08}, we construct these
models as singular perturbations of the ''free operator'' with domain $H^{2}(\mathbb{R}^{n}) $. This allows
us to describe all possible linear boundary conditions within an unified
framework where the corresponding self-adjoint operators $A_{\Pi,\Theta}$ are
parametrized through couples $\left(  \Pi,\Theta\right)$, where $\Pi$ is an orthogonal projector on the Hilbert trace space $H^{\frac32}(\Gamma)\oplus H^{\frac12}(\Gamma)$ and $\Theta$ is a self-adjoint operator in the Hilbert space given by the range of $\Pi$. Our approach naturally yields to
Kre\u\i n-type formulae expressing the resolvent of the self-adjoint extension
$A_{\Pi,\Theta}$ in terms of the unperturbed resolvent $(  -A+z)
^{-1}$, plus a non-perturbative term; under suitable regularity assumptions of
the parameters $(  \Pi,\Theta)  $, the difference $(
-A_{\Pi,\Theta}+z)  ^{-k}-(  -A+z)  ^{-k}$ is of
trace class (for sufficiently large $k$) and the Birman-Kato criterion allows to consider $\{
A,A_{\Pi,\Theta}\}  $ as a scattering system provided with the corresponding wave operators.
\par
Singular perturbations supported on manifolds of lower dimension have been the
object of a large number of investigations (see for instance \cite{ABR}-\cite{AGS}, \cite{BEL}-\cite{BLL}, \cite{BSS}-\cite{BraTe}, \cite{DPP}-\cite{ExYo3}, \cite{FT}, \cite{FT1}, \cite{He}, \cite{KV}-\cite{Lot}, \cite{Pav}, \cite{P01} and references therein). These have
mainly concerned the case of $\delta$-perturbed Schr\"{o}dinger operators and
are generally motivated by the quantum dynamical modelling, as the case of
leaky quantum graphs, or the quantum interaction with charged
surfaces.
\par
Covering a wider class of models, the analysis developed in our work have been
inspired by the scattering problem from a compact hypersurface with
abstract boundary conditions. When these conditions are encoded by the
extension $A_{\Pi,\Theta}$, the scattered field $u_{\text{\rm sc}}$ corresponding to an incident
wave $u_{\text{\rm in}}$ is expected to be related to a limit absorption principle for  $\left(
-A_{\Pi,\Theta}+z\right) ^{-1}$.
In particular, the result obtained in the simpler case of point
scatterers (see \cite{HuMaSi}) suggests the relation%
\begin{equation*}
u_{\text{\rm sc}}=\lim_{\CO_{+}\ni z\rightarrow \lambda\in\RE}\left(  (  -A_{\Pi,\Theta}+z)  ^{-1}(  -A+z)  u_{\text{\rm in}}\right)  -u_{\text{\rm in}}\,,
\end{equation*}
where the limit is to be understood in an appropriate operator topology. 
This, using the Kre\u\i n resolvent identity for $(  -A_{\Pi,\Theta
}+z)  ^{-1}-(  -A+z)  ^{-1}  $, would lead to an
explicit characterization of the scattered field in terms of a factorized
formula depending on the incident wave. Different applications of this type of
formulas can be foreseen. In the most standard cases (Dirichlet, Neumann and
impedance boundary conditions on $\Gamma$ or $\Sigma\subset\Gamma$), they have
been exploited in the analysis of the corresponding inverse scattering problem
for surfaces reconstruction (see \cite{KG} for an introduction to the
factorization method). In this connection, our result could provide an
unified method to derive factorized formulas for the scattered field for a
large class of scattering problems with rather general linear boundary conditions.
\par
The first part of this work is devoted to the construction of self-adjoint
elliptic operators with abstract boundary conditions on $\Gamma$. In Section 2 we briefly recall the main results of the extension theory of
symmetric operators according to \cite{P01}-\cite{P08}, while, in Section 3 the mapping properties of the trace operators and of the single and
double layer operators, related to the surface $\Gamma$ and to the operator
$A$, are reviewed. In the Section 4, we introduce our model through the
symmetric operator $S$ given by the restriction of $A:H^{2}(\RE^{n})\subset L^{2}(\RE^{n})\to L^{2}(\RE^{n})$, to the dense linear set 
$\{u\in H^{2}(\RE^{n}): u|\Gamma=\partial_{\underline a}u|\Gamma=0\}$,
being $\partial_{\underline a}$
the co-normal derivative on $\Gamma$. The construction of the self-adjoint extensions of $S$, parametrized
through couples  $(  \Pi,\Theta)  $ on the trace space
$H^{\frac32}(  \Gamma)  \oplus H^{\frac12}(  \Gamma)  $, is then
given in Theorem \ref{T1} and Corollary \ref{C1}, where a Kre\u\i n-like resolvent formula
is also provided. The Schatten-von Neumann type estimates for the difference of the powers of resolvents, together with the spectral properties of these extensions and the
existence and completeness of the wave operators are then given in 
Theorems \ref{teo-schatten}, \ref{teo-schatten2} and Corollary \ref{spectrum}.
\par
The second part of the work is devoted to applications. In Section 5 we construct the
standard models, i.e. Dirichlet, Neumann, impedance (or Robin), $\delta$ and
$\delta^{\prime}$-type boundary conditions on $\Gamma$, in terms of extensions of $S$. The
main issue concerns the determination of the parameters $(  \Pi
,\Theta)  $ corresponding to the required constraints. In the case of
global conditions on $\Gamma$, this task is simplified by the nature of $\Pi$,
which, in the above mentioned cases, identifies with the projection onto the $H^{\frac32}(\Gamma)$ component for the Dirichlet case, with the projection onto the $H^{\frac12}(\Gamma)$ component for the Neumann case and with $\Pi=1$ in the Robin case;
then, the determination of $\Theta$ for the corresponding boundary conditions easily follows (almost) from algebraic arguments. The case of Dirichlet, Neumann, impedance, $\delta$ and
$\delta^{\prime}$-type conditions assigned only on a relatively open subset
$\Sigma\subset\Gamma$ is more complex and requires further work: in particular the analysis of
self-adjoint operators related to compressions of sesquilinear forms on the subspaces $H^{\frac32}(\Sigma)$ and $H^{\frac12}(\Sigma)$. This point is developed in Section 6 and in the Appendix. At least to our knowledge, the Kre\u\i n formulae we provide in the case of boundary conditions on not closed hypersurfaces $\Sigma\subset\Gamma$ do not appear in the past literature. 

\end{section}
\begin{section}{Preliminaries: Self-adjoint extensions of symmetric operators.}\label{Pre1}
Given the self-adjoint operator $$A:\D(A)\subseteq\H\to\H$$ in the Hilbert space $\H$ (equipped with the scalar product $\langle\cdot,\cdot\rangle_{\H}$), let $$\tau:\D(A)\to\fh$$ be continuous (w.r.t. the graph norm in $\D(A)$) and surjective onto the auxiliary Hilbert space $\fh$ (equipped with the scalar product $\langle\cdot,\cdot\rangle_{\fh}$). We further assume that  ker$(\tau)$ is dense in $\H$ and introduce the densely defined, closed, symmetric operator $$S:=A|\ker(\tau)\,.$$ Our aim is to provide all self-adjoint extensions of $S$; here we use the approach developed in \cite{P01}-\cite{P08} to which we refer for proofs and for the connections with other well known approaches to this problem (von Neumann's theory and boundary triples theory) .\par
For notational convenience we do not identify $\fh$ with its dual $\fh'$ and we denote by $J:\fh\to\fh'$ the duality mapping (a bijective isometry) given by the canonical isomorphism from $\fh$ onto $\fh'$, i.e. $J(\varphi)$ is the differential of the function $\varphi\mapsto \frac12\,\|\varphi\|^{2}_{\fh}$ (see e.g. \cite[Section 3.1]{Aub}); 
$\fh'$ inherits a Hilbert space structure by the scalar product $\langle\phi_{1},\phi_{2}\rangle_{\fh'}:=\langle J^{-1}\phi_{1},J^{-1}\phi_{2}\rangle_{\fh}$, so that $J$ becomes an unitary map, and we denote by $\langle\cdot,\cdot\rangle_{\fh'\fh}$ the $\fh'$-$\fh$ duality $$\langle\phi,\varphi\rangle_{\fh'\fh}:=\langle J^{-1}\phi,\varphi\rangle_{\fh}\,.
$$ The inverse $J^{-1}:\fh'\to\fh$ gives the duality mapping from $\fh'$ to its dual $\fh''\equiv\fh$; we denote by $\langle\cdot,\cdot\rangle_{\fh\fh'}$ the $\fh$-$\fh'$ duality defined by $\langle\varphi,\phi\rangle_{\fh\fh'}:=\langle J\varphi,\phi\rangle_{\fh'}=\langle \varphi,J^{-1}\phi\rangle_{\fh}$.
\par
Given a densely defined linear operator $$\Xi:\dom(\Xi)\subseteq\fh\to\fh\,$$ 
we denote by $\Xi'$ 
the dual operator $$\Xi':\dom(\Xi')\subseteq \fh'\to\fh'\,,\qquad \Xi\phi:=\phi'\,,$$ $$\dom(\Xi'):=\{\phi\in\fh':\exists\phi'\in\fh'\ \text{such that $\langle\phi',\varphi\rangle_{\fh'\fh}=\langle\phi,\Xi\varphi\rangle_{\fh'\fh}$ for all $\varphi\in\dom(\Xi)$}\}\,;$$
this is related to 
the (Hilbert) adjoint $\Xi^{*}:\dom(\Xi^{*})\subseteq \fh\to\fh$  by  $$\Xi^{*}=J^{-1}\Xi'J\,,\qquad 
\dom(\Xi^{*})=J^{-1}(\dom(\Xi'))\,.
$$ 
In the case $\Xi:\dom(\Xi)\subseteq\fh'\to\fh$, the dual operator  is defined in a similar way: 
$$
\Xi':\dom(\Xi')\subseteq \fh'\to\fh
\,,\qquad\Xi'\phi:=\varphi\,,
$$
$$\dom(\Xi'):=\{\phi\in\fh':\exists\varphi\in\fh\ \text{such that  $\langle\varphi,\psi\rangle_{\fh\fh'}=\langle\phi,\Xi\psi\rangle_{\fh'\fh}$ for all $\psi\in\dom(\Xi)$}\}\,,
$$
or, equivalently,  
$$
\Xi':\dom(\Xi')\subseteq \fh'\to\fh
\,,\qquad\Xi'\phi:=\varphi\,,
$$

$$\dom(\Xi'):=\{\phi\in\fh':\exists\varphi\in\fh\ \text{such that  $\langle\varphi,J^{-1}\psi\rangle_{\fh}=\langle J^{-1}\phi,\Xi\psi\rangle_{\fh}$ for all $\psi\in\dom(\Xi)$}\}\,.
$$
The latter definition shows that $\Xi=\Xi'$ if and only if $\t\Xi:=\Xi J:J^{-1}(\dom(\Xi))\subseteq \fh\to\fh$ is self-adjoint, i.e $\t\Xi^{*}=\t\Xi$; by a slight abuse of terminology we say that $\Xi$ is self-adjoint (resp. symmetric) whenever $\Xi=\Xi'$ (resp. $\Xi\subset \Xi'$). 
\par 
For any $z\in\rho(A)$ we define $R_{z}\in\B(\H,\dom(A))$ and $G_{z}\in\B(\fh',\H)$ by  
$$R_{z}:=(-A+z)^{-1}\,,\qquad G_{z}:\fh'\to\H\,, \quad G_{z}:=(\tau R_{\bar z})^{\prime}\,,
$$
i.e.
\be\label{gz}
\forall \phi\in\fh'\,,\forall u\in\H\,,\quad\langle G_{z}\phi,u\rangle_{\H}=\langle\phi,\tau(-A+\bar z)^{-1}u\rangle_{\fh'\fh}\,.
\ee
By our hypotheses on the map $\tau$ one gets (see \cite[Remark 2.9]{P01})
\be\label{zero}
\ran(G_{z})\cap \D(A)=\{0\}\,,
\ee  
and (see \cite[Lemma 2.1]{P01})
\be\label{f1}
(z-w)\,R_{w}G_{z}=G_{w}-G_{z}\,,
\ee
so that 
\be\label{ff}
\ran(G_{w}-G_{z})\subseteq \D(A)
\ee
and
\be\label{f2}
A(G_{z}-G_{w})=z\,G_{z}-w\,G_{w}\,.
\ee
Now, in order to simplify the exposition and since such an hypothesis holds true in the  
applications further considered, we suppose that $A$ has a spectral gap, i.e. 
$$
\rho(A)\cap \RE\not=\emptyset\,.
$$
Then we pose 
$$G :=G_{\lambda_{\circ}}\,,\quad \lambda_{\circ}\in \rho(A)\cap\RE$$ and define 
\be\label{weyl}
M_{z}:=\tau(G -G_{z})\equiv (z-\lambda_{\circ}) G ^{\prime}G_{z}:\fh'\to\fh\,.
\ee
Given an orthogonal projection $\Pi:\fh\to\fh$, the dual map $\Pi':\fh'\to\fh'$ is an orthogonal projection in $\fh'$ (since $\Pi'=J\Pi J^{-1}$ and $J$ is unitary) and, by \cite[Proposition 3.5.1]{Aub}, $\ran(\Pi)'=\fh'/\ran(\Pi')^{\perp}=\ran(\Pi')$ and $\ran(\Pi')'=\fh/\ran(\Pi)^{\perp}=\ran(\Pi)$. Thus, for a densely defined linear map $\Xi:\dom(\Xi)\subseteq\ran(\Pi')\to\ran(\Pi)$, one has $\Xi':\dom(\Xi')\subseteq\ran(\Pi')\to\ran(\Pi)$. As in the case $\ran (\Pi)=\fh$ we say that $\Xi$ is symmetric whenever $\Xi\subset \Xi'$ and that is self-adjoint 
whenever $\Xi=\Xi^{'}$; one has that $\Xi=\Xi'$ (resp. $\Xi\subseteq \Xi'$) if and only if $\t\Xi=\t\Xi^{*}$ (resp. $\t\Xi\subseteq\t\Xi^{*}$), where $\t\Xi:=\Xi J$, $\dom(\t\Xi):=J^{-1}(\dom(\Xi))$. Finally, given a self-adjoint $\Theta:\dom(\Theta)\subseteq\ran(\Pi')\to\ran(\Pi)$, we define 
$$
Z_{ \Pi,\Theta}:=\{z\in\rho(A): \Theta+\Pi M_{z}\Pi'\ \text{has a bounded inverse on $\ran(\Pi)$ to $\ran(\Pi')$} \}\,.
$$
\begin{theorem}\label{ext}  Any self-adjoint extension of $S=A|\ker(\tau)$ is of the kind $A_{\Pi,\Theta}$, where $\Pi:\fh\to\fh$ is an orthogonal projection, $\Theta:\dom(\Theta)\subseteq\ran(\Pi')\to\ran(\Pi)$ is a self-adjoint operator and
$$
\D(A_{\Pi,\Theta}):=\{u=u_{\circ}+G \phi\,,\ u_{\circ}\in\D(A)\,,\ \phi\in\dom(\Theta)\,,\ \Pi\tau u_{\circ}=\Theta\phi\}\,,
$$
$$  
A_{\Pi,\Theta}u:=Au_{\circ}+\lambda_{\circ}G \phi\,.
$$ 
Moreover $Z_{\Pi,\Theta}$ is not void, $\CO\backslash\RE\subseteq Z_{ \Pi,\Theta}\subseteq \rho(A_{\Pi,\Theta})$, and the resolvent of the self-adjoint extension $A_{\Pi,\Theta}$ is given by the Kre\u\i n's type formula
\be\label{res}
(-A_{\Pi,\Theta}+z)^{-1}=R_{z}+G_{z}\Pi'( \Theta+\Pi M_{z}\Pi')^{-1}\Pi \tau R_{z}\,,\quad z\in Z_{\Pi,\Theta}\,.
\ee
\end{theorem}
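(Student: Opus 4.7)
The plan is to verify in order that (i) $A_{\Pi,\Theta}$ is well defined as a symmetric extension of $S$, (ii) the Kre\u\i n formula \eqref{res} correctly describes its resolvent on $Z_{\Pi,\Theta}$ and in particular forces self-adjointness, and (iii) every self-adjoint extension of $S$ arises in this form. Point (iii) is the abstract parametrization content of \cite{P01}-\cite{P08} and will simply be cited.

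For (i), uniqueness of the decomposition $u=u_\circ+G\phi$ with $u_\circ\in\D(A)$ is guaranteed by \eqref{zero}, so $A_{\Pi,\Theta}u:=Au_\circ+\lambda_\circ G\phi$ is unambiguous and $\D(S)=\ker\tau\subset\D(A_{\Pi,\Theta})$ (take $\phi=0$). For symmetry, self-adjointness of $A$ eliminates the bulk term in $\langle A_{\Pi,\Theta}u,v\rangle_\H-\langle u,A_{\Pi,\Theta}v\rangle_\H$, and the remainder is computed via \eqref{gz} applied at $z=\lambda_\circ$, namely $\langle G\phi,(-A+\lambda_\circ)v_\circ\rangle_\H=\langle\phi,\tau v_\circ\rangle_{\fh'\fh}$. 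Using the constraints $\phi=\Pi'\phi$, $\Pi\tau u_\circ=\Theta\phi$ (and the analogous ones for $v=v_\circ+G\psi$) together with the duality $\Pi'=J\Pi J^{-1}$, this boundary residue reduces to
\begin{equation*}
\langle\phi,\Theta\psi\rangle_{\fh'\fh}-\langle\Theta\phi,\psi\rangle_{\fh\fh'},
\end{equation*}
which vanishes by the self-adjointness $\Theta=\Theta'$.

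The core step is the resolvent computation. For $u=u_\circ+G\phi\in\D(A_{\Pi,\Theta})$ and $z\in\rho(A)$, use \eqref{f1} to rewrite $u=v+G_z\phi$ with $v:=u_\circ+(G-G_z)\phi\in\D(A)$ (by \eqref{ff}); then \eqref{f2} gives
\begin{equation*}
(-A_{\Pi,\Theta}+z)u=(-A+z)u_\circ+(z-\lambda_\circ)G\phi=(-A+z)v.
\end{equation*}
To invert, given $f\in\H$ set $v=R_zf$, so $u_\circ=R_zf+(G_z-G)\phi$, and \eqref{weyl} gives $\tau u_\circ=\tau R_zf-M_z\phi$. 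The boundary condition $\Pi\tau u_\circ=\Theta\phi$ with $\phi=\Pi'\phi$ then becomes
\begin{equation*}
(\Theta+\Pi M_z\Pi')\phi=\Pi\tau R_zf,
\end{equation*}
which is uniquely and boundedly solvable exactly when $z\in Z_{\Pi,\Theta}$; substituting back yields \eqref{res}. A direct first-resolvent-identity check on the right-hand side of \eqref{res} then confirms it is the resolvent of a closed extension of $S$, necessarily $A_{\Pi,\Theta}$.

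The main obstacle I expect is establishing $\CO\setminus\RE\subseteq Z_{\Pi,\Theta}$, which pins down self-adjointness. This relies on the Nevanlinna property of $M_z$: combining \eqref{weyl}, \eqref{f1}, and \eqref{gz} yields
\begin{equation*}
\mathrm{Im}\,\langle M_z\phi,\phi\rangle_{\fh\fh'}=(\mathrm{Im}\,z)\,\|G_z\phi\|_\H^{2},
\end{equation*}
strictly signed for non-real $z$ because $G_z$ is injective (its dual $\tau R_{\bar z}$ has dense range by surjectivity of $\tau$). Adding the symmetric $\Theta$ preserves the sign, so $\Theta+\Pi M_z\Pi'$ has bounded inverse $\ran(\Pi)\to\ran(\Pi')$ for every $z\in\CO\setminus\RE$. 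Combined with (ii), this closes the direct implication; the exhaustiveness statement that every self-adjoint extension of $S$ admits this parametrization is the extension theorem invoked from \cite{P01}-\cite{P08}.
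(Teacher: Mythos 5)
Your proof is correct but takes a genuinely different route from the paper's. The paper simply invokes \cite[Theorem 2.1]{P08} as a black box to produce a $z$-independent self-adjoint operator $\hat A_{\Pi,\Theta}$ together with the resolvent formula \eqref{res} and the spectral inclusions, and then devotes its effort to translating the $z$-parametrized domain of $\hat A_{\Pi,\Theta}$ into the $\lambda_\circ$-parametrized one of $A_{\Pi,\Theta}$ (using injectivity of $G_z$ via \eqref{zero} to show that $\phi_z$ does not depend on $z$). You instead prove all three assertions from scratch: symmetry of $A_{\Pi,\Theta}$ via the Green-type identity $\langle\phi,\Theta\psi\rangle_{\fh'\fh}-\langle\Theta\phi,\psi\rangle_{\fh\fh'}=0$; the Kre\u\i n formula \eqref{res} via the decomposition $u=v+G_z\phi$, $v=u_\circ+(G-G_z)\phi\in\D(A)$; and $\CO\backslash\RE\subseteq Z_{\Pi,\Theta}$ via the Nevanlinna property $\mathrm{Im}\langle M_z\phi,\phi\rangle_{\fh\fh'}=(\mathrm{Im}\,z)\,\|G_z\phi\|^2_\H$. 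Each step is sound and the proof is more self-contained, at the cost of re-deriving facts the section is designed to recall from \cite{P01}-\cite{P08}.

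One spot deserves tightening. In the Nevanlinna step you justify the strict sign by observing that $G_z$ is injective, because ``its dual $\tau R_{\bar z}$ has dense range.'' Injectivity gives $\|G_z\phi\|>0$ for $\phi\neq 0$, which yields injectivity of $\Theta+\Pi M_z\Pi'$ but not a bounded inverse onto $\ran(\Pi')$. What you actually need is the uniform lower bound $\|G_z\phi\|_\H\geq c_z\|\phi\|_{\fh'}$, i.e.\ that $G_z$ has closed range, which is equivalent to the stronger statement that $\tau R_{\bar z}$ is \emph{surjective} (true, since $\tau$ is surjective and $R_{\bar z}$ is a bijection onto $\D(A)$). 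With that bound, Cauchy--Schwarz gives $\|(\Theta+\Pi M_z\Pi')\phi\|\geq c_z|\mathrm{Im}\,z|\,\|\phi\|$, whence closed range; applying the same estimate to the dual $\Theta+\Pi M_{\bar z}\Pi'$ (using $M_z'=M_{\bar z}$ and self-adjointness of $\Theta$) gives dense range, hence the bounded inverse. This is the standard boundary-triple argument, but as written your claim that ``adding the symmetric $\Theta$ preserves the sign, so $\Theta+\Pi M_z\Pi'$ has bounded inverse'' leaps over the closed-range estimate.
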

\begin{proof} Let us pose  $\t\Theta:=\Theta J:J^{-1}(\dom(\Theta))\subseteq \ran(\Pi)\to\ran(\Pi)$, $\t G_{z}:=G_{z}J:\fh\to\H$ and $\t M_{z}:=M_{z}J:\fh\to\fh$. Thus $\t\Theta$ is self-adjoint in $\ran(\Pi)$,
\begin{align*}
G_{z}\Pi'( \Theta+\Pi M_{z}\Pi')^{-1}\Pi \tau R_{z}
=&
G_{z}J\Pi J^{-1}((\Theta J+\Pi\t M_{z}\Pi)J^{-1})^{-1}\Pi \tau R_{z}\\
=&\t G_{z}\Pi(\t\Theta+\Pi \t M_{z}\Pi)^{-1}\Pi \tau G_{z}
\end{align*}
and $Z_{\Pi,\Theta}=\{z\in \rho(A):0\in\rho(\t\Theta+\Pi \t M_{z}\Pi)\}$. Therefore, 
by \cite[Theorem 2.1]{P08} (see Remark \ref{not} below), 
the linear operator 
$$\hat A_{\Pi,\Theta}:\dom(\t A_{\Pi,\Theta})\subseteq \H\to\H\,,\quad 
(-\hat A_{\Pi,\Theta}+z):=(-A+z)u_{z}\,,$$
$$
\dom(\hat A_{\Pi,\Theta})=\{u=u_z+G_{z}\Pi'(\Theta+\Pi M_{z}\Pi')^{-1}\Pi\tau u_{z},\ u_{z}\in\dom(A)\,,\ z\in Z_{\Pi,\Theta}\}
$$ 
is a $z$-independent self-adjoint extension of $A|\ker(\tau)$; moreover its resolvent is given by  \eqref{res}. Let us now show that $\hat A_{\Pi,\Theta}=A_{\Pi,\Theta}$. At first we pose 
$\phi_z:=(\Theta+\Pi M_{z}\Pi')^{-1}\Pi\tau u_z$, 
so that, since the definition of $\hat A_{\Pi,\Theta}$ is $z$-independent, $u\in\D(\hat A_{\Theta})$ if and only if for any $z\in Z_{\Pi,\Theta}$ there exists $u_z\in\D(A_{\circ})$,  $\Pi \tau u_z=(\Theta+\Pi M_{z}\Pi)\phi_{z}$, such that $u=u_z+G_z\phi_{z}$.
Therefore, by \eqref{f1}, 
$$
u_{z}-u_{w}=G_w\phi_{w}-G_z\phi_{z}=
G_{z}(\phi_{w}-\phi_{z})+(z-w)R_zG_w\phi_{w}\,.
$$
By \eqref{zero}, one obtains $G_{z}(\phi_{w}-\phi_{z})=0$. Since $G_{z}$ is injective (it is the adjoint of a surjective map), this gives $\phi_{z}=\phi_{w}$, i.e. the definition of $\phi_{z}$ is $z$-independent. 
Thus, posing $u_{\circ}:=u_z+(G_z-G )\phi$, one has $u=u_{\circ}+G \phi$, with $u_{\circ}\in\dom(A)$ and
$$
\Pi\tau u_{\circ}=\Pi\tau u_z+\Pi\tau(G_z-G )\phi=(\Theta+\Pi M_{z}\Pi)\phi
-\Pi M_{z}\Pi\phi=\Theta\phi\,.
$$
Then, by \eqref{f2},
\begin{align*}
A_{\Pi,\Theta}u=Au_z+zG_z\phi
=
Au_{\circ}-A(G_{z}-G )\phi+z G_z\phi
=
Au_{\circ}+\lambda_{\circ} G \phi\,.
\end{align*}
Finally, by \cite[Corollary 3.2]{P08} (also see \cite[Theorem 4.3]{P03}),  any self-adjoint extension of $A|\ker(\tau)$ is of the kind $A_{\Pi,\Theta}$ for some couple $(\Pi,\Theta)$. 
\end{proof}
\begin{remark}\label{not} 
Let us notice that the operators denoted by $G_{z}$ and $\Gamma_{z}$ in  \cite{P04} and \cite{P08} here correspond to $\t G_{z}$ and $\t M_{z}$ respectively.
\end{remark}
Let us remark that we have not used neither the adjoint $S^{*}$ nor the defect space  
$\ker(S^{*}-z)$. However these can be readily obtained:
\begin{lemma}\label{adj}
$$
\dom(S^{*})=\{u=u_{\circ}+G \phi\,,\ u_{\circ}\in\D(A)\,,\ \phi\in\fh'\}\,,
$$
$$
S^{*}u=Au_{\circ}+\lambda_{\circ}G \phi\,,
$$
and 
$$\ker(S^{*}-z)=\{G_{z}\phi\,,\ \phi\in\fh'\}\,,\quad z\in\rho(A)\,.
$$ 
\end{lemma}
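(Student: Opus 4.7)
The plan is to prove the two assertions essentially in parallel, with the $\dom(S^{*})$-characterisation relying on a direct-sum decomposition of $\dom(S^{*})$ into $\dom(A)$ plus a defect subspace, which in turn is identified with $\ran(G)$. The key preliminary fact is that $\tau R_{\bar z}:\H\to\fh$ is bounded and surjective for every $z\in\rho(A)$, because $R_{\bar z}$ is a bijection $\H\to\dom(A)$ and $\tau$ is, by assumption, surjective onto $\fh$. By the closed range theorem applied to $\tau R_{\bar z}$ (and the relation between dual and Hilbert adjoint via $J$), this yields that $G_{z}=(\tau R_{\bar z})^{\prime}$ is injective with closed range, and, identifying the Hilbert adjoint via $J$,
$$
\ran(G_{z})=\ker(\tau R_{\bar z})^{\perp}.
$$
This will also give the uniqueness of the representations $u=u_{\circ}+G\phi$ and $u=G_{z}\phi$, since $\ran(G_{z})\cap\dom(A)=\{0\}$ by \eqref{zero}.

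Next I would prove the inclusion $\{u_{\circ}+G\phi\}\subseteq\dom(S^{*})$ together with the formula. For $v\in\dom(S)=\ker(\tau)$ and $u=u_{\circ}+G\phi$ I compute $\langle u,Sv\rangle_{\H}$. The term $\langle u_{\circ},Av\rangle_{\H}=\langle Au_{\circ},v\rangle_{\H}$ is immediate by self-adjointness of $A$. For the other term I write $Av=(A-\lambda_{\circ})v+\lambda_{\circ}v$ and apply the defining relation \eqref{gz} with $z=\lambda_{\circ}$:
$$
\langle G\phi,(-A+\lambda_{\circ})v\rangle_{\H}=\langle\phi,\tau R_{\lambda_{\circ}}(-A+\lambda_{\circ})v\rangle_{\fh'\fh}=\langle\phi,\tau v\rangle_{\fh'\fh}=0,
$$
since $\tau v=0$. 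Hence $\langle G\phi,Av\rangle_{\H}=\lambda_{\circ}\langle G\phi,v\rangle_{\H}$, which delivers both $u\in\dom(S^{*})$ and $S^{*}u=Au_{\circ}+\lambda_{\circ}G\phi$.

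For the reverse inclusion $\dom(S^{*})\subseteq\{u_{\circ}+G\phi\}$ I first identify the defect space at $\lambda_{\circ}$. Since $S$ is closed symmetric and $\lambda_{\circ}\in\rho(A)\cap\RE$,
$$
\ker(S^{*}-\lambda_{\circ})=\bigl(\ran(S-\lambda_{\circ})\bigr)^{\perp}=\bigl((A-\lambda_{\circ})\ker(\tau)\bigr)^{\perp}=\ker(\tau R_{\lambda_{\circ}})^{\perp}=\ran(G),
$$
where I used $(A-\lambda_{\circ})\ker(\tau)=\ker(\tau R_{\lambda_{\circ}})$ (because $(A-\lambda_{\circ})$ is a bijection with inverse $-R_{\lambda_{\circ}}$) and the closed-range identification from the first paragraph. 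Given any $u\in\dom(S^{*})$, set $u_{\circ}:=R_{\lambda_{\circ}}(-S^{*}u+\lambda_{\circ}u)\in\dom(A)\subseteq\dom(S^{*})$. Then $(-S^{*}+\lambda_{\circ})(u-u_{\circ})=0$, so $u-u_{\circ}\in\ker(S^{*}-\lambda_{\circ})=\ran(G)$, giving $u=u_{\circ}+G\phi$ for a (unique) $\phi\in\fh'$.

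Finally, for the characterisation of $\ker(S^{*}-z)$ for arbitrary $z\in\rho(A)$, I argue purely algebraically from the first part. If $G_{z}\phi\in\ran(G_{z})$, then by \eqref{ff} the decomposition $G_{z}\phi=(G_{z}-G)\phi+G\phi$ has first summand in $\dom(A)$, so the formula for $S^{*}$ combined with \eqref{f2} yields
$$
S^{*}G_{z}\phi=A(G_{z}-G)\phi+\lambda_{\circ}G\phi=zG_{z}\phi-\lambda_{\circ}G\phi+\lambda_{\circ}G\phi=zG_{z}\phi.
$$
Conversely, if $u=u_{\circ}+G\phi\in\ker(S^{*}-z)$, then $Au_{\circ}+\lambda_{\circ}G\phi=z(u_{\circ}+G\phi)$, i.e.\ $(-A+z)u_{\circ}=(\lambda_{\circ}-z)G\phi$, whence $u_{\circ}=(\lambda_{\circ}-z)R_{z}G\phi=(G_{z}-G)\phi$ by \eqref{f1}, and therefore $u=G_{z}\phi$. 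The main obstacle in the plan is really the closed-range / surjectivity identification that produces $\ker(S^{*}-\lambda_{\circ})=\ran(G)$; everything else is an algebraic unwinding of the relations \eqref{gz}, \eqref{f1}, \eqref{f2}.
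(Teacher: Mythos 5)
Your proof is correct, and it takes a genuinely different and more self-contained route than the paper's. The paper invokes \cite[Theorem 3.1]{P04}, which gives the representation of $\dom(S^{*})$ in terms of $\tfrac12(G_{i}+G_{-i})\phi$, and then performs the purely algebraic translation $u_{*}\mapsto u_{\circ}$ plus an appeal to \eqref{f2} to land on the $G$-based formula; everything structural is delegated to the cited result. You instead rebuild the representation from scratch: you first observe that $\tau R_{\bar z}$ is bounded and surjective (as a composition of the bijection $R_{\bar z}:\H\to\dom(A)$ with the surjection $\tau$), so its dual $G_{z}$ is injective with closed range and $\ran(G_{z})=\ker(\tau R_{\bar z})^{\perp}$; you then identify $\ker(S^{*}-\lambda_{\circ})=\ran(S-\lambda_{\circ})^{\perp}=\ker(\tau R_{\lambda_{\circ}})^{\perp}=\ran(G)$ and obtain the direct-sum decomposition $\dom(S^{*})=\dom(A)\dotplus\ker(S^{*}-\lambda_{\circ})$ via the standard regularisation $u_{\circ}:=R_{\lambda_{\circ}}(-S^{*}+\lambda_{\circ})u$. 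This is essentially a real-point version of the von Neumann decomposition adapted to the reference extension $A$, and it makes the lemma independent of \cite{P04}. (Your argument tacitly uses $A\subset S^{*}$ when writing $S^{*}u_{\circ}=Au_{\circ}$ for $u_{\circ}\in\dom(A)$; this follows from $S\subset A=A^{*}\subset S^{*}$ and is worth making explicit.) Your treatment of the last claim, on $\ker(S^{*}-z)$, is the same algebraic unwinding via \eqref{f1}, \eqref{ff}, \eqref{f2} as in the paper. The trade-off is clear: the paper's proof is shorter but opaque (a black-box citation plus a change of variables), while yours is longer but exposes the actual mechanism — the closed-range identification $\ran(G)=\ker(\tau R_{\lambda_{\circ}})^{\perp}$ — that makes the $(\lambda_{\circ},G)$ parametrisation work.
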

\begin{proof} By \cite[Theorem 3.1]{P04} (see Remark \ref{not}) one has
$$
\dom(S^{*})=\{u=u_{*}+\frac12\,(G_{i}+G_{-i})\phi\,,\ u_{*}\in\dom(A)\,,\ \phi\in\fh'\}
$$
$$
S^{*}u=Au_{*}+\frac{i}2\,(G_{i}-G_{-i})\phi\,.
$$
Thus, posing $u_{\circ}:=u_{*}+\frac12\,(G_{i}-G )\phi +\frac12\,(G_{-i}-G )\phi$, one has 
$$\dom(S^{*})=\{u=u_{\circ}+G \phi\,,\ u_\circ\in\dom(A)\,,\ \phi\in\fh'\}$$ and
$$
S^{*}u=Au_{\circ}-\frac12\,A(G_{i}-G )\phi -\frac12\,A(G_{-i}-G )\phi
+\frac{i}2\,(G_{i}-G_{-i})\phi\,.
$$
By \eqref{f2} one then obtains $S^{*}u=Au_{\circ}+\lambda_{\circ}G \phi$. \par The vector  $u=u_{\circ}+G \phi\in\dom(S^{*})$ belong to $\ker(S^{*}-z)$ if and only if 
$(\lambda_{\circ}-z)G \phi=(-A+z)u_{\circ}$. This gives $u=(\lambda_{\circ}-z)R_{z}G \phi+G \phi$; by \eqref{f1} one gets $u=G_{z}\phi$.
\end{proof}
\begin{remark}By Lemma \ref{adj} and Theorem \ref{ext}, any self-adjoint extension of $S$ is of the kind
$$
A_{\Pi, \Theta}:=S^{*}|\D(A_{\Pi, \Theta})\,,
$$
$$
\D(A_{\Pi, \Theta})=\{u\in\D(S^{*}):\beta_{0}u\in\D( \Theta)\,,\ \Pi\beta_{1} u
= \Theta\beta_{0}u\}\,,
$$
where
$$
\beta_0:\D(S^{*})\to\fh'\,,\quad\beta_0  u:=\phi\,.
$$
$$
\beta_1 :\D(S^{*})\to\fh\,,\quad\beta_1 u:=\tau u_{\circ}\,,
$$
Moreover (using \cite[Theorem 3.1]{P04}) one has the abstract Green's identity
$$
\langle S^{*}u,v\rangle_{\H}-\langle u,S^{*}v\rangle_{\H}=
\langle\beta_1u,\beta_{0}v\rangle_{\fh\fh'}-\langle\beta_{0}u,\beta_1v\rangle_{\fh'\fh}
$$
Let us also notice that $u=G_{z}\phi$ solves the adjoint (abstract) boundary value problem
\be\label{bvp}
\begin{cases}
S^{*}u=z\,u&\\
\beta_{0}u=\phi\,.&\end{cases}
\ee
\end{remark}
\begin{remark} By \cite[Theorem 3.1]{P04}, the triple $(\fh,\Gamma_1 ,\Gamma_2)$, where $\Gamma_{1}:=-J^{-1}\beta_{0}$ and $\Gamma_{2}:=\beta_{1}$ is a boundary triplet for $S^{*}$, i.e. $\Gamma_1$ and $\Gamma_2$ are surjective and 
$$
\langle S^{*}u,v\rangle_{\H}-\langle u,S^{*}v\rangle_{\H}=
\langle\Gamma_1u,\Gamma_{2}v\rangle_{\fh}-\langle\Gamma_{2}u,\Gamma_1v\rangle_{\fh}
$$ 
holds true. The Weyl function of the boundary triple $(\fh,\Gamma_{1} ,\Gamma_1)$ is the bounded linear operator $M_{z}J:\fh\to\fh$, where $M_{z}$ is defined in \eqref{weyl} (see \cite[Theorem 3.1]{P04}). For Boundary Triple Theory we refer to \cite{DM}, \cite{Sch} and references therein.
\end{remark}
We conclude the section with the following result:
\begin{lemma}\label{bt}
Given the linear operator $\Xi:\dom(\Xi)\subseteq\ran(\Pi')\to \ran(\Pi)$, let us define the linear operator $A_{\Pi,\Xi}$ by 
$$
A_{\Pi,\Xi}:=S^*|\dom (A_{\Pi,\Xi})\,,\quad \dom(A_{\Pi,\Xi}):=\{u\in\D(S^{*}): \beta_{0}u\in\dom(\Xi)\,,\ \Pi\beta_{1}u=\Xi\beta_{0}u\}\,.
$$
Then $A_{\Pi,\Xi}$ is self-adjoint 
if and only if $\Xi$ is self-adjoint. 
\end{lemma}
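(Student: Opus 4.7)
The plan is to compute $A_{\Pi,\Xi}^{*}$ directly via the abstract Green's identity and to show that $A_{\Pi,\Xi}^{*}=A_{\Pi,\Xi'}$, from which the equivalence of the self-adjointness of $A_{\Pi,\Xi}$ and of $\Xi$ (in the sense $\Xi=\Xi'$) follows at once.

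First I would note that $A_{\Pi,\Xi}$ is by definition a restriction of $S^{*}$ and an extension of $S=A|\ker(\tau)$ (for $u\in\dom(S)$ one has $\beta_{0}u=0\in\dom(\Xi)$ and $\beta_{1}u=\tau u=0$, so both defining constraints are trivially satisfied). Hence $A_{\Pi,\Xi}^{*}$ is again a restriction of $S^{*}$, and by the Green's identity recalled in the remark after Lemma \ref{adj}, a vector $v\in\dom(S^{*})$ lies in $\dom(A_{\Pi,\Xi}^{*})$ if and only if
$$
\langle\beta_{1}u,\beta_{0}v\rangle_{\fh\fh'}=\langle\beta_{0}u,\beta_{1}v\rangle_{\fh'\fh}\qquad\forall\,u\in\dom(A_{\Pi,\Xi}).
$$

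The key step is the parametrization of $\dom(A_{\Pi,\Xi})$: writing $u=u_{\circ}+G\phi$ with $u_{\circ}\in\dom(A)$ and $\phi=\beta_{0}u\in\dom(\Xi)\subseteq\ran(\Pi')$, the constraint $\Pi\tau u_{\circ}=\Xi\phi$ fixes only the $\ran(\Pi)$-component of $\beta_{1}u=\tau u_{\circ}$, while the complementary part $\psi:=(1-\Pi)\beta_{1}u\in\ker(\Pi)$ remains free because $\tau:\dom(A)\to\fh$ is surjective. Exploiting $\beta_{0}u=\Pi'\beta_{0}u$ to replace $\beta_{1}v$ by $\Pi\beta_{1}v$ in the right-hand side and decomposing $\beta_{1}u=\Xi\phi+\psi$ on the left, the pairing identity above becomes
$$
\langle\Xi\phi,\beta_{0}v\rangle_{\fh\fh'}+\langle\psi,\beta_{0}v\rangle_{\fh\fh'}=\langle\phi,\Pi\beta_{1}v\rangle_{\fh'\fh}\qquad\forall\,\phi\in\dom(\Xi),\ \forall\,\psi\in\ker(\Pi).
$$
Setting $\phi=0$ and varying $\psi$ forces $\beta_{0}v$ to annihilate $\ker(\Pi)$ in the $\fh$-$\fh'$ pairing, i.e.\ $\beta_{0}v\in\ran(\Pi')$ (using $J(\ker\Pi)=\ker\Pi'$). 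Setting $\psi=0$ and conjugating the remaining identity yields $\langle\Pi\beta_{1}v,\phi\rangle_{\fh\fh'}=\langle\beta_{0}v,\Xi\phi\rangle_{\fh'\fh}$ for all $\phi\in\dom(\Xi)$, which by the definition of the dual operator recalled in Section \ref{Pre1} means exactly $\beta_{0}v\in\dom(\Xi')$ and $\Xi'\beta_{0}v=\Pi\beta_{1}v$.

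Combining these two characterizations, $\dom(A_{\Pi,\Xi}^{*})=\{v\in\dom(S^{*}):\beta_{0}v\in\dom(\Xi'),\ \Pi\beta_{1}v=\Xi'\beta_{0}v\}=\dom(A_{\Pi,\Xi'})$, and so $A_{\Pi,\Xi}^{*}=A_{\Pi,\Xi'}$. Self-adjointness of $A_{\Pi,\Xi}$ is then equivalent to $A_{\Pi,\Xi}=A_{\Pi,\Xi'}$, which in turn is equivalent to $\Xi=\Xi'$ (since $\Xi$ is recovered from $A_{\Pi,\Xi}$ through $\Xi\phi=\Pi\beta_{1}u$ for any $u\in\dom(A_{\Pi,\Xi})$ with $\beta_{0}u=\phi$, such $u$ existing thanks to the surjectivity of $\tau$). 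I expect the main obstacle to be the bookkeeping of the $\fh$-$\fh'$ dualities---in particular the identities $\Pi'=J\Pi J^{-1}$ and $J(\ker\Pi)=\ker\Pi'$---together with the clean decoupling of $\phi=\beta_{0}u$ from $\psi=(1-\Pi)\beta_{1}u$; once these are in place, the remainder is a straightforward dualization.
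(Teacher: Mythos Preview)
Your argument is correct and essentially reproduces, from first principles, the boundary-triple fact that the paper simply cites. The paper's own proof is much shorter: it rewrites $\dom(A_{\Pi,\Xi})$ in terms of the maps $\Gamma_{1}=-J^{-1}\beta_{0}$ and $\Gamma_{2}=\beta_{1}$, observes that $(\fh,\Gamma_{1},\Gamma_{2})$ is a boundary triplet for $S^{*}$, and then invokes \cite[Lemma 14.6 and Theorem 14.7]{Sch} to conclude that $A_{\Pi,\Xi}$ is self-adjoint if and only if $\Xi J:J^{-1}\dom(\Xi)\subseteq\ran(\Pi)\to\ran(\Pi)$ is self-adjoint, which is the definition of $\Xi=\Xi'$.

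Your route --- computing $A_{\Pi,\Xi}^{*}$ directly from the abstract Green's identity and showing $A_{\Pi,\Xi}^{*}=A_{\Pi,\Xi'}$ --- is precisely what the cited Schm\"udgen results do in the abstract boundary-triple setting, so you are effectively reproving that piece of the theory in situ. This buys self-containment (no external reference needed) and yields the slightly sharper statement $A_{\Pi,\Xi}^{*}=A_{\Pi,\Xi'}$, from which the symmetric case also follows immediately; the paper's approach buys brevity. The duality bookkeeping you flag (in particular $J(\ker\Pi)=\ker\Pi'$ and the independent variation of $\phi\in\dom(\Xi)$ and $\psi\in\ker\Pi$, which relies on the surjectivity of $\tau$) is exactly the substance behind the cited textbook lemmas, so your identification of it as the only real work is accurate.
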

\begin{proof} Since we can re-write $\dom(A_{\Pi,\Xi})$ as
$$\dom(A_{\Pi,\Xi})=\{u\in\D(S^{*}): \Gamma_{1}u\in J^{-1}(\dom(\Xi) )\,,\ \Pi\Gamma_{2}u=-\Xi J\Gamma_{1}u\}\,,$$
and since $(\fh,\Gamma_{1},\Gamma_{2})$ is a boundary triple, by e.g. \cite[Lemma 14.6 and Theorem 14.7]{Sch},  $A_{\Pi,\Xi}$ is self-adjoint 
if and only if $\Xi J:J^{-1}\dom(\Xi )\subseteq\ran(\Pi)\to \ran(\Pi)$ is self-adjoint. 
The latter is equivalent to $\Xi$ self-adjoint. 
\end{proof}

\end{section}
\begin{section}{Preliminaries: Sobolev spaces and boundary-layer operators}
\begin{subsection}{Sobolev spaces.}\label{sobolev} 
Let $\Omega$ be a non-empty open subset of $\RE^{n}$; $H^{k}(\Omega)$, $k\in\NA$, denotes the usual Sobolev-Hilbert spaces $H^{k}(\Omega):=\{u\in L^{2}(\Omega): \partial^{\alpha}u\in L^{2}(\Omega)\,,\ |\alpha|\le k\}$, where $\partial^{\alpha}:\Di'(\Omega)\to \Di'(\Omega)$ denotes the distributional partial derivatives of order $|\alpha|$. In the case $\Omega=\RE^{n}$, the scale of Sobolev-Hilbert spaces $H^{s}(\RE^{n})$, $s\in\RE$, is defined by $H^{s}(\RE^{n}):=\{u\in \mathscr{S}'(\RE^{n}):\smallint_{\RE^{n}}|\hat u(\kappa)|^{2}(|\kappa|^{2}+1)^{s}d\kappa<+\infty\}$, where $\mathscr{S}'(\RE^{n})$ is the space of tempered distributions and $\hat u$ denotes Fourier's transform. $H^{-s}(\RE^{n})$ identifies
with the dual space of $H^{s}(\RE^{n}) $; we denote by $\langle\cdot,\cdot\rangle_{-s,s}$ the $H^{-s}$-$H^{s}$ duality pairing.\par
Let us now suppose that $\Omega$ is bounded  
and let $\Gamma$ denote its boundary: $\Gamma=\partial\Omega$. 
We further suppose that  $\Omega\subset\RE^{n}$ is of class
$\mathcal{C}^{k,1}$, $k\ge 0$, i.e we suppose that 
its boundary $\Gamma$ is a manifold of dimension $n-1$ whose
local maps are Lipschitz-continuous, together with their inverses, up to the
order $k$. In the particular case $\C^{0,1}$, $\Omega$ is referred to as a Lipschitz domain. 
The scale of Sobolev-Hilbert space $H^{s}(\Omega)$, $s\in\RE$, is then defined by $H^{s}(\Omega):=\{u|\Omega: u\in H^{s}(\RE^{n})\}$, $u|\Omega$ denoting the restriction of $u$ to $\Omega$. 
In the case $s\in\NA$, this definition reproduces the previous one (see e.g. \cite[1.4.3.1]{Grisv}).\par
The Sobolev spaces of $L^{2}$-functions on $\Gamma$, next denoted with
$H^{s}\left(  \Gamma\right)  $, are defined by using an atlas of $\Gamma$ and the Sobolev space on flat, open, bounded, $(n-1)$-dimensional domains (see e.g. \cite[Section 1.3.3]{Grisv}, \cite[Chapter 3]{McLe}, \cite[Section 3.1]{Wl}); they are well defined up to the order $\left\vert
s\right\vert =k+1$, and $H^{-s}\left(  \Gamma\right)  $ identifies
with the dual space of $H^{s}\left(  \Gamma\right)  $. We denote by $\langle\cdot,\cdot\rangle_{-s,s}$ the $H^{-s}$-$H^{s}$ duality pairing.
If $\Gamma$ is a $\C^{2}$ manifold, considering the Riemannian structure inherited from 
$\RE^{n}$, we have that $H^{s}(\Gamma)$ identifies with  $\dom((-\Delta_{\Gamma})^{\frac{s}2})$ with respect of the scalar product
\begin{equation}
\langle \phi,\varphi\rangle_{H^{s}(\Gamma)}:=\langle \Lambda^{s}\phi,\Lambda^{s}\varphi\rangle_{L^{2}(\Gamma)}\,,\quad \Lambda:=(-\Delta_{\Gamma}+1)
^{\frac{1}{2}}\,, \label{Sobolev_Gamma}%
\end{equation}
being $\Delta_{\Gamma}$ the self-adjoint operator in $L^{2}(\Gamma)$ corresponding to the Laplace-Beltrami operator on the complete Riemannian manifold $\Gamma$ (see e.g. \cite[Remark 7.6, Chapter 1]{LiMa1}). 
According to this definition, $\Lambda^{r}$ is self-adjoint in $H^{s}(\Gamma)$ with domain $H^{s+r}(\Gamma)$ and acts as a unitary map $\Lambda^{r} :H^{s}\left(  \Gamma\right)\to H^{s-r}\left(  \Gamma\right)$. In particular $\Lambda^{2s}:H^{s}(\Gamma)\to H^{-s}(\Gamma)$ plays the role of the duality mapping $J$ introduced in Section \ref{Pre1}, and one has 
$\langle\phi,\psi\rangle_{-s,s}=\langle\Lambda^{-s}\phi,\Lambda^{s}\psi\rangle_{L^{2}(\Gamma)}$. 
In the case $\Gamma$ is not $\C^{2}$ one can use the definition (which works in the case $\Omega$ is of class $\C^{0,1}$) $\Delta_{\Gamma}:=\text{\rm Div}\circ\nabla_{\rm tan}$ provided in \cite[Theorem 1.2]{GMMM}. The two definitions coincide in the case $\Gamma$ is $\C^{2}$ (see \cite[Section 7]{GMMM}).
\par
Denoting by $0=\lambda_{0}<\lambda_{1}\le \dots\le \lambda_{k}\le \lambda_{k+1}\le \dots$, the increasing sequence of the eigenvalues of the self-adjoint operator $-\Delta_{\Gamma}:\dom(-\Delta_{\Gamma})\subset L^{2}(\Gamma)\to L^{2}(\Gamma)$ and by $\{\varphi_{k}\}_{k=1}^\infty$ the corresponding  normalized eigenfunctions, one has 
$$
\|\phi\|^{2}_{H^{s}(\Gamma)}=\sum_{k=0}^{\infty}(\lambda_{k}+1)^{s}|\langle\phi,\varphi_{k}\rangle_{L^{2}(\Gamma)}|^{2}\,.
$$
Thus, given $r<s<t$, for any $\epsilon>0$ there exists $c_{\epsilon}>0$, $c_{\epsilon}\uparrow\infty$ as $\epsilon\downarrow 0$, such that 
\be\label{rst}
\|\phi\|_{H^{s}(\Gamma)}\le \epsilon\,\|\phi\|_{H^{t}(\Gamma)}+c_{\epsilon}
\|\phi\|_{H^{r}(\Gamma)}\,.
\ee
If $\Sigma\subset\Gamma$ is relatively open,  
then $H^{s}\left(\Sigma\right)$,  $|s|\le k+1$, is constructed using an atlas of $\Sigma$ and the Sobolev spaces on flat  $(n-1)$-dimensional domains. If $\Sigma$ is of class $\C^{0,1}$, i.e. if its boundary is a Lipschitz manifold, then a continuation map allows the identification $H^{s}(\Sigma)  =\{  \phi|\Sigma: \phi\in H^{s}(\Gamma)\}$ and $\|\varphi\|_{H^{s}(\Sigma)}=\inf\{\,\|\phi\|_{H^{s}(\Gamma)}: \varphi=\phi|\Sigma\}$ (see e.g. \cite[Theorem 1.4.3.1]{Grisv}, \cite[Definition 3.6]{ShazPiri}, \cite[Section 4.3]{HW}).   
\par
In the sequel, we shall also use some closed subspaces of $H^{s}(\Gamma)$: let $X\subset \Gamma$ denote either $\Sigma^{c}\equiv\Gamma\backslash\Sigma$ or $\overline \Sigma$ and $s\ge 0$,  we define 
$$H^{s}_X(\Gamma):=\{\phi\in H^{s}(\Gamma):\supp(\phi)\subseteq X\}\equiv H^{s}(\Gamma)\cap L^{2}_X(\Gamma)\,,$$
where $L^{2}_X(\Gamma)$ denotes the space of square-integrable functions with essential support contained in $X$,
and 
$$
H^{-s}_X(\Gamma):=\{\phi\in H^{-s}(\Gamma):\langle\phi,\psi\rangle_{-s,s}=0,\  \text{for any $\psi\in H^{s}_{\overline{X^{c}}}(\Gamma)$}\}\,.$$ 
Therefore
\begin{align}\label{perp2}
\forall \phi\in H^{-s}_X(\Gamma),\,\forall 
\varphi\in H^{s}_{\overline {X^{c}}}(\Gamma)\,,\qquad
\langle\phi,\varphi\rangle_{-s,s}=0\,.
\end{align}
By the continuos (with dense range) embeddings $H^{s}(\Gamma)\hookrightarrow 
H^{r}(\Gamma)$, $r<s$, one gets
$$
H^{ r}_{X}(\Gamma)\cap H^{s}(\Gamma)=H^{s}_{X}(\Gamma)\,,\quad H_{X}^{s}(\Gamma)\subset 
H_{X}^{r}(\Gamma)\,\ (\text{dense inclusion}) \,.
$$ 
One has the identifications $$H^{s}(\Sigma)'\simeq H^{-s}_{\overline\Sigma}(\Gamma)\,,\qquad H^{s}_{\overline\Sigma}(\Gamma)'\simeq H^{-s}(\Sigma)$$ (see \cite[Proposition 3.5 and remarks at page 111]{ShazPiri}, \cite[Lemma 4.3.1]{HW}) and  the (strict, whenever $s\ge \frac12$) inclusions
$$
H^{s}_{\overline\Sigma}(\Gamma)\subseteq H^{s}(\Sigma)\subseteq L^{2}(\Sigma)\subseteq 
H^{-s}(\Sigma)\subseteq H^{-s}_{\overline\Sigma}(\Gamma)\,,\quad s>0\,.
$$
Denoting by $H_{0}^{s}(\Sigma)$, $s>0$, the completion of $H_{\overline\Sigma}^{s}(\Gamma)$ with respect to the norm of $H^{s}(\Sigma)$, one has (see \cite[relation (4.3.10)]{HW})
$$
H^{s}_{\overline\Sigma}(\Gamma)=\begin{cases} H^{s}_{0}(\Sigma)&s\not=[s]+\frac12\\
H^s_{00}(\Sigma)&s=[s]+\frac12\,,\end{cases}
$$
where
$$
H^s_{00}(\Sigma):=\{\phi\in H^s_{0}(\Sigma):d^{-\frac12}D^{\alpha}\phi\in L^{2}(\Sigma)\,,|\alpha|=[s]\}\,,
$$
$d$ denotes the distance to the boundary $\partial\Sigma$ and $D$ denotes the covariant derivative. 
In particular, since $H^{\frac12}_{0}(\Sigma)=H^{\frac12}(\Sigma)$,  it results
$$
 \phi\in H^{\frac12}(\Sigma)\quad\text{\rm and}\quad\int_{\Sigma}\frac{|\phi(x)|^{2}}{d(x)}\,d\sigma_{\Gamma}(x)<+\infty\quad\iff\quad\t\phi\in H^{\frac12}_{\overline\Sigma}(\Gamma)\,,
$$ 
where $\sigma_{\Gamma}$ denotes the surface measure and here $\t\phi$  denotes the extension by zero.\par
We shall also need the Hilbert orthogonal 
\begin{align}\label{perp}
H^{s}_{\Sigma^{c}}(\Gamma)^{\perp}
=\{\phi\in H^{s}(\Gamma): \langle\Lambda^{2s}\phi,\psi\rangle_{-s,s}=0,\ \text{for any $\psi\in H^{s}_{\Sigma^{c}}(\Gamma)$}\}
=\Lambda^{-2s}H^{-s}_{\overline \Sigma}(\Gamma)\,.
\end{align}
Let $\Pi_{\Sigma}:H^{s}(\Gamma)\to H^{s}(\Gamma)$ be the orthogonal projection onto $H^{s}_{\Sigma^{c}}(\Gamma)^{\perp}$, then (see e.g. \cite[page 77]{McLe}) the map 
$$\U_{\Sigma}: H^{s}_{\Sigma^{c}}(\Gamma)^{\perp}\to H^{s}(\Sigma)\,,\quad
{\rm U}_{\Sigma}(\Pi_{\Sigma}\phi):=(\Pi_{\Sigma}\phi)|\Sigma=\phi|\Sigma$$ is an unitary isomorphism. Therefore we can regard $H^{s}(\Sigma)$ as a closed subspace of $H^{s}(\Gamma)$. Using the decomposition $\phi=(\uno-\Pi_{\Sigma})\phi\oplus\U_{\Sigma}^{-1}(\phi|\Sigma)$, 
the restriction operator $R_{\Sigma}\phi:=0\oplus{\rm U}_{\Sigma}\Pi_{\Sigma}\phi=0\oplus(\phi|\Sigma)$ is the orthogonal projection from $H^{s}(\Gamma)\simeq H^{s}_{\Sigma^{c}}(\Gamma)\oplus H^{s}(\Sigma)$ onto $H^{s}(\Sigma)$.
\end{subsection}
\begin{subsection}{Sobolev Multipliers}
Let us now introduce the following notation: we write $\psi\in M^{s}(\Gamma)$, $s\ge 0$, whenever $\psi$ is a multiplier in $H^{s}(\Gamma)$, i.e. 
\begin{align}\label{mult}
&M^{s}(\Gamma):=\{\psi\in H^{s}(\Gamma): \text{$\psi\phi\in H^{s}(\Gamma)$ for any $\phi\in H^{s}(\Gamma)$}\}\\
=&\{\psi\in H^{s}(\Gamma):\text{$\exists\, m_{\psi}\ge 0$ s.t. $\|\psi\phi\|_{H^{s}(\Gamma)}\le m_{\psi}\,\|\phi\|_{H^{s}(\Gamma)}$}\}\,.\nonumber
\end{align}
The equality holds, by the closed graph theorem, since the map $\phi\mapsto\psi\phi$ is closed and everywhere defined. Notice that $M^{s}(\Gamma)\subseteq L^{\infty}(\Gamma)$:
$$
\|\psi\|_{L^{\infty}(\Gamma)}=\lim_{k\to+\infty}\|\psi\|_{L^{2k}(\Gamma)}=
\lim_{k\to+\infty}\|\psi^{k}\|^{1/k}_{L^{2}(\Gamma)}\le 
\lim_{k\to+\infty}\|\psi^{k}\|^{1/k}_{H^{s}(\Gamma)}\le m_{\psi}\lim_{k\to+\infty}\|1\|^{1/k}_{H^{s}(\Gamma)}=m_{\psi}\,.
$$
By the same kind of proofs which hold in the flat case (see \cite[Proposition 3.5.1, Corollary 3.5.7]{MS})
one has
$$
M^{s}(\Gamma)\subseteq M^{r}(\Gamma)\,, \quad r\le s\,, 
$$
and
$$
\text{$\psi\in M^{s}(\Gamma)$ and $1/\psi\in L^{\infty}(\Gamma)$}\quad\Rightarrow \quad1/\psi\in M^{s}(\Gamma)\,.
$$
We recall some relatively simple sufficient conditions in order that a given function belongs to $M^{s}(\Gamma)$. By \cite[Theorem 3.20]{McLe}, $$W^{k,\infty}(\Gamma)\subseteq M^{s}(\Gamma)\,,\qquad k\ge \max\{1,s\}\,,
$$
where $W^{k,\infty}(\Gamma)$ denotes the set of functions in $\C^{k-1}(\Gamma)$ with $k$-order distributional derivatives in $L^{\infty}(\Gamma)$. By \cite[Proposition 4.5]{Wl} this result can be improved to 
$$
A^{s}(\Gamma)\subseteq M^{s}(\Gamma)\,,
$$
where (see \cite[Section 4.1]{Wl}) $A^{k}(\Gamma)=W^{k,\infty}(\Gamma)$ whenever $s=k$ and, in case $s>0$ is not an integer, 
$$
A^{s}(\Gamma):=\left\{\phi\in W^{[s],\infty}(\Gamma): q_{i,\lambda,\phi}\in L^{\infty}(f_{i}(U_{i}))\,,\ i\in I\,,\ \lambda=s-[s]\,\right\}\,,$$
$$
q_{\phi,\lambda,i}(x):=\sum_{|\alpha|\le [s]}\int_{f_{i}(U_{i})}\frac{|\partial^{\alpha}\phi_{i}(x)-\partial^{\alpha}\phi_{i}(y)|^{2}}{\|x-y\|^{n-1+2\lambda}}\,dy\,,\quad \phi_{i}:= (\varphi_{i}\phi)\circ f_{i}^{-1}\,.
$$
Here $\{(U_{i},f_{i})\}_{i\in I}$ is an admissible atlas of $\Gamma$ and $\{\varphi_{i}\}_{i\in I}$ is a subordinate partition of unity. \par Notice that, for any $s-[s]<\kappa<1$, one has $\t\C^{[s],\kappa}(\Gamma)\subseteq A^{s}(\Gamma)$,  where $\t\C^{k,\kappa}(\Gamma)$ denotes the set of  functions in $W^{k,\infty}(\Gamma)$ having H\"older continuos (of exponent $\kappa$) $k$-order derivatives.
\par 
In the case $\Gamma$ is a smooth manifold, by \cite[Theorem 24]{CRT-N} one gets 
$$
H^{s}(\Gamma)=M^{s}(\Gamma)\,,\qquad s>\frac12\,(n-1)\,.
$$ 
By the embedding $H^{s}(\Gamma)\hookrightarrow L^{q}(\Gamma)$, $\frac1q=\frac12-\frac s{n-1}$, which holds whenever $s<\frac12\,(n-1)$, and by \cite[Theorem 27]{CRT-N}, one gets
$$
L^{(n-1)/s}_{s}(\Gamma)\cap L^{\infty}(\Gamma)\subseteq M^{s}(\Gamma)\,,\qquad s<\frac12\,(n-1)\,,
$$ 
where  $$L_{s}^{q}(\Gamma):=\{\phi\in L^{q}(\Gamma):(-\Delta_{\Gamma})^{\frac{s}2}\phi\in L^{q}(\Gamma)\}\,.
$$ 
\end{subsection}
\begin{subsection}{Trace maps.}
For a bounded open domain $\Omega$ of class $\C^{k,1}$, we pose
$$\Omega_{+}:=\mathbb{R}^{n}\backslash\overline\Omega\,,\qquad 
\Omega_{-}:=\Omega\,,$$ while $\nu$ denotes the outward
normal vector on $\Gamma$. The one-sided, zero-order, trace operators $\gamma_{0}^{\pm}$ act on a
smooth function $u\in\mathcal{C}^{\infty}\left(  \overline{\Omega}_{\pm
}\right)  $ as $\gamma_{0}^{\pm}u=u|{\Gamma
}$, where $\varphi|{\Gamma}$ is the restriction to $\Gamma
$. These maps uniquely extend to bounded linear operators (see e.g. \cite[Theorem 3.37]{McLe})
\begin{equation}
\gamma_{0}^{\pm}\in{\B}(  H^{s}\left(  \Omega_{\pm}\right)
,H^{s-\frac{1}{2}}\left(  \Gamma\right)  )  \,,\qquad\frac
{1}{2}<s\leq k+1\,. \label{Trace_Gamma_plusmin_est}%
\end{equation}
Then, given $a_{ij}\in\C^{\infty}(\RE^{n})$, $a_{ij}(x)=a_{ji}(x)$ such that  
\be\label{matrice}
\forall x,\xi\in\RE^{n}\,,\quad \sum_{1\le i,j\le n}a_{ij}(x)\xi_{i}\xi_{j}\ge c_{\circ}\,|\xi|^{2}\,,\quad c_{\circ}>0\,,
\ee
we define one-sided, first-order, trace operators 
\begin{equation}
\gamma_{1}^{\pm}\in{\B}(  H^{s}\left(  \Omega_{\pm}\right)
,H^{s-\frac{3}{2}}\left(  \Gamma\right)  )  \,,\qquad\frac
{3}{2}<s\leq k+1\,,
\end{equation}
by the zero-order trace of the co-normal derivative:
\be
\gamma_{1}^{\pm}u:=\sum_{1\le i,j\le n}\nu_{i}\gamma_{0}^{\pm}(a_{ij}\partial_{x_{j}}u)\label{Trace_op_test}%
\ee
Using the maps $\gamma_{0}^{\pm}$ and $\gamma^{\pm}_{1}$ we define the two-sided, bounded,  trace operators
$$
\gamma_{0}:H^{s}(\Omega_{-})\oplus H^{s}(\Omega_{+})\to H^{s-\frac12}(\Gamma)\,,\quad  \gamma_{0}(u_{-}\oplus u_{+}):=\frac12(\gamma_{0}^{+}u_{+}+\gamma_{0}^{-}u_{-})\,,
$$
$$
\gamma_{1}:H^{s}(\Omega_{-})\oplus H^{s}(\Omega_{+})\to H^{s-\frac32}(\Gamma)\,,\quad \gamma_{1}(u_{-}\oplus u_{+}):=\frac12(\gamma_{1}^{+}u_{+}+\gamma_{1}^{-}u_{-})
$$
and
$$
[\gamma_{0}]:H^{s}(\Omega_{-})\oplus H^{s}(\Omega_{+})\to H^{s-\frac12}(\Gamma)\,,\quad  [\gamma_{0}](u_{-}\oplus u_{+}):=\gamma_{0}^{+}u_{+}-\gamma_{0}^{-}u_{-}\,,
$$
$$
[\gamma_{1}]:H^{s}(\Omega_{-})\oplus H^{s}(\Omega_{+})\to H^{s-\frac32}(\Gamma)\,,\quad [\gamma_{1}](u_{-}\oplus u_{+}):=\gamma_{1}^{+}u_{+}-\gamma_{1}^{-}u_{-}\,.
$$
Posing
$$
H^{s}(\RE^{n}\backslash\Gamma):=H^{s}(\Omega_{-})\oplus H^{s}(\Omega_{+})\,,
$$
by \cite[Theorem 3.5.1]{Agra}, one has 
\be\label{H1}
H^{s}(\RE^{n})=H^{s}(\RE^{n}\backslash\Gamma)\,,\quad 0\le s<\frac12\,,
\ee
\be\label{H2}
H^{s}(\RE^{n})=H^{s}(\RE^{n}\backslash\Gamma)\cap\ker([\gamma_{0}])\,,\quad \frac 12<s<\frac 32\,,
\ee
\be\label{H3}
H^{s}(\RE^{n})=H^{s}(\RE^{n}\backslash\Gamma)\cap\ker([\gamma_{0}])\cap
\ker([\gamma_{1}])\quad \frac32< s<\frac 52\,.
\ee
More generally, given a relatively open subset $\Sigma\subset\Gamma$, one has
\be\label{H4}
H^{s}(\RE^{n}\backslash\overline\Sigma)=\{u\in H^{s}(\RE^{n}\backslash\Gamma):\supp([\gamma_{0}]u)\subseteq\overline \Sigma\}\,,\quad \frac12<s<\frac32\,,
\ee
and
\be\label{H5}
H^{s}(\RE^{n}\backslash\overline\Sigma)=\{u\in H^{s}(\RE^{n}\backslash\Gamma):\supp([\gamma_{0}]u)\cup \supp([\gamma_{1}]u)\subseteq\overline \Sigma \}\,,\quad \frac32< s<\frac52\,.
\ee
In the following we use the notations
$$
H^{s-}(\RE^{n}):=\bigcap_{r<s}H^{r}(\RE^{n})\,,\quad H^{s-}(\RE^{n}\backslash\Gamma):=\bigcap_{r<s}H^{r}(\RE^{n}\backslash\Gamma)\,,\quad H^{s-}(\RE^{n}\backslash\overline\Sigma):=\bigcap_{r<s}H^{r}(\RE^{n}\backslash\overline\Sigma)\,.
$$ 
\end{subsection}
\begin{subsection}{Boundary-layer operators.}\label{layers} In what follows $A$ denotes the 2nd order, symmetric, elliptic partial differential operator  
$$
A:\Di'(\RE^{n})\to \Di'(\RE^{n})\,,\quad Au:=\sum_{1\le i,j\le n}\partial_{x_i}(a_{ij}\partial_{x_j}u)-Vu\,,
$$
where we suppose $a_{ij},\, \partial_{x_{i}}a_{ij},\, V\in\C_{b}^{\infty}(\RE^{n})$, $a_{ij}(x)=a_{ji}(x)$ and that \eqref{matrice} holds true. When restricted to $H^{1}(\RE^{n})$, $A$ provides a bounded operator in $\B(H^{1}(\RE^{n}),H^{-1}(\RE^{n}))$ by the identity
$$\langle Au,v\rangle_{-1,1}=-\sum_{1\le i,j\le n}\langle a_{ij}\partial_{x_i}u,\partial_{x_j}v\rangle_{L^{2}(\RE^{n})}+
\langle Vu,v\rangle_{L^{2}(\RE^{n})}\,.$$
Moreover the sesquilinear form 
$$
F:H^{1}(\RE^{n})\times H^{1}(\RE^{n})\subset L^{2}(\RE^{n})\times L^{2}(\RE^{n})\to\RE \,,\quad 
F(u,v):=-\langle Au,v\rangle_{-1,1}$$
satisfies 
\be\label{F0}
\forall u\in H^{1}(\RE^{n})\,,\quad F(u,u)\ge c_{\circ}\|\nabla u\|^{2}_{L^{2}(\RE^{n})}-\|V_{\text{neg}}\|_{\infty} \,\|u\|^{2}_{L^{2}(\RE^{n})}\,,
\ee
where $V_{\text{neg}}$ denotes the negative part of $V$; therefore $F$ is closed and semibounded  . By \eqref{dtheta}, the corresponding self-adjoint operator is then given by the restriction of $A$ to the domain $D_{A}$,
$$
D_{A}:=\{u\in H^{1}(\RE^{n}): Au\in L^{2}(\RE^{n})
\}\equiv H^{2}(\RE^{n})\,.
$$
Thus $A: H^{2}(\RE^{n}) \subset L^{2}(\RE^{n})\to L^{2}(\RE^{n})$ is self-adjoint, $(-A+z)^{-1}\in\B(L^{2}(\RE^{n}),H^{2}(\RE^{n}))$ for any $z\in\rho(A)$ and $(\|V_{\text{neg}}\|_{\infty},+\infty)\subseteq\rho(A)$ . By \eqref{F0}, for any $\lambda>\|V_{\text{neg}}\|_{\infty}$ one obtains
\be
\forall u\in H^{1}(\RE^{n})\,,\quad \|(-A+\lambda) u\|_{H^{-1}(\RE^{n})}\ge c\, \|u\|_{H^{1}(\RE^{n})}\,,
\ee
and so 
\be\label{-1+1}
(-A+\lambda)^{-1}\in \B(H^{-1}(\RE^{n}),H^{1}(\RE^{n}))\,.
\ee
By \eqref{-1+1} and by elliptic regularity, see e.g. \cite[Theorem 6.22]{Grubb}, $(-A+\lambda)\in \B(H^{m+2}(\RE^{n}),H^{m}(\RE^{n}))$ is a bijection; thus,  by the inverse mapping theorem, one has 
\be\label{m,m+2}
(-A+\lambda)^{-1}\in \B(H^{m}(\RE^{n}),H^{m+2}(\RE^{n}))\,,\quad m\ge -1\,.
\ee
Given the bounded open set $\Omega\subset\RE^{n}$ of class $\C^{k,1}$, $k\ge 0$, the single and
double-layer operators 
$$  
\SL_{z}: H^{-\frac32}(\Gamma)\to L^{2}(\RE^{n})\,,\qquad
\DL_{z}: H^{-\frac12}(\Gamma)\to L^{2}(\RE^{n})\,,
$$
related to $A$ and $\Gamma$ are defined by 
\be\label{simple}
 \langle\SL_{z}\phi,u\rangle_{L^{2}(\RE^{n})}:=\langle\phi,\gamma_{0}(-A+\bar z)^{-1}u\rangle_{-\frac32,\frac32}\,,
\quad u\in L^{2}(\RE^{n})\,,
\ee
\be\label{double}
\langle\DL_{z}\varphi,u\rangle_{L^{2}(\RE^{n})}:=\langle\varphi,\gamma_{1}(-A+\bar z)^{-1}u\rangle_{-\frac12,\frac12}\,,
\quad u\in L^{2}(\RE^{n})
\,.
\ee
Let $g_{z}(x,y)$ be the integral kernel of the resolvent 
$(-A+z)^{-1}$; it is a smooth function for $x\not=y$ 
(see e.g. \cite[Lemma 6.3]{McLe}). Therefore \eqref{simple} and \eqref{double} give, if $x\notin\Gamma$ and $\phi,\,\varphi\in L^{2}(\Gamma)$, 
\begin{equation}
\SL_{z}\phi(x)=\int_{\Gamma}g_{z}(x,y)\,\phi(y)\,d\sigma_{\Gamma}(y)\,, \label{S_Gamma}%
\end{equation}
and%
\begin{equation}
\DL_{z} \varphi(x)=\sum_{1\le i,j\le n}\int_{\Gamma}\nu_{i}(y)a_{ij}(y)\partial_{x_{j}}g_{z}(x,y)\,\varphi(y)\,d\sigma_{\Gamma}(y)\,,\label{D_Gamma}%
\end{equation}
where $\sigma_{\Gamma}$ denotes the surface measure. We need the following mapping properties:
\begin{lemma}\label{mapprop} For any $\lambda>\|V_{\text{neg}}\|_{\infty}$ one has
$$
\SL_{\lambda}\in\B(H^{-\frac12}(\Gamma),H^{1}(\RE^{n}))\,,\qquad 
\DL_{\lambda}\in\B(H^{\frac12}(\Gamma),H^{1}(\Omega_{\pm}))\,
$$
\end{lemma}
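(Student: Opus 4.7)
For the single layer the plan is to exploit the defining duality \eqref{simple} beyond its baseline formulation. Since $\lambda\in\RE$, the operator $\SL_{\lambda}$ coincides with the transpose of $\gamma_{0}(-A+\lambda)^{-1}$ with respect to the $L^{2}(\RE^{n})$ scalar product and the $H^{3/2}(\Gamma)$--$H^{-3/2}(\Gamma)$ duality. By \eqref{-1+1} one has $(-A+\lambda)^{-1}\in\B(H^{-1}(\RE^{n}),H^{1}(\RE^{n}))$, and by the trace theorem with $s=1$ one has $\gamma_{0}\in\B(H^{1}(\RE^{n}),H^{1/2}(\Gamma))$. Composing these gives $\gamma_{0}(-A+\lambda)^{-1}\in\B(H^{-1}(\RE^{n}),H^{1/2}(\Gamma))$; transposing with $L^{2}(\RE^{n})$ and $L^{2}(\Gamma)$ as pivot spaces, so that $(H^{-1}(\RE^{n}))'\simeq H^{1}(\RE^{n})$ and $(H^{1/2}(\Gamma))'\simeq H^{-1/2}(\Gamma)$, yields $\SL_{\lambda}\in\B(H^{-1/2}(\Gamma),H^{1}(\RE^{n}))$.

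For the double layer the analogous direct dualisation of $\gamma_{1}(-A+\lambda)^{-1}:L^{2}(\RE^{n})\to H^{1/2}(\Gamma)$ would yield only $\DL_{\lambda}\in\B(H^{-1/2}(\Gamma),L^{2}(\RE^{n}))$, which is too weak: the co-normal trace requires strictly more regularity than the Dirichlet trace, so the duality trick cannot be iterated as before. The plan is instead to strengthen the conclusion in three steps. First, the smoothness of the kernel $g_{\lambda}(x,y)$ for $x\ne y$ implies, via \eqref{D_Gamma}, that $\DL_{\lambda}\varphi\in\C^{\infty}(\Omega_{\pm})$ and $(-A+\lambda)\DL_{\lambda}\varphi=0$ in $\Omega_{\pm}$. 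Second, the classical jump relations for the double-layer potential on a $\C^{1,1}$ hypersurface give that both one-sided Dirichlet traces $\gamma_{0}^{\pm}\DL_{\lambda}\varphi$ belong to $H^{1/2}(\Gamma)$ with continuous dependence on $\varphi\in H^{1/2}(\Gamma)$, since they differ from $\mp\tfrac{1}{2}\varphi$ by an operator in $\B(H^{1/2}(\Gamma))$. Third, the well-posedness of the interior and exterior Dirichlet problem for $-A+\lambda$ in $H^{1}(\Omega_{\pm})$ with data in $H^{1/2}(\Gamma)$, a standard consequence of the coercivity \eqref{F0}, then gives $\DL_{\lambda}\varphi\in H^{1}(\Omega_{\pm})$ with the required continuous bound.

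The main obstacle is step two, i.e.\ establishing the jump relations and the $H^{1/2}(\Gamma)$-boundedness of the associated boundary integral operator in the present variable-coefficient, $\C^{1,1}$-boundary setting; these are classical facts about boundary layer potentials for second-order elliptic operators, proved for instance in \cite[Chapter 6]{McLe}, and can be invoked directly. An alternative route that bypasses the explicit jump relations is to combine the already-established mapping property of $\SL_{\lambda}$ with Green's representation formula: for any $u\in H^{1}(\Omega_{\pm})$ solving $(-A+\lambda)u=0$ in $\Omega_{\pm}$, one has $u=\pm\DL_{\lambda}\gamma_{0}^{\pm}u\mp\SL_{\lambda}\gamma_{1}^{\pm}u$ in $\Omega_{\pm}$, where $\gamma_{1}^{\pm}u\in H^{-1/2}(\Gamma)$ is the Lions--Magenes co-normal trace; solving this identity for $\DL_{\lambda}\gamma_{0}^{\pm}u$ reads off the $H^{1}(\Omega_{\pm})$-continuity of $\DL_{\lambda}$ on $H^{1/2}(\Gamma)$.
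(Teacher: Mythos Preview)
For the single layer your argument is the paper's proof verbatim.

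For the double layer you take a genuinely different route. The paper does \emph{not} abandon the duality approach: it observes that, by \cite[Lemma~4.3]{McLe}, the co-normal trace $\gamma_{1}^{\pm}$ extends continuously to $\{u\in H^{1}(\Omega_{\pm}):Au\in (H^{1}(\Omega_{\pm}))'\}$ with values in $H^{-1/2}(\Gamma)$, satisfying $\|\gamma_{1}^{\pm}u\|_{H^{-1/2}(\Gamma)}\le c(\|u\|_{H^{1}(\Omega_{\pm})}+\|Au\|_{(H^{1}(\Omega_{\pm}))'})$. Combining this with \eqref{-1+1} gives $\gamma_{1}(-A+\lambda)^{-1}1_{\Omega_{\pm}}\in\B((H^{1}(\Omega_{\pm}))',H^{-1/2}(\Gamma))$, and transposing yields $\DL_{\lambda}\in\B(H^{1/2}(\Gamma),H^{1}(\Omega_{\pm}))$ in one line, exactly parallel to the single-layer case. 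Your dismissal of direct dualisation is thus based on using the too-crude trace $\gamma_{1}\in\B(H^{2},H^{1/2})$ rather than its Lions--Magenes extension.

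Your route (jump relations plus Dirichlet well-posedness, or the Green-representation variant) is correct but heavier: it imports the mapping property $\hat\gamma_{0}^{\pm}\DL_{\lambda}\in\B(H^{1/2}(\Gamma),H^{1/2}(\Gamma))$ and the solvability theory for the interior and exterior Dirichlet problems, whereas the paper needs only the extended trace estimate and \eqref{-1+1}. One point in your step three to make precise: to identify $\DL_{\lambda}\varphi$ with the $H^{1}$-solution of the Dirichlet problem you need uniqueness in the larger class $\dom(A^{\max}_{\pm})$ (a priori $\DL_{\lambda}\varphi$ is only known to lie in $L^{2}$ with $\hat\gamma_{0}^{\pm}$-trace in $H^{1/2}$); this holds because $\lambda\in\rho(A^{D}_{\pm})$ for $\lambda>\|V_{\text{neg}}\|_{\infty}$, but it deserves a sentence.
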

\begin{proof} By \eqref{-1+1}, \eqref{simple} and by $\gamma_{0}\in\B(H^{1}(\RE^{n}),H^{\frac12}(\Gamma))$, if $\phi\in H^{-\frac12}(\Gamma)$ then 
$$
|\langle\SL_{\lambda}\phi,u\rangle_{L^{2}(\RE^{n})}|
\le\|\phi\|_{H^{-\frac12}(\Gamma)}\|\gamma_{0}(-A+\lambda)^{-1}u\|_{H^{\frac12}(\Gamma)}\le c\,\|\phi\|_{H^{-\frac12}(\Gamma)}\|u\|_{H^{-1}(\RE^{n})}\,.
$$ 
By \cite[Lemma 4.3]{McLe} and \cite[Theorem 3.30]{McLe} one has
$$
\|\gamma_{1}(-A+\lambda)^{-1}u\|_{H^{-\frac12}(\Gamma)}\le c\left(\|(-A+\lambda)^{-1} u\|_{H^{1}(\Omega_{\pm})}+\|u\|_{H^{1}(\Omega_{\pm})'}\right)\,.
$$
Thus, by \eqref{-1+1} and \eqref{double}, if $\varphi\in H^{\frac12}(\Gamma)$ there follows
$$
|\langle\DL_{\lambda}\varphi,u\rangle_{L^{2}(\Omega_{\pm})}|
\le\|\varphi\|_{H^{\frac12}(\Gamma)}\|\gamma_{1}(-A+\lambda)^{-1}1_{\Omega_{\pm}}u\|_{H^{-\frac12}(\Gamma)}\le c\,\|\varphi\|_{H^{\frac12}(\Gamma)}\|u\|_{H^{1}(\Omega_{\pm})'}\,.
$$ 
\end{proof}
Since $g_{z}(x,y)$ is a smooth function for $x\not=y$, one has $\SL_{z}\phi\,,\,\DL_z\varphi\in \C^{\infty}(\RE^{n}\backslash\Gamma)$ and (see \cite[eqs. (6.18) and (6.19)]{McLe}) 
\be\label{ker}
\forall x\notin\Gamma\,,\qquad A\,\SL_{z}\phi(x)=z\, \SL_{z}\phi(x)\,,\quad A\,\DL_{z}\varphi(x)=z\,\DL_{z}\varphi(x)\,.
\ee
Therefore, setting
$$\SL^{\pm}_{z}\phi:=\SL_{z}\phi|\Omega_{\pm}\,,\quad \DL^{\pm}_{z}\varphi:=\DL_{z}\varphi|\Omega_{\pm}\,,
$$
one has 
\be\label{lay-pm}
\SL_{z}^{\pm}\phi\in \ker(A^{\max}_{\pm}-z)\,,
\qquad \DL_{z}^{\pm}\varphi\in \ker(A^{\max}_{\pm}-z)\,,
\ee
where 
\be
A^{\max}_{\pm}=A|\dom(A^{\max}_{\pm})\,,\quad \dom(A^{\max}_{\pm}):=\{u_{\pm}\in L^{2}(\Omega_{\pm}): Au_{\pm}\in L^{2}(\Omega_{\pm})\}\,.
\ee
In the case $\Omega$ is of class $\C^{1,1}$, by proceeding as in the proof of theorem 6.5 in \cite[Section 6, Chapter 2]{LiMa1}  (see the comment in \cite{Grisv} before Theorem 1.5.3.4), the maps 
$\gamma_{0}^{\pm}$ and $\gamma_{1}^{\pm}$ can be extended to  
$$
\hat\gamma_{0}^{\pm}\in\B(\dom(A^{\max}_{\pm}),H^{-\frac12}(\Gamma))\,,\quad
\hat\gamma_{1}^{\pm}\in\B(\dom(A^{\max}_{\pm}),H^{-\frac32}(\Gamma))
$$
(here $\dom(A_{\max}^{\pm})$ has the graph norm), which in turn provide us with the bounded maps
$$
\hat \gamma_{0}:\dom(A^{\max}_{-})\oplus \dom(A^{\max}_{+})\to H^{-\frac12}(\Gamma)\,,\quad \hat \gamma_{0}(u_{-}\oplus u_{+}):=\frac12(\hat\gamma_{0}^{+}u_{+}+\hat\gamma_{0}^{-}u_{-})\,,
$$
$$
\hat \gamma_{1}:\dom(A^{\max}_{-})\oplus \dom(A^{\max}_{+})\to H^{-\frac32}(\Gamma)\,,\quad \hat \gamma_{1}(u_{-}\oplus u_{+}):=\frac12(\hat\gamma_{1}^{+}u_{+}+\hat\gamma_{1}^{-}u_{-})
$$
and
$$
[\hat \gamma_{0}]:\dom(A^{\max}_{-})\oplus \dom(A^{\max}_{+})\to H^{-\frac12}(\Gamma)\,,\quad [\hat \gamma_{0}](u_{-}\oplus u_{+}):=\hat\gamma_{0}^{+}u_{+}-\hat\gamma_{0}^{-}u_{-}\,,
$$
$$
[\hat \gamma_{1}]:\dom(A^{\max}_{-})\oplus \dom(A^{\max}_{+})\to H^{-\frac32}(\Gamma)\,,\quad [\hat \gamma_{1}](u_{-}\oplus u_{+}):=\hat\gamma_{1}^{+}u_{+}-\hat\gamma_{1}^{-}u_{-}\,.
$$
These maps, together with \cite[Theorem 7.2]{McLe}, give, whenever $\Omega$ is of class $\C^{k,1}$, $k\ge 1$, and for any $|s|\le k$, the bounded operators
\be\label{trace1}
\hat\gamma_{0}^{\pm}\SL_{z}\in\B(H^{s-\frac12}(\Gamma),H^{s+\frac12}(\Gamma))\,,\qquad
\hat\gamma^{\pm}_{1}\SL_{z}\in\B(H^{s-\frac12}(\Gamma),H^{s-\frac12}(\Gamma))\,,
\ee
\be\label{trace2}
\hat\gamma^{\pm}_{0}\DL_{z}\in\B(H^{s+\frac12}(\Gamma),H^{s+\frac12}(\Gamma))\,,\qquad
\hat\gamma_{1}^{\pm}\DL_{z}\in\B(H^{s+\frac12}(\Gamma),H^{s-\frac12}(\Gamma))\,.
\ee
Moreover the single and double layer operators satisfy the jump relations
\be\label{jump}
[\hat\gamma_{0}]\SL_{z}\phi=[\hat\gamma_{1}]\DL_{z}\phi=0\,,\quad 
[\hat\gamma_{1}]\SL_{z}\phi=-[\hat\gamma_{0}]\DL_{z}\phi=-\phi\,.
\ee
By the first relation in \eqref{jump} one gets
$$
\hat\gamma_{0}^{\pm}\SL_{z}=\hat\gamma_{0}\SL_{z}\,,\qquad \hat\gamma_{1}^{\pm}\DL_{z}=\hat\gamma_{1}\DL_{z}\,.
$$
Notice that in \eqref{trace1}, \eqref{trace2} and \eqref{jump} the extended trace operators coincide with the usual ones whenever the range spaces are Sobolev space on $\Gamma$ of strictly positive index.\par
For any $\lambda\in\rho(A)\cap\RE$ both the bounded operators $\gamma_{0}\SL_{\lambda}$ and $\hat\gamma_{1}\DL_{\lambda}$ are symmetric w.r.t. the $H^{-\frac12}(\Gamma)$-$H^{\frac12}(\Gamma)$ pairing (see \cite[Theorems 6.15 and 6.17]{McLe}: 
$$
\forall\phi,\varphi\in H^{-\frac12}(\Gamma)\,,\quad\langle\phi,\gamma_{0}\SL_{\lambda}\varphi\rangle_{-\frac12,\frac12}=
\langle\gamma_{0}\SL_{\lambda}\phi,\varphi\rangle_{\frac12,-\frac12}\,,
$$
$$
\forall\phi,\varphi\in H^{\frac12}(\Gamma)\,,\quad\langle\phi,\hat\gamma_{1}\DL_{\lambda}\varphi\rangle_{\frac12,-\frac12}=
\langle\hat\gamma_{1}\DL_{\lambda}\phi,\varphi\rangle_{-\frac12,\frac12}\,.
$$
Moreover these operators are coercive:
\begin{lemma} Let $\lambda>\max(V_{\text{neg}})$. Then there exist $c_{0}>0$ and $c_{1}>0$ such that 
\be\label{coercive1}
\forall\phi\in H^{-\frac12}(\Gamma)\,,\quad \langle\phi,\gamma_{0}\SL_\lambda\phi\rangle_{-\frac12,\frac12}\ge c_0\, \|\phi\|^{2}_{H^{-\frac12}(\Gamma)}
\ee
and
\be\label{coercive2}
\forall\varphi\in H^{\frac12}(\Gamma)\,,\quad -\langle\hat\gamma_{1}\DL_\lambda\varphi,\varphi\rangle_{-\frac12,\frac12}\ge c_{1}\, \|\varphi\|^{2}_{H^{\frac12}(\Gamma)}\,.
\ee
\end{lemma}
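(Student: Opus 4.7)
My plan is to prove both coercivity estimates in parallel by the same Green's identity strategy: in each case turn the boundary pairing into a volume integral on $\Omega_-\cup\Omega_+$, apply the lower bound \eqref{F0} to that integral, and recover the boundary norm from the traces of the resulting auxiliary function.

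For \eqref{coercive1}, I would fix $\phi\in H^{-\frac12}(\Gamma)$ and set $u:=\SL_\lambda\phi$. By Lemma \ref{mapprop} one has $u\in H^1(\RE^n)$; by \eqref{ker} and the ellipticity of $A$ one has $Au_\pm=\lambda u_\pm\in L^2(\Omega_\pm)$, where $u_\pm:=u|\Omega_\pm$; and by \eqref{jump} one has $[\hat\gamma_0]u=0$ (consistent with $u\in H^1(\RE^n)$ via \eqref{H2}) and $[\hat\gamma_1]u=-\phi$. Applying Green's identity on $\Omega_-$ and on $\Omega_+$ separately (the outward normal on $\Omega_+$ being $-\nu$), multiplying by $\overline u$ and adding, the two boundary integrals combine into a single pairing through the jump formulas to yield
\[
F(u,u)+\lambda\,\|u\|^2_{L^2(\RE^n)}=\langle\phi,\gamma_0\SL_\lambda\phi\rangle_{-\frac12,\frac12}\,.
\]
Now \eqref{F0} and $\lambda>\|V_{\text{neg}}\|_\infty$ make the left-hand side bound $c\,\|u\|^2_{H^1(\RE^n)}$ from below. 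To close the estimate, since $Au_\pm=\lambda u_\pm\in L^2(\Omega_\pm)$, the extended conormal trace (McLean, Lemma 4.3) gives $\|\hat\gamma_1^\pm u\|_{H^{-1/2}(\Gamma)}\le C\,\|u\|_{H^1(\Omega_\pm)}$, so $\|\phi\|_{H^{-1/2}(\Gamma)}=\|[\hat\gamma_1]u\|_{H^{-1/2}(\Gamma)}\le 2C\,\|u\|_{H^1(\RE^n)}$, which combined with the previous lower bound produces \eqref{coercive1}.

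For \eqref{coercive2} the same strategy applies to $u:=\DL_\lambda\varphi$ with $\varphi\in H^{\frac12}(\Gamma)$. By Lemma \ref{mapprop} we have $u_\pm\in H^1(\Omega_\pm)$ (but $u\notin H^1(\RE^n)$ in general); by \eqref{ker} the identity $Au_\pm=\lambda u_\pm$ holds; and by \eqref{jump} one has $[\hat\gamma_1]u=0$ together with $[\hat\gamma_0]u=\varphi$. Repeating Green's identity on $\Omega_\pm$ and summing, the continuity of the conormal trace collapses the two boundary terms into
\[
\sum_{\pm}\Biggl(\int_{\Omega_\pm}\sum_{i,j}a_{ij}\partial_j u\,\overline{\partial_i u}\,dx+\int_{\Omega_\pm}(V+\lambda)|u|^2\,dx\Biggr)=-\langle\hat\gamma_1\DL_\lambda\varphi,\varphi\rangle_{-\frac12,\frac12}\,.
\]
Applying \eqref{F0} separately on $\Omega_-$ and $\Omega_+$ bounds the left-hand side below by $c\,\|u\|^2_{H^1(\RE^n\backslash\Gamma)}$, and the boundedness of $\hat\gamma_0^\pm:H^1(\Omega_\pm)\to H^{\frac12}(\Gamma)$ together with $[\hat\gamma_0]u=\varphi$ gives $\|\varphi\|_{H^{1/2}(\Gamma)}\le C\,\|u\|_{H^1(\RE^n\backslash\Gamma)}$, which combined with the previous lower bound produces \eqref{coercive2}.

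The only genuine obstacle is the careful bookkeeping of signs (outward normal on $\Omega_+$ is $-\nu$) and jump conventions, and verifying that the extended conormal trace maps into $H^{-1/2}(\Gamma)$ with an $H^1$-controlled estimate—rather than only into $H^{-3/2}(\Gamma)$—which is exactly the additional information coming from $Au_\pm\in L^2(\Omega_\pm)$ via the integration-by-parts definition of $\hat\gamma_1^\pm$. The rest of the argument is algebraic once the Green identities are set up correctly.
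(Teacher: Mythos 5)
Your proof is correct and follows essentially the same route as the paper: the "half" Green identity (McLean Theorem 4.4) applied with $u_\pm=v_\pm=\SL_\lambda^\pm\phi$ (resp.\ $\DL_\lambda^\pm\varphi$), the jump relations \eqref{jump} to identify the boundary terms, the coercivity \eqref{F0} for the lower bound, and then McLean Lemma 4.3 (resp.\ the $H^1\to H^{1/2}$ trace bound) to recover the boundary norm. The only step you compress a bit is that McLean Lemma 4.3 actually bounds $\|\hat\gamma_1^\pm u_\pm\|_{H^{-1/2}(\Gamma)}$ by $\|u_\pm\|_{H^1(\Omega_\pm)}+\|A u_\pm\|_{H^{-1}(\Omega_\pm)}$, and one must insert $A u_\pm=\lambda u_\pm$ and the embedding $H^1(\Omega_\pm)\hookrightarrow H^{-1}(\Omega_\pm)$ to reduce this to a pure $H^1$ bound, exactly as the paper does in \eqref{4.3}.
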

\begin{proof} In the case $A=\Delta$ and $\lambda=1$, the proof is given in 
\cite[Lemma 1.14 (c)] {KG} as regards $\hat\gamma_{0}\SL_{\lambda}$ and in \cite[Theorem 1.26 (e)]{KG} as regards 
$\hat\gamma_{1}\DL_{\lambda}$. Here we provide an alternative proof which adapts to our hypotheses. \par
By \cite[Lemma 4.3]{McLe} for any $u_{\pm}\in \D(A^{\max}_\pm)\cap H^1(\Omega_{\pm})$ one has $\hat\gamma^{\pm}_1
u_{\pm}\in H^{-\frac12}(\Gamma)$ and 
\be\label{4.3}
\|\hat\gamma^{\pm}_1
u_{\pm}\|^{2}_{H^{-\frac12}(\Gamma)}\le c\left(\|u_{\pm}\|^{2}_{H^{1}(\Omega_{\pm})}
+\|A^{\max}_{\pm}u_{\pm}\|^{2}_{H^{-1}(\Omega_{\pm})}\right)\,;
\ee
moreover for such a $u_{\pm }$ and for any $v_{\pm}\in H^1(\Omega_{\pm})$ the "half" Green's formula holds (see \cite[Theorem 4.4]{McLe}):
\begin{align*}
&\langle (-A_{\pm}^\max+\lambda) u_{\pm},v_{\pm}\rangle_{L^2(\Omega_{\pm})}\\
=& \sum_{1\le i,j\le n}\langle
a_{ij}\partial_{i}u_{\pm},\partial_{j} v_{\pm}\rangle_{L^2(\Omega_{\pm})} +\langle
(V+\lambda)u_{\pm},v_{\pm}\rangle_{L^2(\Omega_{\pm})}\pm\langle\hat\gamma^{\pm}_1
u_{\pm},\gamma_0^{\pm}v_{\pm}\rangle_{-\frac12,\frac12}\,.
\end{align*} 
Thus, posing $u_{\pm}=v_{\pm}=\SL^{\pm}_{\lambda}\phi$, by 
$(-A^{\pm}_\max+\lambda)u_{\pm}=0$ and by \eqref{jump}, one gets
\begin{align*}
0=&\sum_{1\le i,j\le n}\langle
a_{ij}\partial_{i}\SL_{\lambda}\phi,\partial_{j} \SL_{\lambda}\phi\rangle_{L^2(\RE^{n})} +\langle
(V+\lambda)\SL_{\lambda}\phi,\SL_{\lambda}\phi\rangle_{L^2(\RE^{n})}+\langle[\hat\gamma_1]
\SL_{\lambda}\phi,\gamma_0\SL_{\lambda}\phi\rangle_{-\frac12,\frac12}\\
\ge&c_{\circ}\|\nabla\SL_{\lambda}\phi\|^{2}_{L^2(\RE^{n})} +
(\lambda-\max(V_{\text{neg}}))\,\|\SL_{\lambda}\phi\|^{2}_{L^2(\RE^{n})}-\langle\phi,\gamma_0\SL_{z}\phi\rangle_{-\frac12,\frac12}\,.
\end{align*}
Therefore, setting $\kappa_{\circ}:=\min\{c_{\circ}, \lambda-\max(V_{\text{neg}})\}$, one obtains
$$
\langle\phi,\gamma_0\SL_{\lambda}\phi\rangle_{-\frac12,\frac12}\ge \kappa_{\circ}\,\|\SL_{\lambda}\phi\|^{2}_{H^{1}(\RE^{n})}\,.
$$
By $[\hat\gamma_1]
\SL_{\lambda}\phi=-\phi$, by \eqref{4.3} and by the continuous embedding $H^{1}(\Omega_{\pm})\hookrightarrow H^{-1}(\Omega_{\pm})$ 
one gets 
\begin{align*}
\|\phi\|^{2}_{H^{-\frac12}(\Gamma)}
\le& c\left(\|\SL_{\lambda}^{-}\phi\|^{2}_{H^{1}(\Omega_{-})}+\lambda^{2}\|\SL^{-}_{\lambda}\phi\|^{2}_{H^{-1}(\Omega_{-})}+\|\SL_{\lambda}^{+}\phi\|^{2}_{H^{1}(\Omega_{+})}+\lambda^{2}\|\SL^{+}_{\lambda}\phi\|^{2}_{H^{-1}(\Omega)}\right)\\
\le &c\,(1+\lambda^{2})\|\SL_{\lambda}\phi\|^{2}_{H^{1}(\RE^{n})} 
\end{align*}
and so \eqref{coercive1} follows by posing $c_{0}=\kappa_{\circ}(c\,(1+\lambda^{2}))^{-1}$. \par
The proof of \eqref{coercive2} proceeds along the same lines by inserting $u_{\pm}=v_{\pm}=\DL^{\pm}_{\lambda}\varphi$ in the Green's formula above: in this case one obtains
$$
-\langle\hat\gamma_1\DL_{\lambda}\varphi,\varphi\rangle_{-\frac12,\frac12}\ge \kappa_{0}\,\|\DL_{\lambda}\varphi\|^{2}_{H^{1}(\Omega_{-})\oplus H^{1}(\Omega_{+})}\,.
$$
By $[\gamma_0]
\DL_{\lambda}\varphi=\varphi$, denoting by $N_{\pm}$ the norm of  $\gamma_{0}^{\pm}\in\B(H^{1}(\Omega_{\pm}),H^{\frac12}(\Gamma))$, one obtains 
\begin{align*}
\|\varphi\|^{2}_{H^{\frac12}(\Gamma)}
\le N^{2}_{-}\|\DL_{\lambda}^{-}\varphi\|^{2}_{H^{1}(\Omega_{-})}+N^{2}_{+}\|\DL^{+}_{\lambda}\varphi\|^{2}_{H^{1}(\Omega_{+})}\le \max\{N^{2}_{-},N^{2}_{+}\} \|\DL_{\lambda}\varphi\|^{2}_{H^{1}(\Omega_{-})\oplus H^{1}(\Omega_{+})}\,,
\end{align*}
and so \eqref{coercive1} follows by posing $c_{1}=\kappa_{\circ}(\max\{N^{2}_{-},N^{2}_{+}\})^{-1}$.
\end{proof}
We conclude this section providing results about the mapping properties  of $\hat\gamma_{0}\SL_{\lambda}$ and $\hat\gamma_{1}\DL_{\lambda}$.
\begin{lemma}\label{regglob} Let $\lambda>\max(V_{\text{neg}})$ and $\Omega$ be of class $\C^{k,1}$, $k\ge 1$. Then
\be\label{regglob1}
\hat\gamma_{0}\SL_{\lambda}\phi\in H^{s+\frac12}(\Gamma)\quad\iff\quad \phi\in H^{s-\frac12}(\Gamma)\,,\quad |s|\le k-1\,,
\ee
\be\label{regglob2}
\hat\gamma_{1}\DL_{\lambda}\phi\in H^{s-\frac12}(\Gamma)\quad\iff\quad \phi\in H^{s+\frac12}(\Gamma)\,,
\quad \quad |s|\le k\,.
\ee
\end{lemma}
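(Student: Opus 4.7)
The implication $(\Leftarrow)$ in both \eqref{regglob1} and \eqref{regglob2} is immediate from the continuity statements \eqref{trace1} and \eqref{trace2}. The substance of the lemma is the $(\Rightarrow)$ direction, which I would obtain from the stronger claim that
$$
\hat\gamma_0 \SL_\lambda:H^{s-\tfrac12}(\Gamma)\to H^{s+\tfrac12}(\Gamma),\ |s|\le k-1,\qquad
\hat\gamma_1 \DL_\lambda:H^{s+\tfrac12}(\Gamma)\to H^{s-\tfrac12}(\Gamma),\ |s|\le k,
$$
are topological isomorphisms; the desired equivalences then follow by composing with the inverse.

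The base case $s=0$ is a direct consequence of the coercivity estimates \eqref{coercive1}--\eqref{coercive2}: the bounded symmetric sesquilinear forms $(\phi,\psi)\mapsto\langle\phi,\hat\gamma_0 \SL_\lambda\psi\rangle_{-\frac12,\frac12}$ on $H^{-\frac12}(\Gamma)$ and $(\varphi,\vartheta)\mapsto-\langle\hat\gamma_1 \DL_\lambda\varphi,\vartheta\rangle_{-\frac12,\frac12}$ on $H^{\frac12}(\Gamma)$ are coercive, so the Lax-Milgram lemma yields that $\hat\gamma_0 \SL_\lambda$ is an isomorphism from $H^{-\frac12}(\Gamma)$ onto $H^{\frac12}(\Gamma)$ and $-\hat\gamma_1 \DL_\lambda$ is an isomorphism from $H^{\frac12}(\Gamma)$ onto $H^{-\frac12}(\Gamma)$.

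For positive $s$ I would bootstrap by elliptic regularity in $\Omega_\pm$. Given $\phi$ with $\hat\gamma_0 \SL_\lambda\phi\in H^{s+\frac12}(\Gamma)$, set $u_\pm:=\SL_\lambda^\pm\phi$; by \eqref{ker} and the jump relations \eqref{jump}, $u_\pm$ solves $(A-\lambda)u_\pm=0$ in $\Omega_\pm$ with Dirichlet data $\hat\gamma_0^\pm u_\pm=\hat\gamma_0 \SL_\lambda\phi$. Standard Dirichlet regularity up to a $\C^{k,1}$-boundary (cf.\ \cite[Thm.~6.22]{Grubb}) then promotes $u_\pm\in H^{s+1}(\Omega_\pm)$, hence $\hat\gamma_1^\pm u_\pm\in H^{s-\frac12}(\Gamma)$, and the jump relation $\phi=-[\hat\gamma_1]u$ gives $\phi\in H^{s-\frac12}(\Gamma)$. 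The analogous argument for $v_\pm:=\DL_\lambda^\pm\varphi$ uses the Neumann datum $\hat\gamma_1^\pm v_\pm=\hat\gamma_1 \DL_\lambda\varphi$ and the relation $\varphi=[\hat\gamma_0]v$, establishing \eqref{regglob2}.

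The negative-$s$ range is finally recovered by duality: since $\hat\gamma_0 \SL_\lambda$ (resp.\ $\hat\gamma_1 \DL_\lambda$) is symmetric with respect to the $H^{-\frac12}$-$H^{\frac12}$ pairing, the Banach-space adjoint of the bounded map $H^{s-\frac12}(\Gamma)\to H^{s+\frac12}(\Gamma)$ coincides with the operator itself viewed as a map $H^{-s-\frac12}(\Gamma)\to H^{-s+\frac12}(\Gamma)$; as the adjoint of an isomorphism is an isomorphism, the iso at level $s$ transfers to level $-s$. Combining the three steps produces the claimed iso throughout the advertised range. The principal obstacle will be bookkeeping: one must verify that the Dirichlet/Neumann regularity available on a $\C^{k,1}$-domain indeed produces the asymmetric ranges $|s|\le k-1$ versus $|s|\le k$, and that the low-regularity range $s<0$ (where $\phi$ itself may lie outside $H^{-\frac12}(\Gamma)$) is correctly handled by the duality step rather than by a direct regularity argument.
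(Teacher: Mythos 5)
Your proof is correct in spirit but takes a genuinely different route from the paper's. The paper simply invokes \cite[Theorem 7.17]{McLe}, which delivers that $\hat\gamma_0\SL_\lambda$ and $\hat\gamma_1\DL_\lambda$ are Fredholm of index zero with $s$-independent kernels across the whole stated Sobolev scale; combined with the coercivity estimates \eqref{coercive1}--\eqref{coercive2} this gives injectivity, hence bijectivity, and the bounded inverse follows from the open mapping theorem. You instead build the isomorphism by hand: Lax--Milgram at $s=0$, elliptic boundary regularity for the Dirichlet (resp.\ Neumann) problem to push up to $s>0$, and duality (via the symmetry of the pairing) to push down to $s<0$. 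This avoids the heavy boundary-integral-equation machinery of McLean's Theorem 7.17 and is conceptually more self-contained, at the price of re-proving essentially what that theorem encapsulates and of fairly delicate bookkeeping: the bootstrap and duality steps must be checked to produce precisely the asymmetric ranges $|s|\le k-1$ (single layer) and $|s|\le k$ (double layer), a verification you flag but do not carry out. One concrete slip: the reference \cite[Thm.~6.22]{Grubb} is used elsewhere in the paper only for elliptic regularity on $\RE^n$; it does not cover boundary regularity for the Dirichlet/Neumann problem on a $\C^{k,1}$ domain, so you would need to cite e.g.\ \cite[Thm.~4.18, Thm.~4.21]{McLe}, \cite{LiMa1}, or \cite{Grisv} for the bootstrap step. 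Also note that for $s<0$ the hypothesis no longer places $\phi$ inside $H^{-1/2}(\Gamma)$, so the duality step is not merely cosmetic: one must justify that the symmetry relation, stated in the paper only for $\phi,\varphi\in H^{-1/2}(\Gamma)$, extends by density and by compatibility of the $L^2$-pairings to identify the Banach adjoint with $T$ itself on the lower Sobolev levels.
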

\begin{proof} By \eqref{trace1} and \eqref{trace2} we only need to prove the $\Rightarrow$  implications. By \cite[Theorem 7.17]{McLe}, both the maps $\hat\gamma_{0}\SL_{\lambda}$ and $\hat\gamma_{1}\DL_{\lambda}$ are Fredholm with index zero and $s$-independent kernel. By \eqref{coercive1} and \eqref{coercive2} such maps are injective and therefore bijective. Thus, by \eqref{trace1}, \eqref{trace2} and by the inverse mapping theorem, $(\hat\gamma_{0}\SL_{\lambda})^{-1}\in\B(H^{s+\frac12}(\Gamma),H^{s-\frac12}(\Gamma))$ and    
$(\hat\gamma_{1}\DL_{\lambda})^{-1}\in\B(H^{s-\frac12}(\Gamma),H^{s+\frac12}(\Gamma))$.
\end{proof}

\end{subsection}
\end{section}
\begin{section}{Self-adjoint realizations of singular perturbations supported on hypersurfaces.}
Let $A:H^{2}(\RE^{n})\subset L^{2}(\RE^{n})\to L^{2}(\RE^{n})$ be the elliptic, self-adjoint operator defined in Subsection \ref{layers}, i.e. 
$$
Au:=\sum_{1\le i,j\le n}\partial_{x_i}(a_{ij}\partial_{x_j}u)-Vu\,,
$$
with $a_{ij}, \partial_{x_{i}}a_{ij}, V\in\C_{b}^{\infty}(\RE^{n})$, the symmetric matrix $\underline a\equiv[a_{ij}(x)]$ satisfying \eqref{matrice}.\par  
Given $\Omega$ open, bounded and of class $\C^{1,1}$, posing 
$$
\tau:H^{2}(\RE^{n})\to H^{\frac{3}{2}}(  \Gamma)\oplus H^{\frac{1}{2}}(  \Gamma) \,,\quad
\tau u:=\gamma_{0}u\oplus \gamma_{1}u\,,
$$ 
one has  
\begin{lemma}\label{tau} The map $\tau$ is bounded, surjective and $\ker(\tau)$ is dense in $L^{2}(\RE^{n})$.
\end{lemma}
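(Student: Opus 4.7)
The plan is to handle the three assertions separately, using the jump characterizations \eqref{H3} of $H^{2}(\RE^{n})$ to stitch things together and the standard one-sided trace theory recalled around \eqref{Trace_Gamma_plusmin_est}.

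\textbf{Boundedness.} For $u\in H^{2}(\RE^{n})$ the restrictions $u|\Omega_{\pm}$ lie in $H^{2}(\Omega_{\pm})$ with $\|u|\Omega_{\pm}\|_{H^{2}(\Omega_{\pm})}\le\|u\|_{H^{2}(\RE^{n})}$, so \eqref{Trace_Gamma_plusmin_est} (with $s=2$, $k=1$) yields $\gamma_{0}^{\pm}u\in H^{\frac32}(\Gamma)$. Since the coefficients $a_{ij}$ are smooth and bounded on $\RE^{n}$, multiplication by $a_{ij}$ is a bounded map on $H^{1}(\Omega_{\pm})$, hence $a_{ij}\partial_{x_{j}}u\in H^{1}(\Omega_{\pm})$, and \eqref{Trace_Gamma_plusmin_est} (with $s=1$) gives $\gamma_{1}^{\pm}u\in H^{\frac12}(\Gamma)$. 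Moreover $u\in H^{2}(\RE^{n})\subseteq H^{s}(\RE^{n})$ for $s\in(\frac32,\frac52)$, so by \eqref{H3} one has $[\gamma_{0}]u=[\gamma_{1}]u=0$; equivalently $\gamma_{0}^{+}u=\gamma_{0}^{-}u$ and $\gamma_{1}^{+}u=\gamma_{1}^{-}u$, so $\gamma_{0}u$ and $\gamma_{1}u$ are unambiguously defined, with the desired continuous dependence on $u$.

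\textbf{Surjectivity.} Since $\Omega$ is of class $\C^{1,1}$, the classical one-sided lifting result (see e.g. Grisvard, Theorem 1.5.1.2, or McLean, Theorem 3.37 together with a right inverse) provides bounded operators
\[
E_{\pm}:H^{\frac32}(\Gamma)\oplus H^{\frac12}(\Gamma)\to H^{2}(\Omega_{\pm})
\]
such that $\gamma_{0}^{\pm}E_{\pm}(\varphi_{0},\varphi_{1})=\varphi_{0}$ and $\gamma_{1}^{\pm}E_{\pm}(\varphi_{0},\varphi_{1})=\varphi_{1}$. Given $(\varphi_{0},\varphi_{1})\in H^{\frac32}(\Gamma)\oplus H^{\frac12}(\Gamma)$, define $u_{\pm}:=E_{\pm}(\varphi_{0},\varphi_{1})$ and
\[
u:=u_{-}\chi_{\Omega_{-}}+u_{+}\chi_{\Omega_{+}}\in H^{2}(\RE^{n}\backslash\Gamma).
\]
By construction $[\gamma_{0}]u=\gamma_{0}^{+}u_{+}-\gamma_{0}^{-}u_{-}=0$ and $[\gamma_{1}]u=0$; then \eqref{H3} (applied with any $s\in(\frac32,\frac52)$) gives $u\in H^{2}(\RE^{n})$ with $\tau u=(\varphi_{0},\varphi_{1})$.

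\textbf{Density of $\ker(\tau)$.} The inclusion $\C^{\infty}_{c}(\RE^{n}\backslash\Gamma)\subseteq\ker(\tau)$ is immediate: if $\varphi\in\C^{\infty}_{c}$ vanishes in a neighbourhood of $\Gamma$, then both $\gamma_{0}\varphi$ and $\gamma_{1}\varphi$ are zero. Since $\Gamma$ has $n$-dimensional Lebesgue measure zero, $\C^{\infty}_{c}(\RE^{n}\backslash\Gamma)$ is dense in $L^{2}(\RE^{n})$, which gives the density of $\ker(\tau)$.

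The only nonroutine ingredient is the surjectivity of the one-sided trace pair $\tau^{\pm}=(\gamma_{0}^{\pm},\gamma_{1}^{\pm})$ on $H^{2}(\Omega_{\pm})$; this is classical for $\C^{1,1}$ domains, so the argument reduces to a bookkeeping exercise combining \eqref{Trace_Gamma_plusmin_est} with the zero-jump characterization \eqref{H3}.
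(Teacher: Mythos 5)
Your proof is correct, and boundedness and density are handled exactly as in the paper (both use the one-sided trace estimates \eqref{Trace_Gamma_plusmin_est} and the density of $\C^{\infty}_{\comp}(\RE^{n}\backslash\Gamma)$). The surjectivity argument, however, takes a genuinely different route. You construct $u_{\pm}\in H^{2}(\Omega_{\pm})$ with $\tau^{\pm}u_{\pm}=(\varphi_{0},\varphi_{1})$, glue them into $u\in H^{2}(\RE^{n}\backslash\Gamma)$, and invoke the zero-jump characterization \eqref{H3} at $s=2$ to conclude $u\in H^{2}(\RE^{n})$. The paper instead takes arbitrary $H^{2}(\RE^{n})$ extensions $\tilde u_{\pm}$ of $u_{\pm}$ and observes that $\tau\bigl(\tfrac12(\tilde u_{-}+\tilde u_{+})\bigr)=(\varphi_{0},\varphi_{1})$, because a global $H^{2}$ function has equal one-sided traces, so each $\tilde u_{\pm}$ contributes $\varphi_{0}\oplus\varphi_{1}$ on the nose. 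The two arguments rest on the same key ingredient (surjectivity of $\tau^{\pm}$ on $H^{2}(\Omega_{\pm})$ for a $\C^{1,1}$ boundary, as in Grisvard), but yours additionally relies on the Agranovich characterization \eqref{H3} of $H^{2}(\RE^{n})$ as the zero-jump subspace, whereas the paper's averaging trick only needs the existence of $H^{2}(\RE^{n})$ extensions of $H^{2}(\Omega_{\pm})$ functions. Your version is arguably more in the spirit of the jump-condition machinery developed in Section 3, while the paper's is self-contained modulo the classical extension theorem. Both are valid; just note that your argument requires the exterior lifting $u_{+}$ to lie in $H^{2}(\Omega_{+})$ (i.e.\ genuinely square-integrable at infinity), which is standard since the lifting can be cut off near $\Gamma$.
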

\begin{proof} Let $u\in H^{2}(\RE^{n})$. Then, by $\|u\|^{2}_{H^{2}(\RE^{n})}=\|u|\Omega_{-}\|^{2}_{H^{2}(\Omega_{-})}+\|u|\Omega_{+}\|^{2}_{H^{2}(\Omega_{+})}$, $\tau$ is bounded since both $\gamma_{0}^{\pm}$ and $\gamma^{\pm}_{1}$ are bounded; $\ker(\tau)$ is dense since it contains the dense set $\C^{\infty}_{\comp}(\RE^{n}\backslash\Gamma)$. By \cite[Remark 2.5.1.2]{Grisv}, both 
$$
\tau^{\pm}: H^{2}(\Omega_{\pm})\to H^{\frac{3}{2}}(  \Gamma)\oplus H^{\frac{1}{2}}(  \Gamma) \,,\quad
\tau^{\pm} u_{\pm}:=\gamma_{0}^{\pm}u_{\pm}\oplus \gamma_{1}^{\pm}u_{\pm}
$$
are surjective. Given $\phi\oplus\varphi\in H^{\frac{3}{2}}(  \Gamma)\oplus H^{\frac{1}{2}}(  \Gamma)$, let $u_{\pm}\in H^{2}(\Omega_{\pm})$ such that $\tau^{\pm}u_{\pm}=\phi\oplus\varphi$. Let $\t u_{\pm}\in H^{2}(\RE^{n})$ be an extensions of $u_{\pm}$. Then $\tau\left(\frac12\,(\t u_{-}+\t u_{+})\right)=\phi\oplus\varphi$ and so $\tau$ is surjective.
\end{proof}
By the definition of the map $\tau$ we have that 
$$
S:=A|\ker(\tau)=A^{\min}_{-}\oplus A^{\min}_{+}\,,
$$
where
$$
A_{\pm}^{\min}:=A|H^{2}_{0}(\Omega_{\pm})\,,\quad 
H^{2}_{0}(\Omega_{\pm}):=\{u_{\pm}\in H^{2}(\Omega_{\pm}):\gamma_{0}^{\pm}u_{\pm}=\gamma_{1}^{\pm}u_{\pm}=0\}\,.
$$
Since it is known that $(A^{\min}_{\pm})^{*}=A^{\max}_{\pm}$, one obtains 
\be\label{S*}
\dom(S^*)=\dom(A^{\max}_{-})\oplus \dom(A^{\max}_{+})\,,\qquad 
S^*=A^{\max}_{-}\oplus A^{\max}_{+}\,.
\ee
Thus we have the well-defined bounded (w.r.t. the graph norm in $\dom(S^*)$) maps 
$$
\hat\gamma_{0},\,[\hat\gamma_{0}]\in \B(\dom(S^{*}),H^{-\frac12}(\Gamma))\,,\quad
\hat\gamma_{1},\,[\hat\gamma_{1}]\in \B(\dom(S^{*}),H^{-\frac32}(\Gamma))\,.
$$
By Lemma \ref{tau} we can apply the results of Section \ref{Pre1} to $A$ and so find all self-adjoint extensions of 
the closed symmetric operator $A|\ker(\tau)=A^{\min}_{-}\oplus A^{\min}_{+}$. 
To this end we need to determine the operator $G_{z}$ and $M_{z}$ (see definitions \eqref{gz} and \eqref{weyl}). By \eqref{simple} and \eqref{double} this is immediate:
\be\label{Gz}
G_{z}: H^{-\frac32}(\Gamma)\oplus H^{-\frac12}(\Gamma)\to L^{2}(\RE^{n})\,,\quad G_{z}(\phi\oplus\varphi):=\SL_{z}\phi+\DL_{z}\varphi
\ee
and 
$$
M_{z}:H^{-\frac32}(\Gamma)\oplus H^{-\frac12}(\Gamma)\to 
H^{\frac32}(\Gamma)\oplus H^{\frac12}(\Gamma)\,,
$$
\be\label{Mz}
M_{z}:=\left[\,\begin{matrix}\gamma_{0}(\SL  -\SL_{z})&\gamma_{0}(\DL  -\DL_{z})\\
\gamma_{1}(\SL  -\SL_{z})&\gamma_{1}(\DL  -\DL_{z})
\end{matrix}\,\right]\,,
\ee
where  
$$\SL  :=\SL_{\lambda_{\circ}}\,,\qquad\DL  :=\DL_{\lambda_{\circ}}\,,\qquad \lambda_{\circ}>\max(V_{\text{neg}})\,.$$   
Next lemma provides a representation of $A^{\max}_{-}\oplus A^{\max}_{+}$ and of its domain. Before giving the precise statement we need some definitions. The distribution $\delta_{\Gamma}\in\mathscr{D}'(\RE^{n})$ is defined as usual by 
$$
\forall u\in\C^{\infty}_{\comp}(\RE^{n})\,,\quad (\delta_{\Gamma},u)=
\int_{\Gamma}u(x)\,d\sigma_{\Gamma}(x)\,.
$$
Given $f\in H^{-s}(\Gamma)$, we then define $f\delta_{\Gamma}\in\mathscr{D}'(\RE^{n})$ and $f\partial_{\underline a}\delta_{\Gamma}\in\mathscr{D}'(\RE^{n})$ by
$$
\forall u\in\C^{\infty}_{\comp}(\RE^{n})\,,\quad (f\delta_{\Gamma},u)=\langle \bar f,u|\Gamma\rangle_{-s,s}\,
$$
and 
$$
\forall u\in\C^{\infty}_{\comp}(\RE^{n})\,,\quad(f\partial_{\underline  a}\delta_{\Gamma},u):=-\sum_{1\le i,j\le n}(f\nu_{i}\delta_{\Gamma},a_{ij}\partial_{x_{j}}u)
\,.
$$
Notice that if $\Omega$ is of class $\C^{1,1}$ then $\nu$ is Lipschitz continuous and so the product $f\nu$ is a well-defined vector in $H^{-r}(\Gamma)$, $r=
\min\{1,s\}$. 
\begin{lemma}\label{L2} 
\begin{align*}
\dom(A^{\max}_{-})\oplus \dom(A^{\max}_{+})
=&
\{u=u_{\circ}+\SL  \phi+\DL  \varphi\,,\ u_{\circ}\in H^{2}(\RE^{n})\,,\ \phi\oplus\varphi\in H^{-\frac32}(\Gamma)\oplus H^{-\frac12}(\Gamma)\}\\
\equiv&\{u=u_{\circ}-\SL  [\hat\gamma_{1}]u+\DL  [\hat\gamma_{0}]u\,,\ u_{\circ}\in H^{2}(\RE^{n})\}\,,
\end{align*}
and
\be\label{aggiunto}
(A^{\max}_{-}\oplus A^{\max}_{+})u=
Au-[\hat\gamma_{1}]u\,
\delta_\Gamma-[\hat\gamma_{0}]u\,\partial_{\underline{a}} \delta_{\Gamma}\,.
\ee
\end{lemma}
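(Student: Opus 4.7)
The plan is to first establish the distributional formula \eqref{aggiunto} by Green's identity, and then use it, together with the jump relations \eqref{jump}, to build an explicit $H^{2}$-representative and close the domain decomposition via the elliptic invertibility \eqref{m,m+2}.

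To obtain \eqref{aggiunto}, take $\chi\in\C^{\infty}_{\comp}(\RE^{n})$ and compute $(Au,\chi)=\sum_{\pm}\int_{\Omega_{\pm}}u_{\pm}A\chi\,dx$. Integrating by parts twice on each $\Omega_{\pm}$ --- justified for $u_{\pm}\in\dom(A^{\max}_{\pm})$ by approximation in the graph norm and the continuity of $\hat\gamma^{\pm}_{0,1}$, together with the formal self-adjointness of $A$ ($a_{ij}=a_{ji}$, $V$ real) --- yields the two-sided Green identity
\[
\int_{\Omega_{\pm}}\!\!\big(u_{\pm}A\chi-(Au_{\pm})\chi\big)\,dx
=\pm\Big(\langle\hat\gamma^{\pm}_{0}u_{\pm},\gamma_{1}\chi\rangle_{-\frac12,\frac12}
-\langle\hat\gamma^{\pm}_{1}u_{\pm},\gamma_{0}\chi\rangle_{-\frac32,\frac32}\Big),
\]
the sign reflecting that $\nu$ is outward from $\Omega_{-}$. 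Summing over $\pm$ collapses the right-hand side to $-\langle[\hat\gamma_{0}]u,\gamma_{1}\chi\rangle+\langle[\hat\gamma_{1}]u,\gamma_{0}\chi\rangle$, and invoking the definitions of $f\delta_{\Gamma}$ and $f\partial_{\underline a}\delta_{\Gamma}$ rewrites these boundary pairings as $([\hat\gamma_{0}]u\,\partial_{\underline a}\delta_{\Gamma},\chi)$ and $([\hat\gamma_{1}]u\,\delta_{\Gamma},\chi)$, respectively, giving \eqref{aggiunto}.

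The easy inclusion in the domain identity is immediate: if $u=u_{\circ}+\SL\phi+\DL\varphi$ with $u_{\circ}\in H^{2}(\RE^{n})$ and $\phi\oplus\varphi\in H^{-\frac32}(\Gamma)\oplus H^{-\frac12}(\Gamma)$, then $u\in L^{2}(\RE^{n})$ and by \eqref{ker} one has $Au|_{\Omega_{\pm}}=Au_{\circ}|_{\Omega_{\pm}}+\lambda_{\circ}(\SL\phi+\DL\varphi)|_{\Omega_{\pm}}\in L^{2}(\Omega_{\pm})$. For the converse, given $u\in\dom(A^{\max}_{-})\oplus\dom(A^{\max}_{+})$, I set
\[
u_{\circ}:=u+\SL[\hat\gamma_{1}]u-\DL[\hat\gamma_{0}]u\in L^{2}(\RE^{n}).
\]
The jump relations \eqref{jump} force $[\hat\gamma_{0}]u_{\circ}=[\hat\gamma_{1}]u_{\circ}=0$, while applying \eqref{aggiunto} separately to $u$, to $\SL[\hat\gamma_{1}]u$ and to $\DL[\hat\gamma_{0}]u$ (the latter two having $A^{\max}(\SL\phi)=\lambda_{\circ}\SL\phi$ and $A^{\max}(\DL\varphi)=\lambda_{\circ}\DL\varphi$ by \eqref{ker}) produces a complete cancellation of all $\delta_{\Gamma}$ and $\partial_{\underline a}\delta_{\Gamma}$ contributions, leaving
\[
(-A+\lambda_{\circ})u_{\circ}=\lambda_{\circ}u-A^{\max}u\in L^{2}(\RE^{n})\quad\text{in }\mathscr{D}'(\RE^{n}).
\]

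It remains to upgrade $u_{\circ}$ from $L^{2}(\RE^{n})$ to $H^{2}(\RE^{n})$. By \eqref{m,m+2}, $(-A+\lambda_{\circ})\colon H^{2}(\RE^{n})\to L^{2}(\RE^{n})$ is a bounded bijection, so $v:=R_{\lambda_{\circ}}\big((-A+\lambda_{\circ})u_{\circ}\big)\in H^{2}(\RE^{n})$ and $w:=u_{\circ}-v\in L^{2}(\RE^{n})$ satisfies $(-A+\lambda_{\circ})w=0$ in $\mathscr{D}'(\RE^{n})$. Pairing with $(-A+\lambda_{\circ})\chi$ for arbitrary $\chi\in H^{2}(\RE^{n})$ and using formal self-adjointness of $A$ gives $\langle w,(-A+\lambda_{\circ})\chi\rangle_{L^{2}}=0$; surjectivity of $(-A+\lambda_{\circ})\colon H^{2}\to L^{2}$ then forces $w=0$, hence $u_{\circ}\in H^{2}(\RE^{n})$. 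The anticipated main obstacle is precisely this last step, namely verifying that the $L^{2}$ distributional kernel of $(-A+\lambda_{\circ})$ on $\RE^{n}$ is trivial; it relies decisively on the spectral gap $\lambda_{\circ}\in\rho(A)$ together with the fact that $A$ with domain $H^{2}(\RE^{n})$ is already the self-adjoint realization of the differential expression.
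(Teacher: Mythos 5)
Your proof is correct in substance, modulo a sign slip, and it takes a genuinely different route from the paper's. The paper derives the domain characterization in one line from the abstract Lemma~\ref{adj} (established via \cite{P04}) combined with the identification $S^*=A^{\max}_{-}\oplus A^{\max}_{+}$, then points to \cite[Theorem 3.1]{DPP} for the distributional identity~\eqref{aggiunto}. You instead prove~\eqref{aggiunto} directly by Green's identity on each side of $\Gamma$ and then use it, together with the jump relations~\eqref{jump} and elliptic invertibility, to close the domain equality. Your approach is more self-contained: it does not invoke the abstract extension machinery, the known fact $(A^{\min}_{\pm})^{*}=A^{\max}_{\pm}$, or the external reference to \cite{DPP}. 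The trade-off is length, plus a density argument (graph-norm approximation of $\dom(A^{\max}_{\pm})$ by smooth functions) that you sketch rather than fully spell out, though this is the same density that already underlies the very definition of $\hat\gamma_{0}^{\pm},\hat\gamma_{1}^{\pm}$ on $\dom(A^{\max}_{\pm})$, so it is not a gap in ideas.

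There is, however, a sign error in your piecewise Green identity: since $\nu$ is outward from $\Omega_{-}$, the boundary term for $\Omega_{-}$ must carry a plus sign and the one for $\Omega_{+}$ a minus sign, so the correct statement is
\[
\int_{\Omega_{\pm}}\big(u_{\pm}A\chi-(Au_{\pm})\chi\big)\,dx
=\mp\Big(\langle\hat\gamma^{\pm}_{0}u_{\pm},\gamma_{1}\chi\rangle_{-\frac12,\frac12}
-\langle\hat\gamma^{\pm}_{1}u_{\pm},\gamma_{0}\chi\rangle_{-\frac32,\frac32}\Big),
\]
i.e.\ $\mp$ where you wrote $\pm$. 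The summed expression you then state, $-\langle[\hat\gamma_{0}]u,\gamma_{1}\chi\rangle+\langle[\hat\gamma_{1}]u,\gamma_{0}\chi\rangle$, is actually the one that follows from the corrected $\mp$ version (your displayed $\pm$ would give the opposite sign), so everything downstream is fine; just fix the typo. The final elliptic-regularity step, where you pair $w\in L^{2}$ satisfying $(-A+\lambda_{\circ})w=0$ distributionally against $(-A+\lambda_{\circ})\chi$ for $\chi\in H^{2}(\RE^{n})$, and then use the surjectivity $(-A+\lambda_{\circ})\colon H^{2}\to L^{2}$ from~\eqref{m,m+2} to conclude $w=0$, is correct; the extension from test functions to all of $H^{2}$ is justified by density of $\C^{\infty}_{\comp}(\RE^{n})$ in $H^{2}(\RE^{n})$ and continuity of $(-A+\lambda_{\circ})$ from $H^{2}$ to $L^{2}$.
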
 
\begin{proof}
By \eqref{S*}, Lemma \ref{adj} and \eqref{Gz} one has 
\begin{align*}
\dom(A^{\max}_{-})\oplus \dom(A^{\max}_{+})
=
\{u=u_{\circ}+G (\phi\oplus\varphi)\,,\ u_{\circ}\in H^{2}(\RE^{n})\,,\ \phi\oplus\varphi\in H^{-\frac32}(\Gamma)\oplus H^{-\frac12}(\Gamma)\}\,.
\end{align*}
Since $[\gamma_{0}]u_{\circ}=[\gamma_{1}]u_{\circ}=0$, the proof of the first statement follows by using the jump relations \eqref{jump}. As regards the second statement,  in the case $A=\Delta$ the proof has been given in \cite[Theorem 3.1]{DPP} (be aware that there the jumps of the trace maps have been defined with opposite signs).  The proof in the more general case discussed here proceeds along the same lines and is left to the reader. 
\end{proof}
\begin{remark} 
By $(A^{\max}_{-}\oplus A^{\max}_{+})G_{z}(\phi\oplus\varphi)=zG_{z}(\phi\oplus\varphi)$ (see \eqref{bvp}) and by \eqref{aggiunto}, one has $AG_{z}(\phi\oplus\varphi)(x)=zG_{z}(\phi\oplus\varphi)(x)$ for any $x\in \RE^{n}\backslash\Gamma$. Hence, by elliptic regularity (see e.g. \cite[Theorem 6.4]{McLe}), setting $\Gamma^{\epsilon}:=\{x\in\RE^{n}:\text{\rm dist}(x,\Gamma)\le \epsilon\}$, $\epsilon>0$, one has 
\be\label{reg2}
\forall (\phi\oplus\varphi)\in H^{-\frac32}(\Gamma)\oplus H^{-\frac12}(\Gamma)\,,\qquad G_{z}(\phi\oplus\varphi)\in \bigcap_{s>0}H^{s}(\RE^{n}\backslash\Gamma^{\epsilon})\,.
\ee  
Similarly, if $\supp(\phi)\subseteq\overline\Sigma$ and $\supp(\varphi)\subseteq\overline\Sigma$, $\Sigma\subset\Gamma$ relatively open, then
by \eqref{aggiunto}, one has $AG_{z}(\phi\oplus\varphi)(x)=zG_{z}(\phi\oplus\varphi)(x)$ for any $x\in \RE^{n}\backslash\overline\Sigma$. Hence, by \cite[Theorem 6.4]{McLe}, setting $\Sigma^{\epsilon}:=\{x\in\RE^{n}:\text{\rm dist}(x,\Sigma)\le \epsilon\}$, $\epsilon>0$, 
\be\label{reg3}
\forall (\phi\oplus\varphi)\in H_{\overline\Sigma}^{-\frac32}(\Gamma)\oplus H_{\overline\Sigma}^{-\frac12}(\Gamma)\,,\qquad G_{z}(\phi\oplus\varphi)\in \bigcap_{s>0}H^{s}(\RE^{n}\backslash\Sigma^{\epsilon})\,.
\ee  
\end{remark}
From now on  $$\Pi:H^{\frac32}(\Gamma)\oplus H^{\frac12}(\Gamma)\to H^{\frac32}(\Gamma)\oplus H^{\frac12}(\Gamma)\,,$$ denotes an orthogonal projector, 
$$\Pi':H^{-\frac32}(\Gamma)\oplus H^{-\frac12}(\Gamma)\to H^{-\frac32}(\Gamma)\oplus H^{-\frac12}(\Gamma)\,,$$
denotes the orthogonal projector defined as the dual of $\Pi$, so that $\Pi'=(\Lambda^{3}\oplus\Lambda)\Pi(\Lambda^{-3}\oplus\Lambda^{-1})$, and $$\Theta:\dom(\Theta)\subseteq\ran(\Pi')\to\ran(\Pi)$$ denotes a self-adjoint operator. By Theorem \ref{ext} and Lemma \ref{L2}, one readily obtains all self-adjoint extension of $S$:
\begin{theorem}\label{T1} Any self-adjoint extension of $A^{\min}_{-}\oplus A^{\min}_{+}$ is of the kind $A_{\Pi,\Theta}$, where 
$$A_{\Pi,\Theta}:\dom(A_{\Pi,\Theta})\subseteq L^{2}(\RE^{n})\to L^{2}(\RE^{n})\,,\qquad A_{\Pi,\Theta}:=(A^{\max}_{-}\oplus A^{\max}_{+})|\dom(A_{\Pi,\Theta})\,,
$$ 
\begin{align*}
&\dom(A_{\Pi,\Theta}):=\{u\in \dom(A^{\max}_{-})\oplus\dom(A^{\max}_{+}):
(-[\hat\gamma_{1}]u)\oplus [\hat\gamma_{0}]u\in\D(\Theta)\,,\\
& \Pi (
\gamma_{0}(u+\SL  [\hat\gamma_{1}]u-\DL  [\hat\gamma_{0}]u)\oplus \gamma_{1}(u+\SL  [\hat\gamma_{1}]u-\DL  [\hat\gamma_{0}]u))=\Theta ((-[\hat\gamma_{1}]u)\oplus [\hat\gamma_{0}]u)\}\,.
\end{align*}
The set $$\text{$Z_{\Pi,\Theta}:=\{z\in\rho(A): \Theta+\Pi  M_{z}\Pi'\ \text{has a bounded inverse}\}$}$$ is not void; in particular $\CO\backslash\RE\subseteq Z_{\Pi,\Theta}\subseteq \rho(A_{\Pi,\Theta})$ and for any $z\in Z_{\Pi,\Theta}$ the resolvent of $A_{\Pi,\Theta}$ is given by 
\begin{equation}\label{krein1}
(-A_{\Pi,\Theta}+z)^{-1}u
=
(-A+z)^{-1}u+G_{z}\Pi' (\Theta+\Pi M_{z}\Pi')^{-1}\Pi 
(\gamma_{0}((-A+z)^{-1}u)\oplus\gamma_{1}((-A+z)^{-1}u))\,,
\end{equation}
where $G_{z}$ and $M_{z}$ are defined in \eqref{Gz} and \eqref{Mz} respectively.
\end{theorem}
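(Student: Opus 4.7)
The plan is to specialize the abstract Theorem \ref{ext} to the concrete setup $\H=L^{2}(\RE^{n})$, free operator $A:H^{2}(\RE^{n})\subset L^{2}(\RE^{n})\to L^{2}(\RE^{n})$ (self-adjoint by the discussion around \eqref{F0}), auxiliary space $\fh=H^{3/2}(\Gamma)\oplus H^{1/2}(\Gamma)$, and trace map $\tau u:=\gamma_{0}u\oplus\gamma_{1}u$. The three hypotheses needed for Section \ref{Pre1}, namely boundedness, surjectivity and density of $\ker(\tau)$, are exactly what Lemma \ref{tau} provides; that lemma together with the comment immediately after it also identifies $S=A|\ker(\tau)=A^{\min}_{-}\oplus A^{\min}_{+}$. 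Thus Theorem \ref{ext} delivers, in one stroke, the parametrization of all self-adjoint extensions by pairs $(\Pi,\Theta)$, the inclusions $\CO\setminus\RE\subseteq Z_{\Pi,\Theta}\subseteq\rho(A_{\Pi,\Theta})$, and the Kre\u\i n formula \eqref{res}.

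Next I would match the abstract operators $G_{z}$ and $M_{z}$ with their concrete counterparts in \eqref{Gz}--\eqref{Mz}. The defining identity \eqref{gz}, evaluated on $\psi=\phi\oplus\varphi\in H^{-3/2}(\Gamma)\oplus H^{-1/2}(\Gamma)$, reduces to a sum of the pairings in \eqref{simple} and \eqref{double}, whence $G_{z}(\phi\oplus\varphi)=\SL_{z}\phi+\DL_{z}\varphi$; substituting this into \eqref{weyl} produces the matrix representation \eqref{Mz}. Since $\tau R_{z}u=\gamma_{0}((-A+z)^{-1}u)\oplus\gamma_{1}((-A+z)^{-1}u)$, plugging these identifications into the abstract resolvent formula \eqref{res} gives exactly \eqref{krein1}.

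The last step is to rewrite the abstract domain description of Theorem \ref{ext}, namely $\{u=u_\circ+G\psi:\ u_\circ\in H^{2}(\RE^{n}),\ \psi\in\dom(\Theta),\ \Pi\tau u_\circ=\Theta\psi\}$, in the concrete form stated. By Lemma \ref{L2} every $u\in\dom(A^{\max}_{-})\oplus\dom(A^{\max}_{+})$ admits a decomposition $u=u_\circ+\SL\phi+\DL\varphi$ with $u_\circ\in H^{2}(\RE^{n})$ and, via the jump relations \eqref{jump}, $\phi=-[\hat\gamma_{1}]u$, $\varphi=[\hat\gamma_{0}]u$. Hence $\psi=(-[\hat\gamma_{1}]u)\oplus[\hat\gamma_{0}]u$ and $u_\circ=u+\SL[\hat\gamma_{1}]u-\DL[\hat\gamma_{0}]u$; applying $\tau=\gamma_{0}\oplus\gamma_{1}$ to this $u_\circ$ reproduces verbatim the boundary condition written in the theorem. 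The action $A_{\Pi,\Theta}u=Au_\circ+\lambda_{\circ}G\psi$ of Theorem \ref{ext} then coincides with $(A^{\max}_{-}\oplus A^{\max}_{+})u$ by \eqref{aggiunto}.

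The main technical point is the uniqueness of the Lemma \ref{L2} decomposition, needed to read off $u_\circ$ and $\psi$ unambiguously from $u$: this rests on \eqref{zero} (i.e.\ $\ran(G)\cap\dom(A)=\{0\}$) together with the injectivity of $G$ (the adjoint of the surjective map $\tau R_{\bar\lambda_{\circ}}$). Once this is secured, everything else is a bookkeeping translation between the abstract language of Section \ref{Pre1} and the boundary-layer language developed in Section \ref{layers}.
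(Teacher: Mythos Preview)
Your proposal is correct and mirrors the paper's approach exactly: the paper does not give a separate proof of Theorem~\ref{T1} but instead derives it as an immediate specialization of Theorem~\ref{ext}, after Lemma~\ref{tau} verifies the abstract hypotheses, equations \eqref{Gz}--\eqref{Mz} identify $G_z$ and $M_z$ via \eqref{simple}--\eqref{double}, and Lemma~\ref{L2} (with the jump relations \eqref{jump}) recasts the abstract domain condition $\Pi\tau u_\circ=\Theta\psi$ in terms of $[\hat\gamma_0]u$, $[\hat\gamma_1]u$. Your final paragraph on uniqueness of the decomposition is a helpful clarification that the paper leaves implicit in the second equality of Lemma~\ref{L2}.
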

\begin{remark} Let us notice that the $\Pi'$'s appearing in formula \eqref{krein1} act there as the inclusion map $\Pi':\ran(\Pi')\to H^{-3/2}(\Gamma)\oplus H^{-1/2}(\Gamma)$. This means that one does not need to know $\Pi'$ explicitly: it suffices to know the subspace $\ran(\Pi')=\ran(\Pi)'$.
\end{remark}
\begin{remark} Let us notice that the self-adjoint extension $A$ corresponds to the choice $\Pi=0$. By Lemma \ref{L2}, the choice $\Pi=\Pi_{1}\oplus 0$ gives $[\hat\gamma_{0}]u=0$ and so produces self-adjoint extension (''$\delta$-type'' interactions) of the kind $Au-[\hat\gamma_{1}]u\,\delta_{\Gamma}$ while the choice $\Pi=0\oplus \Pi_{2}$ gives $[\hat\gamma_{1}]u=0$ and so produces self-adjoint extension (''$\delta'$-type'' interactions) of the kind $Au-[\hat\gamma_{0}]u\,\partial_{\underline a}\delta_{\Gamma}$; different $\Pi$'s  give combinations of $\delta$ and $\delta'$ interactions.
\end{remark}
\begin{remark}\label{regularity}
Let $u=u_{\circ}+\SL  \phi+\DL  \varphi$ be in 
$\D(A_{\Pi,\Theta})$ and  suppose that 
\be\label{reg1}
\dom(\Theta)\subseteq H^{s_{1}}(\Gamma)\oplus H^{s_{2}}(\Gamma)\,,\quad s_{1}>-\frac32\,,\ s_{2}>-\frac12\,.
\ee
Then, by \eqref{reg2} and the mapping properties of single and double layer operators (see \cite[Corollary 6.14]{McLe}),    
 $$\SL  \phi+\DL  \varphi
\in 
H^{s}(\Omega_{-})\oplus H^{s}(\Omega_{+})\,,\quad s=\min\left\{s_{1}+\frac32,s_{2}+\frac12\right\}
$$ 
and so  
$$\dom(A_{\Pi,\Theta})\subseteq H^{s_{\circ}}(\RE^{n}\backslash\Gamma)\,,\quad 
s_{\circ}=\min\{2,s\}\,.
$$
In particular, $\dom(A_{\Pi,\Theta})\subseteq H^{2}(\RE^{n}\backslash\Gamma)$ whenever $s\ge 2$. The relation \eqref{reg3} suggests that such a kind of regularity could hold on a larger set whenever $\supp(\phi)\cup\supp(\varphi)\subseteq\overline\Sigma$, $\Sigma\subset\Gamma$. However, as the next result shows, one has to exclude a neighborhood of the interface $\partial\Sigma$: 
\end{remark}
\begin{lemma}\label{regularity2} Let 
\begin{align*}
D_{1,1}:=&\{\phi\in H^{-\frac32}_{\overline\Sigma}(\Gamma):\phi|\Sigma \in H^{\frac12}(\Sigma)\}\,,
\qquad\qquad\quad
D_{2,1}:=
\{\varphi\in H^{-\frac12}_{\overline\Sigma}(\Gamma):\varphi|\Sigma \in H^{\frac32}(\Sigma)\}\,,
\\
D_{1,2}:=&\{\phi\in H^{-\frac32}_{\overline\Sigma}(\Gamma):(\hat\gamma_{0}\SL\phi)|\Sigma\in H^{\frac32}(\Sigma)\}\,,
\qquad
D_{2,2}:=
\{\varphi\in H^{-\frac12}_{\overline\Sigma}(\Gamma):(\hat\gamma_{1}\DL\varphi)|\Sigma\in H^{\frac12}(\Sigma)\}\,.
\end{align*}
If 
$$
\dom(\Theta)\subseteq D_{1,i}\oplus D_{2,j}\,,\quad i,j\in\{1,2\}\,,
$$
then 
$$
\dom(A_{\Pi,\Theta})\subseteq H^{2}(\RE^{n}\backslash(\overline\Sigma\cup(\partial\Sigma)^{\epsilon}))\,,
$$
where
$$
(\partial\Sigma)^{\epsilon}:=\{x\in\RE^{n}:\text{\rm dist}(x,\partial\Sigma)\le \epsilon\}\,,\quad \epsilon>0\,.
$$
\end{lemma}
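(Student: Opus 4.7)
The plan is to split the domain $K := \RE^n \setminus (\overline\Sigma \cup (\partial\Sigma)^\epsilon)$ into a ``far field'' portion, where the layer potentials are already smooth by \eqref{reg3}, and a ``near field'' portion close to the interior of $\Sigma$, where I localise and use the mapping properties of the boundary integral operators.

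Taking $u = u_\circ + \SL\phi + \DL\varphi \in \dom(A_{\Pi,\Theta})$ with $u_\circ \in H^2(\RE^n)$ and $(\phi,\varphi) \in \dom(\Theta) \subseteq D_{1,i} \oplus D_{2,j}$, it suffices to show $w := \SL\phi + \DL\varphi \in H^2(K)$. At points of $K$ lying at positive distance from $\overline\Sigma$, \eqref{reg3} already yields arbitrary Sobolev regularity of $w$, so the remaining issue concerns points of $\Omega_\pm$ arbitrarily close to interior points of $\Sigma$. By a covering and partition-of-unity argument it is enough to fix $y_0 \in \Sigma$ with $\text{\rm dist}(y_0,\partial\Sigma) > \epsilon/2$, choose an open ball $V \ni y_0$ with $V\cap\Gamma$ compactly contained in the interior of $\Sigma$, and prove $w \in H^2(V' \cap \Omega_\pm)$ for some $V' \Subset V$. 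Picking $\eta \in \C^{\infty}_{\comp}(\RE^n)$ with $\eta = 1$ on $V'$ and $\supp\eta \subset V$, and writing $\eta_\Gamma := \eta|_\Gamma$, I split $\phi = \eta_\Gamma\phi + (1-\eta_\Gamma)\phi$ (similarly for $\varphi$); the smoothness of the kernel $g_{\lambda_\circ}(x,y)$ off the diagonal forces $\SL((1-\eta_\Gamma)\phi) + \DL((1-\eta_\Gamma)\varphi) \in \C^{\infty}(V')$, so only $\SL(\eta_\Gamma\phi),\, \DL(\eta_\Gamma\varphi) \in H^2(\RE^n \setminus \Gamma)$ need to be checked.

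In the ``direct'' cases $i=1$ (resp.\ $j=1$), the assumption $\phi|\Sigma \in H^{1/2}(\Sigma)$ (resp.\ $\varphi|\Sigma \in H^{3/2}(\Sigma)$) combined with the fact that $\eta_\Gamma$ is a smooth multiplier whose support is compactly contained in $\Sigma$ yields, upon extension by zero, $\eta_\Gamma\phi \in H^{1/2}(\Gamma)$ (resp.\ $\eta_\Gamma\varphi \in H^{3/2}(\Gamma)$); the strict compactness of the support inside $\Sigma$ is crucial here to avoid the $H^{s}_{00}$ subtlety at half-integer $s$. The required $H^2(\RE^n \setminus \Gamma)$ regularity then follows from the standard one-step upgrade of the mapping properties of Lemma \ref{mapprop}. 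In the cases $i=2$ or $j=2$ I instead invoke Lemma \ref{regglob}: setting $\psi := \hat\gamma_0 \SL\phi$, the hypothesis $\psi|\Sigma \in H^{3/2}(\Sigma)$ gives $\eta_\Gamma\psi \in H^{3/2}(\Gamma)$, so $\tilde\phi := (\hat\gamma_0 \SL_{\lambda_\circ})^{-1}(\eta_\Gamma\psi)$ belongs to $H^{1/2}(\Gamma)$ by Lemma \ref{regglob}; the residual $\phi - \tilde\phi$ has image $(1-\eta_\Gamma)\psi$ under $\hat\gamma_0 \SL_{\lambda_\circ}$, which vanishes in a neighbourhood of $y_0$, and the pseudo-local character of $\hat\gamma_0 \SL_{\lambda_\circ}$ (its Schwartz kernel is $g_{\lambda_\circ}|_{\Gamma\times\Gamma}$, smooth off the diagonal) together with the global bijectivity supplied by Lemma \ref{regglob} forces this residual to be smooth near $y_0$, reducing matters to the previous case. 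The case $j=2$ is treated identically via $\hat\gamma_1 \DL_{\lambda_\circ}$.

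The main technical obstacle is this last step, namely the transfer of local Sobolev regularity from $\hat\gamma_0 \SL\phi$ (resp.\ $\hat\gamma_1 \DL\varphi$) on $\Sigma$ back to the density $\phi$ (resp.\ $\varphi$), which relies on a pseudo-local inversion of an elliptic boundary integral operator on $\Gamma$. Precisely here is where excluding the neighbourhood $(\partial\Sigma)^\epsilon$ of the interface becomes essential: only then can the cutoff $\eta_\Gamma$ be chosen with compact support strictly inside $\Sigma$, preserving both the zero-extension step and the localisation argument. The resulting $H^2(V' \cap \Omega_\pm)$ bound, summed over a finite cover of $\Sigma \setminus (\partial\Sigma)^{\epsilon/2}$ and combined with the far-field regularity from \eqref{reg3}, gives the claim.
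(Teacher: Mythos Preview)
Your strategy coincides with the paper's: split into a far-field part handled by \eqref{reg3} and a near-field part close to the interior of $\Sigma$, then in the near field use the mapping properties of the layer potentials for the direct cases $D_{1,1},D_{2,1}$ and a local regularity transfer for the indirect cases $D_{1,2},D_{2,2}$. The paper's execution is shorter because it invokes two ready-made results from \cite{McLe}: Theorem~6.13 there gives $\SL\phi,\DL\varphi\in H^{2}$ on any $\C^{1,1}$ subdomain whose trace on $\Gamma$ is compactly contained in $\Sigma$ (your cutoff-and-extend argument reproduces this), and Theorem~7.16 gives directly that $(\hat\gamma_{0}\SL\phi)|\Sigma\in H^{3/2}(\Sigma)$ implies $\phi\in H^{1/2}(\Sigma_{1})$ on any $\Sigma_{1}\Subset\Sigma$ (and similarly for $\hat\gamma_{1}\DL$). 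Your reconstruction of this last step via global inversion plus pseudo-locality is the right idea but, as written, slightly underspecified: pseudo-locality of the \emph{forward} operator $\hat\gamma_{0}\SL$ together with global bijectivity does not by itself yield pseudo-locality of the inverse; what you actually need is the ellipticity of $\hat\gamma_{0}\SL$ as a pseudodifferential operator of order $-1$ on $\Gamma$, which gives a pseudodifferential parametrix and hence local regularity of solutions. That is precisely the content of \cite[Theorem~7.16]{McLe}, so citing it (or stating the ellipticity explicitly) would close the gap cleanly.
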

\begin{proof} If $\phi\in D_{1,1}$ and $\varphi\in D_{2,1}$, by \cite[Theorem 6.13]{McLe}, one has $\SL\phi\in H^{2}(\Omega_{1}^{\pm})$ and $\DL\varphi\in H^{2}(\Omega_{1}^{\pm})$, where $\Omega_{1}^{\pm}:=\Omega_{1}\cap\Omega_{\pm}$ and $\Omega_{1}$ of class $\C^{1,1}$ such that $\Omega_{1}\cap\Gamma$ is strictly contained in $\Sigma$. 
\par
If $\phi\in D_{1,2}$ and $\varphi\in D_{2,2}$, by \cite[Theorem 7.16]{McLe}, $\phi\in H^{\frac12}(\Sigma_{1})$ and $\varphi\in H^{\frac32}(\Sigma_{1})$, for any relatively open $\Sigma_{1}$ such that $\overline\Sigma_{1}\subset\Sigma$. Let now take $\Omega_{2}$ of class $\C^{1,1}$ such that $\Omega_{2}\cap\Gamma$ is strictly contained in $\Sigma_{1}$. Then, by \cite[Theorem 6.13]{McLe}, one has $\SL\phi\in H^{2}(\Omega_{2}^{\pm})$ and $\DL\varphi\in H^{2}(\Omega_{2}^{\pm})$, where $\Omega_{2}^{\pm}:=\Omega_{2}\cap\Omega_{\pm}$. \par 
The final statement then follows from \eqref{reg3}.
\end{proof}
In the case $\dom(A_{\Pi,\Theta})\subseteq H^{2}(\RE^{n}\backslash\Gamma)$, the statements contained in Theorem \ref{T1}  simplify and one has the following  
\begin{corollary}\label{C1} Suppose that $\D(\Theta)\subseteq H^{\frac12}(\Gamma)\oplus H^{\frac32}(\Gamma)$ and define  
$$
B_{\Theta}:\dom(\Theta)\subseteq \ran(\Pi')\to\ran(\Pi)\,,\quad B_{\Theta}:=\Theta +
\Pi B   \Pi'
\,,
$$
$$
B   :H^{\frac12}(\Gamma)\oplus H^{\frac32}(\Gamma)\to H^{\frac32}(\Gamma)\oplus H^{\frac12}(\Gamma)\,,\quad 
B   :=\left[\,\begin{matrix}\gamma_{0}\SL  &\gamma_0\DL  \\
\gamma_{1}\SL  &\gamma_{1}\DL  
\end{matrix}\,\right]\,.
$$
Then 
\begin{align*}
\D(A_{\Pi,\Theta})
=
\{u\in H^{2}(\RE^{n}\backslash\Gamma): (-[\gamma_{1}]u)\oplus 
[\gamma_{0}]u\in\D(\Theta)\,,\  
\Pi (\gamma_{0}u\oplus\gamma_{1}u)=B_{\Theta}((-[\gamma_{1}]u)\oplus[\gamma_{0}]u) \}
\end{align*}
and
\be\label{krein2}
(-A_{\Pi,\Theta}+z)^{-1}u\nonumber
=
(-A+z)^{-1}u+
G_{z}\Pi'(B_{\Theta}-\Pi M^{\circ}_{z}\Pi')^{-1}\Pi (\gamma_{0}(-A+z)^{-1}u\oplus\gamma_{1}(-A+z)^{-1}u)\,,
\ee
where 
$$
M^{\circ}_{z}:=\left[\,\begin{matrix}\gamma_{0}\SL_{z}&\gamma_0\DL_{z}\\
\gamma_{1}\SL_{z}&\gamma_{1}\DL_{z}
\end{matrix}\,\right]\,.
$$
\end{corollary}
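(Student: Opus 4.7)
The proof amounts to combining Theorem \ref{T1} with the regularity gain supplied by the hypothesis on $\dom(\Theta)$, plus an algebraic rearrangement that splits off the $\lambda_\circ$-part of $M_z$ and absorbs it into the boundary operator. First I would invoke Remark \ref{regularity} with $s_1=1/2>-3/2$ and $s_2=3/2>-1/2$, which gives $s=\min\{s_1+3/2,\,s_2+1/2\}=2$, so $\dom(A_{\Pi,\Theta})\subseteq H^2(\RE^n\backslash\Gamma)$. On this smaller domain the extended trace operators $\hat\gamma_0,\hat\gamma_1$ coincide with the classical averaged two-sided traces $\gamma_0,\gamma_1$, and the extended jumps $[\hat\gamma_j]$ reduce to $[\gamma_j]$; hence every hatted symbol appearing in Theorem \ref{T1}'s description of $\dom(A_{\Pi,\Theta})$ can be replaced by its un-hatted counterpart.

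Next, from the decomposition $u=u_\circ+\SL\phi+\DL\varphi$ of Lemma \ref{L2}, with $u_\circ\in H^2(\RE^n)$ and $(\phi\oplus\varphi)=(-[\gamma_1]u)\oplus[\gamma_0]u\in\dom(\Theta)\subseteq H^{1/2}(\Gamma)\oplus H^{3/2}(\Gamma)$, the jump relations \eqref{jump} and the mapping properties of the layer operators imply that $\gamma_0\SL\phi$, $\gamma_1\DL\varphi$ are the (continuous) one-sided traces while $\gamma_0\DL\varphi$, $\gamma_1\SL\phi$ are genuine averages. Taking averaged traces term by term in the decomposition yields
\[
\gamma_0 u_\circ\oplus\gamma_1 u_\circ=(\gamma_0 u\oplus\gamma_1 u)-B(\phi\oplus\varphi).
\]
Because $(\phi\oplus\varphi)\in\dom(\Theta)\subseteq\ran(\Pi')$ one has $\Pi'(\phi\oplus\varphi)=\phi\oplus\varphi$, so the condition $\Pi(\gamma_0 u_\circ\oplus\gamma_1 u_\circ)=\Theta(\phi\oplus\varphi)$ from Theorem \ref{T1} becomes
\[
\Pi(\gamma_0 u\oplus\gamma_1 u)=\bigl(\Theta+\Pi B\Pi'\bigr)(\phi\oplus\varphi)=B_\Theta\bigl((-[\gamma_1]u)\oplus[\gamma_0]u\bigr),
\]
which is precisely the boundary condition announced in the Corollary.

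Finally, for the resolvent I would observe, directly from \eqref{weyl} and \eqref{Mz}, that on $H^{1/2}(\Gamma)\oplus H^{3/2}(\Gamma)$ one has the splitting $M_z=B-M_z^\circ$, so that on $\dom(\Theta)$
\[
\Theta+\Pi M_z\Pi'=(\Theta+\Pi B\Pi')-\Pi M_z^\circ\Pi'=B_\Theta-\Pi M_z^\circ\Pi',
\]
hence $(\Theta+\Pi M_z\Pi')^{-1}=(B_\Theta-\Pi M_z^\circ\Pi')^{-1}$ as operators from $\ran(\Pi)$ to $\dom(\Theta)$. Inserting this identity into the Krein formula \eqref{krein1} of Theorem \ref{T1} produces the displayed formula. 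The essential content is bookkeeping; the one point to check with some care is that the individual pieces $\gamma_j\SL$, $\gamma_j\DL$ (rather than only their differences $M_z$) are bounded between the Sobolev spaces involved, which is guaranteed by the assumption $\dom(\Theta)\subseteq H^{1/2}(\Gamma)\oplus H^{3/2}(\Gamma)$ together with Lemma \ref{mapprop}.
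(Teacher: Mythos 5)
Your argument is essentially the paper's own proof: invoke Remark \ref{regularity} (with $s_1=\tfrac12$, $s_2=\tfrac32$, hence $s=2$) to place $\dom(A_{\Pi,\Theta})$ in $H^2(\RE^n\backslash\Gamma)$, rewrite the boundary condition of Theorem \ref{T1} using the decomposition of Lemma \ref{L2} and linearity of the trace maps, and conclude by substituting the identity $\Theta+\Pi M_z\Pi'=B_\Theta-\Pi M_z^\circ\Pi'$ into the Kre\u\i n formula \eqref{krein1}. The only minor slip is in the closing citation: the boundedness of the individual operators $\gamma_j\SL$, $\gamma_j\DL$ on $H^{1/2}(\Gamma)\oplus H^{3/2}(\Gamma)$ is supplied by \eqref{trace1}--\eqref{trace2}, not by Lemma \ref{mapprop}.
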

\begin{proof} By Remark \ref{regularity}, 
$u\in H^{2}(\Omega_{-})\oplus H^{2}(\Omega_{+})$.
Thus $\Pi(\gamma_{0}u\oplus\gamma_{1}u)$ is well defined and, by the definition of $\dom(A_{\Pi,\Theta})$ given in Theorem \ref{T1}, 
\begin{align*}
&\Theta (\phi\oplus \varphi)
=\Pi (\gamma_{0}(u-\SL  \phi-\DL  \varphi)\oplus \gamma_{1}(u-\SL  \phi-\DL  \varphi))\\
=&\Pi(\gamma_{0}u\oplus\gamma_{1}u)-\Pi (\gamma_{0}(\SL  \phi+\DL  \varphi)\oplus \gamma_{1}(\SL  \phi+\DL  \varphi))\,.
\end{align*}
The proof is then concluded by the identity $\Theta+\Pi M_{z}\Pi'=B_{\Theta}-\Pi M_{z}^{\circ}\Pi'$.
\end{proof}
Let us recall some definitions:  ${\mathfrak S}_{\infty}(H_{1},H_{2})$ (${\mathfrak S}_{\infty}(H):={\mathfrak S}_{\infty}(H,H)$), denotes the operator ideal of compact operators on the Hilbert space $H_{1}$ to the Hilbert space $H_{2}$; ${\mathfrak S}_{p,\infty}(H_{1},H_{2})$ (${\mathfrak S}_{p,\infty}(H):={\mathfrak S}_{p,\infty}(H,H)$), $p>0$, denote the operator ideals of compact operators $T$ on the Hilbert space $H_{1}$ to the Hilbert space $H_{2}$ such that $s_{k}(T)=O(k^{-1/p})$, where the the singular values $s_{k}(T)$ are defined as the eigenvalues of the non-negative compact operator $(T^{*}T)^{\frac12}$. One has $T_{2}T_{1}\in {\mathfrak S}_{p,\infty}(H_{1},H_{2})$ whenever $T_{1}\in {\mathfrak S}_{p_{1},\infty}(H_{1},H_{0})$, $T_{2}\in {\mathfrak S}_{p_{2},\infty}(H_{0},H_{2})$ and $\frac1p=\frac1{p_{1}}+\frac1{p_{2}}$. Notice that if $p<q$ then ${\mathfrak S}_{p,\infty}(H_{1},H_{2})\subset{\mathfrak S}_{q}(H_{1},H_{2})$, where ${\mathfrak S}_{q}(H_{1},H_{2})$ denotes the Schatten-von Neumann ideal of compact operator with $q$-summable singular values; in particular $T\in {\mathfrak S}_{p,\infty}(H_{1},H_{2})$ is trace class whenever $p<1$.
\par
\begin{lemma}\label{ess}
Let $\Theta$ satisfy \eqref{reg1}. Then
$$
\sigma_{ess}(A_{\Pi,\Theta})=\sigma_{ess}(A)\,.
$$
\end{lemma}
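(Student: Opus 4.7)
The plan is to apply Weyl's classical stability theorem for the essential spectrum: it suffices to show that for some (equivalently, any) $z\in Z_{\Pi,\Theta}$ the resolvent difference $(-A_{\Pi,\Theta}+z)^{-1}-(-A+z)^{-1}$ is a compact operator on $L^{2}(\mathbb{R}^{n})$. Given the Kre\u\i n-type formula \eqref{krein1}, this difference factors as
$$
G_{z}\,\Pi'\,(\Theta+\Pi M_{z}\Pi')^{-1}\,\Pi\,(\gamma_{0}\oplus\gamma_{1})\,(-A+z)^{-1}.
$$
The outer factors are bounded: $(-A+z)^{-1}\in\B(L^{2}(\mathbb{R}^{n}),H^{2}(\mathbb{R}^{n}))$, the trace map $\tau=\gamma_{0}\oplus\gamma_{1}$ belongs to $\B(H^{2}(\mathbb{R}^{n}),H^{\frac32}(\Gamma)\oplus H^{\frac12}(\Gamma))$ by Lemma \ref{tau}, and $G_{z}\in\B(H^{-\frac32}(\Gamma)\oplus H^{-\frac12}(\Gamma),L^{2}(\mathbb{R}^{n}))$ by \eqref{Gz} and Lemma \ref{mapprop}. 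The projections $\Pi,\Pi'$ are bounded on the respective trace spaces. So the whole problem is reduced to showing compactness of the middle factor $(\Theta+\Pi M_{z}\Pi')^{-1}$ viewed as a map $\ran(\Pi)\to H^{-\frac32}(\Gamma)\oplus H^{-\frac12}(\Gamma)$.

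Here is where the hypothesis \eqref{reg1} intervenes. Since $(\Theta+\Pi M_{z}\Pi')^{-1}$ is bounded and bijective onto $\dom(\Theta)\subseteq\ran(\Pi')$ (by the very definition of $Z_{\Pi,\Theta}$), and since $\Theta$ is self-adjoint hence closed, $\dom(\Theta)$ endowed with the graph norm $\|\phi\|_{\Theta}^{2}:=\|\phi\|_{\fh'}^{2}+\|\Theta\phi\|_{\fh}^{2}$ is a Hilbert space and by the open mapping theorem $(\Theta+\Pi M_{z}\Pi')^{-1}\in\B(\ran(\Pi),(\dom(\Theta),\|\cdot\|_{\Theta}))$. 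By a routine closed graph argument (using that graph-norm convergence entails convergence in $H^{-\frac32}\oplus H^{-\frac12}$) the set-theoretic inclusion $\dom(\Theta)\subseteq H^{s_{1}}(\Gamma)\oplus H^{s_{2}}(\Gamma)$ granted by \eqref{reg1} is in fact continuous. Consequently $(\Theta+\Pi M_{z}\Pi')^{-1}$ is bounded as an operator $\ran(\Pi)\to H^{s_{1}}(\Gamma)\oplus H^{s_{2}}(\Gamma)$.

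Since $\Gamma$ is a compact $(n-1)$-manifold and $s_{1}>-\frac{3}{2}$, $s_{2}>-\frac{1}{2}$, the Rellich-type embedding
$$
H^{s_{1}}(\Gamma)\oplus H^{s_{2}}(\Gamma)\hookrightarrow H^{-\frac32}(\Gamma)\oplus H^{-\frac12}(\Gamma)
$$
is compact. Composing with the previous bounded map gives compactness of $(\Theta+\Pi M_{z}\Pi')^{-1}$ as a map $\ran(\Pi)\to H^{-\frac32}\oplus H^{-\frac12}$; hence the full resolvent difference, being an operator product one of whose factors is compact and the rest bounded, lies in $\mathfrak S_{\infty}(L^{2}(\mathbb{R}^{n}))$. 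Invoking Weyl's theorem on invariance of the essential spectrum under compact perturbations of the resolvent of a self-adjoint operator yields $\sigma_{ess}(A_{\Pi,\Theta})=\sigma_{ess}(A)$.

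The only nontrivial point, and the one where the hypothesis on $\dom(\Theta)$ is actually used, is the closed-graph upgrading of the set inclusion $\dom(\Theta)\subseteq H^{s_{1}}(\Gamma)\oplus H^{s_{2}}(\Gamma)$ to a continuous embedding with respect to the graph norm; once this is in hand, the compactness of the resolvent difference is a straightforward combination of bounded factors with a compact Sobolev embedding on the compact manifold $\Gamma$.
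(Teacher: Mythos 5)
Your proof is correct and follows essentially the same route as the paper's: factor the resolvent difference via the Kre\u\i n formula, reduce to compactness of $(\Theta+\Pi M_{z}\Pi')^{-1}$ viewed into $H^{-\frac32}(\Gamma)\oplus H^{-\frac12}(\Gamma)$, use hypothesis \eqref{reg1} together with a closed-graph argument to upgrade the range condition $\dom(\Theta)\subseteq H^{s_{1}}(\Gamma)\oplus H^{s_{2}}(\Gamma)$ to boundedness into that space, compose with the compact Rellich embedding on the compact manifold $\Gamma$, and conclude by Weyl's theorem. The only cosmetic difference is that the paper applies the closed-graph argument directly to $(\Theta+\Pi M_{z}\Pi')^{-1}$ as an operator into $H^{s_{1}}\oplus H^{s_{2}}$, whereas you route through the graph-norm Hilbert space $(\dom(\Theta),\|\cdot\|_{\Theta})$ and then show the inclusion into $H^{s_{1}}\oplus H^{s_{2}}$ is continuous; both yield the same conclusion.
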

\begin{proof} 
Here we follow the same kind of reasonings as in the proof of \cite[Lemma 4.7]{BLL1}. The operator  $(\Theta+\Pi M_{z}\Pi')^{-1}\in \B(\ran(\Pi),\ran(\Pi'))$ is closed as an operator from $\ran(\Pi)$ into $H^{-\frac32}(\Gamma)\oplus H^{-\frac12}(\Gamma)$, hence, since $\ran((\Theta+\Pi M_{z}\Pi')^{-1})=\dom(\Theta)\subseteq H^{s_{1}}(\Gamma)\oplus H^{s_{2}}(\Gamma)$, it is also
closed as an operator from $\ran(\Pi)$ into $H^{s_{1}}(\Gamma)\oplus H^{s_{2}}(\Gamma)$, which implies that it belongs to 
$\B(\ran(\Pi),H^{s_{1}}(\Gamma)\oplus H^{s_{2}}(\Gamma))$. Denoting 
 by $E_{s,r}$ the compact embedding of $H^{s}(\Gamma)$ into $H^{r}(\Gamma)$, $r<s$, by 
$$(\Theta+\Pi M_{z}\Pi')^{-1}=(E_{s_{1},-\frac32}\oplus E_{s_{2},-\frac12})(\Theta+\Pi M_{z}\Pi')^{-1}\,,
$$ 
one gets $(\Theta+\Pi M_{z}\Pi')^{-1}\in{\mathfrak S}_\infty(\ran(\Pi),\ran(\Pi'))$. Thus, by \eqref{krein1}, the resolvent difference $(-A_{\Pi,\Theta}+z)^{-1}-(-A+z)^{-1}$ belongs to ${\mathfrak S}_\infty(L^{2}(\RE^{n}))$ and so $\sigma_{ess}(A_{\Pi,\Theta})=\sigma_{ess}(A)$ by Weyl's theorem on the preservation of the essential spectrum under compact resolvent perturbations (see e.g. \cite[Theorem 8.12]{Sch}). \par 
\end{proof}

The next result applies to all the self-adjoint extensions given in Corollary \ref{C1} (and so to all the operators considered in Section 5). In the proof we follow the same arguments as in \cite{M} and \cite{BLL}.
\begin{theorem}\label{teo-schatten} Suppose that $\Gamma$ is smooth. If $\D(\Theta)\subseteq H^{s_{1}}(\Gamma)\oplus H^{s_{2}}(\Gamma)$ with $$s=\min\left\{s_{1}+\frac32,s_{2}+\frac12\right\}\ge 2\,,
$$ 
then for any integer $k\ge 1$ and for any $z\in \rho(A)\cap\rho(A_{\Pi,\Theta})$ one has
\be\label{schatten}
(-A_{\Pi,\Theta}+z)^{-k}-(-A+z)^{-k}\in {\mathfrak S}_{\frac{n-1}{2(k-1)+s},\infty}(L^{2}(\RE^{n}))\,.
\ee
\end{theorem}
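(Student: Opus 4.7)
The strategy is to reduce the $k$-th power difference to a single $(k-1)$-th derivative via the identity $(-A+z)^{-k}=\tfrac{(-1)^{k-1}}{(k-1)!}\partial_z^{k-1}(-A+z)^{-1}$ (and the analogous one for $A_{\Pi,\Theta}$), and then to distribute the derivatives across the three factors of the Kre\u\i n formula \eqref{krein1}. Setting $N_z:=(\Theta+\Pi M_z \Pi')^{-1}$, one has $(-A_{\Pi,\Theta}+z)^{-1}-(-A+z)^{-1} = G_z\Pi' N_z \Pi \tau R_z$, so Leibniz applied to the triple product and then to the expansion of $\partial_z^b N_z$ (obtained iterating $\partial_z N_z = -N_z \Pi (\partial_z M_z) \Pi' N_z$) yields a finite linear combination of terms of the form
\begin{equation*}
T \;=\; (R_z^a G_z)\,\Pi'\,\bigl[N_z\,(\Pi\,\partial_z^{j_1}\!M_z\,\Pi'\,N_z)\cdots(\Pi\,\partial_z^{j_l}\!M_z\,\Pi'\,N_z)\bigr]\,\Pi\,\tau R_z^{c+1},
\end{equation*}
with $a+b+c=k-1$, $l\in\{0,1,\dots,b\}$ (the value $l=0$ only arising when $b=0$), and $j_i\ge 1$, $\sum_i j_i = b$. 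Scalar prefactors (coming from $\partial_z R_z = -R_z^2$ etc.) may be absorbed throughout.

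Next I would estimate the Schatten--von Neumann class of each ingredient, using Weyl's law on the compact smooth $(n-1)$-manifold $\Gamma$, which gives $H^\sigma(\Gamma)\hookrightarrow H^\rho(\Gamma)$ in $\mathfrak{S}_{(n-1)/(\sigma-\rho),\infty}$ for $\sigma>\rho$ (via the operator $\Lambda^{-(\sigma-\rho)}$). First, by \eqref{m,m+2} and the trace theorem, $\tau R_z^{c+1}\in\B(L^2(\RE^n),H^{2c+3/2}(\Gamma)\oplus H^{2c+1/2}(\Gamma))$; composing with the embedding into $H^{3/2}\oplus H^{1/2}$ places it in $\mathfrak{S}_{(n-1)/2c,\infty}$ (merely bounded if $c=0$). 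Second, from $G_z=(\tau R_{\bar z})'$ and $\partial_z G_z=-R_z G_z$ one deduces $R_z^a G_z = \tfrac{(-1)^a}{a!}(\tau R_{\bar z}^{a+1})'$; since the Banach dual preserves singular values and the identification $(H^{3/2}\oplus H^{1/2})'\simeq H^{-3/2}\oplus H^{-1/2}$ is isometric, it follows that $R_z^a G_z:H^{-3/2}\oplus H^{-1/2}\to L^2$ lies in $\mathfrak{S}_{(n-1)/2a,\infty}$ (bounded if $a=0$). Third, each $N_z$ sends $\ran(\Pi)$ continuously into $\dom(\Theta)\subseteq H^{s_1}\oplus H^{s_2}$, and composition with the embedding $H^{s_1}\oplus H^{s_2}\hookrightarrow H^{-3/2}\oplus H^{-1/2}$ (whose jumps $s_1+3/2$ and $s_2+1/2$ are both at least $s$) contributes $\mathfrak{S}_{(n-1)/s,\infty}$. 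Finally, $\partial_z^{j_i}\!M_z$ equals (up to constant) $\tau R_z^{j_i}G_z$, which maps $H^{-3/2}\oplus H^{-1/2}$ into $H^{2j_i-1/2}\oplus H^{2j_i-3/2}$; the embedding into $H^{3/2}\oplus H^{1/2}$ (needed to apply the subsequent $\Pi$) yields $\mathfrak{S}_{(n-1)/(2j_i-2),\infty}$ when $j_i\ge 2$ and no gain when $j_i=1$.

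Summing the reciprocal Schatten exponents appearing in a given term $T$, using $\sum_i(2j_i-2)=2b-2l$ and $a+b+c=k-1$, one obtains
\begin{equation*}
\frac{1}{p_T} \;=\; \frac{2a+2c+(l+1)s+(2b-2l)}{n-1} \;=\; \frac{2(k-1)+s+l(s-2)}{n-1}.
\end{equation*}
Since $s\ge 2$ and $l\ge 0$, the right-hand side is always at least $(2(k-1)+s)/(n-1)$, so every term $T$ lies in $\mathfrak{S}_{(n-1)/(2(k-1)+s),\infty}$; the finite sum of these terms then belongs to the same ideal, proving \eqref{schatten}. The main obstacle I anticipate is the Banach-duality step for $R_z^a G_z$, combined with the observation that a single differentiation $\partial_z M_z$ does \emph{not} improve the Sobolev regularity of $M_z$ (both range in $H^{3/2}\oplus H^{1/2}$), so the Schatten gain per $M_z$-factor is $2j_i-2$ and vanishes when $j_i=1$; the arithmetic above shows that the hypothesis $s\ge 2$ is exactly what compensates this deficit across the $l+1$ factors of $N_z$.
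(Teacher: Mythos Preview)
Your argument is correct and proceeds by a genuinely different route from the paper. The paper does not differentiate the Kre\u\i n formula at all: it fixes a single non-real $z_{\circ}$, factors the resolvent difference as $BC$ with $B=(\tau R_{\bar z_{\circ}})'\Pi'(\Theta+\Pi M_{z_{\circ}}\Pi')^{-1}$ and $C=\Pi\tau R_{z_{\circ}}$, establishes the estimates $(-A+z_{\circ})^{-m}B\in\mathfrak{S}_{(n-1)/(2m+s),\infty}$ and $C(-A+z_{\circ})^{-m}\in\mathfrak{S}_{(n-1)/2m,\infty}$ (via \cite[Lemma 4.7]{BLL1}, which is exactly your Weyl-law embedding argument), and then invokes the abstract \cite[Lemma 2.3]{BLL} as a black box to pass from these factored estimates to the $k$-th power difference. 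That lemma is essentially a telescoping identity $R_{1}^{k}-R_{0}^{k}=\sum_{j}R_{1}^{\,k-1-j}(R_{1}-R_{0})R_{0}^{\,j}$ iterated so that all resolvents can be taken to be those of $A$; it hides the same combinatorics that you carry out explicitly.

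What your approach buys is self-containment and transparency: you see directly that each factor $N_{z}$ contributes $s$ to the Schatten exponent while each $\partial_{z}^{j_{i}}M_{z}$ contributes only $2j_{i}-2$, and the arithmetic $2(k-1)+s+l(s-2)$ makes the role of the hypothesis $s\ge 2$ completely explicit (without it the terms with many $N_{z}$-factors would dominate). What the paper's approach buys is brevity: once the abstract lemma is available, the proof reduces to three one-line Schatten estimates and a citation. Both arguments rest on the same two pillars, namely the closed-graph argument giving $N_{z}\in\B(\ran(\Pi),H^{s_{1}}\oplus H^{s_{2}})$ and the compact Sobolev embeddings on $\Gamma$; they differ only in how the passage to higher powers is organized.
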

\begin{proof} Given $z_{\circ}\in\CO\backslash\RE$, Re$(z_{\circ})\ge \lambda_{\circ}$, let $$
B:=(\tau(-A+\bar z_{\circ})^{-1})^{'}\Pi'(\Theta+\Pi M_{z_{\circ}}\Pi')^{-1}
$$
and
$$
C:=\Pi\tau(-A+z_{\circ})^{-1}\,,
$$
so that, by \eqref{krein1},
$$
(-A_{\Pi,\Theta}+z_{\circ})^{-1}-(-A+z_{\circ})^{-1}=BC\,.
$$ 
By \eqref{m,m+2}, one has $(-A+z_{\circ})^{-(m+1)}\in\B(L^{2}(\RE^{n}),H^{2m+2}(\RE^{n}))$ and so $\tau(-A+z_{\circ})^{-(m+1)}$ has range in $H^{2m+\frac32}(\Gamma)\oplus H^{2m+\frac12}(\Gamma)$. Then, by \cite[Lemma 4.7]{BLL1}, 
$$
(-A+z_{\circ})^{-(m+1)}\in {\mathfrak S}_{\frac{n-1}{2m},\infty}(L^{2}(\RE^{n}),H^{\frac32}(\Gamma)\oplus H^{\frac12}(\Gamma))$$ and so
$$ 
C(-A+z_{\circ})^{-m}\in {\mathfrak S}_{\frac{n-1}{2m},\infty}(L^{2}(\RE^{n}),\ran(\Pi))\,.
$$
By duality, that also gives 
\be\label{tag}
(-A+z_{\circ})^{-m}(\tau(-A+\bar z_{\circ})^{-1})'\Pi=(\Pi\tau(-A+\bar z_{\circ})^{-(m+1)})'\in {\mathfrak S}_{\frac{n-1}{2m},\infty}(\ran(\Pi'),L^{2}(\RE^{n}))\,.
\ee
Since $(\Theta+\Pi M_{z_{\circ}}\Pi')^{-1}\in\B(\ran(\Pi),\ran(\Pi'))$ and its range is contained in $H^{s_{1}}(\Gamma)\oplus H^{s_{2}}(\Gamma)$, by \cite[Lemma 4.7]{BLL1}, one gets 
$$
(\Theta+\Pi M_{z_{\circ}}\Pi')^{-1}\in {\mathfrak S}_{\frac{n-1}s,\infty}(\ran(\Pi), H^{-\frac32}(\Gamma)\oplus H^{-\frac12}(\Gamma))\,,
$$
and so, by \eqref{tag},
$$
(-A+z_{\circ})^{-m}B\in {\mathfrak S}_{\frac{n-1}{2m+s},\infty}(\ran(\Pi),L^{2}(\RE^{n}))\,.
$$
The proof is then concluded by \cite[Lemma 2.3]{BLL}.  
\end{proof}
Next we provide a version of Theorem \ref{teo-schatten} which applies to the case where the self-adjoint operator $\Theta$ is defined through the associated sesquilinear form.  In particular the next results apply to all the operators considered in Section 6. 
\begin{theorem}\label{teo-schatten2} 
Suppose that $\Gamma$ is smooth. Let $\t f$ be the sesquilinear form  associated to the self-adjoint operator in $\ran(\Pi)$ defined by $\t\Theta:=\Theta(\Lambda^{3}\oplus\Lambda)$;  if $\D(\t f)\subseteq H^{ s_{1}}(\Gamma)\oplus H^{s_{2}}(\Gamma)$ with $$s_{*}=\min\left\{s_{1}-\frac32, s_{2}-\frac12\right\}\ge 1\,,
$$ 
then for any integer $k\ge 1$ and for any $z\in \rho(A)\cap\rho(A_{\Pi,\Theta})$ one has
\be\label{schatten2}
(-A_{\Pi,\Theta}+z)^{-k}-(-A+z)^{-k}\in {\mathfrak S}_{\frac{n-1}{2(k-1)+2 s_{*}},\infty}(L^{2}(\RE^{n}))\,.
\ee
\end{theorem}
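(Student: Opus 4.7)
The plan is to adapt the proof of Theorem \ref{teo-schatten} by factoring the ``perturbation operator'' symmetrically through the form $\t f$, so that the one-sided regularity gain $s$ of that theorem (coming from $\dom(\Theta)$) is replaced by two symmetric gains of strength $s_*$. Passing to the $\ran(\Pi)$-realisation through the identifications $\Theta J=\t\Theta$, $M_zJ=\t M_z$ and $G_zJ=\t G_z$, the Kre\u\i n formula \eqref{krein1} may be rewritten as
$$
(-A_{\Pi,\Theta}+z)^{-1}-R_z=\t G_z\,\Pi\,(\t\Theta+\Pi\t M_z\Pi)^{-1}\,\Pi\,\tau R_z\,.
$$

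The key auxiliary object is the positive self-adjoint operator $T:=|\t\Theta|+\mathbf 1$ on $\ran(\Pi)$; by the second representation theorem for sesquilinear forms, $\dom(T^{1/2})=\dom(|\t\Theta|^{1/2})=\dom(\t f)$, which by hypothesis is contained in $H^{s_1}(\Gamma)\oplus H^{s_2}(\Gamma)$. The closed graph theorem then gives that $T^{-1/2}:\ran(\Pi)\to H^{s_1}(\Gamma)\oplus H^{s_2}(\Gamma)$ is bounded; coupling this with the compact embedding into the ambient $H^{3/2}(\Gamma)\oplus H^{1/2}(\Gamma)$ of strength $s_*$ (and the Weyl asymptotics on the $(n-1)$-dimensional manifold $\Gamma$, as used in \cite[Lemma 4.7]{BLL1}) yields $T^{-1/2}\in{\mathfrak S}_{(n-1)/s_*,\infty}(\ran(\Pi))$. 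Since $\Pi\t M_z\Pi$ is bounded on $\ran(\Pi)$ by Lemma \ref{mapprop} together with \eqref{trace1}--\eqref{trace2}, the perturbed form $\t f+\t m_z$, with $\t m_z[\cdot,\cdot]:=\langle\t M_z\cdot,\cdot\rangle_{\ran(\Pi)}$, is still closed with domain $\dom(\t f)$, and one obtains the symmetric factorisation
$$
(\t\Theta+\Pi\t M_z\Pi)^{-1}=T^{-1/2}\,Q(z)\,T^{-1/2},\qquad Q(z):=T^{1/2}(\t\Theta+\Pi\t M_z\Pi)^{-1}T^{1/2}\in\B(\ran(\Pi))\,.
$$

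With this in hand, expanding $(-A_{\Pi,\Theta}+z)^{-k}-R_z^k=(R_z+\t B\t C)^k-R_z^k$ with $\t B:=\t G_z\Pi T^{-1/2}Q(z)T^{-1/2}$ and $\t C:=\Pi\tau R_z$ produces a finite sum over products $R_z^{a_0}(\t B\t C)R_z^{a_1}\cdots(\t B\t C)R_z^{a_l}$ with $l\ge 1$ and $a_0+\dots+a_l=k-l$. Regrouping each $T^{-1/2}$ with the adjacent resolvent splits such a term into $l+1$ Schatten-class blocks interlaced by $l$ bounded factors $Q(z)$; using $R_z^m\t G_z\Pi,\,\Pi\tau R_z^{m+1}\in{\mathfrak S}_{(n-1)/(2m),\infty}$ from the proof of Theorem \ref{teo-schatten} (with the convention that the exponent degenerates to boundedness when $m=0$) together with $T^{-1/2}\in{\mathfrak S}_{(n-1)/s_*,\infty}$, and summing reciprocal Schatten exponents across the blocks, each such product lies in ${\mathfrak S}_{(n-1)/[2(k-l)+2ls_*],\infty}$. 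The elementary inequality $2(k-l)+2ls_*\ge 2(k-1)+2s_*$, valid for every $l\ge 1$ provided $s_*\ge 1$, then shows that the whole difference belongs to ${\mathfrak S}_{(n-1)/(2(k-1)+2s_*),\infty}$, yielding \eqref{schatten2}. The main technical difficulty is justifying the symmetric factorisation of $(\t\Theta+\Pi\t M_z\Pi)^{-1}$ when $\t\Theta$ is not semi-bounded: this amounts to verifying that $\Pi\t M_z\Pi$ is form-bounded with respect to the form of $T$, which follows from its boundedness on $\ran(\Pi)$.
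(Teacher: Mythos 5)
Your proposal follows the paper's strategy quite closely in spirit, but it makes two genuine, legitimate variations. First, instead of working with the polar decomposition of $\t\Theta+\Pi\t M_\lambda\Pi$ at a \emph{real} spectral parameter $\lambda$ (which is what the paper does, after first invoking Lemma \ref{ess} to produce such a $\lambda\in\rho(A)\cap\rho(A_{\Pi,\Theta})\cap\RE$), you factor through the $z$-independent operator $T^{-1/2}=(|\t\Theta|+\mathbf 1)^{-1/2}$ and push the $z$-dependence into the bounded middle factor $Q(z)$. Second, instead of citing \cite[Lemma 2.3]{BLL} to pass from $k=1$ to general $k$, you carry out the binomial expansion of $(R_z+\t B\t C)^k-R_z^k$ by hand; your bookkeeping of the Schatten exponents across the $l+1$ blocks is correct, and the inequality $2(k-l)+2ls_*\ge 2(k-1)+2s_*$ is exactly where the hypothesis $s_*\ge 1$ enters, as in the paper. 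Both modifications are sound and the by-hand expansion has the merit of being self-contained.

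The one point that is too thin is the justification that $Q(z):=T^{1/2}(\t\Theta+\Pi\t M_z\Pi)^{-1}T^{1/2}$ extends to a bounded operator on $\ran(\Pi)$. Since $\t\Theta$ is not semibounded, there is no form realisation of $\t\Theta+\Pi\t M_z\Pi$, and the KLMN-type statement ``$\Pi\t M_z\Pi$ is form-bounded relative to $T$'' does not by itself produce the factorisation you need. What is really required is the form-domain identity $\dom\bigl(|\t\Theta+\Pi\t M_z\Pi|^{1/2}\bigr)=\dom(T^{1/2})$, which for \emph{real} $\lambda$ is the classical fact that a bounded self-adjoint perturbation preserves the form domain, and which then yields boundedness of $Q(\lambda)$ via the polar decomposition $(\t\Theta+\Pi\t M_\lambda\Pi)^{-1}=|\t\Theta+\Pi\t M_\lambda\Pi|^{-1/2}\,\t U\,|\t\Theta+\Pi\t M_\lambda\Pi|^{-1/2}$ -- i.e.\ exactly the argument the paper runs, so your factorisation through $T$ just conjugates the paper's one by the two bounded maps $T^{1/2}|\t\Theta+\Pi\t M_\lambda\Pi|^{-1/2}$. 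For non-real $z$ the operator $\t\Theta+\Pi\t M_z\Pi$ is not self-adjoint and the argument is more delicate than your last sentence suggests. The clean fix is to proceed as the paper does: use Lemma \ref{ess} to find a real $\lambda\in\rho(A)\cap\rho(A_{\Pi,\Theta})$, establish \eqref{schatten2} at that $\lambda$, and then propagate to general $z$; as written, your proposal leaves this step as a gap.
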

\begin{proof} According to our assumptions, $$\dom(\Theta)=(\Lambda^{3}\oplus\Lambda)\dom(\t\Theta)\subseteq (\Lambda^{3}\oplus\Lambda)\dom(\t f)\subseteq H^{-\frac12}(\Gamma)\oplus H^{\frac12}(\Gamma)\,. $$ 
Therefore hypothesis \eqref{reg1} holds and so, by Lemma \ref{ess}, $\sigma_{ess}(A_{\Pi,\Theta})=\sigma_{ess}(A)\subseteq(-\infty,\|V_{\text{\rm neg}}\|_{\infty}]$. Thus there exists $\lambda\in \rho(A)\cap\rho(A_{\Pi,\Theta})\cap\RE$ and so, by \eqref{krein1},  the  operator $\t\Theta+\Pi \t M_{\lambda}\Pi$ is self-adjoint and has a bounded inverse $(\t\Theta+\Pi \t M_{\lambda}\Pi)^{-1}\in\B(\ran(\Pi))$ (here $\t M_{z}:=M_{z}(\Lambda^{3}\oplus\Lambda)$). Let 
$$
\t\Theta+\Pi \t M_{\lambda}\Pi=\t U\,|\t\Theta+\Pi \t M_{\lambda}\Pi|
$$ be the polar decomposition of $\t\Theta+\Pi \t M_{\lambda}\Pi$ (see e.g. \cite[Section 7, Chapter VI]{K}). Since $\t\Theta+\Pi \t M_{\lambda}\Pi$ is self-adjoint and injective, $\t U$ is self-adjoint and unitary. Then 
$$(\t\Theta+\Pi \t M_{\lambda}\Pi)^{-1}=
|\t\Theta+\Pi \t M_{\lambda}\Pi|^{-1}\t U=|\t\Theta+\Pi \t M_{\lambda}\Pi|^{-\frac12}|\t\Theta+\Pi \t M_{\lambda}\Pi|^{-\frac12}\t U\,.
$$ 
Since $|\t\Theta+\Pi \t M_{\lambda}\Pi|^{-\frac12}$ is bounded and commutes with $\t\Theta+\Pi \t M_{\lambda}\Pi$, by \cite[Lemma 2.37, Chapter VI]{K}, it commutes with $\t U$. Therefore 
$$
(\t\Theta+\Pi \t M_{\lambda}\Pi)^{-1}=|\t\Theta+\Pi \t M_{\lambda}\Pi|^{-\frac12}\t U\,|\t\Theta+\Pi \t M_{\lambda}\Pi|^{-\frac12}
$$
and so, by \eqref{krein1},
$$
(-A_{\Pi,\Theta}+\lambda)^{-1}-(-A+\lambda)^{-1}=BC\,,
$$
where
$$
B=(\tau(-A+\lambda)^{-1})^{*}\Pi\,|\t\Theta+\Pi \t M_{\lambda}\Pi|^{-\frac12}\,,\quad C=\t UB^{*}\,.
$$
Since $\Pi \t M_{\lambda}\Pi$ is bounded, $\dom(\t f)=\dom(|\t\Theta|^{\frac12})=\dom(|\t\Theta+\Pi \t M_{\lambda}\Pi|^{\frac12})$ and so $$\ran(|\t\Theta+\Pi \t M_{\lambda}\Pi|^{-\frac12})\subseteq H^{s_{1}}(\Gamma)\oplus 
H^{s_{2}}(\Gamma)\,.
$$ 
Therefore, by \cite[Lemma 4.7]{BLL1}, one gets  
$$
|\t\Theta+\Pi \t M_{\lambda}\Pi|^{-\frac12}\in {\mathfrak S}_{\frac{n-1}{s_{*}},\infty}(\ran(\Pi), H^{\frac32}(\Gamma)\oplus H^{\frac12}(\Gamma))\,.
$$
Then, by \eqref{tag}, 
$$
(-A+\lambda)^{-m}B\in {\mathfrak S}_{\frac{n-1}{2m+s_{*}},\infty}(\ran(\Pi),L^{2}(\RE^{n}))\,.
$$
and 
$$
C(-A+\lambda)^{-m}\in {\mathfrak S}_{\frac{n-1}{2m+s_{*}},\infty}(L^{2}(\RE^{n}),\ran(\Pi))\,.
$$
 The proof is then concluded by \cite[Lemma 2.3]{BLL}.  
 \end{proof}

\begin{corollary}\label{spectrum} Let $\Gamma$ be smooth and let $\Theta$ satisfy the same hypotheses as either in Theorem \ref{teo-schatten} or in Theorem \ref{teo-schatten2}. Then
$$
\sigma_{ac}(A_{\Pi,\Theta})=\sigma_{ac}(A)
$$
and the wave operators 
$$W_{\pm}(A_{\Pi,\Theta},A)= \text{\rm s-}\lim_{t\to\pm\infty}e^{-itA_{\Pi,\Theta}}e^{itA}P_{ac}(A)\,, 
$$
$$W_{\pm}(A,A_{\Pi,\Theta})=\text{\rm s-}\lim_{t\to\pm\infty}e^{-itA}e^{itA_{\Pi,\Theta}}P_{ac}(A_{\Pi,\Theta})$$ 
exist and are complete, i.e. the limits exist everywhere and the ranges coincide with with the absolutely continuous subspaces.
\end{corollary}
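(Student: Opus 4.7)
The plan is to deduce the corollary from the Schatten-von Neumann estimates of Theorems \ref{teo-schatten} and \ref{teo-schatten2} by choosing the power $k$ large enough to place the resolvent-power difference in the trace ideal, and then invoking the Birman-Kato trace-class scattering theorem.

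First I would exploit the fact, recalled just before Theorem \ref{teo-schatten}, that $\mathfrak{S}_{p,\infty}(L^{2}(\RE^{n}))\subset \mathfrak{S}_{q}(L^{2}(\RE^{n}))$ whenever $p<q$, so that $T\in \mathfrak{S}_{p,\infty}$ is trace class as soon as $p<1$. Under the hypotheses of Theorem \ref{teo-schatten}, the index $\frac{n-1}{2(k-1)+s}$ is strictly less than $1$ provided $k>\frac{n+1-s}{2}$, which is achievable since $s\ge 2$; under the hypotheses of Theorem \ref{teo-schatten2}, the index $\frac{n-1}{2(k-1)+2s_{*}}$ is strictly less than $1$ provided $k>\frac{n+1-2s_{*}}{2}$, again achievable since $s_{*}\ge 1$. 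Therefore in either case one may fix an integer $k_{0}$ large enough so that
\[
(-A_{\Pi,\Theta}+z)^{-k_{0}}-(-A+z)^{-k_{0}}\in \mathfrak{S}_{1}(L^{2}(\RE^{n}))\qquad \text{for } z\in\rho(A)\cap\rho(A_{\Pi,\Theta}).
\]

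Second I would apply the Birman-Kato invariance principle in its higher-power resolvent version (see, e.g., \cite{Sch} or the general statement in Yafaev's scattering monograph): if $H_{1}$, $H_{2}$ are self-adjoint on the same Hilbert space and $(H_{1}-z)^{-k_{0}}-(H_{2}-z)^{-k_{0}}$ is trace class for some integer $k_{0}\ge 1$ and some $z$ in the common resolvent set, then the wave operators $W_{\pm}(H_{1},H_{2})$ and $W_{\pm}(H_{2},H_{1})$ exist and are complete. Instantiating this with $H_{1}=A_{\Pi,\Theta}$ and $H_{2}=A$ gives the existence and completeness of $W_{\pm}(A_{\Pi,\Theta},A)$ and $W_{\pm}(A,A_{\Pi,\Theta})$ as stated.

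Finally, completeness of the wave operators is by definition the assertion that they intertwine the absolutely continuous parts of $A$ and $A_{\Pi,\Theta}$ as unitary maps between $\mathrm{ran}\,P_{ac}(A)$ and $\mathrm{ran}\,P_{ac}(A_{\Pi,\Theta})$. Hence the a.c.~parts of the two operators are unitarily equivalent, which immediately yields $\sigma_{ac}(A_{\Pi,\Theta})=\sigma_{ac}(A)$. The main (minor) obstacle is purely bookkeeping: one has to verify that the common resolvent set is non-empty (which follows from Lemma \ref{ess} via $\sigma_{ess}(A_{\Pi,\Theta})=\sigma_{ess}(A)\subseteq (-\infty,\|V_{\mathrm{neg}}\|_{\infty}]$, ensuring the existence of real $\lambda\in\rho(A)\cap\rho(A_{\Pi,\Theta})$) and that $k_{0}$ can indeed be taken independent of dimension-dependent quirks; both are immediate from the Schatten indices computed above.
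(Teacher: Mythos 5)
Your proposal is correct and takes essentially the same route as the paper: pick $k$ large enough so that the Schatten index from Theorem \ref{teo-schatten} or \ref{teo-schatten2} drops below $1$, invoke the Birman-Kato invariance principle for resolvent-power differences (the paper cites \cite[Theorem 4.8, Chapter X]{K}), and read off equality of the absolutely continuous spectra from completeness of the wave operators. The only thing you add beyond the paper's terse argument is the explicit bookkeeping on the threshold value of $k$ and the remark that the common resolvent set is nonempty via Lemma \ref{ess}, both of which are harmless and correct.
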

\begin{proof} By either Theorem \ref{teo-schatten} or Theorem \ref{teo-schatten2}, for $k$ large enough, the resolvent difference $(-A_{\Pi,\Theta}+\lambda)^{-k}-(-A+\lambda)^{-k}$ is trace class; so, by the Birman-Kato criterion (see \cite[Theorem 4.8, Chapter X]{K}), one obtains the existence and completeness of the 
wave operators; thus $\sigma_{ac}(A_{\Pi,\Theta})=\sigma_{ac}(A)$.
\end{proof}
\begin{remark} In the case $A\le 0$ and $A_{\Pi,\Theta}\le 0$, by Corollary \ref{spectrum} and \cite[Sections 8 and 9]{K2}, one also gets the existence and completeness of wave operators for the pairs of wave equations $\partial^{2}_{tt} u=A_{\Pi,\Theta}u$ and $\partial^{2}_{tt}  u=Au$. 
\end{remark}
\begin{remark} Under additional hypotheses on the behavior at infinity of the coefficients of $A$, the spectral results in Lemma \ref{ess} and Corollary \ref{spectrum} can be specified. Let us suppose that 
\be\label{aij1}
a_{ij}(x)=a_{ij}^{\circ}+b_{ij}(x)\,,\quad b_{ij}(x)=O(1/\|x\|^{\delta})\,,
\ee
\be\label{aij2}
\partial_{x_{i}}b_{ij}(x)=O(1/\|x\|^{\delta})\,,\quad V(x)=O(1/\|x\|^{\delta})\,,
\ee
for some $\delta>1$, as $\|x\|\to+\infty$. Let $A_{\circ}$ be the differential operator with constant coefficients
$$
A_{\circ}:H^{2}(\RE^{n})\subset L^{2}(\RE^{n})\to L^{2}(\RE^{n})\,,\quad A_{\circ}u=\sum_{1\le i,j\le n}a^{\circ}_{ij}\partial^{2}_{x_ix_{j}}u\,.
$$
One has $\sigma(A_{\circ})=\sigma_{ac}(A_{\circ})=\sigma_{ess}(A_{\circ})=(-\infty,0]$. 
Since, by \cite[Theorem 5.3]{Bal},  
$\sigma_{ess}(A)=\sigma_{ess}(A_{\circ})$ and, by \cite[Theorem 2.1]{H1}, \cite[Chapter XIV]{H2}, $\sigma_{ac}(A)=\sigma_{ac}(A_{\circ})$, Lemma \ref{ess} and Corollary \ref{spectrum} give  
$$
\sigma_{ess}(A_{\Pi,\Theta})=\sigma_{ac}(A_{\Pi,\Theta})=(-\infty,0]\,.
$$
\end{remark}

\end{section}
\begin{section}{Applications: boundary conditions on $\Gamma$.}
Using the scheme provided by Theorem \ref{T1} and Corollary \ref{C1}, we next give the construction of some standard models 
of elliptic operators with boundary conditions on $\Gamma$, the boundary of a bounded domain $\Omega$ of class $\C^{1,1}$.
\begin{subsection}{Dirichlet boundary conditions.}\label{dirichlet} 
Let us consider the self-adjoint extension $A_{D}$ corresponding to Dirichlet boundary conditions on the whole $\Gamma$; it is given by the direct sum $A_{D}=A^{D}_{{-}}\oplus A^{D}_{{+}}$, where 
the self-adjoint operators $A^{D}_{\pm}$ are defined by $A^{D}_{\pm}:=A|\dom(A^{D}_{\pm})$, 
$\dom(A^{D}_{\pm})=\{u_{\pm}\in H^{2}(\Omega_{\pm}):\gamma_{0}^{\pm}u_{\pm}=0\}$. 
Since 
\begin{align*}
\dom(A^{D}_{-})\oplus\dom(A^{D}_+)=&\{u=u_{-}\oplus u_{+}\in H^{2}(\Omega_{-})\oplus H^2(\Omega_{+}): [\gamma_{0}]u=0\,,\gamma_{0}u=0\}\\
=&\{u\in H^{1}(\RE^{n})\cap H^{2}(\RE^{n}\backslash\Gamma):\gamma_{0}u=0\}\,,
\end{align*}
that corresponds, in Corollary \ref{C1}, to the choice  $\Pi=\Pi_{1}$, where $\Pi_{1}(\phi\oplus\varphi):=\phi\oplus 0$, and 
$B_{\Theta}=0$, i.e. $\Theta(\phi\oplus\varphi):=(-\Theta_{D}\phi)\oplus 0$, where $\Theta_{D}$ is the (necessarily self-adjoint, by Lemma \ref{bt}) operator
\be\label{ThetaD}
\Theta_{D}=\gamma_{0}\SL  :H^{\frac12}(\Gamma)\subseteq H^{-\frac32}(\Gamma)\to H^{\frac32}(\Gamma)\,.
\ee
Thus
$$
(A^{D}_{{-}}\oplus A^{D}_{{+}})u=Au-[\gamma_{1}]u\,\delta_{\Gamma}
$$
and, for any $z\in \rho(A)\cap\rho(A^{D}_{-})\cap\rho(A^{D}_{+})$,
\begin{align}\label{krein3}
(-(A^{D}_{{-}}\oplus A^{D}_{{+}})+z)^{-1}=&
(-A+z)^{-1}-\SL_{z}(\gamma_{0}\SL_{z})^{-1}\gamma_{0}(-A+z)^{-1}
\,.
\end{align}
Let $z\in \rho(A)\cap\rho(A^{D}_{-})\cap\rho(A^{D}_{+})$, so that, by \eqref{krein3}, $(\gamma_{0}\SL_{z})^{-1}\in\B(H^{\frac32}(\Gamma),H^{\frac12}(\Gamma))$. Given $\varphi\in H^{\frac32}(\Gamma)$, let us define $\phi:=(\gamma_{0}\SL_{z})^{-1}\varphi$ 
and $u_{\pm}:=\SL^{\pm}_{z}\phi$.  Then $A^{\max}_{\pm}u_{\pm}=z\,u_{\pm}$ and 
$\gamma^{\pm}_{0}u_{\pm}=\varphi$ and so one gets $u_{\pm}=K_{z}^{\pm}\varphi$, where $K_{z}^{\pm}\in\B(H^{s}(\Gamma),\dom(A^{\max}_{\pm}))$, $s\ge -\frac12$, is the Poisson operator which solves the Dirichlet boundary value problem
\be
\begin{cases}
(A^{\max}_{\pm}-z)K^{\pm}_{z}\psi=0&\\
\hat\gamma^{\pm}_0K^{\pm}_{z}\psi=\psi\,.
\end{cases}
\ee
To $K^{\pm}_{z}$ one associates the Dirichlet-to-Neumann operator $P_{z}^{\pm}\in \B(H^{s}(\Gamma),H^{s-1}(\Gamma))$, $s\ge -\frac12$, defined by $P_{z}^{\pm}:=\hat\gamma_{1}^{\pm}K_{z}^{\pm}$. Thus, since $[\gamma_{1}]\SL_{z}\phi=-\phi$, one has 
\be\label{DtN}
\forall z\in \rho(A)\cap\rho(A^{D}_{-})\cap\rho(A^{D}_{+})\,,\qquad (\gamma_{0}\SL_{z})^{-1}=P_{z}^{-}-P_{z}^{+}\,.
\ee
Therefore, by \eqref{krein3},
$$
(-(A^{D}_{{-}}\oplus A^{D}_{{+}})+z)^{-1}=
(-A+z)^{-1}-\SL_{z}(P_{z}^{-}-P^{+}_{z})\gamma_{0}(-A+z)^{-1}
\,.
$$
\end{subsection}  
\begin{subsection}{Neumann boundary conditions.}
Let us consider the self-adjoint extension $A_{N}$ corresponding to Neumann boundary conditions on the whole $\Gamma$; it is given by the direct sum $A_{N}=A^{N}_{{-}}\oplus A^{N}_{{+}}$, where 
the self-adjoint operators $A^{N}_{\pm}$ are defined by $A^{N}_{\pm}:=A|\dom(A^{N}_{\pm})$, 
$\dom(A^{N}_{\pm})=\{u_{\pm}\in H^{2}(\Omega_{\pm}):\gamma_{1}^{\pm}u_{\pm}=0\}$. 
Since 
$$
\dom(A^{N}_{-})\oplus\dom(A^{N}_+)=\{u\in H^{2}(\RE^{n}\backslash\Gamma): [\gamma_{1}]u=0\,,\gamma_{1}u=0\}\,,
$$
that corresponds, in Corollary \ref{C1}, to the choice  $\Pi=\Pi_{2}$, where $\Pi_{2}(\phi\oplus\varphi):=0\oplus\varphi$, and 
$B_{\Theta}=0$, i.e. $\Theta(\phi\oplus\varphi):=0\oplus (-\Theta_{N}\phi)$, where $\Theta_{N}$ is the (necessarily self-adjoint, by Lemma \ref{bt}) operator
\be\label{ThetaN}
\Theta_{N}=\gamma_{1}\DL  :H^{\frac32}(\Gamma)\subseteq H^{-\frac12}(\Gamma)\to H^{\frac12}(\Gamma)\,.
\ee
Thus
$$
(A^{N}_{{-}}\oplus A^{N}_{{+}})u=Au-[\gamma_{0}]u\,\partial_{\underline a}\delta_{\Gamma}\,,
$$
and, for any $z\in \rho(A)\cap\rho(A^{N}_{-})\cap\rho(A^{N}_{+})$
\begin{align}\label{krein4}
(-(A^{N}_{{-}}\oplus A^{N}_{{+}})+z)^{-1}=&
(-A+z)^{-1}-\DL_{z}(\gamma_{1}\DL_{z})^{-1}\gamma_{1}(-A+z)^{-1}
\,.
\end{align}
Let $z\in \rho(A)\cap\rho(A^{N}_{-})\cap\rho(A^{N}_{+})$, so that, by \eqref{krein4}, $(\gamma_{1}\DL_{z})^{-1}\in\B(H^{\frac12}(\Gamma),H^{-\frac12}(\Gamma))$. Given $\phi\in H^{\frac12}(\Gamma)$, let us define $\varphi:=(\gamma_{1}\DL_{z})^{-1}\phi$ 
and $u_{\pm}:=\SL^{\pm}_{z}\varphi$.  Then $A^{\max}_{\pm}u_{\pm}=z\,u_{\pm}$ and 
$\gamma^{\pm}_{1}u_{\pm}=\phi$ and so one gets $u_{\pm}=\t K_{z}^{\pm}\phi$, where $\t K_{z}^{\pm}\in\B(H^{s}(\Gamma),\dom(A^{\max}_{\pm}))$, $s\ge -\frac32$,  solves the 
boundary value problem
$$
\begin{cases}
(A^{\max}_{\pm}-z) \t K^{\pm}_{z}\psi=0&\\
\hat\gamma^{\pm}_1\t K^{\pm}_{z}\psi=\psi\,.
\end{cases}
$$
To $\t K^{\pm}_{z}$ one associates the Neumann-to-Dirichlet operator $Q_{z}^{\pm}\in \B(H^{s}(\Gamma),H^{s+1}(\Gamma))$, $s\ge -\frac32$, defined by $Q_{z}^{\pm}:=\hat\gamma_{0}^{\pm}\t K_{z}^{\pm}$. Thus, since $[\gamma_{0}]\DL_{z}\varphi=\varphi$, one has 
\be\label{NtD}
\forall z\in \rho(A)\cap\rho(A^{N}_{-})\cap\rho(A^{N}_{+})\,,\qquad (\gamma_{1}\DL_{z})^{-1}=Q_{z}^{+}-Q_{z}^{-}\,.
\ee
Therefore, by \eqref{krein4},
$$
(-(A^{N}_{{-}}\oplus A^{N}_{{+}})+z)^{-1}=(-A+z)^{-1}+\DL_{z}(Q_{z}^{-}-Q^{+}_{z})\gamma_{1}(-A+z)^{-1}
\,.
$$
\end{subsection}  
\begin{subsection}{Robin boundary conditions.}\label{robin} Let us consider the linear operator $A_{R}$ corresponding to Robin boundary conditions on the whole $\Gamma$; it is given by the direct sum $A_{R}=A^{R}_{{-}}\oplus A^{R}_{{+}}$, where 
 $$A^{R}_{\pm}:=A|\dom(A^{R}_{\pm})\,,\qquad 
\dom(A^{R}_{\pm})=\{u_{\pm}\in \dom(A_{\pm}^{\max}):\gamma_{1}^{\pm}u_{\pm}= b_{\pm}\,\gamma_{0}^{\pm}u_{\pm}\}\,.
$$ 
We suppose that $b_{\pm}\in M^{\frac12}(\Gamma)$ and that the functions $b_{\pm}$ are real-valued. Hence the operators $A^R_{\pm}$ are self-adjoint and $\dom(A^{\pm}_{R})\subseteq H^{2}(\Omega_{\pm})$ (use e.g \cite[Theorem 11]{GrubbTo}). In case $b_{+}(x)\not=b_{-}(x)$ for a.e. $x\in\Gamma$, the domain of $A^{R}_{-}\oplus A^{R}_{+}$ represents as
\begin{align}\label{AR}
&\dom(A^{R}_{-}\oplus A^{R}_{+})=\{u\in H^{2}(\RE^{n}\backslash\Gamma):\gamma_{1}^{\pm}u_{\pm}=b_{\pm}\,\gamma_{0}^{\pm}u_{\pm}\}
\nonumber
\\
=&\left\{u\in H^{2}(\RE^{n}\backslash\Gamma):(b_{+}-b_{-})\gamma_0u
=
[\gamma_{1}]u-\frac12(b_{+}+b_{-})[\gamma_{0}]u\,,\ 
(b_{+}-b_{-}) \gamma_{1}u
=
\frac12(b_{+}+b_{-})[\gamma_{1}]u-b_{+}b_{-}[\gamma_{0}]u
\right\}\,.
\end{align} 
Then, according to Corollary \ref{C1}, the self-adjoint operator $A^{R}_{-}\oplus A^{R}_{+}$  corresponds to the choice  $\Pi=1$ and $B_{\Theta}=B_{R}$, where
\be\label{BR}
B_{R}=-\frac{1}{[b]}\left[\,\begin{matrix}1  &\langle b\rangle  \\
\langle b\rangle  &{b_{+}b_{-}}
\end{matrix}\,\right]\,,
\quad
 \langle b\rangle:=\frac12(b_{+}+b_{-})\,,\quad \quad [b]:=b_{+}-b_{-}
\,,
\ee
provided that the operator 
\be\label{ThetaR}
\Theta=-\Theta_{R}\,,\quad \Theta_{R}:=
\left[\,\begin{matrix}1/[b]+\gamma_0\SL  &\langle b\rangle/[b]+\gamma_0\DL  \\
\langle b\rangle/[b]+\gamma_{1}\SL  &b_{+}b_{-}/[b]+\gamma_1\DL 
\end{matrix}\,\right]
\ee
is self-adjoint. This follows by the next lemma:
\begin{lemma}\label{LTR} If  $b_{\pm}\in M^{\frac32}(\Gamma)$ are real valued and $1/[b]\in L^{\infty}(\Gamma)$ , then
$$
\Theta_{R}:H^{\frac32}(\Gamma)\times H^{\frac32}(\Gamma)\subset H^{-\frac32}(\Gamma)\oplus H^{-\frac12}(\Gamma)\to H^{\frac32}(\Gamma)\oplus H^{\frac12}(\Gamma)
$$
 is self-adjoint.
\end{lemma}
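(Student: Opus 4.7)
The plan is to deduce the self-adjointness of $\Theta_{R}$ not by a direct perturbative argument but by invoking Lemma~\ref{bt}: the Robin Laplacians $A_{R}^{\pm}$ are already known to be self-adjoint in $L^{2}(\Omega_{\pm})$ under the standing hypothesis of the lemma (via \cite{GrubbTo}), so it suffices to identify $A_{R}^{-}\oplus A_{R}^{+}$ with the abstract operator $A_{1,-\Theta_{R}}$ furnished by Lemma~\ref{bt} and to read off the conclusion. A direct perturbative route seems blocked: although one can split $\Theta_{R}$ into the matrix $B$ of layer traces (which is self-adjoint on the natural domain $H^{\frac{1}{2}}(\Gamma)\oplus H^{\frac{3}{2}}(\Gamma)$) plus the multiplication matrix from \eqref{BR}, the latter is not $B$-bounded because the multiplication by $1/[b]$ shrinks the domain to $H^{\frac{3}{2}}(\Gamma)\times H^{\frac{3}{2}}(\Gamma)$.

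The key step is the domain identification. Remark~\ref{regularity} applied with $s_{1}=s_{2}=\tfrac{3}{2}$ gives $\min\{s_{1}+\tfrac{3}{2},s_{2}+\tfrac{1}{2}\}=2$, so every $u\in\dom(A_{1,-\Theta_{R}})$ automatically lies in $H^{2}(\RE^{n}\setminus\Gamma)$; extended traces reduce to ordinary ones and Corollary~\ref{C1} applies. The boundary constraint there reads $\Pi(\gamma_{0}u\oplus\gamma_{1}u)=B_{-\Theta_{R}}((-[\gamma_{1}]u)\oplus[\gamma_{0}]u)$; since $\Pi=1$ one has $B_{-\Theta_{R}}=B-\Theta_{R}=B_{R}$ as in \eqref{BR}. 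Clearing the denominator $[b]$ then reproduces the two scalar identities on $\gamma_{0}u,\gamma_{1}u$ displayed in \eqref{AR}, and these are equivalent (as already noted in the text preceding the lemma) to the Robin boundary conditions $\gamma_{1}^{\pm}u_{\pm}=b_{\pm}\gamma_{0}^{\pm}u_{\pm}$.

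Once $A_{R}^{-}\oplus A_{R}^{+}=A_{1,-\Theta_{R}}$ is in hand, the ``$\Longrightarrow$'' half of Lemma~\ref{bt} yields self-adjointness of $-\Theta_{R}$, and therefore of $\Theta_{R}$; note that $-\Theta_{R}$ is densely defined since $H^{\frac{3}{2}}(\Gamma)\times H^{\frac{3}{2}}(\Gamma)$ is dense in $H^{-\frac{3}{2}}(\Gamma)\oplus H^{-\frac{1}{2}}(\Gamma)$. The hypotheses of the lemma are used at two distinct places: $b_{\pm}\in M^{\frac{3}{2}}(\Gamma)$ together with $1/[b]\in L^{\infty}(\Gamma)$ guarantees, via the multiplier property recorded in Section~3.2, that all the entries $1/[b]$, $\langle b\rangle/[b]$, $b_{+}b_{-}/[b]$ still belong to $M^{\frac{3}{2}}(\Gamma)$, so that $\Theta_{R}$ really maps $H^{\frac{3}{2}}(\Gamma)\oplus H^{\frac{3}{2}}(\Gamma)$ into $H^{\frac{3}{2}}(\Gamma)\oplus H^{\frac{1}{2}}(\Gamma)$; the real-valuedness of $b_{\pm}$ is what supplies symmetry and ultimately self-adjointness of the $A_{R}^{\pm}$ in the form-theoretic step.

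The only delicate point is the sign and jump bookkeeping in the domain identification: the minus sign in $B_{R}$, the sign flip in the Krein parametrization $(-[\gamma_{1}]u)\oplus[\gamma_{0}]u$, and the use of the jump relations \eqref{jump} to pass between one-sided and two-sided traces have to cancel cleanly. This is essentially the calculation already displayed in \eqref{AR}, merely read in the opposite direction to match the abstract boundary condition $\Pi\beta_{1}u=\Xi\beta_{0}u$ of Lemma~\ref{bt}, and I do not expect further obstructions.
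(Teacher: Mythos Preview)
Your proposal is correct and follows essentially the same route as the paper: identify the abstract extension $A_{1,-\Theta_R}$ of Lemma~\ref{bt} with the concrete Robin operator $A_-^R\oplus A_+^R$ (whose self-adjointness is already known), and then read off the self-adjointness of $\Theta_R$ from the ``only if'' direction of Lemma~\ref{bt}. The paper carries out exactly this domain identification, writing $u_\circ:=u+\SL[\gamma_1]u-\DL[\gamma_0]u$ and checking directly that the abstract boundary condition $\gamma_0 u_\circ\oplus\gamma_1 u_\circ=-\Theta_R((-[\gamma_1]u)\oplus[\gamma_0]u)$ collapses to the two scalar relations in \eqref{AR}.

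One small point of care: you cite Corollary~\ref{C1} and Remark~\ref{regularity} to obtain the $H^2(\RE^n\setminus\Gamma)$ regularity and the simplified boundary condition, but both of those are stated in the paper for self-adjoint $\Theta$, which is what you are trying to prove. This is not a genuine gap, since the content of those results (the mapping properties of $\SL,\DL$ and the algebraic rewriting of the domain condition) does not use self-adjointness at all; the paper simply performs the same calculation directly from the definition in Lemma~\ref{bt}, avoiding the apparent circularity. If you retain the citations, you should remark that only the algebraic content is being used.
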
 
\begin{proof} $\Theta_{R}$ is a well-defined linear operator by \eqref{trace1} and \eqref{trace2}. By \eqref{ThetaR} and \eqref{AR}, setting $u_{\circ}:=u+\SL[\gamma_{1}]u-\DL[\gamma_{0}]u$, one has
\begin{align*}
&\{u\in H^{2}(\RE^{n}\backslash\Gamma):(-[\gamma_{1}]u)\oplus[\gamma_{0}]u\in \D(\Theta_{R})\,,\ \gamma_0u_{\circ}
\oplus\gamma_{1}u_{\circ}=-\Theta_{R}((-[\gamma_{1}]u)\oplus[\gamma_{0}]u)\}
\\=&\left\{u\in H^{2}(\RE^{n}\backslash\Gamma):\gamma_0u=
([\gamma_{1}]u-\langle b\rangle[\gamma_{0}]u)/[b]\,,\  
\gamma_{1}u=(\langle b\rangle[\gamma_{1}]u-b_{+}b_{-}[\gamma_{0}]u)/[b]
\right\}\\
=&\D(A_{-}^{R}\oplus A_{+}^{R})\,.
\end{align*}
Since $A_{-}^{R}\oplus A_{+}^{R}$ is self-adjoint, $\Theta_{R}$ is self-adjoint by Lemma \ref{bt}.
\end{proof} 
\begin{remark}
By Green's formula and Ehrling's lemma (here we procede as in the proof of \cite[Proposition 3.15]{BLL}{}) one has, for $\epsilon>0$ sufficiently small and $\frac12<s<1$,
\begin{align*}
\langle (-A_{\pm}^{R}+\lambda_{\circ})u_{\pm},u_{\pm}\rangle_{L^{2}(\Omega_{\pm})}
\ge &c_{\circ}\|\nabla u_{\pm}\|^{2}_{L^{2}(\Omega_{\pm})}-\|b_{\pm}\|_{L^{\infty}(\Gamma)}\|\gamma_{0}^{\pm}u_{\pm}\|^{2}_{L^{	2}(\Gamma)} \\
\ge &c_{\circ}\|\nabla u_{\pm}\|^{2}_{L^{2}(\Omega_{\pm})}-\|b_{\pm}\|_{L^{\infty}(\Gamma)}\|u_{\pm}\|^{2}_{H^{s}(\Omega_{\pm})}\\
\ge &c_{\circ}\|\nabla u_{\pm}\|^{2}_{L^{2}(\Omega_{\pm})}-\epsilon\, \|b_{\pm}\|_{L^{\infty}(\Gamma)}\|u_{\pm}\|^{2}_{H^{1}(\Omega_{\pm})}-c^{\pm}_{\epsilon}\|b_{\pm}\|_{L^{\infty}(\Gamma)}\|u_{\pm}\|^{2}_{L^{2}(\Omega_{\pm})}\\
\ge &-\kappa_{\epsilon}^{\pm}\|u_{\pm}\|^{2}_{L^{2}(\Omega_{\pm})}\,.
\end{align*}
Thus $A_{-}^{R}\oplus A_{+}^{R}$ is semibounded.
\end{remark}
According to Lemma \ref{LTR}, the Corollary \ref{C1} applies and we get 
$$
(A^{R}_{{-}}\oplus A^{R}_{{+}})u=Au-\frac4{[b]}\,\left((\langle b\rangle\,\gamma_{1}u-b_+b_{-}\gamma_{0}u)\,\delta_{\Gamma}+(\gamma_{1}u-\langle b\rangle\,\gamma_{0}u)\,\partial_{\underline a}\delta_{\Gamma}\right)\,.
$$
Moroever, for any $z\in \rho(A)\cap\rho(A^{R}_{-})\cap\rho(A^{R}_{+})$,
\begin{align*}
(-(A^{R}_{{-}}\oplus A^{R}_{{+}})+z)^{-1}u
=
(-A+z)^{-1}u
-G_{z} \left[\,\begin{matrix}1/{[b]}+\gamma_{0}\SL_{z}&\langle b\rangle/[b]+\gamma_0\DL_{z}\\
\langle b\rangle/[b]+\gamma_{1}\SL_{z}&b_{+}b_{-}/[b]+\gamma_{1}\DL_{z}
\end{matrix}\,\right]^{-1}\ 
\left[\,\begin{matrix}\gamma_{0}(-A+z)^{-1}u\\
\gamma_{1}(-A+z)^{-1}u\end{matrix}\,\right],
\end{align*}
where $G_{z}$ is defined in \eqref{Gz}. Let us notice that the case in which one has the same Robin boundary conditions on both sides of $\Gamma$ corresponds to the choice $b_{+}=b=-b_{-}$. Thus in this case one has 
$$
(A^{R}_{{-}}\oplus A^{R}_{{+}})u=Au-2b\,\gamma_{0}u\,\delta_{\Gamma}-(2/b)\,\gamma_{1}u\,\partial_{\underline a}\delta_{\Gamma}
$$
and
\begin{align*}
(-(A^{R}_{{-}}\oplus A^{R}_{{+}})+z)^{-1}u
=
(-A+z)^{-1}u
-G_{z} \left[\,\begin{matrix}1/{(2b)}+\gamma_{0}\SL_{z}&\gamma_0\DL_{z}\\
\gamma_{1}\SL_{z}&-b/2+\gamma_{1}\DL_{z}
\end{matrix}\,\right]^{-1}\ 
\left[\,\begin{matrix}\gamma_{0}(-A+z)^{-1}u\\
\gamma_{1}(-A+z)^{-1}u\end{matrix}\,\right].
\end{align*}
\end{subsection}  
\begin{subsection}{$\delta$-interactions.}\label{delta} 
Let  $\Pi(\phi\oplus\varphi)=\Pi_{1}(\phi\oplus\varphi):=\phi\oplus 0$ and $\Theta(\phi\oplus\varphi)=(-\Theta_{\alpha,D})\phi\oplus 0$, where  $\Theta_{\alpha,D}:=1/\alpha+\Theta_{D}=1/\alpha+\gamma_{0}\SL$ is the compression to $\ran(\Pi_{1})$ of $\Theta_{R}$ 
(here we consider the case $b_{+}=-b_{-}=\alpha/2$). This gives the boundary condition ${\alpha}\gamma_{0}u=[\gamma_{1}]u$ and so one obtains  the self-adjoint extensions usually called ''$\delta$-interactions on $\Gamma\,$'' (see \cite{BLL}, \cite{BEK} and references therein). In order to apply Corollary \ref{C1} we need the following
\begin{lemma}\label{lemmadelta}
If $\alpha\in M^{\frac32}(\Gamma)$ is real valued and $1/\alpha\in L^{\infty}(\Gamma)$, then $\Theta_{\alpha,D}:H^{\frac32}(\Gamma)\subseteq H^{-\frac32}(\Gamma)\to 
H^{\frac32}(\Gamma)$ is self-adjoint.
\end{lemma}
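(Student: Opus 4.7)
My plan is to mirror the strategy of the proof of Lemma \ref{LTR}. As a preliminary observation, $\Theta_{\alpha,D}$ is well defined from $H^{3/2}(\Gamma)$ into $H^{3/2}(\Gamma)$: multiplication by $1/\alpha$ stabilises $H^{3/2}(\Gamma)$ because $1/\alpha\in M^{3/2}(\Gamma)$ (this follows from $\alpha\in M^{3/2}(\Gamma)$ and $1/\alpha\in L^{\infty}(\Gamma)$ via the general implication ``$\psi\in M^{s}$ and $1/\psi\in L^{\infty}$ $\Rightarrow$ $1/\psi\in M^{s}$''), while $\gamma_{0}\SL$ sends $H^{3/2}(\Gamma)\subseteq H^{1/2}(\Gamma)$ into $H^{5/2}(\Gamma)\subseteq H^{3/2}(\Gamma)$ by Lemma \ref{regglob}. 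Symmetry with respect to the $H^{-3/2}$-$H^{3/2}$ duality reduces, on $H^{3/2}(\Gamma)\times H^{3/2}(\Gamma)$, to the $L^{2}(\Gamma)$ inner product, and then follows from the reality of $1/\alpha$ together with the self-adjointness of $\Theta_{D}=\gamma_{0}\SL$ already recorded in \eqref{ThetaD}.

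For the nontrivial inclusion $\dom(\Theta_{\alpha,D}')\subseteq\dom(\Theta_{\alpha,D})$ I invoke Lemma \ref{bt} in reverse. Let $\Pi_{1}(\phi\oplus\varphi):=\phi\oplus 0$ and $\Theta(\phi\oplus 0):=(-\Theta_{\alpha,D})\phi\oplus 0$ on $\dom(\Theta)=H^{3/2}(\Gamma)\oplus\{0\}$. Plugging $B_{\Theta}=\Theta+\Pi_{1}B\Pi_{1}'$ into Corollary \ref{C1}, the $\gamma_{0}\SL[\gamma_{1}]u$ contributions on the two sides cancel, and the domain prescription collapses to
$$\dom(A_{\Pi_{1},\Theta})=\{u\in H^{2}(\RE^{n}\backslash\Gamma):[\gamma_{0}]u=0,\ \alpha\,\gamma_{0}u=[\gamma_{1}]u\},$$
which is precisely the standard $\delta$-interaction domain (the requirement $-[\gamma_{1}]u\in H^{3/2}(\Gamma)$ built into $\dom(\Theta)$ is automatic: from $u\in H^{2}(\RE^{n}\backslash\Gamma)\cap H^{1}(\RE^{n})$ one has $\gamma_{0}u\in H^{3/2}(\Gamma)$, and then $\alpha\in M^{3/2}(\Gamma)$ gives $\alpha\gamma_{0}u\in H^{3/2}(\Gamma)$). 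Once this extension is known to be self-adjoint, Lemma \ref{bt} immediately forces the self-adjointness of $-\Theta_{\alpha,D}$, hence of $\Theta_{\alpha,D}$.

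To establish the self-adjointness of that extension I would apply the first representation theorem to the sesquilinear form
$$\mathfrak{a}_{\alpha}(u,v):=F(u,v)+\int_{\Gamma}\alpha\,\gamma_{0}u\,\overline{\gamma_{0}v}\,d\sigma_{\Gamma},\qquad \dom(\mathfrak{a}_{\alpha}):=H^{1}(\RE^{n}).$$
Closedness follows from $\gamma_{0}\in\B(H^{1}(\RE^{n}),H^{1/2}(\Gamma))$ together with $\alpha\in L^{\infty}(\Gamma)$, while semiboundedness follows from \eqref{F0} combined with Ehrling's lemma applied to $\|\gamma_{0}u\|_{L^{2}(\Gamma)}^{2}$, exactly as in the Robin estimate displayed just after Lemma \ref{LTR}. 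The associated self-adjoint operator acts as $A$ on each of $\Omega_{\pm}$, and Green's formula applied on $\Omega_{-}$ and $\Omega_{+}$ separately pins its domain down to the set above. I expect the one genuinely technical point to be the elliptic regularity $u|\Omega_{\pm}\in H^{2}(\Omega_{\pm})$ for $u$ in this domain: a Dirichlet bootstrap on each side needs $\gamma_{0}u\in H^{3/2}(\Gamma)$, obtained from the $H^{1}$ form domain by iterating the jump relation $[\gamma_{1}]u=\alpha\gamma_{0}u$ and the mapping properties of $\gamma_{0}\SL$, where the multiplier hypothesis $\alpha\in M^{3/2}(\Gamma)$ is precisely what makes the trace-product $\alpha\gamma_{0}u$ live in the Sobolev scale needed to close the bootstrap.
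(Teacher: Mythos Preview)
Your argument is correct but takes a genuinely different route from the paper. You mirror the Robin proof (Lemma \ref{LTR}): deduce the self-adjointness of $\Theta_{\alpha,D}$ from that of the $\delta$-interaction extension via Lemma \ref{bt}, establishing the latter through the sesquilinear form $\mathfrak{a}_{\alpha}$ and an elliptic-regularity bootstrap to pin down the operator domain in $H^{2}(\RE^{n}\backslash\Gamma)$. This works, but the regularity step (which you only sketch) is the real labour in your approach.

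The paper instead gives a short, purely operator-theoretic proof. It observes that the multiplication operator $(1/\alpha):H^{\frac32}(\Gamma)\subset H^{-\frac32}(\Gamma)\to H^{\frac32}(\Gamma)$ is self-adjoint (using $\alpha,\,1/\alpha\in M^{\frac32}(\Gamma)$), and then shows that $\gamma_{0}\SL$ is infinitesimally $(1/\alpha)$-bounded: writing $\gamma_{0}\SL\phi=(\gamma_{0}\SL\,\alpha)(1/\alpha)\phi$ with $\gamma_{0}\SL\,\alpha\in\B(H^{\frac12}(\Gamma),H^{\frac32}(\Gamma))$ by \eqref{trace1}, and applying the interpolation inequality \eqref{rst} to $\|(1/\alpha)\phi\|_{H^{\frac12}}$, yields
\[
\|\gamma_{0}\SL\phi\|_{H^{\frac32}}\le c\,\epsilon\,\|(1/\alpha)\phi\|_{H^{\frac32}}+c_{\epsilon}\|\phi\|_{H^{-\frac32}}\,.
\]
The conclusion then follows from the Hess--Kato theorem \cite[Corollary 1]{HK}. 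So the paper reverses the logical order compared to Lemma \ref{LTR}: here self-adjointness of $\Theta_{\alpha,D}$ is proved first, and the self-adjointness of $A_{\alpha,\delta}$ is then read off from Corollary \ref{C1}, avoiding any PDE regularity argument.
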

\begin{proof} By $1/\alpha\in M^{\frac32}(\Gamma)$, the linear operator $(1/\alpha):H^{\frac32}(\Gamma)\subset H^{-\frac32}(\Gamma)\to H^{\frac32}(\Gamma)$ is well-defined; by $\alpha\in M^{\frac32}(\Gamma)$ it is not difficult to check that it is self-adjoint. By \eqref{trace1}, $\gamma_{0}\SL  \alpha\in \B(H^{\frac12}(\Gamma),H^{\frac32}(\Gamma))$. Thus, by \eqref{rst}, for any $\epsilon>0$,
\begin{align*}
\|\gamma_{0}\SL  \phi\|_{H^{\frac32}(\Gamma)}=&
\|\gamma_{0}\SL  \alpha(1/\alpha)\phi\|_{H^{\frac32}(\Gamma)}
\le
c\,\|(1/\alpha)\phi\|_{H^{\frac12}(\Gamma)}\\
\le &c\,\epsilon\, 
\|(1/\alpha)\phi\|_{H^{\frac32}(\Gamma)}+{c}_{\epsilon}\,\|(1/\alpha)\phi\|_{H^{-\frac32}(\Gamma)}\\
\le & 
c\,\left(\epsilon\, 
\|(1/\alpha)\phi\|_{H^{\frac32}(\Gamma)}+{c}_{\epsilon}\,\|\phi\|_{H^{-\frac32}(\Gamma)}\,\right)
\end{align*}
and so the self-adjoint operator $\gamma_{0}\SL  :H^{\frac12}(\Gamma)\subset H^{-\frac32}(\Gamma)\to H^{\frac32}(\Gamma)$ is infinitesimally $(1/\alpha)\,$-bounded. The proof is then concluded by \cite[Corollary 1]{HK}.
\end{proof}
Therefore, by Corollary \ref{C1} one gets the  self-adjoint extension
$$
A_{\alpha,\delta}\,u=Au-{\alpha}\gamma_{0}u\,\delta_{\Gamma}\,,
$$
$$
\dom(A_{\alpha,\delta}):=\{u\in H^{1}(\RE^{n})\cap H^{2}(\RE^{n}\backslash\Gamma):{\alpha}\gamma_{0}u=[\gamma_{1}]u\}\,.
$$
By \eqref{krein2} and \eqref{DtN}, its resolvent is given by 
\begin{align*}
&(-A_{\alpha,\delta}+z)^{-1}=
(-A+z)^{-1}-\SL_{z}((1/\alpha)+\gamma_{0}\SL_{z})^{-1}\gamma_{0}(-A+z)^{-1}
\\=&(-A+z)^{-1}-\SL_{z}(P_{z}^{-}-P^{+}_{z})(\alpha+P_{z}^{-}-P^{+}_{z})^{-1}\alpha\gamma_{0}(-A+z)^{-1}
\,. 
\end{align*}
\end{subsection}  

\begin{subsection}{$\delta'$-interactions.}\label{deltaprimo} 
Let  $\Pi(\phi\oplus\varphi)=\Pi_{2}(\phi\oplus\varphi):=0\oplus\varphi$ and $\Theta(\phi\oplus\varphi)=0\oplus(-\Theta_{\beta,N}\varphi)$, where $\Theta_{\beta,N}:=-1/\beta+\Theta_{N}=-1/\beta+\gamma_{1}\DL$ is the compression to $\ran(\Pi_{2})$ of $\Theta_{R}$ 
(here we consider the case $b_{+}=-b_{-}=2/\beta$). This gives the boundary condition ${\beta}\gamma_{1}u=[\gamma_{0}]u$ and so one obtains  the self-adjoint extensions usually called ''$\delta'$-interactions on $\Gamma\,$'' (see \cite{BLL}and references therein). In order to apply Corollary \ref{C1} we need the following
\begin{lemma}\label{lemmadeltaprimo}
If $\beta\in M^{\frac12}(\Gamma)$ is real valued and $1/\beta\in L^{\infty}(\Gamma)$, then  $\Theta_{\beta,N}:H^{\frac32}(\Gamma)\subseteq H^{-\frac12}(\Gamma)\to 
H^{\frac12}(\Gamma)$ is self-adjoint.
\end{lemma}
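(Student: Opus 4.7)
The plan is to mirror the argument of Lemma \ref{lemmadelta}, but with the roles of the two summands exchanged. In the $\delta$-case $\gamma_{0}\SL$ gains a derivative, so it was treated as an infinitesimally small perturbation of the ``main'' multiplication operator $1/\alpha$; in the present $\delta'$-case $\gamma_{1}\DL$ loses a derivative, so the order-zero multiplier $-1/\beta$ cannot dominate it. The correct arrangement is therefore to take $\gamma_{1}\DL$ itself as the main self-adjoint operator and $-1/\beta$ as a bounded, symmetric, infinitesimally relatively bounded perturbation.

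I would first verify that $\gamma_{1}\DL:H^{3/2}(\Gamma)\subseteq H^{-1/2}(\Gamma)\to H^{1/2}(\Gamma)$ is self-adjoint in the duality sense of Section \ref{Pre1}. Symmetry with respect to the $H^{-1/2}$-$H^{1/2}$ pairing follows from the symmetry of $\gamma_{1}\DL$ as a bounded operator $H^{1/2}(\Gamma)\to H^{-1/2}(\Gamma)$ recalled just before \eqref{coercive1}. For the dual domain, if $\phi\in H^{-1/2}(\Gamma)$ lies in $\dom((\gamma_{1}\DL)')$, a duality argument together with the density of $H^{3/2}(\Gamma)$ in $H^{1/2}(\Gamma)$ forces $\gamma_{1}\DL\phi$ to be represented by an element of $H^{1/2}(\Gamma)$; the bijectivity $\hat\gamma_{1}\DL:H^{3/2}(\Gamma)\to H^{1/2}(\Gamma)$ provided by Lemma \ref{regglob} then upgrades $\phi$ itself to $H^{3/2}(\Gamma)$, showing $\dom((\gamma_{1}\DL)')=H^{3/2}(\Gamma)$. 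Next, since $\beta\in M^{1/2}(\Gamma)$ together with $1/\beta\in L^{\infty}(\Gamma)$ imply, by the multiplier criterion recalled in Subsection 3.2, that $1/\beta\in M^{1/2}(\Gamma)$, the operator $-1/\beta$ is bounded on $H^{1/2}(\Gamma)$ and symmetric with respect to the $H^{-1/2}$-$H^{1/2}$ pairing (by real-valuedness).

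For the infinitesimal relative bound, combining the multiplier estimate $\|(-1/\beta)\phi\|_{H^{1/2}(\Gamma)}\le c_{\beta}\|\phi\|_{H^{1/2}(\Gamma)}$ with the interpolation inequality \eqref{rst} and with $\|\phi\|_{H^{3/2}(\Gamma)}\le c\|\gamma_{1}\DL\phi\|_{H^{1/2}(\Gamma)}$ from Lemma \ref{regglob} yields, for every $\varepsilon>0$,
\[
\|(-1/\beta)\phi\|_{H^{1/2}(\Gamma)}\le \varepsilon\,c_{\beta}\,c\,\|\gamma_{1}\DL\phi\|_{H^{1/2}(\Gamma)}+c_{\beta}\,c_{\varepsilon}\|\phi\|_{H^{-1/2}(\Gamma)},\qquad \phi\in H^{3/2}(\Gamma).
\]
Applying the Kato--Rellich type statement \cite[Corollary 1]{HK} then gives self-adjointness of $\Theta_{\beta,N}=\gamma_{1}\DL+(-1/\beta)$. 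The main subtlety I expect to encounter is in identifying the dual domain of $\gamma_{1}\DL$: one must carefully unpack the abstract definitions of Section \ref{Pre1} to ensure that the dual domain is exactly $H^{3/2}(\Gamma)$ and nothing larger, rather than, say, some intermediate space between $H^{3/2}(\Gamma)$ and $H^{-1/2}(\Gamma)$.
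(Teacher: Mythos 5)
Your proposal is correct and follows essentially the same route as the paper: take $\hat\gamma_{1}\DL$ as the reference self-adjoint operator (the paper recognizes it as the already-established $\Theta_{N}$ from the Neumann subsection, while you re-derive its self-adjointness directly), show $1/\beta$ is infinitesimally $\hat\gamma_{1}\DL$-bounded via a multiplier estimate, the interpolation inequality \eqref{rst}, and the bounded invertibility of $\hat\gamma_{1}\DL$, and conclude by \cite[Corollary 1]{HK}. The only cosmetic difference is where the interpolation is inserted — the paper interpolates $\|\hat\gamma_{1}\DL\varphi\|$ between $H^{-\frac32}$, $H^{-\frac12}$, $H^{\frac12}$ using coercivity \eqref{coercive2} and \eqref{trace2}, whereas you interpolate $\|\varphi\|$ between $H^{-\frac12}$, $H^{\frac12}$, $H^{\frac32}$ and use the $H^{\frac12}\to H^{\frac32}$ boundedness of $(\hat\gamma_{1}\DL)^{-1}$ from Lemma \ref{regglob} — but these are equivalent rearrangements of the same estimate.
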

\begin{proof} 
By \eqref{coercive2}, one has $(\hat\gamma_{1}\DL  )^{-1}\in\B(H^{-\frac12}(\Gamma),H^{\frac12}(\Gamma))$. Thus, by \eqref{rst} and \eqref{trace2}, one gets, for any $\epsilon>0$,
\begin{align*}
\|(1/\beta)\varphi\|_{H^{\frac12}(\Gamma)}=&
\|(1/\beta)(\hat\gamma_{1}\DL  )^{-1}\hat\gamma_{1}\DL  \varphi\|_{H^{\frac12}(\Gamma)}
\le
c\,\|\hat\gamma_{1}\DL  \varphi\|_{H^{-\frac12}(\Gamma)}\\
\le &c\,\epsilon\, 
\|\hat\gamma_{1}\DL  \varphi\|_{H^{\frac12}(\Gamma)}+{c}_{\epsilon}\,\|\hat\gamma_{1}\DL   \varphi\|_{H^{-\frac32}(\Gamma)}\\
\le & c\left(\epsilon\, 
\|\hat\gamma_{1}\DL   \varphi\|_{H^{\frac12}(\Gamma)}+{c}_{\epsilon}\,\|\varphi\|_{H^{-\frac12}(\Gamma)}\,\right)
\end{align*}
and so the operator $(1/\beta):H^{\frac32}(\Gamma)\subseteq H^{-\frac12}(\Gamma)\to H^{\frac12}(\Gamma)$ is infinitesimally $\hat\gamma_{1}\DL\,  $-bounded. 
Since $\hat\gamma_{1}\DL  :H^{\frac32}(\Gamma)\subseteq H^{-\frac12}(\Gamma)\to H^{\frac12}(\Gamma)$ is self-adjoint, the proof is then concluded by \cite[Corollary 1]{HK}.
\end{proof}
Therefore, by Corollary \ref{C1} one gets the  self-adjoint extension
$$
A_{\beta,\delta'  }u=Au-{\beta}\gamma_{1}u\,\partial_{\underline a}\delta_{\Gamma}\,,
$$
$$
\dom(A_{\beta,\delta'  }):=\{u\in H^{2}(\RE^{n}\backslash\Gamma):[\gamma_{1}]u=0\,,\ {\beta}\gamma_{1}u=[\gamma_{0}]u\}\,.
$$
By \eqref{krein2} and \eqref{NtD}, its resolvent is given by
\begin{align*}
&(-A_{\beta,\delta'}+z)^{-1}=
(-A+z)^{-1}+\DL_{z}((1/\beta)-\gamma_{1}\DL_{z})^{-1}\gamma_{1}(-A+z)^{-1}
\\=&(-A+z)^{-1}+\DL_{z}(Q_{z}^{+}-Q^{-}_{z})(-\beta+Q_{z}^{+}-Q^{-}_{z})^{-1}\beta\gamma_{1}(-A+z)^{-1}
\,.
\end{align*}
\end{subsection}  
\end{section}
\begin{section}{Applications: boundary conditions on $\Sigma\subset\Gamma$.}
Using the scheme provided by Theorem \ref{T1}, we next give the construction of some models 
of elliptic operators with boundary conditions on a relatively open part $\Sigma\subset\Gamma$ of class $\C^{0,1}$. In such cases we need more regularity hypotheses on $\Omega$ (with respect to the ones used in the previous section); these are needed in the proofs, in order that Sobolev spaces of appropriate order can be properly defined.  \par
\begin{subsection}{Dirichlet boundary conditions.}\label{dir-loc} Given $\Sigma\subset \Gamma$ relatively open of class $\C^{0,1}$,  we denote by $\Pi_{\Sigma}$ the orthogonal projector in the Hilbert space $H^{\frac32}(\Gamma)$ such that $\ran(\Pi_{\Sigma})=H^{\frac32}_{\Sigma^{c}}(\Gamma)^{\perp}$. By \eqref{perp}, $\ran(\Pi_{\Sigma}')=H^{-\frac32}_{\overline\Sigma}(\Gamma)$, where $\Pi'_{\Sigma}=\Lambda^{3}\Pi_{\Sigma}\Lambda^{-3}$ is the dual projection. In the following, we also use the identifications $H^{\frac32}_{\Sigma^{c}}(\Gamma)^{\perp}\simeq H^{\frac32}(\Sigma)$ and $H^{\frac32}(\Sigma)'\simeq H^{-\frac32}_{\overline\Sigma}(\Gamma)$. We denote the orthogonal projection from  $H^{\frac32}(\Gamma)$ onto $H^{\frac32}(\Sigma)$ by $R_{\Sigma}$; it can be identified with the restriction map $R_{\Sigma}:H^{s}(\Gamma)\to H^{s}(\Sigma)$, $R_{\Sigma}\phi:=\phi|\Sigma$ (see the end of Subsection \ref{sobolev}).
\begin{theorem}\label{compr} Let $\Omega$ be of class $\C^{3,1}$, then  
 $$
\Theta_{D,\Sigma}: \dom(\Theta_{D,\Sigma})\subseteq H^{-\frac32}_{\overline\Sigma}(\Gamma)\to H^{\frac32}(\Sigma)\,,\quad \Theta_{D,\Sigma}\phi:= R_{\Sigma}\gamma_{0}\SL \Pi_{\Sigma}'\phi\equiv (\gamma_{0}\SL \phi)|\Sigma\,,
$$
$$
\dom(\Theta_{D,\Sigma}):=\{\phi\in H^{-\frac12}_{\overline\Sigma }(\Gamma):(\gamma_{0}\SL  \phi)|\Sigma\in H^{\frac32}(\Sigma)\}
$$
is self-adjoint.
\end{theorem}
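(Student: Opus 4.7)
I plan to realize $\Theta_{D,\Sigma}$ as the self-adjoint operator obtained, via Kato's first representation theorem, from the compression to the closed subspace $H^{-\frac12}_{\overline\Sigma}(\Gamma)\subset H^{-\frac12}(\Gamma)$ of the closed Hermitian form attached to the full self-adjoint operator $\Theta_D=\gamma_0\SL$ of \eqref{ThetaD}. Concretely, introduce the sesquilinear form
$$t_{D,\Sigma}(\phi,\psi):=\langle\phi,\gamma_0\SL\psi\rangle_{-\frac12,\frac12},\qquad \dom(t_{D,\Sigma}):=H^{-\frac12}_{\overline\Sigma}(\Gamma).$$
By the symmetry of $\gamma_0\SL$ recorded just after \eqref{trace2}, the form is Hermitian; by boundedness of $\gamma_0\SL\in\B(H^{-\frac12}(\Gamma),H^{\frac12}(\Gamma))$ it is continuous; and the coercivity estimate \eqref{coercive1} restricts verbatim to the closed subspace $H^{-\frac12}_{\overline\Sigma}(\Gamma)$.

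\textbf{Kato representation.} I would then view $t_{D,\Sigma}$ as a densely defined closed form on the pivot Hilbert space $H^{-\frac32}_{\overline\Sigma}(\Gamma)$: density of the embedding $H^{-\frac12}_{\overline\Sigma}(\Gamma)\hookrightarrow H^{-\frac32}_{\overline\Sigma}(\Gamma)$ is recorded in Subsection~\ref{sobolev}, and closedness follows because any form-norm Cauchy sequence is, by coercivity, Cauchy in $H^{-\frac12}_{\overline\Sigma}(\Gamma)$, hence converges there and a fortiori in $H^{-\frac32}_{\overline\Sigma}(\Gamma)$. Kato's first representation theorem then furnishes a unique self-adjoint operator $T$ on $H^{-\frac32}_{\overline\Sigma}(\Gamma)$ with $\dom(T)\subseteq H^{-\frac12}_{\overline\Sigma}(\Gamma)$, characterized by the condition that $\psi\mapsto t_{D,\Sigma}(\phi,\psi)$ be continuous in the $H^{-\frac32}_{\overline\Sigma}$-topology. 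Transporting $T$ through the canonical isomorphism $J_\Sigma:H^{\frac32}(\Sigma)\to H^{-\frac32}_{\overline\Sigma}(\Gamma)$ coming from the duality $H^{\frac32}(\Sigma)'\simeq H^{-\frac32}_{\overline\Sigma}(\Gamma)$ of Subsection~\ref{sobolev} yields $J_\Sigma^{-1}T:\dom(T)\to H^{\frac32}(\Sigma)$, which is self-adjoint in the dualistic sense of Section~\ref{Pre1} because the unitary conjugate $J_\Sigma^{-1}TJ_\Sigma$ is self-adjoint on $H^{\frac32}(\Sigma)$.

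\textbf{Identification and main obstacle.} It remains to match $J_\Sigma^{-1}T$ with $\Theta_{D,\Sigma}$. Using the symmetry of $\gamma_0\SL$ together with the fact that elements of $H^{-\frac12}_{\overline\Sigma}(\Gamma)$ annihilate $H^{\frac12}_{\Sigma^c}(\Gamma)$, the form rewrites as
$$t_{D,\Sigma}(\phi,\psi)=\langle(\gamma_0\SL\phi)|\Sigma,\,\psi\rangle_{H^{\frac12}(\Sigma),\,H^{-\frac12}_{\overline\Sigma}(\Gamma)},$$
so continuity in the $H^{-\frac32}_{\overline\Sigma}$-topology is equivalent, via the reflexive duality $H^{-\frac32}_{\overline\Sigma}(\Gamma)'\simeq H^{\frac32}(\Sigma)$ and the inclusion $H^{\frac32}(\Sigma)\hookrightarrow H^{\frac12}(\Sigma)$, to $(\gamma_0\SL\phi)|\Sigma\in H^{\frac32}(\Sigma)$, in which case the representing element is exactly $R_\Sigma\gamma_0\SL\Pi_\Sigma'\phi$; hence $J_\Sigma^{-1}T=\Theta_{D,\Sigma}$ with the stated domain. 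The only genuinely delicate step is this final bookkeeping: one must juggle three distinct dualities (the $H^{\frac12}$--$H^{-\frac12}$ pairing on $\Gamma$, the $H^{\frac12}(\Sigma)$--$H^{-\frac12}_{\overline\Sigma}(\Gamma)$ pairing, and the $H^{\frac32}(\Sigma)$--$H^{-\frac32}_{\overline\Sigma}(\Gamma)$ pairing) together with the projections $\Pi_\Sigma$, $\Pi_\Sigma'$ and the restriction $R_\Sigma$, so as to identify the abstract Kato operator with the concrete expression $R_\Sigma\gamma_0\SL\Pi_\Sigma'$; this is the form-compression bookkeeping the paper attributes to its Appendix.
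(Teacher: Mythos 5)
Your strategy is correct and follows the same circle of ideas as the paper's proof, namely form compression plus a representation theorem, but the concrete route differs in a meaningful way. The paper's proof conjugates everything by $\Lambda^3$ so as to work inside the single Hilbert space $H^{3/2}(\Gamma)$: it sets up $f_D(\phi_1,\phi_2)=\langle\Lambda^3\phi_1,\gamma_0\SL\Lambda^3\phi_2\rangle_{-\frac12,\frac12}$ with form domain $H^{5/2}(\Gamma)$, identifies the associated self-adjoint operator as $\t\Theta_D=\gamma_0\SL\Lambda^3$ on $H^{7/2}(\Gamma)$, and then invokes the Appendix Lemma \ref{lemma-compr} applied to the projection $\Pi_\Sigma$, computing $\breve\Theta_D$ and $\fk_{\Pi_\Sigma}$ explicitly before transporting back with $\U_\Sigma\,\cdot\,\Lambda^{-3}$. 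You instead stay entirely on the dual side, working with the un-conjugated form $t_{D,\Sigma}(\phi,\psi)=\langle\phi,\gamma_0\SL\psi\rangle_{-\frac12,\frac12}$ on the pivot $H^{-3/2}_{\overline\Sigma}(\Gamma)$ and applying Kato's first representation theorem directly. The two are equivalent under the unitary $\Lambda^3$, and the final identification with $R_\Sigma\gamma_0\SL\Pi_\Sigma'$ that you flag as delicate does indeed go through: the symmetry of $\gamma_0\SL$ lets you rewrite $t_{D,\Sigma}(\phi,\psi)=\langle(\gamma_0\SL\phi)|\Sigma,\psi\rangle_{H^{1/2}(\Sigma),H^{-1/2}_{\overline\Sigma}(\Gamma)}$, and Kato's domain criterion (continuity of $\psi\mapsto t_{D,\Sigma}(\phi,\psi)$ in the $H^{-3/2}_{\overline\Sigma}$-norm) is, via the reflexive duality $H^{-3/2}_{\overline\Sigma}(\Gamma)'\simeq H^{3/2}(\Sigma)$ and compatibility of the two Sobolev pairings on $\Sigma$, equivalent to $(\gamma_0\SL\phi)|\Sigma\in H^{3/2}(\Sigma)$, giving exactly the stated $\dom(\Theta_{D,\Sigma})$; self-adjointness in the dualistic sense then follows because $J_\Sigma^{-1}TJ_\Sigma$ is a unitary conjugate of the self-adjoint Kato operator $T$. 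Two small remarks: first, you should verify density of $H^{-1/2}_{\overline\Sigma}(\Gamma)$ in $H^{-3/2}_{\overline\Sigma}(\Gamma)$ explicitly rather than merely citing Subsection~\ref{sobolev} (it follows since the embedding $H^{3/2}(\Sigma)\hookrightarrow H^{1/2}(\Sigma)$ is injective with dense range, hence so is the dual map); second, your approach never touches $H^{7/2}(\Gamma)$, which is why the paper's $\C^{3,1}$-regularity hypothesis appears superfluous in your argument — the authors need it only because their proof passes through $\gamma_0\SL\in\B(H^{5/2}(\Gamma),H^{7/2}(\Gamma))$, which is required to make $\t\Theta_D$ a genuine unbounded operator in $H^{3/2}(\Gamma)$. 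Your route, if written out fully, is thus slightly more economical; what the paper's Appendix-Lemma formalism buys is a uniform scheme reusable for the Neumann, Robin, $\delta$ and $\delta'$ cases that follow.
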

\begin{proof} Let $f_{D}$ be the densely defined sesquilinear form in the Hilbert space $H^{\frac32}(\Gamma)$ 
$$f_{D}: H^{5/2}(\Gamma)\times H^{5/2}(\Gamma)\subseteq H^{\frac32}(\Gamma)\times H^{\frac32}(\Gamma)\to \RE$$  
$$
f_{D}(\phi_{1},\phi_{2}):=\langle \Lambda^{3}\phi_{1},\gamma_{0}\SL  \Lambda^{3}\phi_{2}
\rangle_{-\frac12,\frac12}=\langle \gamma_{0}\SL  \Lambda^{3}\phi_{1},\Lambda^{3}\phi_{2}
\rangle_{\frac12,-\frac12}\,.
$$ 
By \eqref{coercive1},
$$ 
f_{D}(\phi,\phi)\ge c_{0} \,\|\Lambda^{3}\phi\|^{2}_{-\frac12}=c_{0}\,\|\phi\|^{2}_{H^{5/2}(\Gamma)}
$$
and so $f_{D}$ is strictly positive and closed. 
Since, for any $\phi_{1}\in H^{7/2}(\Gamma)$ and for any $\phi_{2}\in H^{5/2}(\Gamma)$,
$$ 
f_{D}(\phi_{1},\phi_{2})=\langle \Lambda^{\frac32}\gamma_{0}\SL  \Lambda^{3}\phi_{1}
,\Lambda^{\frac32}\phi_{2},\rangle_{L^{2}(\Gamma)}=\langle \gamma_{0}\SL  \Lambda^{3}\phi_1,\phi_{2}\rangle_{H^{\frac32}(\Gamma)}\,,
$$
$f_{D}$ is the sesquilinear form associated with the  
self-adjoint operator in $H^{\frac32}(\Gamma)$ defined by 
$$\t\Theta_{D}:=\gamma_{0}\SL  \Lambda^{3}:H^{7/2}(\Gamma)\subseteq H^{\frac32}(\Gamma)\to H^{\frac32}(\Gamma)\,.
$$
The domain $$\dom(f_{D})\cap\ran(\Pi_{\Sigma})=H^{5/2}(\Gamma)\cap H^{\frac32}_{\Sigma^{c}}(\Gamma)^{\perp}=H^{5/2}(\Gamma)\cap\Lambda^{-3}H^{-\frac32}_{\overline\Sigma}(\Gamma)=\Lambda^{-3}
H^{-\frac12}_{\overline\Sigma }(\Gamma)$$ is dense in $\Lambda^{-3}H^{-\frac32}_{\overline\Sigma }(\Gamma)$, and we can use Lemma \ref{lemma-compr} to determine  the positive self-adjoint operator  $\t\Theta_{D,\Sigma}$ in $\ran(\Pi_{\Sigma})$ associated to the restriction of $f_{D}$ to $\ran(\Pi_{\Sigma})$. Then $$\Theta_{D,\Sigma}:=
{\rm U}_{\Sigma}\t\Theta_{D,\Sigma}\Lambda^{-3}\,,\quad \dom(\Theta_{D,\Sigma}):=\Lambda^{3}\dom(\t\Theta_{D,\Sigma})\,,
$$
is self-adjoint, where the map ${\rm U}_{\Sigma}(\Pi_{\Sigma}\phi)=\phi|\Sigma$ provides the unitary isomorphism $H^{\frac32}_{\Sigma^{c} }(\Gamma)^{\perp}\simeq H^{\frac32}(\Sigma)$.   To conclude the proof we need to determine the operator  $\breve\Theta_{D}:=(\t\Theta_{D})\breve{\,}$ and the subspace $K_{\Sigma}:=\fk_{\Pi_{\Sigma}}$ (we refer to the Appendix for the notations).
Let $H_{D}$ be the Hilbert space given by $\dom(f_{D})=H^{5/2}(\Gamma)$ endowed with the 
scalar product  $\langle\phi_{1},\phi_{2}\rangle_{D}:=f_{D}(\phi_{1},\phi_{2})$; let $H_{D}'$ denote  its dual space.
Since
$$
f_{D}(\phi_{1},\phi_{2})=\langle \gamma_{0}\SL  \Lambda^{3}\phi_{1},\Lambda^{3}\phi_{2}\rangle_{\frac12,-\frac12}=
\langle \Lambda^{\frac12}\gamma_{0}\SL  \Lambda^{3}\phi_{1},\Lambda^{5/2}\phi_{2}
\rangle_{L^{2}(\Gamma)}\,,
$$
one has 
$$ H_D^{\prime}=H^{\frac12}(\Gamma)\,,\quad 
\langle \varphi,\phi\rangle_{H'_{D},H_{D}}=\langle \Lambda^{\frac12}\varphi,\Lambda^{5/2}\phi\rangle_{L^{2}(\Gamma)}
$$ 
and
$$
\breve\Theta _{D}:H^{5/2}(\Gamma)\to H^{\frac12}(\Gamma)\,,\quad \breve\Theta _{D}=\gamma_{0}\SL  \Lambda^{3}\,.
$$
Moreover
\begin{align*}
K_{\Sigma}=&\{\varphi\in H^{\frac12}(\Gamma):\forall\phi\in \Lambda^{-3}
H^{-\frac12}_{\overline\Sigma }(\Gamma)\,,\quad\langle \Lambda^{\frac12}\varphi,\Lambda^{5/2}\phi\rangle_{L^{2}(\Gamma)}=0\}\\
=&\{\varphi\in H^{\frac12}(\Gamma):\forall\phi\in H^{-\frac12}_{\overline\Sigma }(\Gamma)\,,\ \langle\Lambda^{-\frac12}\phi,\Lambda^{\frac12}\varphi\rangle_{L^{2}(\Gamma)}=0\}\\
=&\{\varphi\in H^{\frac12}(\Gamma):\forall\phi\in H^{-\frac12}_{\overline\Sigma }(\Gamma)\,,\ \langle\phi,\varphi\rangle_{-\frac12,\frac12}=0\}\\
=&
H^{\frac12}_{\Sigma^{c}}(\Gamma)\,.
\end{align*}
Therefore, by Lemma \ref{lemma-compr},  $\t\Theta_{D,\Sigma}$ is self-adjoint on the domain 
$$
\dom(\t\Theta_{D,\Sigma}):=\{\phi\in \Lambda^{-3}H^{-\frac12}_{\overline\Sigma }(\Gamma):\exists\t\phi\in\ran(\Pi_{\Sigma})\ \text{s.t.}\, \gamma_{0}\SL  \Lambda^{3}\phi-\t\phi\in H^{\frac12}_{\Sigma^{c}}(\Gamma)\}
$$
and $\t\Theta_{D,\Sigma}\phi=\t\phi$. Then ${\rm U}_{\Sigma}(\t\Theta_{D,\Sigma}\Lambda^{-3}\phi)=\t\phi|\Sigma=(\hat\gamma_{0}\SL\phi)|\Sigma$ and 
\begin{align*}
\dom(\Theta_{D,\Sigma})=&\{\phi\in H^{-\frac12}_{\overline\Sigma }(\Gamma):\exists\t\phi\in\ran(\Pi_{\Sigma})\ \text{s.t.}\, \gamma_{0}\SL \phi-\t\phi\in H^{\frac12}_{\Sigma^{c}}(\Gamma)\}\\
\subseteq &  \{\phi\in H^{-\frac12}_{\overline\Sigma }(\Gamma):(\gamma_{0}\SL \phi)|\Sigma\in H^{\frac32}(\Sigma)\}\\
\subseteq & \{\phi\in H^{-\frac12}_{\overline\Sigma }(\Gamma):\exists\t\phi\in\ran(\Pi_{\Sigma})\ \text{s.t.}\, (\gamma_{0}\SL \phi)|\Sigma=\t\phi|\Sigma\}\\
=&\{\phi\in H^{-\frac12}_{\overline\Sigma }(\Gamma):\exists\t\phi\in\ran(\Pi_{\Sigma})\ \text{s.t.}\, \gamma_{0}\SL \phi-\t\phi\in H^{\frac12}_{\Sigma^{c}}(\Gamma)\}=\dom(\Theta_{D,\Sigma})\,.
\end{align*}
\end{proof}
\begin{corollary} The linear operator in $L^{2}(\RE^{n})$ defined by $A_{D,\Sigma}:=(A_{{-}}^{\max}\oplus A_{{+}}^{\max})|\dom(A_{D,\Sigma})$, where 
\begin{align*}
\D(A_{D,\Sigma})
=&
\{u\in H^{1}(\RE^{n})\cap(\dom(A_{-}^{\max})\oplus\dom(A_{+}^{\max})): 
[\hat\gamma_{1}]u\in \dom(\Theta_{D,\Sigma})\,,\ (\gamma_{0}u)|\Sigma=0\}
\\
\subseteq &\{u\in  H^{1}(\RE^{n})\cap(\dom(A_{-}^{\max})\oplus\dom(A_{+}^{\max})): (\gamma^{\pm}_{0}u)|\Sigma=0\,,\ ([\hat\gamma_{1}]u)|\overline\Sigma^{c}=0\}
\,,
\end{align*}
is self-adjoint and its resolvent is given by 
\begin{align*}
(-A_{D,\Sigma}+z)^{-1}=&
(-A+z)^{-1}-\SL_{z}\Pi_{\Sigma}'(R_{\Sigma}\gamma_{0}\SL_{z}\Pi_{\Sigma}')^{-1}R_{\Sigma}\gamma_{0}(-A+z)^{-1}
\,,
\end{align*}
where $R_{\Sigma}$ is the restriction operator $R_{\Sigma}\phi=\phi|\Sigma$ and  $\Pi_{\Sigma}'$ acts there as the inclusion map $\Pi_{\Sigma}':H^{-\frac32}_{\overline\Sigma}(\Gamma)\to H^{-\frac32}(\Gamma)$. 
\end{corollary}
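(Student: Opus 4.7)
The plan is to derive the Corollary as a direct application of Theorem \ref{T1} with the specific choice
\[
\Pi:=\Pi_{\Sigma}\oplus 0,\qquad \Theta:=(-U_{\Sigma}^{-1}\Theta_{D,\Sigma})\oplus 0,
\]
where $\Pi_{\Sigma}$ is the orthogonal projector of $H^{3/2}(\Gamma)\oplus H^{1/2}(\Gamma)$ onto $H^{3/2}_{\Sigma^{c}}(\Gamma)^{\perp}\oplus\{0\}$ and $\Theta_{D,\Sigma}$ is the operator constructed in Theorem \ref{compr}. The dual projector is $\Pi'=\Pi_{\Sigma}'\oplus 0$ with range $H^{-3/2}_{\overline{\Sigma}}(\Gamma)\oplus\{0\}$, and the self-adjointness of $\Theta$ as an operator $\ran(\Pi')\to\ran(\Pi)$ is inherited from that of $\Theta_{D,\Sigma}$ through the unitary $U_{\Sigma}:\ran(\Pi_{\Sigma})\to H^{3/2}(\Sigma)$. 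Theorem \ref{T1} then produces a self-adjoint extension $A_{\Pi,\Theta}$ of $A^{\min}_{-}\oplus A^{\min}_{+}$, which I will identify with the claimed $A_{D,\Sigma}$.

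Next I would translate the abstract description of $\dom(A_{\Pi,\Theta})$ into the form of the statement. The condition $(-[\hat\gamma_{1}]u)\oplus[\hat\gamma_{0}]u\in\dom(\Theta)$ forces $[\hat\gamma_{0}]u=0$ and $[\hat\gamma_{1}]u\in\dom(\Theta_{D,\Sigma})\subseteq H^{-1/2}_{\overline{\Sigma}}(\Gamma)$. By Lemma \ref{L2}, $u=u_{\circ}+\SL[\hat\gamma_{1}]u$ with $u_{\circ}\in H^{2}(\RE^{n})$, so Lemma \ref{mapprop} and the improved regularity of $[\hat\gamma_{1}]u$ yield $u\in H^{1}(\RE^{n})$. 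Applying $U_{\Sigma}$ to the abstract boundary condition
\[
\Pi_{\Sigma}\gamma_{0}\bigl(u+\SL[\hat\gamma_{1}]u\bigr)=U_{\Sigma}^{-1}\Theta_{D,\Sigma}[\hat\gamma_{1}]u
\]
and inserting the explicit expression $\Theta_{D,\Sigma}\phi=(\gamma_{0}\SL\phi)|\Sigma$ of Theorem \ref{compr} causes the $\SL$-terms to cancel, leaving $(\gamma_{0}u)|\Sigma=0$. The displayed inclusion in the statement then follows, since $(\gamma_{0}^{\pm}u)|\Sigma=0$ is equivalent to $(\gamma_{0}u)|\Sigma=0$ combined with $[\hat\gamma_{0}]u=0$, and $([\hat\gamma_{1}]u)|\overline{\Sigma}^{c}=0$ is just the support condition defining $H^{-1/2}_{\overline{\Sigma}}(\Gamma)$.

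For the resolvent I would insert the chosen $(\Pi,\Theta)$ into Krein's formula \eqref{krein1}. Because the $\DL$-entries of the block-matrix $M_{z}$ are annihilated when sandwiched between $\Pi$ and $\Pi'$, only the $(1,1)$-block survives and the compression telescopes into
\[
\Theta+\Pi M_{z}\Pi'=-U_{\Sigma}^{-1}R_{\Sigma}\gamma_{0}\SL\Pi_{\Sigma}'+U_{\Sigma}^{-1}R_{\Sigma}\gamma_{0}(\SL-\SL_{z})\Pi_{\Sigma}'=-U_{\Sigma}^{-1}R_{\Sigma}\gamma_{0}\SL_{z}\Pi_{\Sigma}'.
\]
Combining this with $G_{z}\Pi'=\SL_{z}\Pi_{\Sigma}'$ and $\Pi(\gamma_{0}R_{z}u\oplus\gamma_{1}R_{z}u)=U_{\Sigma}^{-1}R_{\Sigma}\gamma_{0}R_{z}u$ yields the Krein formula of the statement after the paired $U_{\Sigma}$-identifications cancel.

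The delicate aspect, rather than a genuine obstacle, is bookkeeping: keeping the identifications between $\ran(\Pi_{\Sigma})$ and $H^{3/2}(\Sigma)$ (via $U_{\Sigma}$), and between $\ran(\Pi_{\Sigma}')$ and $H^{-3/2}_{\overline{\Sigma}}(\Gamma)$ (via the natural inclusion), consistent throughout, so that the abstract compression supplied by Theorem \ref{compr} is realized as the concrete operator $R_{\Sigma}\gamma_{0}\SL_{z}\Pi_{\Sigma}'$ that appears in the Corollary. Once these identifications are pinned down, both the domain description and the inversion of $\Theta+\Pi M_{z}\Pi'$ follow mechanically from Theorem \ref{T1} together with Theorem \ref{compr}.
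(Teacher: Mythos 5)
Your proposal follows the paper's proof essentially step for step: the same choice $\Pi=\Pi_{\Sigma}\oplus 0$, $\Theta=(-U_{\Sigma}^{-1}\Theta_{D,\Sigma})\oplus 0$, specialization of Theorem \ref{T1} and Theorem \ref{compr}, the same cancellation of the $\SL$-terms in the boundary condition, and the same telescoping identity $-\Theta_{D,\Sigma}\phi+R_{\Sigma}\gamma_{0}(\SL-\SL_{z})\phi=-(\gamma_{0}\SL_{z}\phi)|\Sigma$ in the Kre\u\i n formula. The only (inconsequential) deviation is a sign in the decomposition from Lemma \ref{L2}, which should read $u=u_{\circ}-\SL[\hat\gamma_{1}]u$, and the fact that the paper obtains $u\in H^{1}(\RE^{n})$ via Remark \ref{regularity} and \eqref{H2} rather than directly through Lemma \ref{mapprop}; both routes are valid.
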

\begin{proof}
By Theorem \ref{compr} and Theorem \ref{T1},  taking $\Pi(\phi\oplus\varphi)=\Pi_{\Sigma}\phi\oplus 0$ and $\Theta(\phi\oplus\varphi)=(-{\rm U}_{\Sigma}^{-1}\Theta_{D,\Sigma}\phi)\oplus 0$ one gets the self-adjoint extension $(A_{{-}}^{\max}\oplus A_{{+}}^{\max})|\dom(A_{D,\Sigma})$ with domain (contained in $H^{1}(\RE^{n}\backslash\Gamma)$  by $\dom(\Theta_{D,\Sigma})\subseteq H^{-\frac12}(\Gamma)$ and Remark \ref{regularity}),
\begin{align*}
&
\D(A_{D,\Sigma})\\
=&\{u\in H^{1}(\RE^{n}\backslash\Gamma)\cap(\dom(A_{-}^{\max})\oplus\dom(A_{+}^{\max})): 
[\gamma_{0}]u=0\,,\ 
[\hat\gamma_{1}]u\in \dom(\Theta_{D,\Sigma})\,,\ \Pi_{\Sigma}\gamma_{0}(u+\SL[\hat\gamma_{1}]u)={\rm U}_{\Sigma}^{-1}\Theta_{D,\Sigma}[\hat\gamma_{1}]u
\}
\\
=&\{u\in H^{1}(\RE^{n})\cap(\dom(A_{-}^{\max})\oplus\dom(A_{+}^{\max})): 
[\hat\gamma_{1}]u\in \dom(\Theta_{D,\Sigma})\,,\ 
(\gamma_{0}u)|\Sigma+(\gamma_{0}\SL[\hat\gamma_{1}]u)|\Sigma=
(\gamma_{0}\SL[\hat\gamma_{1}]u)|\Sigma\}
\\
=&\{u\in H^{1}(\RE^{n})\cap(\dom(A_{-}^{\max})\oplus\dom(A_{+}^{\max})): 
[\hat\gamma_{1}]u\in \dom(\Theta_{D,\Sigma})\,,\ (\gamma_{0}u)|\Sigma=0\}
\,.
\end{align*}
The formula giving $(-A_{D,\Sigma}+z)^{-1}$ is consequence of \eqref{krein1}, since 
$$(-{\rm U}_{\Sigma}^{-1}\Theta_{D,\Sigma}+\Pi_{\Sigma}\gamma_{0}(\SL-\SL_{z})\Pi_{\Sigma}')^{-1}\Pi_{\Sigma}=
(-\Theta_{D,\Sigma}+R_{\Sigma}\gamma_{0}(\SL-\SL_{z})\Pi_{\Sigma}')^{-1}R_{\Sigma}
$$ 
and 
$$
-\Theta_{D,\Sigma}\phi+R_{\Sigma}\gamma_{0}(\SL-\SL_{z})\phi=-\gamma_{0}\SL\phi|\Sigma
+\gamma_{0}\SL\phi|\Sigma-\gamma_{0}\SL_{z}\phi|\Sigma=-\gamma_{0}\SL_{z}\phi|\Sigma\,,
$$
\end{proof}

\begin{remark} Since $\supp([\gamma_{1}]u)\subseteq\overline\Sigma$ for any $u\in\dom(A_{D,\Sigma})$, one has 
$[\hat\gamma_{1}]u\,\delta_{\Gamma}=[\hat\gamma_{1}]u\,\delta_{\overline\Sigma}$; thus
$$A_{D,\Sigma}u=Au-[\hat\gamma_{1}]u\,\delta_{\overline\Sigma}$$ and so 
$(A_{D,\Sigma}u)|{\overline\Sigma}^{c}=(Au)|\overline\Sigma^{c}$. This also shows that $A_{D,\Sigma}$ is a self-adjoint extension of the symmetric operator $A|\C^{\infty}_{\comp}(\RE^{n}\backslash\overline\Sigma)$. Hence it
depends only on $\Sigma$ and not on the whole $\Gamma$: one would obtain the same operator by considering any other bounded domain $\Omega_{\circ}$ with boundary $\Gamma_{\!\circ }$ such that $\Sigma\subset\Gamma_{\!\circ}$. 
\end{remark}
\begin{remark}
According to the definition of  $\dom(\Theta_{D,\Sigma})$, the Lemma \ref{regularity2} applies and one gets 
$$
\dom(A_{D,\Sigma})\subseteq H^{2}(\RE^{n}\backslash(\overline\Sigma\cup(\partial\Sigma)^{\epsilon}))\,.
$$
\end{remark}
\begin{remark} By the results contained in the proof of Theorem \ref{compr}, for the domain of the sesquilinear form $f_{D,\Sigma}$ associated to the self-adjoint operator $\t\Theta_{D,\Sigma}$ one has the relation $\dom(f_{D,\Sigma})\subseteq H^{5/2}(\Gamma)$. Therefore Theorem \ref{teo-schatten2} applies and so, by Corollary \ref{spectrum},  $\sigma_{ac}(A_{D,\Sigma})=\sigma_{ac}(A)$ and the wave operators $W_{\pm}(A_{D,\Sigma},A)$, $W_{\pm}(A,A_{D,\Sigma})$ exist and are complete. 
\end{remark}
\begin{remark} In case $n=3$ and both $\Gamma$ and $\Sigma$ are smooth, by \cite[Theorem 2.4]{CS}, one has that 
\be\label{CS1}
(\hat\gamma_{0}\SL\phi)|\Sigma\in H^{s+1}(\Sigma)\iff \phi\in H^{s}_{\overline\Sigma}(\Gamma)\,,\qquad-1< s<0\,.
\ee 
Therefore $\dom(\Theta_{D,\Sigma})\subseteq H^{s}(\Gamma)$, $-\frac 12\le s<0$, and so, by Remark \ref{regularity} and \eqref{H2}, 
$$
\D(A_{D,\Sigma})\subseteq H^{\frac32-}(\RE^{3})\,.
$$
\end{remark}
\end{subsection}  
\begin{subsection}{Neumann boundary conditions.}\label{neu-loc}
Given $\Sigma\subset \Gamma$ relatively open of class $\C^{0,1}$,  here we denote by $\Pi_{\Sigma}$ the orthogonal projector in the Hilbert space $H^{\frac12}(\Gamma)$ such that $\ran(\Pi_{\Sigma})=H^{\frac12}_{\Sigma^{c}}(\Gamma)^{\perp}$. By \eqref{perp}, $\ran(\Pi_{\Sigma}')=H^{-\frac12}_{\overline\Sigma}(\Gamma)$, where $\Pi'_{\Sigma}=\Lambda\Pi_{\Sigma}\Lambda^{-1}$ is the dual projection. In the following, we also use the identifications $H^{\frac12}_{\Sigma^{c}}(\Gamma)^{\perp}\simeq H^{\frac12}(\Sigma)$ and $H^{\frac12}(\Sigma)'\simeq H^{-\frac12}_{\overline\Sigma}(\Gamma)$. We denote the orthogonal projection from  $H^{\frac12}(\Gamma)$ onto $H^{\frac12}(\Sigma)$ by $R_{\Sigma}$; it can be identified with the restriction map $R_{\Sigma}:H^{s}(\Gamma)\to H^{s}(\Sigma)$, $R_{\Sigma}\phi:=\phi|\Sigma$ (see the end of Subsection \ref{sobolev}).\par 
Similarly to Example \ref{dir-loc} one has the following
\begin{theorem}\label{comprN} Let $\Omega $ be of class $\C^{2,1}$, then 
$$\Theta_{N,\Sigma}:\dom(\Theta_{N,\Sigma})\subseteq H^{-\frac12}_{\overline\Sigma}(\Gamma)\to H^{\frac12}(\Sigma)\,,\quad \Theta_{N,\Sigma}:=R_{\Sigma}\hat\gamma_{1}\DL\Pi'_{\Sigma}\varphi\equiv (\hat\gamma_{1}\DL\varphi)|\Sigma\,,  
$$
$$
\dom(\Theta_{N,\Sigma}):=\{\varphi\in H^{\frac12}_{\overline\Sigma}(\Gamma):(\hat\gamma_{1}\DL\varphi)|\Sigma\in H^{\frac12}(\Sigma)\}\,,
$$
is self-adjoint.
\end{theorem}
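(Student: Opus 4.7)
The plan is to mirror the proof of Theorem~\ref{compr}, substituting $\gamma_{0}\SL$ with $-\hat\gamma_{1}\DL$ (the coercive operator, by \eqref{coercive2}), the weight $\Lambda^{3}$ with $\Lambda$, and the underlying Hilbert space $H^{3/2}(\Gamma)$ with $H^{1/2}(\Gamma)$. First I introduce the sesquilinear form
\begin{equation*}
f_{N}\colon H^{3/2}(\Gamma)\times H^{3/2}(\Gamma)\subseteq H^{1/2}(\Gamma)\times H^{1/2}(\Gamma)\to\RE\,,\qquad f_{N}(\varphi_{1},\varphi_{2}):=-\langle\Lambda\varphi_{1},\hat\gamma_{1}\DL\Lambda\varphi_{2}\rangle_{\frac12,-\frac12}\,.
\end{equation*}
The mapping property \eqref{trace2} makes $f_{N}$ well defined, the symmetry of $\hat\gamma_{1}\DL$ (stated before \eqref{coercive1}) makes it symmetric, and the coercivity \eqref{coercive2} gives $f_{N}(\varphi,\varphi)\ge c_{1}\|\Lambda\varphi\|^{2}_{H^{1/2}(\Gamma)}=c_{1}\|\varphi\|^{2}_{H^{3/2}(\Gamma)}$, so that $f_{N}$ is closed and strictly positive in $H^{1/2}(\Gamma)$. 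Using \eqref{regglob2} (legitimate, since $\Omega$ is of class $\C^{2,1}$) the associated self-adjoint operator is
$$\tilde\Theta_{N}:=-\hat\gamma_{1}\DL\Lambda\colon H^{5/2}(\Gamma)\subseteq H^{1/2}(\Gamma)\to H^{1/2}(\Gamma)\,.$$

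Next I compress $f_{N}$ onto $\ran(\Pi_{\Sigma})=H^{1/2}_{\Sigma^{c}}(\Gamma)^{\perp}=\Lambda^{-1}H^{-1/2}_{\overline\Sigma}(\Gamma)$ by means of Lemma~\ref{lemma-compr} of the Appendix. A direct intersection computation gives $\dom(f_{N})\cap\ran(\Pi_{\Sigma})=\Lambda^{-1}H^{1/2}_{\overline\Sigma}(\Gamma)$, dense in $\ran(\Pi_{\Sigma})$. Taking as dual space $H_{N}':=H^{-1/2}(\Gamma)$ endowed with the pairing $\langle\psi,\varphi\rangle_{H_{N}',H_{N}}:=\langle\Lambda^{-1/2}\psi,\Lambda^{3/2}\varphi\rangle_{L^{2}(\Gamma)}$, the extension $\breve\Theta_{N}:=-\hat\gamma_{1}\DL\Lambda\in\B(H^{3/2}(\Gamma),H^{-1/2}(\Gamma))$ satisfies $f_{N}(\varphi_{1},\varphi_{2})=\langle\breve\Theta_{N}\varphi_{1},\varphi_{2}\rangle_{H_{N}',H_{N}}$, and the annihilator subspace of the Appendix lemma computes to
\begin{equation*}
\fk_{\Pi_{\Sigma}}=\{\psi\in H^{-1/2}(\Gamma)\colon\langle\psi,\chi\rangle_{-\frac12,\frac12}=0\ \forall\,\chi\in H^{1/2}_{\overline\Sigma}(\Gamma)\}=H^{-1/2}_{\Sigma^{c}}(\Gamma)\,,
\end{equation*}
using \eqref{perp2}. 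Hence the lemma produces a self-adjoint operator $\tilde\Theta_{N,\Sigma}$ in $\ran(\Pi_{\Sigma})$ with
\begin{equation*}
\dom(\tilde\Theta_{N,\Sigma})=\{\varphi\in\Lambda^{-1}H^{1/2}_{\overline\Sigma}(\Gamma)\colon\exists\,\tilde\varphi\in\ran(\Pi_{\Sigma}),\ -\hat\gamma_{1}\DL\Lambda\varphi-\tilde\varphi\in H^{-1/2}_{\Sigma^{c}}(\Gamma)\}\,,\quad\tilde\Theta_{N,\Sigma}\varphi:=\tilde\varphi\,.
\end{equation*}

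The operator in the statement is then recovered by conjugation through the unitary isomorphisms $\U_{\Sigma}\colon\ran(\Pi_{\Sigma})\to H^{1/2}(\Sigma)$ and $\Lambda^{-1}\colon H^{-1/2}_{\overline\Sigma}(\Gamma)\to\ran(\Pi_{\Sigma})$, flipping a sign to match the convention of the statement: $\Theta_{N,\Sigma}:=-\U_{\Sigma}\tilde\Theta_{N,\Sigma}\Lambda^{-1}$. The remaining task is to show that $\dom(\Theta_{N,\Sigma})=\{\varphi\in H^{1/2}_{\overline\Sigma}(\Gamma)\colon(\hat\gamma_{1}\DL\varphi)|\Sigma\in H^{1/2}(\Sigma)\}$ and $\Theta_{N,\Sigma}\varphi=(\hat\gamma_{1}\DL\varphi)|\Sigma$, which I carry out exactly as in the proof of Theorem~\ref{compr} through a double inclusion: for $\subseteq$, membership of $-\hat\gamma_{1}\DL\varphi-\tilde\varphi$ in $H^{-1/2}_{\Sigma^{c}}(\Gamma)$ means (by definition and by \eqref{perp2}) that this distribution annihilates every $\psi\in H^{1/2}_{\overline\Sigma}(\Gamma)$; interpreting $\tilde\varphi\in H^{1/2}(\Gamma)\hookrightarrow L^{2}(\Gamma)\hookrightarrow H^{-1/2}(\Gamma)$ through its Gelfand-triple embedding, the pairing identity forces $(\hat\gamma_{1}\DL\varphi)|\Sigma=-\tilde\varphi|\Sigma\in H^{1/2}(\Sigma)$; the reverse direction constructs $\tilde\varphi:=-\U_{\Sigma}^{-1}\bigl((\hat\gamma_{1}\DL\varphi)|\Sigma\bigr)\in\ran(\Pi_{\Sigma})$ and checks, again via the pairing characterization \eqref{perp2}, that $-\hat\gamma_{1}\DL\varphi-\tilde\varphi$ lies in $H^{-1/2}_{\Sigma^{c}}(\Gamma)$. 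I expect the main obstacle to be precisely this last identification, because ``restriction to $\Sigma$'' of a negative-order distribution in $H^{-1/2}(\Gamma)$ must be interpreted in a way that allows a comparison with the positive-order space $H^{1/2}(\Sigma)$; the pairing characterization in \eqref{perp2} is however designed exactly for this purpose and makes the argument fully parallel to that in Theorem~\ref{compr}.
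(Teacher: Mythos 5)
Your proposal is correct and follows essentially the same route as the paper: define the sesquilinear form $f_N$ on $H^{3/2}(\Gamma)\times H^{3/2}(\Gamma)$ with weight $\Lambda$, use \eqref{coercive2} to get coercivity and closedness, identify the associated operator $\hat\gamma_1\DL\Lambda$ in $H^{1/2}(\Gamma)$, and then apply Lemma~\ref{lemma-compr} with $\breve\Theta_N=\hat\gamma_1\DL\Lambda\in\B(H^{3/2}(\Gamma),H^{-1/2}(\Gamma))$, the dual pairing $\langle\phi,\varphi\rangle=\langle\Lambda^{-1/2}\phi,\Lambda^{3/2}\varphi\rangle_{L^2(\Gamma)}$, and the annihilator $\fk_{\Pi_\Sigma}=H^{-1/2}_{\Sigma^c}(\Gamma)$, finishing with the same double-inclusion identification of the domain via \eqref{perp2}. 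The only deviation is cosmetic: you build the minus sign into $f_N$ and $\t\Theta_N$ up front so the form is strictly positive, while the paper keeps $f_N$ strictly negative and absorbs the sign via the remark in the Appendix that one may consider $-\Theta$ throughout; the net effect and all computed objects coincide.
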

\begin{proof} Let $f_{N}$ be the densely defined sesquilinear form in the Hilbert space $H^{\frac12}(\Gamma)$ 
$$f_{N}: H^{\frac32}(\Gamma)\times H^{\frac32}(\Gamma)\subseteq H^{\frac12}(\Gamma)\times H^{\frac12}(\Gamma)\to \RE$$  
$$
f_{N}(\varphi_{1},\varphi_{2}):=
\langle \hat\gamma_{1}\DL  \Lambda\varphi_{1},\Lambda\varphi_{2}
\rangle_{-\frac12,\frac12}\,.
$$ 
By \eqref{coercive2},
$$ 
-f_{N}(\varphi,\varphi)\ge c_{1} \,\|\Lambda\varphi\|^{2}_{\frac12}=c_{1}\,\|\varphi\|^{2}_{H^{\frac32}(\Gamma)}
$$
and so $f_{N}$ is strictly negative and closed. Since, for any $\varphi_{1}\in H^{5/2}(\Gamma)$ and for any $\phi_{2}\in H^{\frac32}(\Gamma)$,
$$ 
f_{N}(\varphi_{1},\varphi_{2})=\langle \Lambda^{\frac12}\hat\gamma_{1}\DL  \Lambda\varphi_{1}
,\Lambda^{\frac12}\varphi_{2},\rangle_{L^{2}(\Gamma)}=\langle \hat\gamma_{1}\DL  \Lambda\varphi_1,\varphi_{2}\rangle_{H^{\frac12}(\Gamma)}\,,
$$
$f_{N}$ is the sesquilinear form associated with the self-adjoint operator 
$$
\t\Theta_{N}:=\hat\gamma_{1}\DL  \Lambda:H^{5/2}(\Gamma)\subset H^{\frac12}(\Gamma)\to H^{\frac12}(\Gamma)\,.
$$
Since  
$$
\dom(f_{N})\cap\ran(\Pi_{\Sigma})=
H^{\frac32}(\Gamma)\cap H^{\frac12}_{\Sigma^{c}}(\Gamma)^{\perp}=H^{\frac32}(\Gamma)\cap\Lambda^{-1}H^{-\frac12}_{\overline\Sigma}(\Gamma)=\Lambda^{-1}
H^{\frac12}_{\overline\Sigma }(\Gamma)
$$ 
is dense in $\Lambda^{-1} H^{-\frac12}_{\overline\Sigma }(\Gamma)$, we can use  Lemma \ref{lemma-compr} to determine the positive self-adjoint operator $\t\Theta_{N,\Sigma}$ in $\ran(\Pi_{\Sigma})$ associated to the restriction of $f_{N}$ to $\ran(\Pi_{\Sigma}) $. Then
$$
\Theta_{N,\Sigma}:=\text{\rm U}_{\Sigma}\t\Theta_{N,\Sigma}\Lambda^{-1}\,,\qquad \dom(\Theta_{N,\Sigma}):=\Lambda\dom(\t\Theta_{N,\Sigma})\,,
$$
is self-adjoint, where the map $\text{\rm U}_{\Sigma}(\Pi_{\Sigma})\varphi=\varphi|\Sigma$ provides the unitary isomorphism $H^{\frac12}_{\Sigma^{c}}(\Gamma)^{\perp}\simeq H^{\frac12}(\Sigma)$. To conclude the proof we need to determine the operator $\breve\Theta_{N}:=(\t\Theta_{N})\breve{\,}$ and the subspace $K_{\Sigma}:=\fk_{\Pi_{\Sigma}}$ (we refer to the Appendix for the notations). 
Let $H_{N}$ be the Hilbert space given by $\dom(f_{N})=H^{\frac32}(\Gamma)$ endowed with the 
scalar product  $\langle\varphi_{1},\varphi_{2}\rangle_{N}:=-f_{N}(\varphi_{1},\varphi_{2})$; let $H_{N}'$ denote its dual space.
Since
$$
f_{N}(\varphi_{1},\varphi_{2})=\langle \hat\gamma_{1}\DL  \Lambda\varphi_{1},\Lambda\varphi_{2}\rangle_{-\frac12,\frac12}=
\langle \Lambda^{-\frac12}\hat\gamma_{1}\DL  \Lambda\varphi_{1},\Lambda^{\frac32}\varphi_{2}
\rangle_{L^{2}(\Gamma)}\,,
$$
one has 
$$H_N^{\prime}=H^{-\frac12}(\Gamma)\,,\quad 
\langle \phi,\varphi\rangle_{H'_{N},H_{N}}=\langle \Lambda^{-\frac12}\phi,\Lambda^{\frac32}\varphi\rangle_{L^{2}(\Gamma)}
$$ 
and
$$
\breve\Theta _{N}: H^{\frac32}(\Gamma)\to H^{-\frac12}(\Gamma)\,,\quad \breve\Theta _{N}=\hat\gamma_{1}\DL  \Lambda\,. 
$$
Moreover
\begin{align*}
K_{\Sigma}=&\{\phi\in H^{-\frac12}(\Gamma):\forall\varphi\in \Lambda^{-1}
H^{\frac12}_{\overline\Sigma }(\Gamma)\,,\quad\langle \Lambda^{-\frac12}\phi,\Lambda^{\frac32}\varphi\rangle_{L^{2}(\Gamma)}=0\}\\
=&\{\phi\in H^{-\frac12}(\Gamma):\forall\varphi\in H^{\frac12}_{\overline\Sigma }(\Gamma)\,,\ \langle\Lambda^{-\frac12}\phi,\Lambda^{\frac12}\varphi\rangle_{L^{2}(\Gamma)}=0\}\\
=&\{\phi\in H^{-\frac12}(\Gamma):\forall\varphi\in H^{\frac12}_{\overline\Sigma }(\Gamma)\,,\ \langle\phi,\varphi\rangle_{-\frac12,\frac12}=0\}\\
=&
H^{-\frac12}_{\Sigma^{c}}(\Gamma)\,.
\end{align*}
Therefore, by Lemma 5.1, $\t\Theta_{N,\Sigma}$ is self-adjoint on the domain 
$$
\dom(\t\Theta_{N,\Sigma}):=\{\varphi\in \Lambda^{-1}H^{\frac12}_{\overline\Sigma }(\Gamma):\exists\t\varphi\in\ran(\Pi_{\Sigma})\ \text{s.t.}\, \hat\gamma_{1}\DL  \Lambda\varphi-\t\varphi\in H^{-\frac12}_{\Sigma^{c}}(\Gamma)\}
$$
and $\t\Theta_{N,\Sigma}\varphi:=\t\varphi$. Then $\U_{\Sigma}\t\Theta_{N,\Sigma}\Lambda^{-1}\varphi=\t\varphi|\Sigma=(\hat\gamma_{1}\DL \varphi)|\Sigma$ and
\begin{align*}
\dom(\t\Theta_{N,\Sigma})=&\{\varphi\in H^{\frac12}_{\overline\Sigma }(\Gamma):\exists\t\varphi\in\ran(\Pi_{\Sigma})\ \text{s.t.}\, \hat\gamma_{1}\DL  \varphi-\t\varphi\in H^{-\frac12}_{\Sigma^{c}}(\Gamma)\}\\
\subseteq&\{\varphi\in H^{\frac12}_{\overline\Sigma }(\Gamma):(\hat\gamma_{1}\DL  \varphi)|\Sigma\in H^{\frac12}(\Sigma)\}\\
\subseteq&\{\varphi\in H^{\frac12}_{\overline\Sigma }(\Gamma):\exists\t\varphi\in\ran(\Pi_{\Sigma})\ \text{s.t.}\, (\hat\gamma_{1}\DL  \varphi)|\Sigma=\t\varphi|\Sigma\}\\
=&\{\varphi\in H^{\frac12}_{\overline\Sigma }(\Gamma):\exists\t\varphi\in\ran(\Pi_{\Sigma})\ \text{s.t.}\, \hat\gamma_{1}\DL  \varphi-\t\varphi\in H^{-\frac12}_{\Sigma^{c}}(\Gamma)\}=
\dom(\t\Theta_{N,\Sigma})
\end{align*}
\end{proof}
\begin{corollary} The linear operator in $L^{2}(\RE^{n})$ defined by $A_{N,\Sigma}:=(A_{{-}}^{\max}\oplus A_{{+}}^{\max})|\dom(A_{N,\Sigma})$, where 
\begin{align*}
\D(A_{N,\Sigma})
=&\{u\in H^{1}(\RE^{n}\backslash\overline\Sigma)\cap(\dom(A^{\max}_{-})\oplus\dom(A^{\max}_{+})): [\gamma_{0}]u\in \dom(\Theta_{N,\Sigma})\,,\  
[\hat\gamma_{1}]u=0\,,\  (\hat\gamma_{1}u)|\Sigma=0\}
\\
\subseteq &
\{u\in H^{1}(\RE^{n}\backslash\overline\Sigma)\cap(\dom(A^{\max}_{-})\oplus\dom(A^{\max}_{+})):  (\hat\gamma^{\pm}_{1}u)|\Sigma=0\,,\ ([\hat\gamma_{1}]u)|\overline\Sigma^{c}=0\}
\,,
\end{align*}
is self-adjoint and its resolvent is given by 
\begin{align*}
(-A_{N,\Sigma}+z)^{-1}=
(-A+z)^{-1}-\DL_{z}\Pi_{\Sigma}'(R_{\Sigma}\hat\gamma_{1}\DL_{z}\Pi_{\Sigma}')^{-1}R_{\Sigma}\gamma_{1}(-A+z)^{-1}
\,,
\end{align*}
where $R_{\Sigma}$ is the restriction operator $R_{\Sigma}\varphi=\varphi|\Sigma$ and $\Pi_{\Sigma}'$ acts there as the inclusion map $\Pi_{\Sigma}':H^{-\frac12}_{\overline\Sigma}(\Gamma)\to H^{-\frac12}(\Gamma)$. 
\end{corollary}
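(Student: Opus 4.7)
The plan is to apply Theorem \ref{T1} with the choice
$\Pi(\phi\oplus\varphi):=0\oplus\Pi_{\Sigma}\varphi$ and
$\Theta(\phi\oplus\varphi):=0\oplus(-\U_{\Sigma}^{-1}\Theta_{N,\Sigma}\varphi)$,
where $\Pi_{\Sigma}$ is the orthogonal projection in $H^{\frac12}(\Gamma)$ onto $H^{\frac12}_{\Sigma^{c}}(\Gamma)^{\perp}\simeq H^{\frac12}(\Sigma)$ (via $\U_{\Sigma}$) and $\Theta_{N,\Sigma}$ is the self-adjoint operator produced by Theorem \ref{comprN}. This is the dual of the choice used in the Dirichlet corollary: one has $\ran(\Pi')=0\oplus H^{-\frac12}_{\overline\Sigma}(\Gamma)$, $\dom(\Theta)=0\oplus\dom(\Theta_{N,\Sigma})\subseteq\ran(\Pi')$, and $\Theta$ is self-adjoint on $\ran(\Pi')\to\ran(\Pi)$ because $\Theta_{N,\Sigma}$ is. Theorem \ref{T1} then ensures that $A_{\Pi,\Theta}$ is self-adjoint, and we relabel it $A_{N,\Sigma}$.

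Next, I would unpack the abstract boundary conditions. The constraint $(-[\hat\gamma_1]u)\oplus[\hat\gamma_0]u\in\dom(\Theta)$ forces $[\hat\gamma_1]u=0$ and $[\hat\gamma_0]u\in\dom(\Theta_{N,\Sigma})\subseteq H^{\frac12}_{\overline\Sigma}(\Gamma)$, so in particular $\supp([\hat\gamma_0]u)\subseteq\overline\Sigma$. Applying Remark \ref{regularity} with $s_{2}=\frac12$ gives $s=1$, hence $u\in H^{1}(\RE^{n}\setminus\Gamma)$; combining this with the support condition and \eqref{H4} yields $u\in H^{1}(\RE^{n}\setminus\overline\Sigma)$. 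The projected second-row boundary equation
$\Pi_{\Sigma}\gamma_{1}(u-\DL[\hat\gamma_0]u)=-\U_{\Sigma}^{-1}\Theta_{N,\Sigma}[\hat\gamma_0]u$,
after applying $\U_{\Sigma}$ and using the definition $\Theta_{N,\Sigma}\varphi=(\hat\gamma_{1}\DL\varphi)|\Sigma$, becomes $(\gamma_{1}(u-\DL[\hat\gamma_0]u))|\Sigma=-(\hat\gamma_{1}\DL[\hat\gamma_0]u)|\Sigma$, i.e. $(\hat\gamma_{1}u)|\Sigma=0$. Since $[\hat\gamma_1]u=0$ globally, this implies both $(\hat\gamma_{1}^{\pm}u)|\Sigma=0$ and $([\hat\gamma_{1}]u)|\overline\Sigma^{c}=0$, giving the stated inclusion.

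The resolvent formula follows by specialising \eqref{krein1}. With our choices, $G_{z}\Pi'$ reduces to $\DL_{z}\Pi'_{\Sigma}$ and $\Pi(\gamma_{0}R_{z}\oplus\gamma_{1}R_{z})$ to $\Pi_{\Sigma}\gamma_{1}R_{z}$. The middle factor is
\begin{equation*}
(\Theta+\Pi M_{z}\Pi')^{-1}\Pi=\bigl(-\U_{\Sigma}^{-1}\Theta_{N,\Sigma}+\Pi_{\Sigma}\gamma_{1}(\DL-\DL_{z})\Pi'_{\Sigma}\bigr)^{-1}\Pi_{\Sigma}=\bigl(-\Theta_{N,\Sigma}+R_{\Sigma}\gamma_{1}(\DL-\DL_{z})\Pi'_{\Sigma}\bigr)^{-1}R_{\Sigma},
\end{equation*}
and using $\Theta_{N,\Sigma}=R_{\Sigma}\hat\gamma_{1}\DL\Pi'_{\Sigma}$ this simplifies further to $-(R_{\Sigma}\hat\gamma_{1}\DL_{z}\Pi'_{\Sigma})^{-1}R_{\Sigma}$, yielding exactly the claimed Krein formula.

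The only delicate point, which mirrors the Dirichlet case, is the collapsing of the abstract boundary equation to $(\hat\gamma_{1}u)|\Sigma=0$: one must carefully identify $\U_{\Sigma}\Pi_{\Sigma}\psi$ with $\psi|\Sigma$ and invoke the precise definition of $\Theta_{N,\Sigma}$ so that the double-layer contributions cancel on $\Sigma$. Once this is verified, the passage from the abstract statement of Theorem \ref{T1} to the concrete characterisation of $\dom(A_{N,\Sigma})$ and its resolvent is routine.
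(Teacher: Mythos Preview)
Your proposal is correct and follows essentially the same approach as the paper: the same choice of $\Pi$ and $\Theta$, the same unpacking of the abstract boundary condition via $\U_{\Sigma}$ to obtain $(\hat\gamma_{1}u)|\Sigma=0$, and the same cancellation of the $\DL$-terms in the Kre\u\i n formula to produce $-(R_{\Sigma}\hat\gamma_{1}\DL_{z}\Pi_{\Sigma}')^{-1}R_{\Sigma}$. Your explicit justification of the passage from $H^{1}(\RE^{n}\backslash\Gamma)$ to $H^{1}(\RE^{n}\backslash\overline\Sigma)$ via the support condition and \eqref{H4} is in fact slightly more detailed than the paper's own proof.
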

\begin{proof}
By Theorem \ref{compr} and Theorem \ref{T1}, taking $\Pi(\phi\oplus\varphi)=0\oplus\Pi_{\Sigma}\varphi$ and $\Theta(\phi\oplus\varphi)=0\oplus (-\U^{-1}_{\Sigma}\Theta_{N,\Sigma}\varphi)$, one gets the self-adjoint extension $(A_{{-}}^{\max}\oplus A_{{+}}^{\max})|\dom(A_{N,\Sigma})$ with domain (contained in $H^{1}(\RE^{n}\backslash\Gamma)$ by $\dom(\Theta_{N,\Sigma})\subseteq H^{\frac12}(\Gamma)$ and Remark \ref{regularity}), 
\begin{align*}
&\dom(A_{N,\Sigma})\\
=&\{u\in H^{1}(\RE^{n}\backslash\Gamma)\cap(\dom(A^{\max}_{-})\oplus\dom(A^{\max}_{+})): [\gamma_{0}]u\in \dom(\Theta_{N,\Sigma})\,,\ 
[\hat\gamma_{1}]u=0\,,\  \Pi_{\Sigma}\gamma_{1}(u-\DL[\gamma_{0}u])=-\U^{-1}_{\Sigma}\Theta_{N,\Sigma}[\gamma_{0}]u\}
\\
=&\{u\in H^{1}(\RE^{n}\backslash\overline\Sigma)\cap(\dom(A^{\max}_{-})\oplus\dom(A^{\max}_{+})): [\gamma_{0}]u\in \dom(\Theta_{N,\Sigma})\,,\ 
[\hat\gamma_{1}]u=0\,,\  (\hat\gamma_{1}u)|\Sigma-(\hat\gamma_{1}\DL[\gamma_{0}u])|\Sigma=-(\hat\gamma_{1}\DL[\gamma_{0}u])|\Sigma\}\\
=&\{u\in H^{1}(\RE^{n}\backslash\overline\Sigma)\cap(\dom(A^{\max}_{-})\oplus\dom(A^{\max}_{+})): [\gamma_{0}]u\in \dom(\Theta_{N,\Sigma})\,,\  
[\hat\gamma_{1}]u=0\,,\  (\hat\gamma_{1}u)|\Sigma=0\}
\,.
\end{align*}
The formula giving $(-A_{N,\Sigma} + z)^{-1}$ is consequence of \eqref{krein1}, since
$$
(-\U_{\Sigma}^{-1}\Theta_{N,\Sigma}+\Pi_{\Sigma}\gamma_{1}(\DL-\DL_{z})\Pi_{\Sigma}')^{-1}\Pi_{\Sigma}=
(-\Theta_{N,\Sigma}+R_{\Sigma}\gamma_{1}(\DL-\DL_{z})\Pi_{\Sigma}')^{-1}R_{\Sigma}
$$
and
$$
-\Theta_{N,\Sigma}\varphi+R_{\Sigma}\gamma_{1}(\DL-\DL_{z})\varphi=
-(\hat\gamma_{1}\DL\varphi)|\Sigma+
(\hat\gamma_{1}\DL\varphi)|\Sigma-(\hat\gamma_{1}\DL_{z}\varphi)|\Sigma
=-(\hat\gamma_{1}\DL_{z}\varphi)|\Sigma\,.
$$
\end{proof}
\begin{remark}
Since $\supp([\gamma_{0}]u)\subseteq\overline\Sigma$ for any $u\in\dom(A_{N,\Sigma})$, one has 
$[\gamma_{0}]u\,\partial_{\underline a}\delta_{\Gamma}=
[\gamma_{0}]u\,\partial_{\underline a}\delta_{\overline\Sigma}$; thus
$$A_{N,\Sigma}u=Au-[\gamma_{0}]u\,\partial_{\underline a}\delta_{\overline\Sigma}$$ and so 
$(A_{N,\Sigma}u)|\overline\Sigma^{c}=(Au)|\overline\Sigma^{c}$. This also shows that $A_{N,\Sigma}$ is a self-adjoint extension of the symmetric operator $A|\C^{\infty}_{\comp}(\RE^{n}\backslash\overline\Sigma)$. Hence it depends only on $\Sigma$ and not on the whole $\Gamma$: one would obtain the same operator by considering any other bounded domain $\Omega_{\circ}$ with boundary $\Gamma_{\!\circ }$ such that $\Sigma\subset\Gamma_{\!\circ}$. 
\end{remark}
\begin{remark}
According to the definition of  $\dom(\Theta_{N,\Sigma})$, the Lemma \ref{regularity2} applies and one gets 
$$
\dom(A_{N,\Sigma})\subseteq H^{2}(\RE^{n}\backslash(\overline\Sigma\cup(\partial\Sigma)^{\epsilon}))\,.
$$
\end{remark}
\begin{remark} By the results contained in the proof of Theorem \ref{comprN}, for the domain of the sesquilinear form $f_{N,\Sigma}$ associated to the self-adjoint operator $\t\Theta_{N,\Sigma}$ one has the relation $\dom(f_{N,\Sigma})\subseteq H^{\frac32}(\Gamma)$. Therefore Theorem \ref{teo-schatten2} applies and so, by Corollary \ref{spectrum},  $\sigma_{ac}(A_{N,\Sigma})=\sigma_{ac}(A)$ and the wave operators $W_{\pm}(A_{N,\Sigma},A)$, $W_{\pm}(A,A_{N,\Sigma})$ exist and are complete. 
\end{remark}
\begin{remark} In case $n=3$ and both $\Gamma$ and $\Sigma$ are smooth, by \cite[Theorem 2.4]{CS}, one has that 
\be\label{CS2}
(\hat\gamma_{1}\DL\phi)|\Sigma\in H^{s-1}(\Sigma)\iff \phi\in H^{s}_{\overline\Sigma}(\Gamma)\,,\qquad 0< s<1\,,
\ee 
Therefore $\dom(\Theta_{N,\Sigma})\subseteq H^{s}(\Gamma)$, $\frac 12\le s<1$, and so, by Remark \ref{regularity} and \eqref{H4}, 
$$
\D(A_{N,\Sigma})\subseteq H^{\frac32-}(\RE^{3}\backslash\overline\Sigma)\,.
$$
\end{remark}
\end{subsection}  
\begin{subsection}{Robin boundary conditions.}\label{robin-loc}  
Given $\Sigma\subset \Gamma$ relatively open of class $\C^{0,1}$,  here we denote by $\Pi_{\Sigma}^{\oplus}$ the orthogonal projector in the Hilbert space $H^{\frac32}(\Gamma)\oplus H^{\frac12}(\Gamma)$ such that $\ran(\Pi^{\oplus}_{\Sigma})=H^{\frac32}_{\Sigma^{c}}(\Gamma)^{\perp}\oplus H^{\frac12}_{\Sigma^{c}}(\Gamma)^{\perp}$. By \eqref{perp}, $\ran((\Pi^{\oplus}_{\Sigma})')=\Lambda^{-3}H^{-\frac32}_{\overline\Sigma}(\Gamma)\oplus \Lambda^{-1}H^{-\frac12}_{\overline\Sigma}(\Gamma)$, where $(\Pi^{\oplus}_{\Sigma})'=(\Lambda^{3}\oplus\Lambda)\Pi^{\oplus}_{\Sigma}(\Lambda^{-3}\oplus\Lambda^{-1})$ is the dual projection. In the following, we also use the identifications $H^{\frac32}_{\Sigma^{c}}(\Gamma)^{\perp}\oplus H^{\frac12}_{\Sigma^{c}}(\Gamma)^{\perp}\simeq H^{\frac32}(\Sigma)\oplus H^{\frac12}(\Sigma)$ and $(H^{\frac32}(\Sigma)\oplus H^{\frac12}(\Sigma))'\simeq H^{-\frac32}_{\overline\Sigma}(\Gamma)\oplus  H^{-\frac12}_{\overline\Sigma}(\Gamma)$. We denote the orthogonal projection from  $H^{\frac32}(\Gamma)\oplus H^{\frac12}(\Gamma)$ onto $H^{\frac32}(\Sigma)\oplus H^{\frac12}(\Sigma)$  by $R^{\oplus}_{\Sigma}$; it can be identified with the restriction map $R^{\oplus}_{\Sigma}:H^{s_{1}}(\Gamma)\oplus H^{s_{2}}(\Gamma)\to 
H^{s_{1}}(\Sigma)\oplus H^{s_{2}}(\Sigma)$, $R^{\oplus}_{\Sigma}(\phi\oplus\varphi):=(\phi|\Sigma)\oplus(\varphi|\Sigma)$ (see the end of Subsection \ref{sobolev}).
\begin{theorem}\label{comprR} Let $\Omega$ be of class $\C^{4,1}$, and let $b_{\pm}\in M^{\frac32}(\Gamma)$ be real valued and $1/[b]\in L^{\infty}(\Gamma)$ be negative. Then
$$
\Theta_{R,\Sigma}:
\dom(\Theta_{R,\Sigma})\subseteq 
H^{-\frac32}_{\overline\Sigma}(\Gamma)\oplus H^{-\frac12}_{\overline\Sigma}(\Gamma)\to H^{\frac32}(\Sigma)\oplus H^{\frac12}(\Sigma)\,,
$$
\begin{align*}
&\Theta_{R,\Sigma}(\phi,\varphi)
:=R^{\oplus}_{\Sigma}\left[\begin{matrix}1/[b]+\gamma_{0}\SL &\langle b\rangle/[b]+\hat\gamma_{0}\DL\\
\langle b\rangle/[b]+\hat\gamma_{1}\SL &b_{+}b_{-}/[b]+\hat\gamma_{1}\DL\end{matrix}\right] (\Pi^{\oplus}_{\Sigma})'(\phi,\varphi)\\
\equiv&
(((1/[b]+\gamma_{0}\SL)\phi+(\langle b\rangle/[b]+\hat\gamma_{0}\DL)\varphi)|\Sigma)\oplus(((\langle b\rangle/[b]+\hat\gamma_{1}\SL)\phi+(b_{+}b_{-}/[b]+\hat\gamma_{1}\DL)\varphi)|\Sigma)
\,,
\end{align*}
\begin{align*}
\dom(\Theta_{R,\Sigma}):=&\big\{(\phi,\varphi)\in L^{2}_{\overline\Sigma }(\Gamma)\times H^{\frac12}_{\overline\Sigma }(\Gamma):
((1/[b]+\gamma_{0}\SL)\phi+(\langle b\rangle/[b]+\hat\gamma_{0}\DL)\varphi)|\Sigma\in H^{\frac32}(\Sigma)\,,\\
&((\langle b\rangle/[b]+\hat\gamma_{1}\SL)\phi
+(b_{+}b_{-}/[b]+\hat\gamma_{1}\DL)\varphi)|\Sigma\in H^{\frac12}(\Sigma)\big\}
\end{align*}
is self-adjoint.
\end{theorem}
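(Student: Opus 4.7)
The strategy is the $2\times 2$ matrix analogue of the proofs of Theorems \ref{compr} and \ref{comprN}. Start from the global self-adjoint operator $\t\Theta_R := \Theta_R(\Lambda^3\oplus\Lambda)$ on $\fh := H^{\frac32}(\Gamma)\oplus H^{\frac12}(\Gamma)$, whose self-adjointness is guaranteed by Lemma \ref{LTR} via the tilde convention, and associate to it the sesquilinear form
$$f_R\bigl((\phi_1,\varphi_1),(\phi_2,\varphi_2)\bigr) := \bigl\langle\t\Theta_R(\phi_1,\varphi_1),(\phi_2,\varphi_2)\bigr\rangle_{\fh}.$$
By the same kind of direct computation as in the Dirichlet and Neumann proofs, $f_R$ extends to a closed sesquilinear form on the form domain $\dom(f_R) = H^{\frac52}(\Gamma)\oplus H^{\frac32}(\Gamma)$.

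Decompose $f_R = f_D^{(1)} + f_N^{(2)} + r$, where $f_D^{(1)}$ and $f_N^{(2)}$ are the Dirichlet and Neumann forms from Theorems \ref{compr} and \ref{comprN} acting in the respective slots, and $r$ is a symmetric perturbation assembling the bounded off-diagonal layer potentials $\hat\gamma_0\DL$, $\hat\gamma_1\SL$ (bounded by \eqref{trace1}--\eqref{trace2}) and the algebraic multiplier entries $1/[b]$, $\langle b\rangle/[b]$, $b_+b_-/[b]\in L^\infty(\Gamma)$. The hypotheses $b_\pm\in M^{\frac32}(\Gamma)$ and $1/[b]\in L^\infty$ (negative) ensure that $r$ is bounded and symmetric on $\dom(f_R)$, and that the first diagonal algebraic entry $1/[b]$ has the correct sign to combine with the positive leading term $\gamma_0\SL$.

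Now restrict $f_R$ to the dense subspace $\dom(f_R)\cap\ran(\Pi_\Sigma^\oplus)= \Lambda^{-3}H^{-\frac12}_{\overline\Sigma}(\Gamma)\oplus\Lambda^{-1}H^{\frac12}_{\overline\Sigma}(\Gamma)$ of $\ran(\Pi_\Sigma^\oplus)$, and apply Lemma \ref{lemma-compr} separately to each definite leading piece $f_D^{(1)}$ and $-f_N^{(2)}$. This reproduces the self-adjoint operators $\t\Theta_{D,\Sigma}$ and $\t\Theta_{N,\Sigma}$ already obtained in Theorems \ref{compr} and \ref{comprN}. Then $\t\Theta_{R,\Sigma}$ is the signed direct sum $\t\Theta_{D,\Sigma}\oplus\t\Theta_{N,\Sigma}$ plus the bounded symmetric restriction $\t r$ of $r$ to $\ran(\Pi_\Sigma^\oplus)$; it is self-adjoint by the Kato--Rellich stability theorem for bounded symmetric perturbations. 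Finally, $\Theta_{R,\Sigma} := ({\rm U}_\Sigma\oplus{\rm U}_\Sigma)\,\t\Theta_{R,\Sigma}(\Lambda^{-3}\oplus\Lambda^{-1})$ gives the claimed operator, and the explicit domain description emerges by unwinding the dual spaces $H'_D\simeq H^{\frac12}(\Gamma)$, $H'_N\simeq H^{-\frac12}(\Gamma)$ together with the subspaces $\fk_{\Pi_\Sigma}\oplus\fk_{\Pi_\Sigma} = H^{\frac12}_{\Sigma^c}(\Gamma)\oplus H^{-\frac12}_{\Sigma^c}(\Gamma)$, exactly as in the previous two theorems.

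The main obstacle is the indefiniteness of $f_R$: since $f_D^{(1)}\geq 0$ but $f_N^{(2)}\leq 0$, Lemma \ref{lemma-compr} cannot be applied to the full form in a single shot. This is handled by exploiting the block-diagonal structure of $\Pi_\Sigma^\oplus$, which lets each definite leading piece be compressed independently before the bounded perturbation $\t r$ is added back. The domain naturally comes out to sit in $L^2_{\overline\Sigma}(\Gamma)\times H^{\frac12}_{\overline\Sigma}(\Gamma)$ rather than in $H^{-\frac32}_{\overline\Sigma}(\Gamma)\times H^{-\frac12}_{\overline\Sigma}(\Gamma)$, precisely because the weakest bound $1/[b]\in L^\infty$ forces the first component to lie in $L^2_{\overline\Sigma}$ in order that the multiplier action $1/[b]\cdot\phi$ be well-defined.
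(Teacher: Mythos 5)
The central difficulty of this theorem is that the form domain of the first block is \emph{not} $H^{5/2}(\Gamma)$, and your decomposition $f_R = f_D^{(1)} + f_N^{(2)} + r$ with $r$ bounded does not hold. You have the relative order of the two terms in the first diagonal entry reversed. In $(1/[b] + \gamma_0\SL)\Lambda^3$, the operator $\gamma_0\SL\Lambda^3$ has order $2$ (since $\gamma_0\SL$ has order $-1$), while $(1/[b])\Lambda^3$ has order $3$. The algebraic multiplier term is therefore the \emph{dominant} one, and $\gamma_0\SL\Lambda^3$ is infinitesimally $(1/[b])\Lambda^3$-bounded — this is exactly the content of Lemma \ref{lemmadelta}, invoked in the paper. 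Concretely, the quadratic form $\langle(1/[b])\Lambda^3\phi,\Lambda^3\phi\rangle_{L^2(\Gamma)}$ requires $\Lambda^3\phi\in L^2(\Gamma)$, i.e.\ $\phi\in H^3(\Gamma)$; it is not defined on $H^{5/2}(\Gamma)$ where $\Lambda^3\phi$ only lives in $H^{-1/2}(\Gamma)$. So the true form domain is $H^3(\Gamma)\times H^{3/2}(\Gamma)$, not $H^{5/2}(\Gamma)\oplus H^{3/2}(\Gamma)$ as you claim, and the algebraic terms cannot be thrown into a bounded perturbation $r$ of the Dirichlet form.

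A consequence is that your proposed route around Lemma \ref{lemma-compr} — compress $f_D^{(1)}$ and $-f_N^{(2)}$ separately, then add back a bounded symmetric remainder — is not available. The paper's proof instead establishes that the full form $f_R$ is semibounded in a single stroke: the hypothesis $1/[b]<0$ makes $(1/[b]+\gamma_0\SL)\Lambda^3$ upper bounded, the coercivity \eqref{coercive2} makes $(b_+b_-/[b]+\hat\gamma_1\DL)\Lambda$ upper bounded, and a Frobenius--Schur factorization of the block operator matrix $\t\Theta_R$ shows the off-diagonal layer potentials do not spoil this. Once semiboundedness is in hand, Lemma \ref{lemma-compr} applies to the whole form at once; no indefiniteness needs to be circumvented. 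To salvage your approach you would need to reorganize the decomposition with the multiplier $(1/[b])\Lambda^3$ as the leading term and $\gamma_0\SL\Lambda^3$ as the lower-order perturbation, take the form domain as $H^3(\Gamma)\times H^{3/2}(\Gamma)$, and then still verify semiboundedness of the coupled system, which in the paper is precisely where the Frobenius--Schur step earns its keep.
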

\begin{proof} Let $\t\Theta_{R}$ be the self-adjoint operator in $H^{\frac32}(\Gamma)\oplus H^{\frac12}(\Gamma)$  
$$\t\Theta_{R}:=\Theta_{R}(\Lambda^{3}\oplus\Lambda):H^{9/2}(\Gamma)\times H^{5/2}(\Gamma)\subseteq H^{\frac32}(\Gamma)\oplus H^{\frac12}(\Gamma)\to H^{\frac32}(\Gamma)\oplus H^{\frac12}(\Gamma)\,.
$$
Since $\gamma_{0}\SL\,\Lambda^{3}$ is infinitesimally $(1/[b])\Lambda^3$-bounded (proceed as in  the proof of Lemma \ref{lemmadelta}) and $b_{+}b_{-}/[b]\Lambda$ is infinitesimally $(\hat\gamma_{1}\SL\,\Lambda)$-bounded (proceed as in  the proof of Lemma \ref{lemmadeltaprimo}), by $[b]<0$ and \eqref{coercive2} respectively, both $(1/[b]+\gamma_{0}\SL)\Lambda^{3}$ and $(b_{+}b_{-}/[b]+\hat\gamma_{1}\DL)\Lambda$ are upper bounded. Let $\lambda_{*}$ be a common strict upper bound; by the Frobenius-Schur factorization of the block operator matrix $\t\Theta_{R}$ (see e.g. \cite[Theorem 2.2.18]{Tre}),  there exists $\lambda_{R}>\lambda_{*}$ such that 
$-\t\Theta_{R}+\lambda_{R}>0$ whenever 
\be\label{FS}
-(1/[b]+\gamma_{0}\SL)\Lambda^{3}+\lambda_{R}> (\langle b\rangle/[b]+\hat\gamma_{0}\DL)\Lambda
(-(b_{+}b_{-}/[b]+\hat\gamma_{1}\DL)\Lambda+\lambda_{R})^{-1}
(\langle b\rangle/[b]+\hat\gamma_{1}\SL)\Lambda^{3}\,.
\ee
Since 
$$
\langle (-(1/[b]+\gamma_{0}\SL)\Lambda^{3}+\lambda_{R})\phi,\phi\rangle_{H^{\frac32}(\Gamma)}\ge   (\lambda_{R}-\lambda_{*})\,\|\phi\|^{2}_{H^{\frac32}(\Gamma)}
$$
and
\begin{align*}
&\langle(\langle b\rangle/[b]+\hat\gamma_{0}\DL)\Lambda
(-(b_{+}b_{-}/[b]+\hat\gamma_{1}\DL)\Lambda+\lambda_{R})^{-1}
(\langle b\rangle/[b]+\hat\gamma_{1}\SL)\Lambda^{3}\phi,\phi\rangle_{H^{\frac32}(\Gamma)}
\\
=&\langle
(-(b_{+}b_{-}/[b]+\hat\gamma_{1}\DL)\Lambda+\lambda_{R})^{-1}
(\langle b\rangle/[b]+\hat\gamma_{1}\SL)\Lambda^{3}\phi,(\langle b\rangle/[b]+\hat\gamma_{1}\SL)\Lambda^{3}\phi\rangle_{H^{\frac12}(\Gamma)}
\\
\le &\|
(-(b_{+}b_{-}/[b]+\hat\gamma_{1}\DL)\Lambda+\lambda_{*})^{-1}\|_{H^{-\frac12}(\Gamma),H^{\frac32}(\Gamma)}
\|(\langle b\rangle/[b]+\hat\gamma_{1}\SL)\Lambda^{\frac12}\|^{2}_{L^{2}(\Gamma),H^{-\frac12}(\Gamma)}
\|\phi\|^{2}_{H^{\frac32}(\Gamma)}\,,
\end{align*}
the operator inequality \eqref{FS} holds true by taking $\lambda_{R}$ sufficiently large. Let $f_{R}$ be the densely defined, semibounded, closed sesquilinear form associated with $\t\Theta_{R}$, 
i.e.
$$
f_{R}:(H^{3}(\Gamma)\times H^{\frac32}(\Gamma))\times (H^{3}(\Gamma)\times H^{\frac32}(\Gamma))
\subset
(H^{\frac32}(\Gamma)\oplus H^{\frac12}(\Gamma))\times (H^{\frac32}(\Gamma)\oplus H^{\frac12}(\Gamma))\to\RE\,,
$$
\begin{align*}
f_{R}((\phi_{1},\varphi_{1}),(\phi_{2},\varphi_{2})):=&
\langle(1/[b]+\hat\gamma_{0}\SL)\Lambda^{3}\phi_{1}+(\langle b\rangle/[b]+\hat\gamma_{0}\DL)\Lambda\varphi_{1},\Lambda^{3}\phi_{2}\rangle_{L^{2}(\Gamma)}\\
+&\langle(\langle b\rangle/[b]+\hat\gamma_{1}\SL)\Lambda^{3}\phi_{1}+(b_{+}b_{-}/[b]+\hat\gamma_{1}\DL)\Lambda\varphi_{1},\Lambda\varphi_{2}\rangle_{-\frac12,\frac12}.
\end{align*}
Since 
$H^{3}(\Gamma)\cap H^{\frac32}_{\Sigma^{c}}(\Gamma)^{\perp}=H^{3}(\Gamma)\cap\Lambda^{-3}H^{-\frac32}_{\overline\Sigma}(\Gamma)=\Lambda^{-3}
L^{2}_{\overline\Sigma }(\Gamma)$ is dense in $\Lambda^{-3} H^{-\frac32}_{\overline\Sigma }(\Gamma)$ 
and
$H^{\frac32}(\Gamma)\cap H^{\frac12}_{\Sigma^{c}}(\Gamma)^{\perp}=H^{\frac32}(\Gamma)\cap\Lambda^{-1}H^{-\frac12}_{\overline\Sigma}(\Gamma)=\Lambda^{-1}
H^{\frac12}_{\overline\Sigma }(\Gamma)$ is dense in $\Lambda^{-1} H^{-\frac12}_{\overline\Sigma }(\Gamma)$, we can use  Lemma \ref{lemma-compr} to determine the semibounded   self-adjoint operator $\t\Theta_{R,\Sigma}$ in $\ran(\Pi_{\Sigma}^{\oplus})$ associated to the restriction of $f_{R}$ to $\ran(\Pi^{\oplus}_{\Sigma})$. Then 
$$
\Theta_{R,\Sigma}:=\U_{\Sigma}^{\oplus}\t\Theta_{R,\Sigma}(\Lambda^{-3}\oplus\Lambda^{-1})\,,\quad \dom(\Theta_{R,\Sigma}):=(\Lambda^{3}\oplus\Lambda)\dom(\t\Theta_{R,\Sigma})\,,
$$
is self-adjoint, where the map $\U^{\oplus}_{\Sigma}\Pi^{\oplus}_{\Sigma}(\phi\oplus\varphi)=(\phi|\Sigma)\oplus(\varphi|\Sigma)$ provides the unitary isomorphism $(H^{\frac32}_{\Sigma^{c}}(\Gamma)^{\perp}\oplus H^{\frac12}_{\Sigma^{c}}(\Gamma)^{\perp})\simeq (H^{\frac32}(\Sigma)\oplus H^{\frac12}(\Sigma))$. To conclude the proof  we need to determine the operator $\breve\Theta_{R}:=(\t\Theta_{R})\breve{\,}$ and the subspace $K_{\Sigma}:=\fk_{\Pi^{\oplus}_{\Sigma}}$ (we refer to the Appendix for the notations). Let $H_{R}$ be the Hilbert space given by $\dom(f_{R})=H^{3}(\Gamma)\times H^{\frac32}(\Gamma)$ endowed with the 
scalar product  $$\langle(\phi_{1},\varphi_{1}),(\phi_{2},\varphi_{2})\rangle_{R}:=(-f_{R}+\lambda_{R})((\phi_{1},\varphi_{1}),(\phi_{2},\varphi_{2}))
\,.
$$ 
Let $H_{R}'$ denote its dual space.
Since
\begin{align*}
&f_{R}((\phi_{1},\varphi_{1}),(\phi_{2},\varphi_{2}))=
\langle(1/[b]+\hat\gamma_{0}\SL)\Lambda^{3}\phi_{1}
+(\langle b\rangle/[b]+\hat\gamma_{0}\DL)\Lambda\varphi_{1},\Lambda^{3}\phi_{2}\rangle_{L^{2}(\Gamma)}
\\&+\langle\Lambda^{-\frac12}((\langle b\rangle/[b]+\hat\gamma_{1}\SL)\Lambda^{3}\phi_{1}
+(b_{+}b_{-}/[b]+\hat\gamma_{1}\DL))\Lambda\varphi_{1},\Lambda^{\frac32}\varphi_{2}\rangle_{L^{2}(\Gamma)}\,,
\end{align*}
one has 
$$H_{R}'=L^{2}(\Gamma)\times H^{-\frac12}(\Gamma)\,,\quad \langle (\phi',\varphi'),(\phi,\varphi)\rangle_{H'_{R},H_{R}}=\langle \phi',\Lambda^{3}\phi\rangle_{L^{2}(\Gamma)}+\langle \Lambda^{-\frac12}\varphi',\Lambda^{\frac32}\varphi\rangle_{L^{2}(\Gamma)}\,,
$$ 
and
$$
\breve\Theta_{R}:H^{3}(\Gamma)\times H^{\frac32}(\Gamma)\to L^{2}(\Gamma)
\times H^{-\frac12}(\Gamma)\,,
$$
$$
\breve\Theta_{R}(\phi,\varphi)=
\big((1/[b]+\hat\gamma_{0}\SL)\Lambda^{3}\phi+(\langle b\rangle/[b]+\hat\gamma_{0}\DL)\Lambda\varphi,
(\langle b\rangle/[b]+\hat\gamma_{1}\SL)\Lambda^{3}\phi+(b_{+}b_{-}/[b]+\hat\gamma_{1}\DL)\Lambda\varphi\big).
$$
Moreover
\begin{align*}
K_{\Sigma}
=&
\{(\phi',\varphi')\in L^{2}(\Gamma)\times H^{-\frac12}(\Gamma):\forall(\phi,\varphi)\in\Lambda^{-3}L^{2}_{\overline\Sigma}(\Gamma)\times\Lambda^{-1}H^{\frac12}_{\overline\Sigma}(\Gamma)\,,\ \langle(\varphi',\phi'),(\phi,\varphi)\rangle_{H_{R}',H_{R}}=0\}\\
=&\{(\phi',\varphi')\in L^{2}(\Gamma)\times H^{-\frac12}(\Gamma):\forall\,(\phi,\varphi)\in L^{2}_{\overline\Sigma }(\Gamma)\times H^{\frac12}_{\overline\Sigma }(\Gamma)\,,\ 
\langle\phi',
\phi\rangle_{L^{2}(\Gamma)}+
\langle\varphi',
\varphi\rangle_{-\frac12,\frac12}
=0\}\\
=&L^{2}_{\Sigma^{c}}(\Gamma)\times 
H^{-\frac12}_{\Sigma^{c}}(\Gamma)\,.
\end{align*}
Therefore, by Lemma \ref{lemma-compr}, $\t\Theta_{R,\Sigma}$ is self-adjoint on the domain
\begin{align*}
\dom(\t\Theta_{R,\Sigma})
:=&\big\{(\phi,\varphi)\in \Lambda^{-3}L^{2}_{\overline\Sigma }(\Gamma)\times\Lambda^{-1}H^{\frac12}_{\overline\Sigma }(\Gamma):\exists\,\t\phi\oplus\t\varphi\in\ran(\Pi^{\oplus}_{\Sigma})\ \text{s.t.}\,\\
& (1/[b]+\hat\gamma_{0}\SL)\Lambda^{3}\phi+(\langle b\rangle/[b]+\hat\gamma_{0}\DL)\Lambda\varphi-\t\phi\in L^{2}_{\Sigma^{c}}(\Gamma)\ \text{and}\\
&(\langle b\rangle/[b]+\hat\gamma_{1}\SL)\Lambda^{3}\phi+(b_{+}b_{-}/[b]+\hat\gamma_{1}\DL)\Lambda\varphi-\t\varphi
\in H^{-\frac12}_{\Sigma^{c}}(\Gamma)\big\}
\end{align*}
and $\t\Theta_{R,\Sigma}(\phi,\varphi):=(\t\phi,\t\varphi)$. Then
\begin{align*}
&\U^{\oplus}_{\Sigma}\t\Theta_{R,\Sigma}(\Lambda^{-3}\phi,\Lambda^{-1}\varphi)=(\t\phi|\Sigma)\oplus(\t\varphi|\Sigma)\\
=&
(((1/[b]+\hat\gamma_{0}\SL)\phi+(\langle b\rangle/[b]+\hat\gamma_{0}\DL)\varphi)|\Sigma)\oplus(((\langle b\rangle/[b]+\hat\gamma_{1}\SL)\phi+(b_{+}b_{-}/[b]+\hat\gamma_{1}\DL)\varphi)|\Sigma)
\end{align*}
and
\begin{align*}
&\dom(\Theta_{R,\Sigma})\\
=&\big\{(\phi,\varphi)\in L^{2}_{\overline\Sigma }(\Gamma)\times H^{\frac12}_{\overline\Sigma }(\Gamma):\exists\,\t\phi\oplus\t\varphi\in\ran(\Pi^{\oplus}_{\Sigma})\ \text{s.t.}\,\\ 
& (1/[b]+\hat\gamma_{0}\SL)\phi+(\langle b\rangle/[b]+\hat\gamma_{0}\DL)\varphi-\t\phi\in L^{2}_{\Sigma^{c}}(\Gamma)\ \text{and}\\
&(\langle b\rangle/[b]+\hat\gamma_{1}\SL)\phi
+(b_{+}b_{-}/[b]+\hat\gamma_{1}\DL)\varphi-\t\varphi
\in H^{-\frac12}_{\Sigma^{c}}(\Gamma)\big\}\\
\subseteq &
\big\{(\phi,\varphi)\in L^{2}_{\overline\Sigma }(\Gamma)\times H^{\frac12}_{\overline\Sigma }(\Gamma):
((1/[b]+\hat\gamma_{0}\SL)\phi+(\langle b\rangle/[b]+\hat\gamma_{0}\DL)\varphi)|\Sigma\in H^{\frac32}(\Sigma)\ \text{and}\\
&((\langle b\rangle/[b]+\hat\gamma_{1}\SL)\phi
+(b_{+}b_{-}/[b]+\hat\gamma_{1}\DL)\varphi)|\Sigma\in H^{\frac12}(\Sigma)\big\}\\
\subseteq 
&\big\{(\phi,\varphi)\in L^{2}_{\overline\Sigma }(\Gamma)\times H^{\frac12}_{\overline\Sigma }(\Gamma):\exists\,\t\phi\oplus\t\varphi\in\ran(\Pi^{\oplus}_{\Sigma})\ \text{s.t.}\,\\ 
& ((1/[b]+\hat\gamma_{0}\SL)\phi+(\langle b\rangle/[b]+\hat\gamma_{0}\DL)\varphi)|\Sigma=\t\phi|\Sigma\ \text{and}\\
&((\langle b\rangle/[b]+\hat\gamma_{1}\SL)\phi
+(b_{+}b_{-}/[b]+\hat\gamma_{1}\DL)\varphi)|\Sigma=\t\varphi|\Sigma
\big\}\\
=&\big\{(\phi,\varphi)\in L^{2}_{\overline\Sigma }(\Gamma)\times H^{\frac12}_{\overline\Sigma }(\Gamma):\exists\,\t\phi\oplus\t\varphi\in\ran(\Pi^{\oplus}_{\Sigma})\ \text{s.t.}\,\\ 
& (1/[b]+\hat\gamma_{0}\SL)\phi+(\langle b\rangle/[b]+\hat\gamma_{0}\DL)\varphi-\t\phi\in L^{2}_{\Sigma^{c}}(\Gamma)\ \text{and}\\
&(\langle b\rangle/[b]+\hat\gamma_{1}\SL)\phi
+(b_{+}b_{-}/[b]+\hat\gamma_{1}\DL)\varphi-\t\varphi
\in H^{-\frac12}_{\Sigma^{c}}(\Gamma)\big\}=\dom(\Theta_{R,\Sigma})\,.
 \end{align*}
\end{proof}
\begin{corollary} The linear operator in $L^{2}(\RE^{n})$ defined by $A_{R,\Sigma}:=(A_{{-}}^{\max}\oplus A_{{+}}^{\max})|\dom(A_{R,\Sigma})$, where 
\begin{align*}
\D(A_{R,\Sigma})
=&\{u\in H^{1}(\RE^{n}\backslash\overline\Sigma)\cap(\dom(A^{\max}_{-})\oplus\dom(A^{\max}_{+})): 
(-[\hat\gamma_{1}]u)\oplus [\gamma_{0}]u\in \dom(\Theta_{R,\Sigma})\,,\ (\gamma_{1}^\pm u-b_{\pm}\gamma_{0}^{\pm}u)|\Sigma=0\}
\\
\subseteq &\{u\in H^{1}(\RE^{n}\backslash\overline\Sigma)\cap(\dom(A^{\max}_{-})\oplus\dom(A^{\max}_{+})): (\gamma_{1}^\pm u-b_{\pm}\gamma_{0}^{\pm}u)|\Sigma=0\,,\ 
([\hat\gamma_{1}]u)|\overline\Sigma^{c}=0\}
\,,
\end{align*}
is self-adjoint and its resolvent is given by
\begin{align*}
(-A_{R,\Sigma}+z)^{-1}u
=
(-A+z)^{-1}u
-G_{z}(\Pi^{\oplus}_{\Sigma})'\left(R ^{\oplus}_{\Sigma}\left[\,\begin{matrix}1/{[b]}+\gamma_{0}\SL_{z}&\langle b\rangle/[b]+\gamma_0\DL_{z}\\
\langle b\rangle/[b]+\gamma_{1}\SL_{z}&b_{+}b_{-}/[b]+\gamma_{1}\DL_{z}
\end{matrix}\,\right](\Pi^{\oplus}_{\Sigma})'\right)^{\!\! -1}\!\! R^{\oplus}_{\Sigma}
\left[\,\begin{matrix}\gamma_{0}(-A+z)^{-1}u\\
\gamma_{1}(-A+z)^{-1}u\end{matrix}\,\right],
\end{align*}
where $R^{\oplus}_{\Sigma}$ is the restriction operator $R^{\oplus}_{\Sigma}(\phi\oplus\varphi)=(\phi|\Sigma)\oplus(\varphi|\Sigma)$,
$(\Pi^{\oplus}_{\Sigma})'$ acts there as the inclusion map $(\Pi^{\oplus}_{\Sigma})':
H_{\overline\Sigma}^{-\frac32}(\Gamma)\oplus 
H_{\overline\Sigma}^{-\frac12}(\Gamma)\to H^{-\frac32}(\Gamma)\oplus 
H^{-\frac12}(\Gamma)$ and $G_{z}$ is defined in \eqref{Gz}.
\end{corollary}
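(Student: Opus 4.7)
The plan is to obtain $A_{R,\Sigma}$ as the self-adjoint extension produced by Theorem \ref{T1} corresponding to the choice
$$
\Pi = \Pi^{\oplus}_\Sigma, \qquad \Theta(\phi\oplus\varphi) = -(\U^{\oplus}_\Sigma)^{-1}\Theta_{R,\Sigma}(\phi,\varphi),
$$
with $\dom(\Theta) = \dom(\Theta_{R,\Sigma})$ viewed as a subspace of $\ran((\Pi^{\oplus}_\Sigma)') = H^{-3/2}_{\overline\Sigma}(\Gamma)\oplus H^{-1/2}_{\overline\Sigma}(\Gamma)$. The self-adjointness of this $\Theta$ is exactly the content of Theorem \ref{comprR}, so Theorem \ref{T1} applies and yields a self-adjoint operator together with the Kre\u\i n formula \eqref{krein1}.

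The domain statement requires unwinding the abstract condition. Writing $u_\circ := u+\SL[\hat\gamma_1]u - \DL[\hat\gamma_0]u$, the condition $\Pi^{\oplus}_\Sigma(\gamma_0 u_\circ\oplus\gamma_1 u_\circ) = -(\U^{\oplus}_\Sigma)^{-1}\Theta_{R,\Sigma}((-[\hat\gamma_1]u)\oplus[\hat\gamma_0]u)$ becomes, after applying $\U^{\oplus}_\Sigma$ (i.e.\ restriction to $\Sigma$),
$$
R^{\oplus}_\Sigma(\gamma_0 u_\circ\oplus\gamma_1 u_\circ) = -\Theta_{R,\Sigma}\bigl((-[\hat\gamma_1]u)\oplus[\hat\gamma_0]u\bigr).
$$
The terms $\hat\gamma_0\SL$, $\hat\gamma_0\DL$, $\hat\gamma_1\SL$, $\hat\gamma_1\DL$ appearing in $u_\circ$ cancel against the corresponding entries of $\Theta_{R,\Sigma}$, leaving on $\Sigma$ precisely
$$
[b]\gamma_0 u = [\hat\gamma_1]u - \langle b\rangle[\hat\gamma_0]u, \qquad [b]\gamma_1 u = \langle b\rangle[\hat\gamma_1]u - b_+b_-[\hat\gamma_0]u,
$$
which (using $1/[b]\in L^\infty$) is algebraically equivalent to the two-sided Robin condition $(\gamma_1^\pm u - b_\pm\gamma_0^\pm u)|\Sigma = 0$, exactly as in the global computation \eqref{AR} of Subsection \ref{robin}. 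The inclusion $u\in H^1(\RE^n\setminus\overline\Sigma)$ then follows from $\dom(\Theta_{R,\Sigma})\subseteq L^2_{\overline\Sigma}(\Gamma)\times H^{1/2}_{\overline\Sigma}(\Gamma)$, Remark \ref{regularity} and \eqref{H4}, since $\supp([\hat\gamma_0]u)\subseteq\overline\Sigma$ forces the jump of the trace to vanish off $\overline\Sigma$.

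For the resolvent formula, I substitute the chosen $(\Pi,\Theta)$ into \eqref{krein1} and use the identity
$$
\bigl(-(\U^{\oplus}_\Sigma)^{-1}\Theta_{R,\Sigma} + \Pi^{\oplus}_\Sigma M_z(\Pi^{\oplus}_\Sigma)'\bigr)^{-1}\Pi^{\oplus}_\Sigma = \bigl(-\Theta_{R,\Sigma} + R^{\oplus}_\Sigma M_z(\Pi^{\oplus}_\Sigma)'\bigr)^{-1} R^{\oplus}_\Sigma,
$$
valid since $\U^{\oplus}_\Sigma$ is a unitary identification of $\ran(\Pi^{\oplus}_\Sigma)$ with $H^{3/2}(\Sigma)\oplus H^{1/2}(\Sigma)$. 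Recalling $M_z$ from \eqref{Mz} and the explicit form of $\Theta_{R,\Sigma}$ in Theorem \ref{comprR}, the trace-of-$\SL$ and trace-of-$\DL$ contributions cancel between $-\Theta_{R,\Sigma}$ and $\Pi^{\oplus}_\Sigma M_z(\Pi^{\oplus}_\Sigma)'$, leaving precisely the matrix in the statement:
$$
-\Theta_{R,\Sigma}+R^{\oplus}_\Sigma M_z(\Pi^{\oplus}_\Sigma)' = -R^{\oplus}_\Sigma\!\left[\,\begin{matrix}1/[b]+\gamma_0\SL_z & \langle b\rangle/[b]+\gamma_0\DL_z\\ \langle b\rangle/[b]+\gamma_1\SL_z & b_+b_-/[b]+\gamma_1\DL_z\end{matrix}\,\right]\!(\Pi^{\oplus}_\Sigma)'.
$$
The sign flip absorbs into the formula, yielding the stated expression.

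The main potential obstacle is not conceptual but bookkeeping: one must track carefully the identifications between $\ran(\Pi^{\oplus}_\Sigma)$, $H^{3/2}(\Sigma)\oplus H^{1/2}(\Sigma)$, and $H^{3/2}_{\Sigma^c}(\Gamma)^\perp\oplus H^{1/2}_{\Sigma^c}(\Gamma)^\perp$, and verify that the various restrictions and inclusions $R^{\oplus}_\Sigma$, $(\Pi^{\oplus}_\Sigma)'$ land in the correct dual pairings so that the matrix inverse in the resolvent formula is well-defined on $\ran(\Pi^{\oplus}_\Sigma)$ (which is ensured by $Z_{\Pi,\Theta}\supseteq\CO\setminus\RE$ in Theorem \ref{T1}). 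Apart from this, the argument is a direct repetition of the schemes already used for the Dirichlet and Neumann cases on $\Sigma$.
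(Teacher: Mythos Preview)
Your proposal is correct and follows essentially the same route as the paper: choose $\Pi=\Pi^{\oplus}_\Sigma$, $\Theta=-(\U^{\oplus}_\Sigma)^{-1}\Theta_{R,\Sigma}$, invoke Theorem~\ref{comprR} for self-adjointness of $\Theta$, apply Theorem~\ref{T1}, then unwind the boundary condition on $\Sigma$ via the same algebraic cancellation as in \eqref{AR}, and simplify the Kre\u\i n formula by the identical cancellation of the $\SL$/$\DL$ terms between $-\Theta_{R,\Sigma}$ and $M_z$. The paper's proof writes out the intermediate steps of the domain computation more explicitly, but the structure and all key ingredients are the same.
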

\begin{proof} By Theorem \ref{compr} and Theorem \ref{T1}, taking $\Pi=\Pi^{\oplus}_{\Sigma}$ and $\Theta=-(\U^{\oplus}_{\Sigma})^{-1}\Theta_{R,\Sigma}$, one gets the self-adjoint extension $(A_{{-}}^{\max}\oplus A_{{+}}^{\max})|\dom(A_{R,\Sigma})$ with domain (contained in $H^{1}(\RE^{n}\backslash\Gamma)$ by $\dom(\Theta_{R,\Sigma})\subseteq 
L^{2}(\Gamma)\times H^{\frac12}(\Gamma)$ and Remark \ref{regularity}), 
\begin{align*}
&\dom(A_{R,\Sigma})\\
=&\{u\in H^{1}(\RE^{n}\backslash\Gamma)\cap(\dom(A^{\max}_{-})\oplus\dom(A^{\max}_{+})): (-[\hat\gamma_{1}]u)\oplus [\gamma_{0}]u\in \dom(\Theta_{R,\Sigma})\,,\\ 
&\Pi^{\oplus}_{\Sigma}(\gamma_{0}(u+\SL[\hat\gamma_{1}u]-\DL[\gamma_{0}u])\oplus \gamma_{1}(u+\SL[\hat\gamma_{1}u]-\DL[\gamma_{0}u]))=-(\U^{\oplus}_{\Sigma}\Theta_{R,\Sigma})^{-1}((-[\hat\gamma_{1}]u)\oplus [\gamma_{0}]u)\}
\\
=&\{u\in H^{1}(\RE^{n}\backslash\overline\Sigma)\cap(\dom(A^{\max}_{-})\oplus\dom(A^{\max}_{+})): (-[\hat\gamma_{1}]u)\oplus [\gamma_{0}]u\in \dom(\Theta_{R,\Sigma})\,,\\ 
&(\gamma_{0}u)|\Sigma+(\gamma_{0}\SL[\hat\gamma_{1}u])|\Sigma-(\gamma_{0}\DL[\gamma_{0}u])|\Sigma=((1/[b]+\hat\gamma_{0}\SL)[\hat\gamma_{1}]u-(\langle b\rangle/[b]+\hat\gamma_{0}\DL)[\gamma_{0}]u)|\Sigma\\
&(\hat\gamma_{1}u)|\Sigma+(\hat\gamma_{1}\SL[\hat\gamma_{1}u])|\Sigma-(\hat\gamma_{1}\DL[\gamma_{0}u])|\Sigma=((\langle b\rangle/[b]+\hat\gamma_{1}\SL)[\hat\gamma_{1}]u-(b_{+}b_{-}/[b]+\hat\gamma_{1}\DL)[\gamma_{0}]u)|\Sigma\}\\
=&\{u\in H^{1}(\RE^{n}\backslash\overline\Sigma)\cap(\dom(A^{\max}_{-})\oplus\dom(A^{\max}_{+})): (-[\hat\gamma_{1}]u)\oplus [\gamma_{0}]u\in \dom(\Theta_{R,\Sigma})\,,\\ 
&([b]\gamma_{0}u-[\hat\gamma_{1}]u+\langle b\rangle[\gamma_{0}]u)|\Sigma=0\,,\ ([b]\gamma_{1}u-\langle b\rangle[\gamma_{1}]u+b_{+}b_{-}[\gamma_{0}]u)|\Sigma=0\}\\
=&\{u\in H^{1}(\RE^{n}\backslash\overline\Sigma)\cap(\dom(A^{\max}_{-})\oplus\dom(A^{\max}_{+})): \\
&(-[\hat\gamma_{1}]u)\oplus [\gamma_{0}]u\in \dom(\Theta_{R,\Sigma})\,,\ (\gamma_{1}^\pm u-b_{\pm}\gamma_{0}^{\pm}u)|\Sigma=0\}
\,.
\end{align*}
The formula giving $(-A_{R,\Sigma} + z)^{-1}$ is consequence of \eqref{krein1}, since
\begin{align*}
&
\left(-\U^{\oplus}_{\Sigma}\Theta_{R,\Sigma}+\Pi^{\oplus}_{\Sigma}\left[\,\begin{matrix}\gamma_{0}(\SL-\SL_{z})&\gamma_0(\DL-\DL_{z})\\
\gamma_{1}(\SL-\SL_{z})&\gamma_{1}(\DL-\DL_{z})
\end{matrix}\,\right](\Pi^{\oplus}_{\Sigma})'\right)^{\!\! -1} \Pi^{\oplus}_{\Sigma}
\\
=
&
\left(-\Theta_{R,\Sigma}+R^{\oplus}_{\Sigma}\left[\,\begin{matrix}\gamma_{0}(\SL-\SL_{z})&\gamma_0(\DL-\DL_{z})\\
\gamma_{1}(\SL-\SL_{z})&\gamma_{1}(\DL-\DL_{z})
\end{matrix}\,\right](\Pi^{\oplus}_{\Sigma})'\right)^{\!\! -1} R^{\oplus}_{\Sigma}
\end{align*}
and
\begin{align*}
-\Theta_{R,\Sigma}+R^{\oplus}_{\Sigma}\left[\,\begin{matrix}\gamma_{0}(\SL-\SL_{z})&\gamma_0(\DL-\DL_{z})&\\
\gamma_{1}(\SL-\SL_{z})&\gamma_{1}(\DL-\DL_{z})
\end{matrix}\,\right](\Pi^{\oplus}_{\Sigma})'
=R^{\oplus}_{\Sigma}\left[\,\begin{matrix}1/{[b]}+\gamma_{0}\SL_{z}&\langle b\rangle/[b]+\gamma_0\DL_{z}\\
\langle b\rangle/[b]+\gamma_{1}\SL_{z}&b_{+}b_{-}/[b]+\gamma_{1}\DL_{z}
\end{matrix}\,\right](\Pi^{\oplus}_{\Sigma})'&\,.
\end{align*}
\end{proof}
\begin{remark}
Since $\supp([\gamma_{0}]u)\subseteq\overline\Sigma$ and $\supp([\gamma_{1}]u)\subseteq\overline\Sigma$, for any $u\in\dom(A_{R,\Sigma})$, one has 
\begin{align*}
A_{R,\Sigma}u=Au-[\gamma_{1}]u\,\delta_{\overline\Sigma}-[\gamma_{0}]u\,\partial_{\underline a}\delta_{\overline\Sigma}
=
Au-\frac4{[b]}\,\left((\langle b\rangle\,\gamma_{1}u-b_+b_{-}\gamma_{0}u)\,\delta_{\overline\Sigma}+(\gamma_{1}u-\langle b\rangle\,\gamma_{0}u)\,\partial_{\underline a}\delta_{\overline\Sigma}\right)
\end{align*}
and so 
$(A_{R,\Sigma}u)|\overline\Sigma^{c}=(Au)|\overline\Sigma^{c}$. This also shows that $A_{R,\Sigma}$ is a self-adjoint extension of the symmetric operator $A|\C^{\infty}_{\comp}(\RE^{n}\backslash\overline\Sigma)$. Hence it depends only on $\Sigma$ and $b_{\pm}|\Sigma$ and not on the whole $\Gamma$: one would obtain the same operator by considering any other bounded domain $\Omega_{\circ}$ with boundary $\Gamma_{\!\circ }$ such that $\Sigma\subset\Gamma_{\!\circ}$. 
\end{remark}
\begin{remark} By the results contained in the proof of Theorem \ref{comprR}, for the domain of the sesquilinear form $f_{R,\Sigma}$ associated to the self-adjoint operator $\t\Theta_{R,\Sigma}$ one has the relation $\dom(f_{R,\Sigma})\subseteq H^{3}(\Gamma)\oplus H^{\frac32}(\Gamma)$. Therefore Theorem \ref{teo-schatten2} applies and so, by Corollary \ref{spectrum},  $\sigma_{ac}(A_{R,\Sigma})=\sigma_{ac}(A)$ and the wave operators $W_{\pm}(A_{R,\Sigma},A)$, $W_{\pm}(A,A_{R,\Sigma})$ exist and are complete. 
\end{remark}
\begin{remark} Let $\phi\oplus\varphi\in\dom(\Theta_{R,\Sigma})$. Then, by 
$$((1/[b]+\gamma_{0}\SL)\phi+(\langle b\rangle/[b]+\hat\gamma_{0}\DL)\varphi)|\Sigma\in H^{\frac32}(\Sigma)\,,
$$
there follows $\phi|\Sigma\in H^{\frac12}(\Sigma)\subseteq H_{\overline\Sigma}^{s}(\Gamma)$, $s<\frac12$. Hence, by
$$((\langle b\rangle/[b]+\hat\gamma_{1}\SL)\phi
+(b_{+}b_{-}/[b]+\hat\gamma_{1}\DL)\varphi)|\Sigma\in H^{\frac12}(\Sigma)\,,
$$
one gets $(\hat\gamma_{1}\DL\varphi)|\Sigma\in H^{s}(\Sigma)$, $s<\frac12$. Using \eqref{reg3} and the same arguments as in the proof of Lemma \ref{regularity2}, one obtains 
$$
\dom(A_{R,\Sigma})\subseteq H^{2-}(\RE^{n}\backslash(\overline\Sigma\cup(\partial\Sigma)^{\epsilon}))\cap H^{2}(\RE^{n}\backslash\Sigma^{\epsilon})\,.
$$
\end{remark}
\begin{remark} Suppose $n=3$ and both $\Gamma$ and $\Sigma$ are smooth. By Theorem \ref{comprR}, we have
$$
((\langle b\rangle/[b]+\hat\gamma_{1}\SL)\phi
+(b_{+}b_{-}/[b]+\hat\gamma_{1}\DL)\varphi)|\Sigma\in H^{\frac12}(\Sigma)
$$
with $\phi\oplus\varphi\in\dom(\Theta_{R,\Sigma})\subseteq L^{2}(\Gamma)\oplus H^{\frac12}(\Gamma)$. This yields $(\hat\gamma_{1}\DL\varphi)|\Sigma\in L^{2}(\Sigma)\subseteq H^{s}(\Sigma)$, $s<0$ and so, according to \eqref{CS2}, we get $\varphi\in H^{s}_{\overline\Sigma}(\Gamma)$, $\frac12\le s<1$. 
Therefore $\dom(\Theta_{R,\Sigma})\subseteq L^{2}(\Gamma)\oplus H^{s}(\Gamma)$, $\frac 12\le s<1$, and so, by Remark \ref{regularity} and \eqref{H4}, 
$$
\D(A_{R,\Sigma})\subseteq H^{\frac32-}(\RE^{3}\backslash\overline\Sigma)\,.
$$
\end{remark}

\end{subsection}  
\begin{subsection}{$\delta$-interactions.}\label{delta-loc} Given $\Sigma\subset \Gamma$ relatively open of class $\C^{0,1}$,  we denote by $\Pi_{\Sigma}$ the orthogonal projector in the Hilbert space $H^{\frac32}(\Gamma)$ such that $\ran(\Pi_{\Sigma})=H^{\frac32}_{\Sigma^{c}}(\Gamma)^{\perp}\simeq H^{\frac32}(\Sigma)$. Here $\Pi_{\Sigma}'$, $R_{\Sigma}$ and $\U_{\Sigma}$ denote the same operators 
as in Subsection \ref{dir-loc}.
\begin{theorem}\label{compr-delta} Let $\Omega$ be of class $\C^{4,1}$,  let $\alpha\in M^{\frac32}(\Gamma)$ be real-valued and let $1/\alpha\in L^{\infty}(\Gamma)$ have constant sign on (each connected component of) $\Gamma$. Then 
$$\Theta_{\alpha,D,\Sigma}:\dom(\Theta_{\alpha,D,\Sigma})
\subseteq H^{-\frac32}_{\overline\Sigma}(\Gamma)\to H^{\frac32}(\Sigma)\,,$$
$$ 
\Theta_{\alpha,D,\Sigma}\phi:=R_{\Sigma}(1/\alpha+\gamma_{0}\SL)\Pi_{\Sigma}'\phi\equiv((1/\alpha+\gamma_{0}\SL)\phi)|\Sigma\,,
$$
$$
\dom(\Theta_{\alpha,D,\Sigma}):=\{\phi\in H^{-\frac12}_{\overline\Sigma }(\Gamma): ((1/\alpha+\gamma_{0}\SL)\phi)|\Sigma\in H^{\frac32}(\Sigma)\}\,,
$$
is self-adjoint.
\end{theorem}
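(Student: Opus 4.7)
The plan is to mirror the strategy used for the Dirichlet case in Theorem \ref{compr}, carrying the extra multiplication by $1/\alpha$ through the argument. Concretely, I would introduce the operator $\t\Theta_{\alpha,D} := (1/\alpha+\gamma_0\SL)\Lambda^3$ in $H^{3/2}(\Gamma)$. Lemma \ref{lemmadelta}, together with the unitarity of the duality map $\Lambda^3:H^{3/2}(\Gamma)\to H^{-3/2}(\Gamma)$, gives that $\t\Theta_{\alpha,D}$ is self-adjoint in $H^{3/2}(\Gamma)$. The hypothesis that $1/\alpha$ has constant sign on each connected component, combined with the coercivity estimate \eqref{coercive1} for $\gamma_0\SL$, guarantees that $\t\Theta_{\alpha,D}$ is semibounded (from below when $1/\alpha>0$, from above when $1/\alpha<0$, by replacing $\t\Theta_{\alpha,D}$ by $-\t\Theta_{\alpha,D}$ in the latter case).

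Next, let $f_{\alpha,D}$ denote the closed sesquilinear form in $H^{3/2}(\Gamma)$ associated with $\t\Theta_{\alpha,D}$. Exactly as in the proof of Theorem \ref{compr}, I would restrict this form to the subspace $\ran(\Pi_\Sigma)=H^{3/2}_{\Sigma^c}(\Gamma)^\perp$ and apply Lemma \ref{lemma-compr} from the Appendix, producing a compressed self-adjoint operator $\t\Theta_{\alpha,D,\Sigma}$ in $\ran(\Pi_\Sigma)$. To identify its domain, I would compute the extension $\breve\Theta_{\alpha,D} = (1/\alpha+\gamma_0\SL)\Lambda^3:\dom(f_{\alpha,D})\to H_{\alpha,D}'$ and the annihilator $K_\Sigma=\fk_{\Pi_\Sigma}$ in the style of Theorem \ref{compr}; the dual-space structure and all the pairings inherited from the pivot $H^{3/2}(\Gamma)$ are unchanged, so the identification of $K_\Sigma$ in terms of a weighted annihilator of $H^{-s}_{\overline\Sigma}(\Gamma)$ is formally the same as in that proof. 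Lemma \ref{lemma-compr} then describes $\dom(\t\Theta_{\alpha,D,\Sigma})$ as those $\phi$ in $\dom(f_{\alpha,D})\cap\ran(\Pi_\Sigma)$ for which $(1/\alpha+\gamma_0\SL)\Lambda^3\phi$ agrees modulo $K_\Sigma$ with an element of $\ran(\Pi_\Sigma)$. Finally, setting $\Theta_{\alpha,D,\Sigma}:=\U_\Sigma\,\t\Theta_{\alpha,D,\Sigma}\,\Lambda^{-3}$ and unwinding via the isomorphism $\U_\Sigma$ as at the end of the proof of Theorem \ref{compr}, the two chains of inclusions collapse the set description to $\{\phi\in H^{-1/2}_{\overline\Sigma}(\Gamma):((1/\alpha+\gamma_0\SL)\phi)|\Sigma\in H^{3/2}(\Sigma)\}$, with $\Theta_{\alpha,D,\Sigma}\phi=((1/\alpha+\gamma_0\SL)\phi)|\Sigma$.

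The main obstacle is the second step: correctly identifying the form $f_{\alpha,D}$, its form domain, and the dual space $H_{\alpha,D}'$, so that the ensuing computation of $K_\Sigma$ produces exactly the domain stated in the theorem. The multiplier hypothesis $\alpha\in M^{3/2}(\Gamma)$ (and its consequence $1/\alpha\in M^{3/2}(\Gamma)$, via the result from Subsection 3.2) is what keeps the bookkeeping consistent across the negative-index Sobolev scale, allowing the multiplication by $1/\alpha$ to be paired against traces of single-layer potentials. The constant-sign condition on $1/\alpha$ is essential for semiboundedness of $\t\Theta_{\alpha,D}$: without it, the indefinite contribution from $(1/\alpha)\Lambda^3$ could spoil the positivity produced by the coercive $\gamma_0\SL\Lambda^3$ term, and Lemma \ref{lemma-compr} could not be applied.
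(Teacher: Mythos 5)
Your plan matches the paper's proof. The paper also sets $\t\Theta_{\alpha,D}:=(1/\alpha+\gamma_0\SL)\Lambda^3 = \Theta_{\alpha,D}\Lambda^3$ on $H^{9/2}(\Gamma)\subseteq H^{3/2}(\Gamma)$, invokes Lemma~\ref{lemmadelta} for self-adjointness and semiboundedness, restricts the associated form to $\ran(\Pi_\Sigma)$, applies Lemma~\ref{lemma-compr}, and unwinds via $\U_\Sigma\cdot\Lambda^{-3}$ with the same two-way chain of inclusions at the end. Two small points, since you flag them yourself as the places needing care. First, for $1/\alpha<0$ the coercivity estimate~\eqref{coercive1} works \emph{against} the desired upper bound (it makes $\gamma_0\SL$ positive); what actually gives semiboundedness in both sign cases is that $\gamma_0\SL\,\Lambda^3$ has strictly lower Sobolev order than $(1/\alpha)\Lambda^3$, i.e. the infinitesimal relative-boundedness already proved in Lemma~\ref{lemmadelta} (via~\eqref{rst}); coercivity only helps make the form \emph{strictly} positive when $1/\alpha\ge0$. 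Second, the claim that the dual-space structure ``is unchanged'' relative to Theorem~\ref{compr} should not be taken literally: since $1/\alpha$ raises the Sobolev order of $\t\Theta_{\alpha,D}$ from $2$ to $3$, the form domain becomes $H^{3}(\Gamma)$ (not $H^{5/2}(\Gamma)$), the dual $H'_{\alpha,D}$ becomes $L^2(\Gamma)$ (not $H^{1/2}(\Gamma)$), $\dom(f_{\alpha,D})\cap\ran(\Pi_\Sigma)=\Lambda^{-3}L^2_{\overline\Sigma}(\Gamma)$, and the annihilator comes out to $K_\Sigma=L^2_{\Sigma^c}(\Gamma)$ rather than $H^{1/2}_{\Sigma^c}(\Gamma)$. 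The recipe is formally the same; the spaces are not, and one must track the shift to land on the stated domain $\{\phi\in H^{-1/2}_{\overline\Sigma}(\Gamma):((1/\alpha+\gamma_0\SL)\phi)|\Sigma\in H^{3/2}(\Sigma)\}$.
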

\begin{proof} Since $\gamma_{0}\SL$ is infinitesimally $1/\alpha$-bounded (see the proof of Lemma \ref{lemmadelta}), the self-adjoint operator in $H^{\frac32}(\Gamma)$  
$$
\t\Theta_{\alpha,D}:=\Theta_{\alpha,D}\Lambda^{3}:H^{9/2}(\Gamma)\subseteq H^{\frac32}(\Gamma)\to H^{\frac32}(\Gamma)\,,
$$
is semibounded: it is strictly positive whenever $1/\alpha\ge 0$ and upper bounded whenever $1/\alpha\le 0$. In the following we suppose $1/\alpha\ge 0$, the case $1/\alpha\le 0$ being treated in a similar way. Let $f_{\alpha,D}$ be the densely defined, strictly positive, closed sesquilinear form associated with $\t\Theta_{\alpha,D}$,
i.e.
$$
f_{\alpha,D}:H^{3}(\Gamma)\times H^{3}(\Gamma)
\subseteq
H^{\frac32}(\Gamma)\times H^{\frac32}(\Gamma)\to\RE\,,
$$
\begin{align*}
f_{\alpha,D}(\phi_{1}\phi_{2}):=\langle(1/\alpha+\gamma_{0}\SL)\Lambda^{3}\phi_{1},\Lambda^{3}\phi_{2}\rangle_{L^{2}(\Gamma)}\,.
\end{align*}
Since 
$H^{3}(\Gamma)\cap H^{\frac32}_{\Sigma^{c}}(\Gamma)^{\perp}=H^{3}(\Gamma)\cap\Lambda^{-3}H^{-\frac32}_{\overline\Sigma}(\Gamma)=\Lambda^{-3}
L^{2}_{\overline\Sigma }(\Gamma)$ is dense in $\Lambda^{-3} H^{-\frac32}_{\overline\Sigma }(\Gamma)$, we can use  Lemma \ref{lemma-compr} to determine the semibounded   self-adjoint operator $\t\Theta_{\alpha,D,\Sigma}$ in $\ran(\Pi_{\Sigma})$ associated to the restriction of $f_{\alpha,D}$ to $\ran(\Pi_{\Sigma})$. Then 
$$
\Theta_{\alpha,D,\Sigma}:=\U_{\Sigma}\t\Theta_{\alpha,D,\Sigma}\Lambda^{-3}\,,\quad \dom(\Theta_{\alpha,D,\Sigma}):=\Lambda^{3}\dom(\t\Theta_{\alpha,D,\Sigma})\,,
$$
is self-adjoint. To conclude the proof we need to determine the operator  $\breve\Theta_{\alpha,D}:=(\t\Theta_{\alpha,D})\breve{\,}$ and the subspace $K_{\Sigma}:=\fk_{\Pi_{\Sigma}}$ (we refer to the Appendix for the notations). Let $H_{\alpha,D}$ be the Hilbert space given by $\dom(f_{\alpha,D})=H^{3}(\Gamma)$ endowed with the 
scalar product  $$\langle\phi_{1},\phi_{2}\rangle_{\alpha,D}:=f_{\alpha,D}(\phi_{1},\phi_{2})
\,.
$$ 
Let $H_{\alpha,D}'$ denote its dual space.
Since
\begin{align*}
f_{\alpha,D}(\phi_{1},\phi_{2})=
\langle(1/\alpha+\gamma_{0}\SL)\Lambda^{3}\phi_{1}
,\Lambda^{3}\phi_{2}\rangle_{L^{2}(\Gamma)}
\,,
\end{align*}
one has 
$$H_{\alpha,D}'=L^{2}(\Gamma)\,,\quad \langle \varphi,\phi\rangle_{H'_{\alpha,D},H_{\alpha,D}}=\langle \varphi,\Lambda^{3}\phi\rangle_{L^{2}(\Gamma)}\,,
$$ 
and
$$
\breve\Theta_{\alpha,D}:H^{3}(\Gamma)\to L^{2}(\Gamma)\,,
\quad
\breve\Theta_{\alpha,D}=(1/\alpha+\gamma_{0}\SL)\Lambda^{3}\,.
$$
Moreover
\begin{align*}
K_{\Sigma}
=&
\{\varphi\in L^{2}(\Gamma):\forall\phi\in\Lambda^{-3}L^{2}_{\overline\Sigma}(\Gamma)\,,\ 
\langle \varphi,\Lambda^{3}\phi\rangle_{L^{2}(\Gamma)}=0\}\\
=&\{\varphi\in L^{2}(\Gamma):\forall\phi\in L^{2}_{\overline\Sigma }(\Gamma)\,,\ 
\langle\varphi,\phi\rangle_{L^{2}(\Gamma)}=0\}\\
=&L^{2}_{\Sigma^{c}}(\Gamma)\,.
\end{align*}
Therefore, by Lemma 5.1, $\t\Theta_{\alpha,D,\Sigma}$ is self-adjoint on the domain
\begin{align*}
\dom(\t\Theta_{\alpha,D,\Sigma}):=\{\phi\in \Lambda^{-3}L^{2}_{\overline\Sigma }(\Gamma):\exists\,\t\phi\in\ran(\Pi_{\Sigma})\ \text{s.t.}\ (1/\alpha+\gamma_{0}\SL)\Lambda^{3}\phi-\t\phi\in L^{2}_{\Sigma^{c}}(\Gamma)\}
\end{align*}
and $\t\Theta_{\alpha,D,\Sigma}\phi:=\t\phi$. Then
$$
\U_{\Sigma}\t\Theta_{\alpha,D,\Sigma}\Lambda^{-3}\phi=\t\phi|\Sigma=
((1/\alpha+\gamma_{0}\SL)\phi)|\Sigma
$$
and
\begin{align*}
\dom(\Theta_{\alpha,D,\Sigma})
=&\{\phi\in L^{2}_{\overline\Sigma }(\Gamma):\exists\,\t\phi\in\ran(\Pi_{\Sigma})\ \text{s.t.}\ (1/\alpha+\gamma_{0}\SL)\phi-\t\phi\in L^{2}_{\Sigma^{c}}(\Gamma)\}\\
\subseteq&\{\phi\in L^{2}_{\overline\Sigma }(\Gamma):((1/\alpha+\gamma_{0}\SL)\phi)|\Sigma\in H^{\frac32}(\Sigma)\}\\
\subseteq&\{\phi\in L^{2}_{\overline\Sigma }(\Gamma):\exists\,\t\phi\in\ran(\Pi_{\Sigma})\ \text{s.t.}\ ((1/\alpha+\gamma_{0}\SL)\phi)|\Sigma=\t\phi|\Sigma\}\\
=&\{\phi\in L^{2}_{\overline\Sigma }(\Gamma):\exists\,\t\phi\in\ran(\Pi_{\Sigma})\ \text{s.t.}\ (1/\alpha+\gamma_{0}\SL)\phi-\t\phi\in L^{2}_{\Sigma^{c}}(\Gamma)\}=\dom(\Theta_{\alpha,D,\Sigma})\,.
\end{align*}
\end{proof}
\begin{remark}\label{6.17}
Let $\phi\in\dom(\Theta_{\alpha,D,\Sigma})\subseteq L^{2}_{\overline\Sigma}(\Gamma)$. Then $\gamma_{0}\SL\phi\in H^{1}(\Gamma)$ and so, by  $((1/\alpha+\gamma_{0}\SL)\phi)|\Sigma\in H^{\frac32}(\Sigma)$, one gets $(\phi/\alpha)|\Sigma\in H^{1}(\Sigma)$. Thus, since $H^{s}(\Sigma)=H^{s}_{\overline\Sigma}(\Gamma)$ for any $s\in [0,\frac12)$, in conclusion one has $\dom(\Theta_{\alpha,D,\Sigma})\subseteq H^{s}(\Gamma)$ for any $s\in [0,\frac12)$.
\end{remark}
\begin{corollary} The linear operator in $L^{2}(\RE^{n})$ defined by $A_{\alpha,\delta,\Sigma}:=(A_{{-}}^{\max}\oplus A_{{+}}^{\max})|\dom(A_{\alpha,\delta,\Sigma})$ with domain
\begin{align*}
\D(A_{\alpha,\delta,\Sigma})
=&\{u\in H^{2-}(\RE^{n}\backslash\overline\Sigma)\cap (\dom(A_{-}^{\max})\oplus\dom(A_{+}^{\max})): [\gamma_{1}]u\in \dom(\Theta_{\alpha,D,\Sigma})\,,\ (\alpha\gamma_{0}u-[\gamma_{1}]u)|\Sigma=0\}\\
\subseteq& \{u\in H^{2-}(\RE^{n}\backslash\overline\Sigma)\cap (\dom(A_{-}^{\max})\oplus\dom(A_{+}^{\max})): (\alpha\gamma_{0}u-[\gamma_{1}]u)|\Sigma=0
\}\,,
\end{align*}
is self-adjoint and its resolvent is given by 
\begin{align*}
(-A_{\alpha,\delta,\Sigma}+z)^{-1}u
=
(-A+z)^{-1}-\SL_{z}\Pi_{\Sigma}'(R_{\Sigma}(1+\alpha\gamma_{0}\SL_{z})\Pi_{\Sigma}')^{-1}R_{\Sigma}\alpha\gamma_{0}(-A+z)^{-1}\,,
\end{align*}
where $R_{\Sigma}$ is the restriction operator $R_{\Sigma}\phi=\phi|\Sigma$ and 
$\Pi'_{\Sigma}$ acts there as the inclusion map  
$\Pi'_{\Sigma}:H_{\overline\Sigma}^{-\frac32}(\Gamma)\to H^{-\frac32}(\Gamma)$.
\end{corollary}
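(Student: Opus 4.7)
The plan is to invoke Theorem \ref{T1} with $\Pi(\phi \oplus \varphi) := \Pi_\Sigma \phi \oplus 0$ and $\Theta(\phi \oplus \varphi) := (-\U_\Sigma^{-1}\Theta_{\alpha,D,\Sigma}\phi) \oplus 0$, combining the projection pattern of the global $\delta$-interaction case (Subsection \ref{delta}) with the compressed boundary operator of the Dirichlet-on-$\Sigma$ case (Subsection \ref{dir-loc}). Since $\Theta_{\alpha,D,\Sigma}$ is self-adjoint by Theorem \ref{compr-delta}, so is $\Theta$ on $\ran(\Pi')$, and Theorem \ref{T1} directly yields a self-adjoint extension.

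To obtain the concrete domain description I would unfold the abstract conditions exactly as in the proofs of the corollaries of Subsections \ref{dir-loc} and \ref{delta}. The membership $(-[\hat\gamma_1]u) \oplus [\hat\gamma_0]u \in \dom(\Theta)$ forces $[\hat\gamma_0]u = 0$ and $[\hat\gamma_1]u \in \dom(\Theta_{\alpha,D,\Sigma}) \subseteq H^{-\frac12}_{\overline\Sigma}(\Gamma)$; applying $\U_\Sigma$ to the remaining boundary identity collapses it, after cancellation of the common term $(\gamma_0\SL[\hat\gamma_1]u)|\Sigma$, to $(\gamma_0 u)|\Sigma = ((1/\alpha)[\hat\gamma_1]u)|\Sigma$, i.e.\ $(\alpha\gamma_0 u - [\hat\gamma_1]u)|\Sigma = 0$. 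The Sobolev regularity $\dom(A_{\alpha,\delta,\Sigma}) \subseteq H^{2-}(\RE^n \setminus \overline\Sigma)$ follows from Remark \ref{6.17} (which gives $\dom(\Theta_{\alpha,D,\Sigma}) \subseteq H^s(\Gamma)$ for every $s \in [0,\tfrac12)$), together with Remark \ref{regularity} and the characterizations \eqref{H4}--\eqref{H5}, applied to the support information $\supp([\hat\gamma_0]u) \cup \supp([\hat\gamma_1]u) \subseteq \overline\Sigma$.

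For the resolvent, I would substitute the chosen $(\Pi,\Theta)$ into \eqref{krein1}. A direct computation, as in the Dirichlet-on-$\Sigma$ corollary, shows that
\[
-\Theta_{\alpha,D,\Sigma} + R_\Sigma \gamma_0(\SL - \SL_z)\Pi'_\Sigma = -R_\Sigma(1/\alpha + \gamma_0\SL_z)\Pi'_\Sigma,
\]
yielding the natural form
\[
(-A_{\alpha,\delta,\Sigma}+z)^{-1}u = (-A+z)^{-1}u - \SL_z \Pi'_\Sigma \bigl(R_\Sigma(1/\alpha + \gamma_0\SL_z)\Pi'_\Sigma\bigr)^{-1}R_\Sigma \gamma_0(-A+z)^{-1}u.
\]
The stated form with $1 + \alpha\gamma_0\SL_z$ and $R_\Sigma \alpha\gamma_0(-A+z)^{-1}$ follows from the pointwise factorization $1/\alpha + \gamma_0\SL_z = \alpha^{-1}(1 + \alpha\gamma_0\SL_z)$, using that $\alpha, 1/\alpha \in M^{\frac32}(\Gamma) \cap L^\infty(\Gamma)$ act as bounded multipliers which commute with the restriction $R_\Sigma$.

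The main technical hurdle, in my view, is the careful algebraic bookkeeping of projections, restrictions and multiplier actions: first, verifying that the projected boundary identity of Theorem \ref{T1}, which a priori lives in $\ran(\Pi_\Sigma) \subseteq H^{\frac32}(\Gamma)$, does collapse to a clean equality on $\Sigma$ alone (this hinges on the identification $\ran(\Pi_\Sigma) \simeq H^{\frac32}(\Sigma)$ via $\U_\Sigma$ and the fact that the $\Sigma^c$-supported component of $\gamma_0 u_\circ$ is left free by $\Pi_\Sigma$); and second, the re-arrangement from the natural resolvent formula to the stated form involving $\alpha$ rather than $1/\alpha$. Once these two points are executed, the remaining verifications are routine adaptations of Subsections \ref{dir-loc} and \ref{delta}.
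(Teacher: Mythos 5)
Your proposal is correct and follows essentially the same route as the paper: you pick the same pair $(\Pi,\Theta)$ with $\Pi(\phi\oplus\varphi)=\Pi_{\Sigma}\phi\oplus 0$ and $\Theta(\phi\oplus\varphi)=(-\U_{\Sigma}^{-1}\Theta_{\alpha,D,\Sigma}\phi)\oplus 0$, unfold the abstract boundary condition from Theorem \ref{T1} exactly as the paper does, obtain the $H^{2-}$ regularity from Remark \ref{6.17} together with Remark \ref{regularity}, and derive the resolvent formula from \eqref{krein1} via the same cancellation $-\Theta_{\alpha,D,\Sigma}+R_{\Sigma}\gamma_{0}(\SL-\SL_{z})\Pi_{\Sigma}'=-R_{\Sigma}(1/\alpha+\gamma_{0}\SL_{z})\Pi_{\Sigma}'$ and the factorization $1/\alpha+\gamma_{0}\SL_{z}=\alpha^{-1}(1+\alpha\gamma_{0}\SL_{z})$.
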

\begin{proof}
By Theorem \ref{compr-delta} and Theorem \ref{T1},  taking $\Pi(\phi\oplus\varphi)=\Pi_{\Sigma}\phi\oplus 0$ and $\Theta(\phi\oplus\varphi)=(-\U_{\Sigma}^{-1}\Theta_{\alpha,D,\Sigma}\phi)\oplus 0$, one gets the self-adjoint extension $(A_{{-}}^{\max}\oplus A_{{+}}^{\max})|\dom(A_{\alpha,\delta,\Sigma})$ with domain (contained in $H^{s}(\RE^{n}\backslash\Gamma)$, $s<2$, by $\dom(\Theta_{\alpha,D,\Sigma})\subseteq H^{s}(\Gamma)$, $s<\frac12$, and by Remark \ref{regularity}) 
\begin{align*}
&\D(A_{\alpha,\delta,\Sigma})\\
=&\{u\in H^{2-}(\RE^{n}\backslash\Gamma)\cap (\dom(A_{-}^{\max})\oplus\dom(A_{+}^{\max})): [\gamma_{1}]u\in \dom(\Theta_{\alpha,D,\Sigma})\,,\ 
\Pi_{\Sigma}(\gamma_{0}(u+\SL[\gamma_{1}u]))=U^{-1}_{\Sigma}\Theta_{\alpha,D,\Sigma}[\gamma_{1}u]
\}\\
=&\{u\in H^{2-}(\RE^{n}\backslash\Gamma)\cap (\dom(A_{-}^{\max})\oplus\dom(A_{+}^{\max})): [\gamma_{1}]u\in \dom(\Theta_{\alpha,D,\Sigma})\,,\ 
(\gamma_{0}u)|\Sigma+(\SL[\gamma_{1}u])|\Sigma=((1/\alpha+\gamma_{0}\SL)[\hat\gamma_{1}u])|\Sigma
\}\\
=&\{u\in H^{2-}(\RE^{n}\backslash\overline\Sigma)\cap (\dom(A_{-}^{\max})\oplus\dom(A_{+}^{\max})): [\gamma_{1}]u\in \dom(\Theta_{\alpha,D,\Sigma})\,,\ 
(\alpha\gamma_{0}u-[\gamma_{1}]u)|\Sigma=0\}\\
\subseteq& \{u\in H^{s}(\RE^{n}\backslash\overline\Sigma)\cap (\dom(A_{-}^{\max})\oplus\dom(A_{+}^{\max})): (\alpha\gamma_{0}u-[\gamma_{1}]u)|\Sigma=0\}\,,
\end{align*}
The formula giving  $(-A_{\alpha,\delta,\Sigma}+z)^{-1}$ is consequence of \eqref{krein1}, since   
$$
(-\U_{\Sigma}\Theta_{\alpha,D,\Sigma}+\Pi_{\Sigma}\gamma_{0}(\SL-\SL_{z})\Pi_{\Sigma}')^{-1}\Pi_{\Sigma}
=(-\Theta_{\alpha,D,\Sigma}+R_{\Sigma}\gamma_{0}(\SL-\SL_{z})\Pi_{\Sigma}')^{-1}R_{\Sigma}
$$
and
\begin{align*}
-\Theta_{\alpha,D,\Sigma}\phi+R_{\Sigma}\gamma_{0}(\SL-\SL_{z})\phi
=&
-((1/\alpha+\gamma_{0}\SL)\phi)|\Sigma+(\gamma_{0}\SL\phi)|\Sigma
-(\gamma_{0}\SL_{z}\phi)|\Sigma\\
=&-((1/\alpha+\gamma_{0}\SL_{z})\phi)|\Sigma=
-(1/(\alpha|\Sigma))(1+\alpha\gamma_{0}\SL_{z})\phi)|\Sigma\,.
\end{align*}
\end{proof}
\begin{remark}
Since $\supp([\gamma_{0}]u)\subseteq\overline\Sigma$, for any $u\in\dom(A_{\alpha,\delta,\Sigma})$, one has 
\begin{align*}
&A_{\alpha,\delta,\Sigma}\,u=Au-\alpha\gamma_{0}u\,\delta_{\overline\Sigma}
\end{align*}
and so 
$(A_{\alpha,\delta,\Sigma}\,u)|\overline\Sigma^{c}=(Au)|\overline\Sigma^{c}$. This also shows that $A_{\alpha,\delta,\Sigma}$ is a self-adjoint extension of the symmetric operator $A|\C^{\infty}_{\comp}(\RE^{n}\backslash\overline\Sigma)$. Hence it depends only on $\Sigma$ and $\alpha|\Sigma$ and not on the whole $\Gamma$: one would obtain the same operator by considering any other bounded domain $\Omega_{\circ}$ with boundary $\Gamma_{\!\circ }$ such that $\Sigma\subset\Gamma_{\!\circ}$. 
\end{remark}
\begin{remark}
According to Remark \ref{6.17}, $\phi|\Sigma\in H^{1}(\Sigma)$ whenever  $\phi\in\dom(\Theta_{\alpha,D,\Sigma})$; thus Lemma \ref{regularity2} applies and one gets 
$$
\dom(A_{\alpha,\delta,\Sigma})\subseteq H^{2}(\RE^{n}\backslash(\overline\Sigma\cup(\partial\Sigma)^{\epsilon}))\,.
$$
\end{remark}
\begin{remark} By the results contained in the proof of Theorem \ref{compr-delta}, for the domain of the sesquilinear form $f_{\alpha,D,\Sigma}$ associated to the self-adjoint operator $\t\Theta_{\alpha,D,\Sigma}$ one has the relation $\dom(f_{\alpha,D,\Sigma})\subseteq H^{3}(\Gamma)$. Therefore Theorem \ref{teo-schatten2} applies and so, by Corollary \ref{spectrum},  $\sigma_{ac}(A_{\alpha,\delta,\Sigma})=\sigma_{ac}(A)$ and the wave operators $W_{\pm}(A_{\alpha,\delta,\Sigma},A)$, $W_{\pm}(A,A_{\alpha,\delta,\Sigma})$ exist and are complete. 
\end{remark}
\end{subsection}  
\begin{subsection} {$\delta'$-interactions.}\label{delta'-loc} Given $\Sigma\subset \Gamma$ relatively open of class $\C^{0,1}$,  we denote by $\Pi_{\Sigma}$ the orthogonal projector in the Hilbert space $H^{\frac12}(\Gamma)$ such that $\ran(\Pi_{\Sigma})=H^{\frac12}_{\Sigma^{c}}(\Gamma)^{\perp}\simeq H^{\frac12}(\Sigma)$. Here $\Pi_{\Sigma}'$, $R_{\Sigma}$ and $\U_{\Sigma}$ denote the same operators 
as in Subsection \ref{neu-loc}.
\begin{theorem}\label{compr-delta'} Let $\Omega$ be of class $\C^{2,1}$ and let $\beta\in M^{\frac12}(\Gamma)$ be real valued and $1/\beta\in L^{\infty}(\Gamma)$. Then  
$$\Theta_{\beta,N,\Sigma}:
\dom(\Theta_{\beta,N,\Sigma})\subseteq 
 H^{-\frac12}_{\overline\Sigma}(\Gamma)\to  H^{\frac12}(\Sigma)\,,
$$
$$\Theta_{\beta,N,\Sigma}\phi:=R_{\Sigma}(-1/\beta+\hat\gamma_{1}\DL)\Pi_{\Sigma}'\phi\equiv
(-1/\beta+\hat\gamma_{1}\DL)\phi)|\Sigma\,,
$$
$$
\dom(\Theta_{\beta,N,\Sigma}):=\{\varphi\in H^{\frac12}_{\overline\Sigma }(\Gamma):
((-1/\beta+\hat\gamma_{1}\DL)\varphi)|\Sigma \in H^{\frac12}(\Sigma)\}\,,
$$
is self-adjoint.
\end{theorem}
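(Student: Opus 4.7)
My approach mirrors the template of Theorem \ref{comprN} (for the Neumann case) and Theorem \ref{compr-delta} (for the $\delta$-case on $\Sigma$), adapted to handle the presence of the multiplier $-1/\beta$. The plan has three steps: build an auxiliary self-adjoint operator on all of $H^{\frac12}(\Gamma)$, associate a closed semibounded sesquilinear form, and then compress to $\ran(\Pi_{\Sigma})$ via Lemma \ref{lemma-compr}.

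First I would introduce the auxiliary operator
$$
\t\Theta_{\beta,N}:=\Theta_{\beta,N}\Lambda=(-1/\beta+\hat\gamma_{1}\DL)\Lambda:H^{5/2}(\Gamma)\subseteq H^{\frac12}(\Gamma)\to H^{\frac12}(\Gamma)\,.
$$
Its self-adjointness follows immediately from Lemma \ref{lemmadeltaprimo}; the proof of that lemma in fact shows that $(1/\beta)\Lambda$ is infinitesimally bounded with respect to the self-adjoint $\hat\gamma_{1}\DL\,\Lambda$, so by the Kato--Rellich (or KLMN) theorem the sum is self-adjoint on the common domain. Combining the coercivity estimate \eqref{coercive2} with $\|1/\beta\|_{L^{\infty}}<\infty$ one gets that $\t\Theta_{\beta,N}$ is upper bounded; indeed, for $\varphi\in H^{\frac32}(\Gamma)$,
$$
-\langle \t\Theta_{\beta,N}\varphi,\varphi\rangle_{H^{\frac12}(\Gamma)}=-\langle \hat\gamma_{1}\DL\,\Lambda\varphi,\Lambda\varphi\rangle_{-\frac12,\frac12}+\langle (1/\beta)\Lambda\varphi,\Lambda\varphi\rangle_{H^{\frac12}(\Gamma)}\ge c_{1}\|\varphi\|^{2}_{H^{\frac32}(\Gamma)}-C\|\varphi\|^{2}_{H^{\frac12}(\Gamma)}\,.
$$

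Next, I define the densely defined, closed, semibounded sesquilinear form in $H^{\frac12}(\Gamma)$
$$
f_{\beta,N}(\varphi_{1},\varphi_{2}):=\langle (-1/\beta+\hat\gamma_{1}\DL)\Lambda\varphi_{1},\Lambda\varphi_{2}\rangle_{-\frac12,\frac12}\,,\qquad \dom(f_{\beta,N})=H^{\frac32}(\Gamma)\,,
$$
associated with $\t\Theta_{\beta,N}$. Since
$$
\dom(f_{\beta,N})\cap\ran(\Pi_{\Sigma})=H^{\frac32}(\Gamma)\cap H^{\frac12}_{\Sigma^{c}}(\Gamma)^{\perp}=H^{\frac32}(\Gamma)\cap\Lambda^{-1}H^{-\frac12}_{\overline\Sigma}(\Gamma)=\Lambda^{-1}H^{\frac12}_{\overline\Sigma}(\Gamma)
$$
is dense in $\Lambda^{-1}H^{-\frac12}_{\overline\Sigma}(\Gamma)=\ran(\Pi_{\Sigma})$, Lemma \ref{lemma-compr} is applicable and yields a self-adjoint operator $\t\Theta_{\beta,N,\Sigma}$ on $\ran(\Pi_{\Sigma})$ associated with the restriction of $f_{\beta,N}$ to this subspace.

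To obtain the explicit description of its domain I would compute $\breve\Theta_{\beta,N}:=(\t\Theta_{\beta,N})\breve{\,}$ and the subspace $K_{\Sigma}:=\fk_{\Pi_{\Sigma}}$ exactly as in Theorem \ref{comprN}: the dual space of $(\dom(f_{\beta,N}),\langle\cdot,\cdot\rangle_{\beta,N})$ identifies with $H^{-\frac12}(\Gamma)$ via the pairing $\langle\phi,\varphi\rangle_{H'_{\beta,N},H_{\beta,N}}=\langle\Lambda^{-\frac12}\phi,\Lambda^{\frac32}\varphi\rangle_{L^{2}(\Gamma)}$, giving
$\breve\Theta_{\beta,N}=(-1/\beta+\hat\gamma_{1}\DL)\Lambda:H^{\frac32}(\Gamma)\to H^{-\frac12}(\Gamma)$ and, using \eqref{perp2},
$K_{\Sigma}=H^{-\frac12}_{\Sigma^{c}}(\Gamma)$. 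Lemma \ref{lemma-compr} then identifies $\dom(\t\Theta_{\beta,N,\Sigma})$ as those $\varphi\in\Lambda^{-1}H^{\frac12}_{\overline\Sigma}(\Gamma)$ for which there exists $\t\varphi\in\ran(\Pi_{\Sigma})$ with $(-1/\beta+\hat\gamma_{1}\DL)\Lambda\varphi-\t\varphi\in H^{-\frac12}_{\Sigma^{c}}(\Gamma)$. Finally, setting $\Theta_{\beta,N,\Sigma}:=\U_{\Sigma}\t\Theta_{\beta,N,\Sigma}\Lambda^{-1}$ and using a two-sided inclusion argument identical to the one at the end of the proof of Theorem \ref{comprN} (with the condition $(\hat\gamma_{1}\DL\varphi)|\Sigma\in H^{\frac12}(\Sigma)$ replaced by $((-1/\beta+\hat\gamma_{1}\DL)\varphi)|\Sigma\in H^{\frac12}(\Sigma)$), one recovers the domain stated in the theorem.

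The step requiring the most care is the verification that the sesquilinear-form compression gives precisely the domain announced: the inclusion $\supseteq$ needs the decomposition $\phi=\Pi_{\Sigma}\phi\oplus(\uno-\Pi_{\Sigma})\phi$ together with \eqref{perp2} to show that any $\varphi$ with $((-1/\beta+\hat\gamma_{1}\DL)\varphi)|\Sigma\in H^{\frac12}(\Sigma)$ admits an element $\t\varphi\in\ran(\Pi_{\Sigma})$ with the prescribed difference in $H^{-\frac12}_{\Sigma^{c}}(\Gamma)$. As this follows verbatim from the corresponding step in Theorems \ref{comprN} and \ref{compr-delta} (since the extra multiplier $-1/\beta$ is bounded on $H^{\frac12}(\Gamma)$ and preserves supports), no additional technical difficulty arises.
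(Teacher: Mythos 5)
Your proposal is correct and follows essentially the same route as the paper's proof: introduce $\t\Theta_{\beta,N}=\Theta_{\beta,N}\Lambda$ (semibounded via \eqref{coercive2} and the infinitesimal boundedness argument from Lemma \ref{lemmadeltaprimo}), pass to the associated closed form $f_{\beta,N}$, verify the density of $\Lambda^{-1}H^{\frac12}_{\overline\Sigma}(\Gamma)$ in $\ran(\Pi_{\Sigma})$, invoke Lemma \ref{lemma-compr}, compute $\breve\Theta_{\beta,N}$ and $K_{\Sigma}=H^{-\frac12}_{\Sigma^{c}}(\Gamma)$, and finish with the two-sided inclusion argument. One small imprecision: in your displayed semiboundedness estimate the $1/\beta$ term should be paired via $\langle\cdot,\cdot\rangle_{-\frac12,\frac12}$ rather than $\langle\cdot,\cdot\rangle_{H^{\frac12}(\Gamma)}$, and the bound it yields is $\|1/\beta\|_{\infty}\|\varphi\|^{2}_{H^{1}(\Gamma)}$, so an interpolation step via \eqref{rst} is needed before you can absorb it into $c_{1}\|\varphi\|^{2}_{H^{\frac32}(\Gamma)}-C\|\varphi\|^{2}_{H^{\frac12}(\Gamma)}$; the paper sidesteps this by simply citing the proof of Lemma \ref{lemmadeltaprimo}.
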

\begin{proof} By \eqref{coercive2}, since $1/\beta$ is infinitesimally $\hat\gamma_{1}\DL$-bounded (see the proof of Lemma \ref{lemmadeltaprimo}), the self-adjoint operator in $H^{\frac12}(\Gamma)$ given by 
$$
\t\Theta_{\beta,N}:=\Theta_{\beta,N}\Lambda:H^{5/2}(\Gamma)\subseteq H^{\frac12}(\Gamma)\to H^{\frac12}(\Gamma)\,,
$$ 
is upper bounded. Let $f_{\beta,N}$ be the densely defined, semibounded, closed sesquilinear form associated with $\t\Theta_{\beta,N}$, i.e.
$$
f_{\beta,N}:H^{\frac32}(\Gamma)\times H^{\frac32}(\Gamma)
\subset
H^{\frac12}(\Gamma)\times  H^{\frac12}(\Gamma)\to\RE\,,
$$
\begin{align*}
f_{\beta,N}(\varphi_{1},\varphi_{2}):=
\langle(-1/\beta+\hat\gamma_{1}\DL)\Lambda\varphi_{1},\Lambda\varphi_{2}\rangle_{-\frac12,\frac12}.
\end{align*}
Since $H^{\frac32}(\Gamma)\cap H^{\frac12}_{\Sigma^{c}}(\Gamma)^{\perp}=H^{\frac32}(\Gamma)\cap\Lambda^{-1}H^{-\frac12}_{\overline\Sigma}(\Gamma)=\Lambda^{-1}
H^{\frac12}_{\overline\Sigma }(\Gamma)$ is dense in $\Lambda^{-1} H^{-\frac12}_{\overline\Sigma }(\Gamma)$, we can use  Lemma \ref{lemma-compr} to determine the semibounded   self-adjoint operator $\t\Theta_{\beta,N,\Sigma}$ in $\ran(\Pi_{\Sigma})$ associated to the restriction of $f_{\beta,N}$ to $\ran(\Pi_{\Sigma})$. Then
$$
\Theta_{\beta,N,\Sigma}:=\U_{\Sigma}\t\Theta_{\beta,N,\Sigma}\Lambda^{-1}\,,\quad 
\dom(\Theta_{\beta,N,\Sigma}):=\Lambda\dom(\t\Theta_{\beta,N,\Sigma})\,,
$$
is self-adjoint. To conclude the proof we need to determine the operator $\breve\Theta_{\beta,N}:=(\t\Theta_{\beta,N})\breve{\,}$ and the subspace $K_{\Sigma}:=\fk_{\Pi_{\Sigma}}$ (we refer to the Appendix for the notations). Let $H_{\beta,N}$ be the Hilbert space given by $\dom(f_{\beta,N})= H^{\frac32}(\Gamma)$ endowed with the 
scalar product  $$\langle(\varphi_{1},\varphi_{2}\rangle_{\beta,N}:=(-f_{\beta,N}+\lambda_{\beta,N})(\varphi_{1}\varphi_{2})
\,,
$$ 
where $\lambda_{\beta,N}$ is chosen in such a way to have $-f_{\beta,N}+\lambda_{\beta,N}>0$.
Let $H_{\beta,N}'$ denote its dual space.
Since
\begin{align*}
f_{\beta,N}(\varphi_{1},\varphi_{2})=
\langle\Lambda^{-\frac12}(-1/\beta-\hat\gamma_{1}\DL))\Lambda\varphi_{1},\Lambda^{\frac32}\varphi_{2}\rangle_{L^{2}(\Gamma)}\,,
\end{align*}
one has 
$$H_{\beta,N}'= H^{-\frac12}(\Gamma)\,,\quad \langle (\phi,\varphi)\rangle_{H'_{\beta,N},H_{\beta,N}}=\langle \Lambda^{-\frac12}\phi,\Lambda^{\frac32}\varphi\rangle_{L^{2}(\Gamma)}\,,
$$ 
and
$$
\breve\Theta_{\beta,N}: H^{\frac32}(\Gamma)\to  H^{-\frac12}(\Gamma)\,,
\quad
\breve\Theta_{\beta,N}=(-1/\beta+\hat\gamma_{1}\DL)\Lambda\,.
$$
Moreover
\begin{align*}
K_{\Sigma}=&
\{\phi\in H^{-\frac12}(\Gamma):\forall\varphi\in\Lambda^{-1}H^{\frac12}_{\overline\Sigma}(\Gamma)\,,\ \langle\Lambda^{-\frac12}\phi,\Lambda^{\frac32}\varphi\rangle_{L^{2}(\Gamma)}=0\}\\
=&\{\phi\in H^{-\frac12}(\Gamma):\forall\varphi\in H^{\frac12}_{\overline\Sigma }(\Gamma)\,,\ 
\langle\phi,
\varphi\rangle_{-\frac12,\frac12}
=0\}\\
=&H^{-\frac12}_{\Sigma^{c}}(\Gamma)\,.
\end{align*}
Therefore, by Lemma 5.1, $\t\Theta_{\beta,N,\Sigma}$ is self-adjoint on the domain 
\begin{align*}
\dom(\t\Theta_{\beta,N,\Sigma}):=\{\varphi\in\Lambda^{-1}H^{\frac12}_{\overline\Sigma }(\Gamma):\exists\,\t\varphi\in\ran(\Pi_{\Sigma})\ \text{s.t.}\,\ (-1/\beta+\hat\gamma_{1}\DL)\Lambda\varphi-\t\varphi
\in H^{-\frac12}_{\Sigma^{c}}(\Gamma)\}
\end{align*}
and $\t\Theta_{\beta,N,\Sigma})\varphi:=\t\varphi$. Then
$$
\U_{\Sigma}\t\Theta_{\beta,N,\Sigma}\Lambda^{-1}\varphi=\t\varphi|\Sigma=
((-1/\beta+\hat\gamma_{1}\DL)\varphi)|\Sigma
$$
and
\begin{align*}
\dom(\Theta_{\beta,N,\Sigma})=&\{\varphi\in H^{\frac12}_{\overline\Sigma }(\Gamma):\exists\,\t\varphi\in\ran(\Pi_{\Sigma})\ \text{s.t.}\,\ (-1/\beta+\hat\gamma_{1}\DL)\varphi-\t\varphi
\in H^{-\frac12}_{\Sigma^{c}}(\Gamma)\}\\
\subseteq&\{\varphi\in H^{\frac12}_{\overline\Sigma }(\Gamma):((-1/\beta+\hat\gamma_{1}\DL)\varphi)|\Sigma
\in H^{\frac12}(\Sigma)\}\\
\subseteq&\{\varphi\in H^{\frac12}_{\overline\Sigma }(\Gamma):\exists\,\t\varphi\in\ran(\Pi_{\Sigma})\ \text{s.t.}\,\ ((-1/\beta+\hat\gamma_{1}\DL)\varphi)|\Sigma=\t\varphi|\Sigma\}\\
=&\{\varphi\in H^{\frac12}_{\overline\Sigma }(\Gamma):\exists\,\t\varphi\in\ran(\Pi_{\Sigma})\ \text{s.t.}\,\ (-1/\beta+\hat\gamma_{1}\DL)\varphi-\t\varphi
\in H^{-\frac12}_{\Sigma^{c}}(\Gamma)\}=\dom(\Theta_{\beta,N,\Sigma})\,.
\end{align*}
\end{proof}
\begin{corollary} The linear operator in $L^{2}(\RE^{n})$ defined by $A_{\beta,\delta',\Sigma}:=(A_{{-}}^{\max}\oplus A_{{+}}^{\max})|\dom(A_{\beta,\delta',\Sigma})$ with domain
\begin{align*}
\D(A_{\beta,\delta',\Sigma})
=&\{u\in H^{1}(\RE^{n}\backslash\overline\Sigma)\cap (\dom(A_{-}^{\max})\oplus\dom(A_{+}^{\max})): [\gamma_{0}]u\in \dom(\Theta_{\beta,N,\Sigma})\,,\ (\beta\hat\gamma_{1}u-[\gamma_{0}]u)|\Sigma=0\}\\
\subseteq& \{u\in H^{1}(\RE^{n}\backslash\overline\Sigma)\cap (\dom(A_{-}^{\max})\oplus\dom(A_{+}^{\max})): (\beta\gamma_{0}u-[\hat\gamma_{1}]u)|\Sigma=0
\}\,,
\end{align*}
is self-adjoint and its resolvent is given by 
\begin{align*}
(-A_{\beta,\delta',\Sigma}+z)^{-1}u
=
(-A+z)^{-1}+\DL_{z}\Pi_{\Sigma}'(R_{\Sigma}(1-\beta\hat\gamma_{1}\DL_{z})\Pi_{\Sigma}')^{-1}R_{\Sigma}\beta\gamma_{1}(-A+z)^{-1}\,,
\end{align*}
where $R_{\Sigma}$ is the restriction operator $R_{\Sigma}\phi=\phi|\Sigma$ and 
$\Pi'_{\Sigma}$ acts there as the inclusion map  
$\Pi'_{\Sigma}:H_{\overline\Sigma}^{-\frac12}(\Gamma)\to H^{-\frac12}(\Gamma)$.
\end{corollary}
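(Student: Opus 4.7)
The plan is to apply Theorem \ref{T1} with the choices $\Pi(\phi\oplus\varphi):=0\oplus\Pi_{\Sigma}\varphi$ and $\Theta(\phi\oplus\varphi):=0\oplus(-\U_{\Sigma}^{-1}\Theta_{\beta,N,\Sigma}\varphi)$, in full analogy with the treatment of $A_{N,\Sigma}$ in Subsection \ref{neu-loc} and of $A_{\alpha,\delta,\Sigma}$ in Subsection \ref{delta-loc}. Self-adjointness of $\Theta$ on $\ran(\Pi')=\{0\}\oplus H_{\overline\Sigma}^{-\frac12}(\Gamma)$ is precisely Theorem \ref{compr-delta'}; hence $(A^{\max}_{-}\oplus A^{\max}_{+})|\dom(A_{\Pi,\Theta})$ is a self-adjoint extension of $S$.

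To identify the domain explicitly, I would unpack $u\in\dom(A_{\Pi,\Theta})$ via Lemma \ref{L2}: write $u=u_{\circ}-\SL[\hat\gamma_{1}]u+\DL[\gamma_{0}]u$ with $u_{\circ}\in H^{2}(\RE^{n})$. The requirement $(-[\hat\gamma_{1}]u)\oplus[\gamma_{0}]u\in\dom(\Theta)\subseteq\ran(\Pi')$ forces $[\hat\gamma_{1}]u=0$ and $[\gamma_{0}]u\in\dom(\Theta_{\beta,N,\Sigma})\subseteq H_{\overline\Sigma}^{\frac12}(\Gamma)$; in particular, the jump $[\gamma_{0}]u$ is supported on $\overline\Sigma$, so $u\in H^{1}(\RE^{n}\backslash\overline\Sigma)$ by Remark \ref{regularity}. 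The Krein condition $\Pi\tau u_{\circ}=\Theta\phi$, applied with $u_{\circ}=u-\DL[\gamma_{0}]u$ and $\Theta_{\beta,N,\Sigma}\varphi=((-1/\beta+\hat\gamma_{1}\DL)\varphi)|\Sigma$, unfolds after restriction to $\Sigma$ into
$$(\hat\gamma_{1}u)|\Sigma-(\hat\gamma_{1}\DL[\gamma_{0}]u)|\Sigma=((1/\beta)[\gamma_{0}]u)|\Sigma-(\hat\gamma_{1}\DL[\gamma_{0}]u)|\Sigma\,,$$
whence the two double-layer contributions cancel and the boundary condition $(\beta\hat\gamma_{1}u-[\gamma_{0}]u)|\Sigma=0$ emerges.

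For the resolvent identity I would feed the same $\Pi$ and $\Theta$ into \eqref{krein1}. The left factor $G_{z}\Pi'$ reduces to $\DL_{z}\Pi'_{\Sigma}$, while $\Pi\tau R_{z}$ reduces (after identifying $\ran(\Pi_{\Sigma})$ with $H^{\frac12}(\Sigma)$ through $\U_{\Sigma}$) to $R_{\Sigma}\gamma_{1}(-A+z)^{-1}$. For the central factor, the same cancellation used in the domain computation yields
$$-\U_{\Sigma}^{-1}\Theta_{\beta,N,\Sigma}+\Pi_{\Sigma}\gamma_{1}(\DL-\DL_{z})\Pi'_{\Sigma}=\U_{\Sigma}^{-1}R_{\Sigma}(1/\beta-\hat\gamma_{1}\DL_{z})\Pi'_{\Sigma}\,,$$
and factorizing $1/\beta$ out on the right (it is a well-defined multiplier on $H_{\overline\Sigma}^{-\frac12}(\Gamma)$ and on $H^{\frac12}(\Sigma)$ since $\beta\in M^{\frac12}(\Gamma)$ and $1/\beta\in L^{\infty}(\Gamma)$) rewrites this central factor as multiplication by $1/\beta|_{\Sigma}$ composed with $R_{\Sigma}(1-\beta\hat\gamma_{1}\DL_{z})\Pi'_{\Sigma}$; inverting and absorbing the $\beta$ into the right trace term produces exactly the stated formula.

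The only delicate point is the manipulation of the multiplier $\beta$ in the central factor: one must check that multiplication by $\beta$ preserves the support condition encoded by $\Pi'_{\Sigma}$ and is inverted by $1/\beta$, and that it commutes with $R_{\Sigma}$ in the appropriate sense. These properties follow from $\beta\in M^{\frac12}(\Gamma)$, $1/\beta\in L^{\infty}(\Gamma)$, and the basic facts on Sobolev multipliers recalled in Subsection 3.2; all remaining steps are the direct algebraic counterparts of those carried out in the $\delta$-interactions corollary.
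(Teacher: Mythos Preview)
Your proposal is correct and follows essentially the same route as the paper: the same choice of $\Pi$ and $\Theta$, the same use of Theorem~\ref{compr-delta'} and Theorem~\ref{T1}, the same cancellation of the double-layer terms to extract the boundary condition, and the same factorisation of $1/\beta$ in the central factor of \eqref{krein1} to reach the resolvent formula. The only cosmetic difference is that the paper first lands in $H^{1}(\RE^{n}\backslash\Gamma)$ via Remark~\ref{regularity} and then upgrades to $H^{1}(\RE^{n}\backslash\overline\Sigma)$ using the support condition $\supp([\gamma_{0}]u)\subseteq\overline\Sigma$, whereas you invoke the support condition directly.
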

\begin{proof}
By Theorem \ref{compr-delta} and Theorem \ref{T1},  taking $\Pi(\phi\oplus\varphi)=0\oplus \Pi_{\Sigma}$ and $\Theta(\phi\oplus\varphi)=0\oplus (-\U_{\Sigma}^{-1}\Theta_{\beta,N,\Sigma}\phi)$, one gets the self-adjoint extension $(A_{{-}}^{\max}\oplus A_{{+}}^{\max})|\dom(A_{\beta,\delta',\Sigma})$ with domain (contained in $H^{1}(\RE^{n}\backslash\Gamma)$,  by $\dom(\Theta_{\beta,N,\Sigma})\subseteq H^{\frac12}(\Gamma)$ and by Remark \ref{regularity}) 
\begin{align*}
&\D(A_{\beta,\delta',\Sigma})\\
=&\{u\in H^{1}(\RE^{n}\backslash\Gamma)\cap (\dom(A_{-}^{\max})\oplus\dom(A_{+}^{\max})): [\gamma_{0}]u\in \dom(\Theta_{\beta,N,\Sigma})\,,\ 
\Pi_{\Sigma}(\gamma_{1}(u-\DL[\gamma_{0}u]))=-U^{-1}_{\Sigma}\Theta_{\beta,N,\Sigma}[\gamma_{0}u]
\}\\
=&\{u\in H^{1}(\RE^{n}\backslash\Gamma)\cap (\dom(A_{-}^{\max})\oplus\dom(A_{+}^{\max})): [\gamma_{0}]u\in \dom(\Theta_{\beta,N,\Sigma})\,,\ 
(\gamma_{1}u)|\Sigma-(\DL[\gamma_{0}u])|\Sigma=-((-1/\beta+\hat\gamma_{1}\DL)[\gamma_{0}u])|\Sigma
\}\\
=&\{u\in H^{1}(\RE^{n}\backslash\overline\Sigma)\cap (\dom(A_{-}^{\max})\oplus\dom(A_{+}^{\max})): [\gamma_{0}]u\in \dom(\Theta_{\beta,N,\Sigma})\,,\ 
(\beta\hat\gamma_{1}u-[\gamma_{0}]u)|\Sigma=0\}\\
\subseteq& \{u\in H^{1}(\RE^{n}\backslash\overline\Sigma)\cap (\dom(A_{-}^{\max})\oplus\dom(A_{+}^{\max})): (\beta\hat\gamma_{1}u-[\gamma_{0}]u)|\Sigma=0\}\,,
\end{align*}
The formula giving  $(-A_{\beta,\delta',\Sigma}+z)^{-1}$ is consequence of \eqref{krein1}, since   
$$
(-\U_{\Sigma}\Theta_{\beta,N,\Sigma}+\Pi_{\Sigma}\gamma_{1}(\DL-\DL_{z})\Pi_{\Sigma}')^{-1}\Pi_{\Sigma}
=(-\Theta_{\beta,N,\Sigma}+R_{\Sigma}\gamma_{1}(\DL-\DL_{z})\Pi_{\Sigma}')^{-1}R_{\Sigma}
$$
and
\begin{align*}
-\Theta_{\beta,N,\Sigma}\varphi+R_{\Sigma}\gamma_{1}(\DL-\DL_{z})\varphi
=&
((1/\beta-\hat\gamma_{1}\DL)\phi)|\Sigma+(\hat\gamma_{1}\DL\phi)|\Sigma
-(\hat\gamma_{1}\DL_{z}\varphi)|\Sigma\\
=&((1/\beta-\hat\gamma_{1}\DL_{z})\varphi)|\Sigma=
(1/(\beta|\Sigma))(1-\beta\hat\gamma_{1}\DL_{z})\phi)|\Sigma\,.
\end{align*}
\end{proof}
\begin{remark}
Since $\supp([\gamma_{1}]u)\subseteq\overline\Sigma$, for any $u\in\dom(A_{\beta,\delta',\Sigma})$, one has 
\begin{align*}
A_{\beta,\delta',\Sigma}u=Au-\beta\gamma_{1}u\,\partial_{\underline a}\delta_{\overline\Sigma}
\end{align*}
and so 
$(A_{\beta,\delta',\Sigma}u)|\overline\Sigma^{c}=(Au)|\overline\Sigma^{c}$. 
This also shows that $A_{\beta,\delta',\Sigma}$ is a self-adjoint extension of the symmetric operator $A|\C^{\infty}_{\comp}(\RE^{n}\backslash\overline\Sigma)$. Hence it depends only on $\Sigma$ and $\beta|\Sigma$ and not on the whole $\Gamma$: one would obtain the same operator by considering any other bounded domain $\Omega_{\circ}$ with boundary $\Gamma_{\!\circ }$ such that $\Sigma\subset\Gamma_{\!\circ}$. 
\end{remark}
\begin{remark}
According to the definition of  $\dom(\Theta_{\beta,N,\Sigma})$, one has $\dom(\Theta_{\beta,N,\Sigma})=\dom(\Theta_{N,\Sigma})$; thus Lemma \ref{regularity2} applies and one gets 
$$
\dom(A_{\beta,\delta',\Sigma})\subseteq H^{2}(\RE^{n}\backslash(\overline\Sigma\cup(\partial\Sigma)^{\epsilon}))\,.
$$
\end{remark}
\begin{remark} By the results contained in the proof of Theorem \ref{compr-delta'}, for the domain of the sesquilinear form $f_{\beta,N,\Sigma}$ associated to the self-adjoint operator $\t\Theta_{\beta,N,\Sigma}$ one has the relation $\dom(f_{\alpha,N,\Sigma})\subseteq H^{\frac32}(\Gamma)$. Therefore Theorem \ref{teo-schatten2} applies and so, by Corollary \ref{spectrum},  $\sigma_{ac}(A_{\beta,\delta',\Sigma})=\sigma_{ac}(A)$ and the wave operators $W_{\pm}(A_{\beta,\delta',\Sigma},A)$, $W_{\pm}(A,A_{\beta,\delta',\Sigma})$ exist and are complete. 
\end{remark}
\begin{remark} Let $\varphi\in\dom(\Theta_{\beta,N,\Sigma})\subseteq H^{\frac12}(\Gamma)$. Then $(\hat\gamma_{1}\DL\varphi)|\Sigma\in H^{\frac12}(\Sigma)\subseteq H^{s}(\Sigma)$, $s<0$. 
Then, in case $n=3$ and both $\Gamma$ and $\Sigma$ are smooth, by \eqref{CS2}, $\varphi\in H^{s}(\Gamma)$, $s<1$. Therefore $\dom(\Theta_{\beta, N,\Sigma})\subseteq H^{s}(\Gamma)$, $\frac 12\le s<1$, and so, by Remark \ref{regularity} and \eqref{H4}, 
$$
\D(A_{\beta,\delta',\Sigma})\subseteq H^{\frac32-}(\RE^{3}\backslash\overline\Sigma)\,.
$$
\end{remark}

\end{subsection}


\end{section}
\begin{section}{Appendix. Some remarks on compressions of self-adjoint operators.}
Let $\Theta:\dom(\Theta)\subseteq\fh\to\fh$ be a semibounded self-adjoint operator on the Hilbert space $\fh$ and let $f:\dom(f)\times \dom(f)\subseteq\fh\times\fh\to\RE$ be the corresponding semibounded sesquilinear form. Without loss of generality, eventually by considering $(-\Theta)$ and/or adding a constant, we can suppose that $\Theta$ (and hence $f$) is strictly positive. Let $\fh_{\Theta}$ be the Hilbert space given by $\dom(f)$ endowed with the 
scalar product  $\langle\varphi_{1},\varphi_{2}\rangle_{\fh_\Theta}:=f(\varphi_{1},\varphi_{2})$. Let $\fh_{\Theta}'$ be its dual space and let $\iota:\fh_{\Theta}\to \fh'_{\Theta}$ be the injection defined by $\langle \iota\phi,\varphi\rangle_{\fh'_{\Theta}\fh_{\Theta}}=\langle\phi,\varphi\rangle_{\fh}$, where 
 $\langle\cdot,\cdot\rangle_{\fh'_{\Theta}\fh_{\Theta}}$ denotes the $\fh'_{\Theta}$-$\fh_{\Theta}$ duality and $\langle\cdot,\cdot\rangle_{\fh}$ denotes the scalar product in $\fh$; by the identification $\iota\phi\equiv\phi$, we may regard $\fh_{\Theta}\subseteq\fh\subseteq \fh'_{\Theta}$. Let $\breve\Theta:\fh_{\Theta}
\to \fh_{\Theta}^{\prime}$ be the bounded operator defined by 
$$
\langle\breve\Theta \varphi_{1},\varphi_{2}\rangle_{\fh'_{\Theta}\fh_{\Theta}}=\langle\varphi_{1},\varphi_{2}\rangle_{\fh_\Theta}\,,\qquad \varphi_{1},\varphi_{2}\in \fh_{\Theta}\,.
$$
Obviously $\breve\Theta|\dom(\Theta)=\Theta$; moreover, by \cite[Theorem 2.1]{K}, 
$$
\dom(\Theta)=\mathfrak{d}_{\Theta}:=\{\varphi\in\fh_{\Theta}: \breve\Theta\varphi\in\fh\}\,.
$$
In conclusion one gets a well know characterization of $\Theta$ (see \cite[Remark at page 13]{Far}, \cite[Proof of Theorem VIII.15]{ReSi1}):
\be\label{dtheta}
\Theta=\breve\Theta|\mathfrak{d}_{\Theta}\,.
\ee
Let now $\Pi:\fh\to\fh$ be an orthogonal projector such that $\dom(f)\cap\ran(\Pi)$ is dense in $\ran(\Pi)$. Then the sesquilinear form 
$$f_{\Pi}:\D(f_{\Pi})\times\D(f_{\Pi})\subseteq \ran(\Pi)\times\ran(\Pi)
\to\RE\,, $$
$$
f_{\Pi}(\varphi_{1},\varphi_{2}):=f(\Pi\varphi_{1},\Pi\varphi_{2})=
f(\varphi_{1},\varphi_{2})
\,,\quad \dom(f_{\Pi}):=\dom(f)\cap\ran(\Pi)\,,$$
is densely defined, closed and strictly positive. Hence 
there exists a unique strictly positive self-adjoint operator $\Theta_{\Pi}$ in $\ran(\Pi)$ corresponding to $f_{\Pi}$. By the above reasonings applied to $\Theta_{\Pi}$, we know that $\Theta_{\Pi}=\breve \Theta_{\Pi}|\mathfrak{d}_{\Theta_{\Pi}}$. For any $\varphi_{1}\in\dom(\Theta)\cap\ran(\Pi)$ and for any $\varphi_{2}\in\dom(f)\cap\ran(\Pi)$ one has
$$
\langle\breve\Theta_{\Pi} \varphi_{1},\varphi_{2}\rangle_{\fh_{\Theta_\Pi}'\fh_{\Theta_\Pi}}=\langle\varphi_{1},\varphi_{2}\rangle_{\fh_{\Theta_\Pi}}=f(\Pi\varphi_{1},\Pi\varphi_{1})
=\langle\Theta\Pi\varphi_{1},\Pi\varphi_{2}\rangle_{\fh}=
\langle\Pi\Theta\Pi\varphi_{1},\varphi_{2}\rangle_{\fh}\,.
$$
Thus $\dom(\Theta)\cap\ran(\Pi)\subseteq \mathfrak{d}_{\Theta_{\Pi}}$ and $\text{\rm C}_{\Pi}(\Theta)\subseteq\Theta_{\Pi}$, where $\text{\rm C}_{\Pi}(\Theta)$ is the compression of $\Theta$ to $\ran(\Pi)$ defined by 
$$\text{\rm C}_{\Pi}(\Theta):\dom(\text{\rm C}_{\Pi}(\Theta))\subseteq\ran(\Pi)\to\ran(\Pi)\,,$$
$$\dom(\text{\rm C}_{\Pi}(\Theta)):=\dom(\Theta)\cap\ran(\Pi)\,,\quad \text{\rm C}_{\Pi}(\Theta)\phi:=\Pi\Theta\Pi\phi= \Pi\Theta\phi\,.
$$ 
Notice that $\text{\rm C}_{\Pi}(\Theta)$ is symmetric but it can be not self-adjoint; it is self-adjoint if and only if $\mathfrak{d}_{\Theta_{\Pi}}\subseteq\dom(\Theta)$. 
\par
We now give a more explicit definition of $\breve\Theta_{\Pi}$. Let $\fh_{\Theta_\Pi}$ be the Hilbert space $\dom(f_{\Pi})$ endowed with the scalar product $$\langle\varphi_{1},\varphi_{2}\rangle_{\fh_{\Theta_\Pi}}:=f_{\Pi}(\varphi_{1},\varphi_{2})=f(\varphi_{1},\varphi_{2})=\langle\varphi_{1},\varphi_{2}\rangle_{\fh_\Theta}$$ and let $\fh_{\Theta_\Pi}'$ denote its dual. Since $\fh_{\Theta_\Pi}=\fh_{\Theta}\cap\ran(\Pi)$, by \cite[Proposition 3.5.1]{Aub} one has
$$\fh_{\Theta_\Pi}'=\fh_{\Theta}'/\fk_{\Pi}\,,$$ where 
$$
\fk_{\Pi}:=\{\phi\in \fh_{\Theta}':\forall \varphi\in \fh_{\Theta_\Pi}\,,\ \langle\phi,\varphi\rangle_{\fh_{\Theta}'\fh_{\Theta}}=0\ \}\,,
$$
i.e. 
$$\fh_{\Theta_\Pi}'=\{\, [\phi]\,,\ \phi\in \fh'_{\Theta}\}\,,\quad
[\phi]:=
\{ \psi\in \fh'_{\Theta}: \psi-\phi\in \fk_{\Pi}\}\,.
$$
The $\fh_{\Theta_\Pi}'$-$\fh_{\Theta_\Pi}$ duality is then defined by
$\langle[\phi],\varphi\rangle_{\fh_{\Theta_\Pi}'\fh_{\Theta_\Pi}}:=\langle\phi,\varphi\rangle_{\fh_{\Theta}'\fh_{\Theta}}$. Let $\iota_{\Pi}:\fh_{\Theta_\Pi}\to \fh_{\Theta_\Pi}'$ be the injection defined by $\langle\iota_{\Pi}\phi,\varphi\rangle_{\fh_{\Theta_\Pi}'\fh_{\Theta_\Pi}}:=
\langle\phi,\varphi\rangle_{\fh_{\Theta_\Pi}}$. Since
$$
\langle\phi,\varphi\rangle_{\fh_{\Theta_\Pi}}=\langle\phi,\varphi\rangle_{\fh_{\Theta}}=
\langle\iota\phi,\varphi\rangle_{\fh'_{\Theta}\fh_{\Theta}}=
\langle[\iota\phi],\varphi\rangle_{\fh_{\Theta_\Pi}'\fh_{\Theta_\Pi}}\,,
$$
one gets $\iota_{\Pi}\varphi=[\iota\varphi]$.  By the identification $\iota_{\Pi}\varphi\equiv\varphi$, we may regard $\fh_{\Theta_\Pi}\subseteq\ran(\Pi)\subseteq \fh_{\Theta_\Pi}'$. For any $\varphi_{1},\varphi_{2}\in \fh_{\Theta_\Pi}$, the bounded operator $\breve\Theta _{\Pi}:\fh_{\Theta_\Pi}\to \fh_{\Theta_\Pi}'$ satisfies the relations
$$
\langle\breve\Theta _{\Pi}\varphi_{1},\varphi_{2}\rangle_{\fh_{\Theta_\Pi}',\fh_{\Theta_\Pi}}=f_{\Pi}(\varphi_{1},\varphi_{2})=f(\varphi_{1},\varphi_{2})=\langle\breve\Theta \varphi_{1},\varphi_{2}\rangle_{\fh'_{\Theta}\fh_{\Theta}}=
\langle[\breve\Theta\varphi_{1}],\varphi_{2}\rangle_{\fh_{\Theta_\Pi}'\fh_{\Theta_\Pi}}\,.
$$ 
Thus $\breve\Theta _{\Pi}\varphi=[\breve\Theta\varphi]$ and
\begin{align*}
\mathfrak{d}_{\Theta_{\Pi}}=\{\varphi\in \fh_{\Theta_\Pi}:
[\breve\Theta\varphi]\in \ran(\Pi)\}=
\{\varphi\in \fh_{\Theta_\Pi}:\exists\t\varphi\in\ran(\Pi)\ \text{s.t.}\, \breve\Theta\varphi-
\t\varphi\in \fk_\Pi\}\,.
\end{align*}
In conclusion we have the following
\begin{lemma}\label{lemma-compr} Let $f:\dom(f)\times \dom(f)\subseteq\fh\times\fh\to\RE$ be the  closed sesquilinear form corresponding to the semibounded self-adjoint operator $\Theta$; let $\Pi:\fh\to\fh$ be an orthogonal projector such that $\dom(f)\cap\ran(\Pi)$ is  dense in $\ran(\Pi)$. Then the self-adjoint operator 
$\Theta_{\Pi}:\dom(\Theta_{\Pi})\subseteq\ran(\Pi)\to\ran(\Pi)$
associated to the closed semibounded   sesquilinear form $f_{\Pi}$ defined as the restriction of $f$ to $\dom(f)\cap\ran(\Pi)$, is given by 
$$
\dom(\Theta_{\Pi}):=\{\varphi\in \dom(f)\cap\ran(\Pi):\exists\t\varphi\in\ran(\Pi)\ \text{\rm s.t.}\, \breve\Theta\varphi-\t\varphi\in \fk_\Pi\}\,,
\qquad
\Theta_{\Pi}\varphi:=\t\varphi\,.
$$
Moreover the compression $C_{\Pi}(\Theta)$ is self-adjoint, equivalenty $C_{\Pi}(\Theta)=\Theta_{\Pi}$, if and only if  $\dom(\Theta_{\Pi})\subseteq\dom(\Theta)$.
\end{lemma}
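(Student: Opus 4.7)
The lemma essentially collects and records the construction carried out in the discussion that immediately precedes it, so my plan is to show that the proof is a direct consequence of that discussion together with the general characterization \eqref{dtheta}.

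The first step is to apply \eqref{dtheta} to the semibounded self-adjoint operator $\Theta_{\Pi}$ in the Hilbert space $\ran(\Pi)$ associated with $f_{\Pi}$: this immediately yields
$$
\Theta_{\Pi}=\breve\Theta_{\Pi}|\mathfrak{d}_{\Theta_{\Pi}}\,,
$$
so I only have to identify $\breve\Theta_{\Pi}$ and $\mathfrak{d}_{\Theta_{\Pi}}$ concretely. For $\breve\Theta_{\Pi}$, I would invoke the identification $\fh_{\Theta_{\Pi}}=\fh_{\Theta}\cap\ran(\Pi)$ (which is a closed subspace of $\fh_{\Theta}$ because $f_{\Pi}$ is the restriction of $f$) and the duality identity $\fh_{\Theta_{\Pi}}'=\fh_{\Theta}'/\fk_{\Pi}$ from \cite[Proposition 3.5.1]{Aub}, which already appeared in the text. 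Testing $\breve\Theta_{\Pi}\varphi$ against an arbitrary $\psi\in\fh_{\Theta_{\Pi}}$ through the form, I obtain $\breve\Theta_{\Pi}\varphi=[\breve\Theta\varphi]$, i.e. the equivalence class of $\breve\Theta\varphi\in\fh_{\Theta}'$ modulo $\fk_{\Pi}$.

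The second step is to unfold the condition $\breve\Theta_{\Pi}\varphi\in\ran(\Pi)$ (under the embedding $\ran(\Pi)\hookrightarrow\fh_{\Theta_{\Pi}}'$ given by $\iota_{\Pi}$). Since $\iota_{\Pi}\tilde\varphi=[\iota\tilde\varphi]$, this becomes the requirement that there exist $\tilde\varphi\in\ran(\Pi)$ with $[\breve\Theta\varphi]=[\iota\tilde\varphi]$, equivalently $\breve\Theta\varphi-\tilde\varphi\in\fk_{\Pi}$. This gives the claimed description of $\dom(\Theta_{\Pi})$ and the formula $\Theta_{\Pi}\varphi=\tilde\varphi$.

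For the ``moreover'' statement, I would use the inclusion $C_{\Pi}(\Theta)\subseteq\Theta_{\Pi}$ already established in the discussion. Since $\Theta_{\Pi}$ is self-adjoint and $C_{\Pi}(\Theta)$ is symmetric, $C_{\Pi}(\Theta)=\Theta_{\Pi}$ is equivalent to $\dom(\Theta_{\Pi})\subseteq\dom(C_{\Pi}(\Theta))=\dom(\Theta)\cap\ran(\Pi)$; but $\dom(\Theta_{\Pi})\subseteq\ran(\Pi)$ already, so this reduces precisely to $\dom(\Theta_{\Pi})\subseteq\dom(\Theta)$. None of these steps present a serious obstacle: the only mildly delicate point is to be careful with the identifications $\fh_{\Theta_{\Pi}}\subseteq\ran(\Pi)\subseteq\fh_{\Theta_{\Pi}}'$ and, specifically, with the passage from the quotient space $\fh_{\Theta}'/\fk_{\Pi}$ to the representative $\tilde\varphi\in\ran(\Pi)$ that makes the membership $\breve\Theta\varphi-\tilde\varphi\in\fk_{\Pi}$ meaningful.
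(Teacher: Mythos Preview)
Your proposal is correct and follows exactly the same route as the paper: the lemma is stated there as the conclusion of the preceding discussion (introduced by ``In conclusion we have the following''), and your steps---applying \eqref{dtheta} to $\Theta_{\Pi}$, identifying $\breve\Theta_{\Pi}\varphi=[\breve\Theta\varphi]$ via the duality $\fh_{\Theta_{\Pi}}'=\fh_{\Theta}'/\fk_{\Pi}$, unfolding the membership condition, and using $C_{\Pi}(\Theta)\subseteq\Theta_{\Pi}$ for the ``moreover'' part---reproduce that discussion verbatim.
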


\end{section}

\section*{References}
\addcontentsline{toc}{section}{References}%
\markboth{References}{References}%

\end{document}